\newtheorem{conj}{Conjecture}[section]
\newtheorem{thm}[conj]{Theorem}
\newtheorem{rem}[conj]{Remark}
\newtheorem{lem}[conj]{Lemma}
\newtheorem{prop}[conj]{Proposition}
\newtheorem{coro}[conj]{Corollary}
\newtheorem{defn}[conj]{Definition}
\newtheorem{cor}[conj]{Corollary}
\newtheorem{ex}[conj]{Example}
\newcommand{\conv}{\mathrm{conv}}
\newcommand{\vol}{\mathrm{Vol}}
\newcommand{\diam}{\mathrm{diam}}
\newcommand{\supp}{\mathrm{supp}}
\newcommand{\inr}{\mathrm{inr}}
\newcommand{\R}{\mathbb{R}}
\newcommand{\N}{\mathbb{N}}
\newcommand{\Z}{\mathbb{Z}}
\newcommand{\bc}{\begin{center}}
\newcommand{\ec}{\end{center}}
\newcommand{\bt}{\begin{tabular}}
\newcommand{\et}{\end{tabular}} 
\newcommand{\bea}{\begin{eqnarray}}
\newcommand{\eea}{\end{eqnarray}}
\newcommand{\bean}{\begin{eqnarray*}}
\newcommand{\eean}{\end{eqnarray*}}
\newcommand{\ba}{\begin{array}}
\newcommand{\ea}{\end{array}}
\def\be{\begin{eqnarray}}
\def\ee{\end{eqnarray}}
\def\ben{\begin{eqnarray*}}
\def\een{\end{eqnarray*}}
\newcommand{\ra} {\rightarrow}
\newcommand{\la} {\leftarrow}
\newcommand{\nth}{\frac{1}{n}}
\newcommand{\Nat}{\mathbb{N}}
\newcommand{\lam}{\lambda}
\newcommand{\eps}{\epsilon}
\def\elabel#1{\label{e:#1}}
\def\sq{$\Box$}
\def\qed{\ifmmode\sq\else{\unskip\nobreak\hfil
\penalty50\hskip1em\null\nobreak\hfil\sq
\parfillskip=0pt\finalhyphendemerits=0\endgraf}\fi\par\medbreak}
\def\supp{{\rm supp\,}}
\newsavebox{\junk}
\savebox{\junk}[1.6mm]{\hbox{$|\!|\!|$}}
\def\det{{\mathop{\rm det}}}
\def\limsup{\mathop{\rm lim\ sup}}
\def\liminf{\mathop{\rm lim\ inf}}
\def\til={{\widetilde =}}
\def\half{{\mathchoice{\textstyle \frac{1}{2}}%
{\frac{1}{2}}%
{\hbox{\tiny $\frac{1}{2}$}}%
{\hbox{\tiny $\frac{1}{2}$}} }}
 \def\eq#1/{(\ref{#1})}
\def\eq#1/{(\ref{e:#1})}
\newcommand{\beqn}[1]{\notes{#1}%
\begin{eqnarray} \elabel{#1}}
\newcommand{\eeqn}{\end{eqnarray} }
\newcommand{\beq}[1]{\notes{#1}%
\begin{equation}\elabel{#1}}
\newcommand{\eeq}{\end{equation}} 
\def\bdes{\begin{description}}
\def\edes{\end{description}}
\def\notes#1{}
\newcommand{\setS}{s}
\newcommand{\setT}{t}
\newcommand{\collS}{\mathcal{C}}
\newcommand{\sumS}{\sum_{\setS\in\collS}}
\newcommand{\as}{\alpha_{\setS}}
\newcommand{\bs}{\beta_{\setS}}
\def\E{\mathbb{E}}
\def\R{{\bf R}}
\def\P{{\bf P}}
\def\H{{\cal H}}
\def\phi{\varphi}
\def\bee{\begin{eqnarray*}}
\def\ene{\end{eqnarray*}}
\newcommand{\rad}{\xrightarrow{d_H}}
\renewcommand{\la}{\lambda}
\renewcommand{\eps}{\varepsilon}
\begin{document}

\title{The convexification effect of Minkowski summation}
\author{Matthieu Fradelizi\thanks{supported in part by the Agence Nationale de la Recherche, project GeMeCoD (ANR 2011 BS01 007 01).}, 
Mokshay Madiman\thanks{supported in part by the U.S. National Science Foundation through grants DMS-1409504 (CAREER) and CCF-1346564.}, 
Arnaud Marsiglietti\thanks{supported in part by the Institute for Mathematics and its Applications with funds provided by the National Science Foundation.} and Artem Zvavitch\thanks{ supported in part by the U.S. National Science
Foundation Grant DMS-1101636, Simons Foundation and the B\'ezout Labex.}}

\date{}

\maketitle

\begin{abstract}
Let us define for a compact set $A \subset \R^n$ the sequence
$$ A(k) = \left\{\frac{a_1+\cdots +a_k}{k}: a_1, \ldots, a_k\in A\right\}=\frac{1}{k}\Big(\underset{k\ {\rm times}}{\underbrace{A + \cdots + A}}\Big). $$
It was independently proved by Shapley, Folkman and Starr (1969) and by Emerson and Greenleaf (1969)
that  $A(k)$ approaches the convex hull of $A$ in the Hausdorff distance induced by the Euclidean norm as $k$ goes to $\infty$.
We explore in this survey how exactly $A(k)$ approaches the convex hull of $A$, and more generally,
how a Minkowski sum of possibly different compact sets approaches convexity,
as measured by various indices of non-convexity. The non-convexity indices considered
include the Hausdorff distance induced by any norm on $\R^n$,
the volume deficit (the difference of volumes), a non-convexity index introduced by Schneider (1975),
and the effective standard deviation or inner radius. After first clarifying the interrelationships between these 
various indices of non-convexity, which were previously either unknown or scattered in the literature, we show that 
the volume deficit of $A(k)$ does not monotonically decrease to 0 in dimension 12 or above, thus falsifying a conjecture of
Bobkov et al. (2011), even though their conjecture is proved to be true in dimension 1 and for certain sets $A$ with
special structure. On the other hand, Schneider's index possesses a strong monotonicity property along the sequence $A(k)$, and 
both the Hausdorff distance and effective standard deviation are eventually monotone (once $k$ exceeds $n$).
Along the way, we obtain new inequalities for the volume of the Minkowski sum of compact sets (showing that this is 
fractionally superadditive but not supermodular in general, but is indeed supermodular when the sets are convex),
falsify a conjecture of Dyn and Farkhi (2004),
demonstrate applications of our results to combinatorial discrepancy theory,
and suggest some questions worthy of further investigation.
\end{abstract}

\noindent {\bf 2010 Mathematics Subject Classification.}  Primary 60E15 11B13;  Secondary 94A17 60F15.

\noindent {\bf Keywords.} Sumsets, Brunn-Minkowski, convex hull, inner radius, Hausdorff distance, discrepancy.

\vspace{1cm}

\newpage
\tableofcontents
\newpage

\section{Introduction}
\label{sec:intro}

Minkowski summation is a basic and ubiquitous operation on sets. Indeed, the Minkowski sum
$A+B = \{a+b: a \in A, b \in B \}$ of sets $A$ and $B$ makes sense as long as $A$ and $B$ are subsets
of an ambient set in which the operation + is defined. In particular, this notion makes sense
in any group, and  there are multiple fields of mathematics that are preoccupied with 
studying what exactly this operation does. For example, much of classical additive combinatorics
studies the cardinality of Minkowski sums (called sumsets in this context) of finite subsets of a group and their interaction
with additive structure of the concerned sets, while the study of the Lebesgue measure of Minkowski sums in $\R^n$
is central to much of convex geometry and geometric functional analysis. 
In this survey paper, which also contains a number of original results, our goal is to understand 
better the qualitative effect of Minkowski summation in $\R^n$-- specifically, the ``convexifying''
effect that it has. Somewhat surprisingly, while the existence of such an effect has long been known,
several rather basic questions about its nature do not seem to have been addressed, and we undertake 
to fill the gap.

The fact that Minkowski summation produces sets that look ``more convex''  is easy to visualize
by drawing a non-convex set\footnote{The simplest nontrivial example is three non-collinear points
in the plane, so that $A(k)$ is the original set $A$ of vertices of a triangle together with those
convex combinations of the vertices formed by rational coefficients with denominator $k$.} in the plane and its self-averages $A(k)$ defined by
\begin{eqnarray}\label{defAk}
A(k) = \left\{\frac{a_1+\cdots +a_k}{k} : a_1, \ldots, a_k\in A\right\}=\frac{1}{k}\Big(\underset{k\ {\rm times}}{\underbrace{A + \cdots + A}}\Big).
\end{eqnarray}
This intuition was first made precise in the late 1960's independently\footnote{Both the papers
of Starr \cite{Sta69} and Emerson and Greenleaf \cite{EG69} were submitted in 1967 and published in 1969,
but in very different communities (economics and algebra); so it is not surprising that the authors of these papers were
unaware of each other. Perhaps more surprising is that the relationship between these papers does not seem
to have ever been noticed in the almost 5 decades since. The fact that $A(k)$  converges 
to the convex hull of $A$, at an $O(1/k)$ rate in the Hausdorff metric when dimension $n$ is fixed, 
should perhaps properly be called the Emerson-Folkman-Greenleaf-Shapley-Starr theorem,
but in keeping with the old mathematical tradition of not worrying too much about names of theorems (cf., Arnold's principle),
we will simply use the nomenclature that has become standard.} by Starr \cite{Sta69} (see also \cite{Sta81}), who credited
Shapley and Folkman for the main result, and by Emerson and Greenleaf \cite{EG69}. Denoting by $\conv(A)$ the convex hull of $A$, by $B_2^n$ the $n$-dimensional Euclidean ball of radius $1$, and by $d(A)= \inf\{r>0: \conv(A)\subset A+rB_2^n\}$ the Hausdorff distance between a 
set $A$ and its convex hull, it follows from the Shapley-Folkman-Starr theorem that 
if $A_1, \ldots, A_k$ are compact sets in $\R^n$ contained inside some ball, then
\ben
d(A_1 +\cdots + A_k) = O\big( \sqrt{\min\{k,n\}} \big) .
\een
By considering $A_1=\cdots=A_k=A$, one concludes that 
$d(A(k))=O \big( \frac{\sqrt{n}}{k} \big)$. In other words,
when $A$ is a compact subset of $\R^n$ for fixed dimension $n$, 
$A(k)$ converges in Hausdorff distance to $\conv(A)$ as $k\ra\infty$,
at rate at least $O(1/k)$.

Our geometric intuition would suggest that in some sense, as $k$ increases,
the set $A(k)$ is getting progressively more convex, or in other words, that
the convergence of $A(k)$ to $\conv(A)$ is, in some sense, monotone.
The main goal of this paper is to examine this intuition, and explore whether it can
be made rigorous. 

One motivation for our goal of exploring monotonicity in the Shapley-Folkman-Starr theorem is that it
was the key tool allowing Starr \cite{Sta69} to prove that in an economy
with a sufficiently large number of traders, there are (under some natural
conditions) configurations arbitrarily close
to equilibrium even without making any convexity assumptions on preferences of the traders;
thus investigations of monotonicity in this theorem speak to the question
of whether these quasi-equilibrium configurations in fact get ``closer'' to a true equilibrium
as the number of traders increases. 
A related result is the core convergence result of Anderson \cite{And78},
which states under very general conditions that 
the discrepancy between a core allocation
and the corresponding competitive equilibrium price vector in a
pure exchange economy becomes arbitrarily small as the number
of agents gets large. These results are central results in mathematical economics,
and continue to attract attention (see, e.g., \cite{Sch12}).

Our original motivation, however, came from a conjecture made 
by Bobkov, Madiman and Wang \cite{BMW11}. To state it,
let us introduce the volume deficit $\Delta(A)$ of a compact set $A$ in $\R^n$: 
$\Delta(A):= \vol_n(\conv(A)\setminus A) = \vol_n(\conv(A))-\vol_n(A)$,
where $\vol_n$ denotes the Lebesgue measure in $\R^n$.

\begin{conj}[Bobkov-Madiman-Wang \cite{BMW11}]\label{weakconj}
Let $A$ be a compact set in $\R^n$ for some $n\in\Nat$, and let $A(k)$
be defined as in \eqref{defAk}. 
Then  the sequence $\{\Delta(A(k))\}_{k \geq 1}$ is non-increasing in $k$,
or equivalently, $\{\vol_n(A(k))\}_{k \geq 1}$ is non-decreasing. 
\end{conj}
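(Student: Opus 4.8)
The plan is to prove the equivalent formulation: $\vol_n(A(k)) \le \vol_n(A(k+1))$ for every $k \ge 1$. Two elementary facts get things started. Since $a = \tfrac1k(a+\cdots+a)$, one has $A \subseteq A(k)$ for all $k$, so in particular $\vol_n(A) \le \vol_n(A(k))$; and, unwinding \eqref{defAk}, $A(k+1) = \tfrac{1}{k+1}A \oplus \tfrac{k}{k+1}A(k)$ is a Minkowski average of $A$ and $A(k)$. The tempting first move is to apply the Brunn--Minkowski inequality to this last decomposition, but it delivers only $\vol_n(A(k+1))^{1/n} \ge \tfrac{1}{k+1}\vol_n(A)^{1/n} + \tfrac{k}{k+1}\vol_n(A(k))^{1/n}$, and because $\vol_n(A) \le \vol_n(A(k))$ the right-hand side is itself at most $\vol_n(A(k))^{1/n}$ --- the wrong direction. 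Iterating Brunn--Minkowski gives exactly superadditivity of $\vol_n^{1/n}$ along the trivial partition of a sum into its singletons, and that is too weak here.

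What one really wants is a \emph{fractional} superadditivity of volume for Minkowski sums: for a fractional partition $(\mathcal{C},\beta)$ of $\{1,\dots,m\}$ --- a family $\mathcal{C}$ of subsets together with weights $\beta_S > 0$ such that $\sum_{S \ni i}\beta_S = 1$ for each $i$ --- and compact $A_1,\dots,A_m \subset \R^n$,
\[
\vol_n\!\Big(\sum_{i=1}^m A_i\Big)^{1/n} \ \ge\ \sum_{S \in \mathcal{C}}\beta_S\,\vol_n\!\Big(\sum_{i \in S}A_i\Big)^{1/n}.
\]
Granting this, specialize: take $m = k+1$, all $A_i = A$, let $\mathcal{C}$ be the family of all $k$-element subsets of $\{1,\dots,k+1\}$, and $\beta_S = \tfrac1k$ (each index lies in exactly $k$ members of $\mathcal{C}$, so this is a legitimate fractional partition). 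Every $k$-subset sum equals the set $kA$, so the inequality reads $\vol_n((k+1)A)^{1/n} \ge \tfrac{k+1}{k}\vol_n(kA)^{1/n}$; using $\vol_n(jA) = j^n\vol_n(A(j))$, this rearranges precisely to $\vol_n(A(k+1))^{1/n} \ge \vol_n(A(k))^{1/n}$, which is the claim.

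So the conjecture is reduced to the displayed inequality, and \emph{that is the step I expect to be the crux.} I would try to prove it by induction on $m$, fixing an index, say $m$, splitting $\mathcal{C}$ into the members that contain $m$ and those that do not, and using Brunn--Minkowski to recombine the pieces so as to descend to a fractional partition of $\{1,\dots,m-1\}$; the accounting of the weights is delicate and I am not certain it closes. An alternative would be a tensorization argument in a product space, where Minkowski sums of independent blocks decouple. My honest worry is that the $n$-th-root version above might be \emph{false} for $n \ge 2$: the most one may be able to prove is fractional superadditivity of $\vol_n$ \emph{without} the root, and the same specialization then only yields $\vol_n(A(k+1)) \ge \big(\tfrac{k}{k+1}\big)^{n-1}\vol_n(A(k))$ --- which forces monotonicity exactly when $n = 1$ and is vacuous otherwise.

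Consequently I would expect this route to settle the conjecture unconditionally in dimension $1$ (where $\vol_1^{1/1} = \vol_1$, so the troublesome root disappears, and the needed fractional superadditivity is a purely one-dimensional statement, also accessible through the structure theory of iterated sumsets on the line), and to handle structured instances --- convex $A$, for which $A(k) = A$ for all $k$; or Cartesian products $A = A_1 \times \cdots \times A_n$ with $A_j \subset \R$, for which $\vol_n(A(k)) = \prod_j \vol_1(A_j(k))$ is a product of non-decreasing sequences by the one-dimensional case --- while breaking down at general compact sets in dimension $\ge 2$, where I would not be surprised if the conjecture itself fails.
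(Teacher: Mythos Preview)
Your analysis is remarkably on target, but you should know that the statement you are trying to prove is \emph{false}: the paper constructs an explicit counterexample in $\R^{12}$ (and hence in every $\R^n$ with $n\ge 12$). So no proof can exist, and the real task is to locate where the strategy breaks and to produce a counterexample.

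Your diagnosis of the obstruction is exactly right. The fractional superadditivity of $\vol_n^{1/n}$ that you display is precisely Conjecture~\ref{strongconj} of the paper, and your reduction (via the leave-one-out fractional partition) is the same one the paper uses. Your worry that the $1/n$-th-root version may fail for $n\ge 2$ is well founded: the paper shows it does fail. Your fallback---fractional superadditivity of $\vol_n$ \emph{without} the root, yielding only $\vol_n(A(k+1))\ge \big(\tfrac{k}{k+1}\big)^{n-1}\vol_n(A(k))$---is proved in the paper as Theorem~\ref{thm:fsa} and Corollary~\ref{cor:fsa-weak}, and your predicted positive cases (dimension~1, Cartesian products of one-dimensional sets, convex sets) are exactly the classes for which the paper establishes the conjecture (Theorems~\ref{thm:1d} and~\ref{thm:product}).

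What you are missing is the constructive step. The paper's counterexample in $\R^{12}$ takes $A=I_1\cup\cdots\cup I_k$ where the $I_j$ are convex bodies sitting in pairwise orthogonal $d$-dimensional subspaces with $kd=n$. Then $k\,A(k)=I_1+\cdots+I_k$ has volume $\prod_j\vol_d(I_j)$, while $(k+1)A(k+1)$ is a union of $k$ translated/dilated boxes whose total volume is at most $k\,2^d\prod_j\vol_d(I_j)$; comparing $(k+1)^n$ against $k^{n+1}2^d$ and optimizing gives $\vol_n(A(k+1))<\vol_n(A(k))$ once $n\ge 12$ (for $k=2$). Your instinct that the conjecture ``breaks down at general compact sets'' is correct, but turning that instinct into a disproof requires this kind of explicit construction.
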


In fact, the authors of \cite{BMW11} proposed a number of related conjectures,
of which Conjecture~\ref{weakconj} is the weakest. Indeed, 
they conjectured a monotonicity property in a probabilistic limit theorem, namely the law of 
large numbers for random sets due to Z.~Artstein and Vitale \cite{AV75};
when this conjectured monotonicity property of \cite{BMW11}  is restricted to deterministic
(i.e., non-random) sets, one obtains Conjecture \ref{weakconj}. They showed
in turn that this conjectured monotonicity property in the law of large numbers for
random sets is implied by the following volume inequality for Minkowski sums.
For $k\ge1$ being an integer, we set $[k]=\{1, \ldots, k\}$. 

\begin{conj}[Bobkov-Madiman-Wang \cite{BMW11}]\label{strongconj}
Let $n\ge1$, $k\ge2$ be integers and let $A_1, \dots, A_k$ be $k$ compact sets in $\R^n$. Then 
\begin{eqnarray}\label{conjdimn}
\vol_n\left(\sum_{i=1}^kA_i\right)^\frac{1}{n}\ge \frac{1}{k-1}\sum_{i=1}^k\vol_n\left(\sum_{j\in[k]\setminus\{i\}}A_j\right)^\frac{1}{n}.
\end{eqnarray}
\end{conj}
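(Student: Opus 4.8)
The base case $k=2$ of \eqref{conjdimn} is precisely the Brunn--Minkowski inequality $\vol_n(A_1+A_2)^{1/n}\ge\vol_n(A_1)^{1/n}+\vol_n(A_2)^{1/n}$, so the natural first plan is to bootstrap from it by induction on $k$. Writing $S=\sum_{i=1}^kA_i$ and $B_i=\sum_{j\in[k]\setminus\{i\}}A_j$, one has $S=B_i+A_i$ for every $i$, so Brunn--Minkowski gives $\vol_n(S)^{1/n}\ge\vol_n(B_i)^{1/n}+\vol_n(A_i)^{1/n}$, and averaging over $i$ yields
\[\vol_n(S)^{1/n}\ge\frac1k\sum_{i=1}^k\vol_n(B_i)^{1/n}+\frac1k\sum_{i=1}^k\vol_n(A_i)^{1/n}.\]
But iterated Brunn--Minkowski gives $\sum_i\vol_n(B_i)^{1/n}\ge(k-1)\sum_i\vol_n(A_i)^{1/n}$, so the right-hand side above is at most $\frac1{k-1}\sum_i\vol_n(B_i)^{1/n}$ --- strictly less unless the $A_i$ are homothetic convex bodies --- and this crude use of Brunn--Minkowski therefore falls short of \eqref{conjdimn}. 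A genuinely multilinear argument, one that uses the structure of the family $\{[k]\setminus\{i\}\}_{i\in[k]}$ rather than just the individual relations $S=B_i+A_i$, is needed.

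That family, weighted by $\frac1{k-1}$, is a \emph{fractional partition} of $[k]$, so \eqref{conjdimn} says exactly that the monotone set function $T\mapsto\vol_n\big(\sum_{i\in T}A_i\big)^{1/n}$ is fractionally superadditive. The standard route to such an inequality is via supermodularity: if
\[\vol_n\Big(\sum_{i\in T\cup U}A_i\Big)^{1/n}+\vol_n\Big(\sum_{i\in T\cap U}A_i\Big)^{1/n}\ge\vol_n\Big(\sum_{i\in T}A_i\Big)^{1/n}+\vol_n\Big(\sum_{i\in U}A_i\Big)^{1/n}\]
for all $T,U\subseteq[k]$, then \eqref{conjdimn} follows from the classical fact that a monotone supermodular function is fractionally superadditive. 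Setting $K=\sum_{i\in T\cap U}A_i$, $L=\sum_{i\in T\setminus U}A_i$, $M=\sum_{i\in U\setminus T}A_i$, this supermodular inequality reads $\vol_n(K+L+M)^{1/n}+\vol_n(K)^{1/n}\ge\vol_n(K+L)^{1/n}+\vol_n(K+M)^{1/n}$. I would try to establish this first for convex $K,L,M$, where it should reduce to a mixed-volume statement (and which the abstract announces is true for convex sets), and then try to remove convexity; failing that, I would attempt the slicing induction on $n$ that proves Brunn--Minkowski itself, aiming at least at $n=1$ and $n=2$.

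The step I expect to be the real obstacle --- and the point at which I would not be surprised to see \eqref{conjdimn} fail outright --- is controlling the exponent $1/n$ as the dimension grows: the Minkowski-supermodularity route is likely to genuinely require convexity (the abstract already records that $\vol_n$ of a Minkowski sum, even without the exponent, is supermodular only for convex sets), and slicing loses exactly the superadditive strength one needs when the sets are spread across many mutually orthogonal directions. An information-theoretic shadow reinforces this caution: the entropy-power analogue of \eqref{conjdimn}, a form of the monotonicity of Shannon entropy in the central limit theorem, is true, yet its proof rests on a variance-drop lemma with no transparent geometric counterpart. I would therefore, in parallel with the proof attempt, stress-test \eqref{conjdimn} on sets built from pieces of orthogonal low-dimensional coordinate subspaces with $n$ growing; if \eqref{conjdimn} fails it should fail there. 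In that case the recoverable content would be \eqref{conjdimn} for small $n$ and for convex sets, plus the determination of the threshold dimension --- and indeed, since \eqref{conjdimn} implies Conjecture~\ref{weakconj}, the counterexample to Conjecture~\ref{weakconj} announced in the introduction for $n\ge12$ already forces \eqref{conjdimn} to be false there.
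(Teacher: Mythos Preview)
Your proposal is not a proof but a strategic discussion, and its bottom line is correct: Conjecture~\ref{strongconj} is false in general, and you correctly anticipate both the mechanism (sets spread across mutually orthogonal subspaces) and the threshold (the counterexample to Conjecture~\ref{weakconj} at $n\ge 12$ forces \eqref{conjdimn} to fail there). This matches the paper's Theorem~\ref{thm:counter} almost exactly: the counterexample is $A=I_1\cup\cdots\cup I_k$ with the $I_j$ convex bodies in pairwise orthogonal $d$-dimensional subspaces of $\R^{kd}$.

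There is, however, a genuine error in your positive programme. You propose to reach \eqref{conjdimn} for convex sets via supermodularity of $T\mapsto\vol_n\big(\sum_{i\in T}A_i\big)^{1/n}$, citing the abstract. But the abstract's supermodularity claim concerns $\vol_n$ itself, \emph{not} $\vol_n^{1/n}$; the paper explicitly shows (in the remarks following Proposition~\ref{prop:smod-1d}) that
\[
\vol_n(K+L+M)^{1/n}+\vol_n(K)^{1/n}\ge\vol_n(K+L)^{1/n}+\vol_n(K+M)^{1/n}
\]
already fails for convex bodies in $\R^2$ (rectangles plus a small disc, or ellipsoids). So the supermodularity route you sketch cannot establish even the convex case of \eqref{conjdimn}. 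That convex case \emph{is} true, but it was proved in \cite{BMW11} by other means; the paper does not reprove it here.

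For the genuine positive results the paper obtains---dimension one (Theorem~\ref{thm:1d}) and Cartesian products (Theorem~\ref{thm:product})---the arguments are quite different from anything you outline. The one-dimensional case uses a set-inclusion argument adapted from Gyarmati--Matolcsi--Ruzsa: one slices each leave-one-out sum $S_i$ at a carefully chosen threshold $s_i$ and shows that translated halves of consecutive $S_i$ pack disjointly into $S$. No Brunn--Minkowski, no induction on $k$.
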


Apart from the fact that Conjecture~\ref{strongconj} implies Conjecture~\ref{weakconj} (which can be seen simply by applying the former to $A_1=\cdots=A_k=A$, where $A$ is a fixed compact set),
Conjecture~\ref{strongconj} is particularly interesting because of its close connections to
an important inequality in Geometry, namely the Brunn-Minkowski inequality,
and a fundamental inequality in Information Theory, namely the entropy power inequality.
Since the conjectures in \cite{BMW11} were largely motivated by these connections,
we now briefly explain them.

The Brunn-Minkowski inequality (or strictly speaking, the Brunn-Minkowski-Lyusternik inequality)
states that for all compact sets $A,B$ in $\R^n$,
\begin{eqnarray}\label{BMI}
\vol_n(A+B)^{1/n} \geq \vol_n(A)^{1/n} + \vol_n(B)^{1/n} .
\end{eqnarray}
It is, of course, a cornerstone of Convex Geometry, and has beautiful relations to many areas
of Mathematics (see, e.g., \cite{Gar02, Sch14:book}).
The case $k=2$ of Conjecture~\ref{strongconj} is exactly the Brunn-Minkowski inequality (\ref{BMI}). Whereas Conjecture~\ref{strongconj} yields the monotonicity described in Conjecture~\ref{weakconj},
the Brunn-Minkowski inequality only allows one to deduce that the subsequence 
$\{\vol_n(A(2^k))\}_{k \in \N}$ is non-decreasing
(one may also deduce this fact from the trivial inclusion $A \subset \frac{A+A}{2}$). 

The entropy power inequality states that for all independent random vectors $X, Y$ in $\R^n$,
\begin{eqnarray}\label{EPI}
N(X+Y) \geq N(X) + N(Y),
\end{eqnarray}
where
$$N(X) = \frac{1}{2\pi e} e^{\frac{2h(X)}{n}}$$
denotes the entropy power of $X$. Let us recall that the entropy of a random vector $X$ with density function $f_X$ 
(with respect to Lebesgue measure $dx$) is $h(X) = -\int f_X(x) \log f_X(x) dx$ if the integral exists and $- \infty$ otherwise (see, e.g., \cite{CT91:book}). As a consequence, one may deduce that for independent and identically distributed random vectors $X_i$, $i \geq 0$, the sequence
$$ \left\{ N \left( \frac{X_1 + \cdots + X_{2^k}}{\sqrt{2^k}} \right) \right\}_{k \in \N} $$
is non-decreasing. 
S.~Artstein, Ball, Barthe and Naor \cite{ABBN04:1}  
generalized the entropy power inequality (\ref{EPI}) by proving that for any independent random vectors $X_1, \dots, X_k$,
\begin{eqnarray}\label{newEPI}
N\left(\sum_{i=1}^k X_i\right) \geq \frac{1}{k-1} \sum_{i=1}^k N\left(\sum_{j \in [k] \setminus \{i\}} X_j\right).
\end{eqnarray}
In particular, if all $X_i$ in the above inequality are identically distributed, then one may deduce that the sequence
$$ \left\{ N \left( \frac{X_1 + \cdots + X_{k}}{\sqrt{k}} \right) \right\}_{k \geq 1} $$
is non-decreasing. This fact is usually referred to as ``the monotonicity of entropy in the Central Limit Theorem'',
since the sequence of entropies of these normalized sums converges to that of a Gaussian distribution as shown earlier
by Barron \cite{Bar86}.
Later, simpler proofs of the inequality \eqref{newEPI} were given by \cite{MB06:isit, TV06}; more general inequalities
were developed in \cite{MB07, Shl07, MG17}.

There is a formal resemblance between inequalities (\ref{EPI}) and (\ref{BMI}) that was noticed in a pioneering work of Costa and Cover \cite{CC84} and later explained by Dembo, Cover and Thomas \cite{DCT91} (see also \cite{SV00, WM14} 
for other aspects of this connection). In the last decade, several further developments have been made that link 
Information Theory to the Brunn-Minkowski theory, including entropy analogues of the Blaschke-Santal\'o inequality \cite{LYZ04},
the reverse Brunn-Minkowski inequality \cite{BM11:cras, BM12:jfa}, the Rogers-Shephard inequality \cite{BM13:goetze, MK18} and the Busemann inequality \cite{BNT16}.
Indeed, volume inequalities and entropy inequalities (and also certain small ball inequalities \cite{MMX17:1}) can be unified using
the framework of R\'enyi entropies; this framework and the relevant literature is surveyed in \cite{MMX17:0}. 
On the other hand, natural analogues in the Brunn-Minkowski theory of Fisher information inequalities 
hold sometimes but not always  \cite{FGM03, AFO14, FM14}.
In particular, it is now well understood that the functional $A\mapsto \vol_n(A)^{1/n}$ in the geometry of compact subsets of $\R^n$, and the functional $f_X\mapsto N(X)$ in probability are analogous to each other in many (but not all) ways. Thus, for example, the monotonicity property desired in Conjecture~\ref{weakconj}
is in a sense analogous to the monotonicity property in the Central Limit Theorem implied by inequality \eqref{newEPI},
and Conjecture \ref{strongconj} from \cite{BMW11} generalizes the Brunn-Minkowski inequality (\ref{BMI})
exactly as inequality (\ref{newEPI}) generalizes the entropy power inequality (\ref{EPI}).

The starting point of this work was the observation that although Conjecture \ref{strongconj} holds for
certain special classes of sets (namely, one dimensional compact sets, convex sets and their Cartesian product, as shown in subsection \ref{sec:1d}),
both Conjecture~\ref{weakconj} and Conjecture~\ref{strongconj} fail to hold in general even for moderately high dimension 
(Theorem~\ref{thm:counter} constructs a counterexample in dimension 12). These results,
which consider the question of the monotonicity of $\Delta(A(k))$ are stated and proved 
in Section~\ref{sec:Delta}. We also discuss there the question of when one has
convergence of $\Delta(A(k))$ to 0, and at what rate, drawing on the work of the
\cite{EG69} (which seems not to be well known in the contemporary literature on convexity). 

Section~\ref{sec:vol} is devoted to developing some new volume inequalities for Minkowski sums.
In particular, we observe in Theorem~\ref{thm:fsa} that if the exponents of $1/n$ in Conjecture~\ref{strongconj} are removed,
then the modified inequality is true for general compact sets
(though unfortunately one can no longer directly relate this to 
a law of large numbers for sets). Furthermore, in the case of convex sets, Theorem~\ref{thm:smod}
proves an even stronger fact, namely that the volume of the Minkowski sum of 
convex sets is supermodular. Various other facts surrounding these observations are also 
discussed in Section~\ref{sec:vol}.

Even though the conjecture about $A(k)$ becoming progressively more convex in the sense
of $\Delta$ is false thanks to Theorem~\ref{thm:counter}, one can ask the same question when we measure the extent of non-convexity
using functionals other than $\Delta$. In Section~\ref{sec:meas}, we survey the existing literature
on measures of non-convexity of sets, also making some possibly new observations about
these various measures and the relations between them. The functionals we consider
include a non-convexity index $c(A)$ introduced by Schneider \cite{Sch75}, the notion of
inner radius $r(A)$ introduced by Starr \cite{Sta69} (and studied in an equivalent form
as the effective standard deviation $v(A)$ by Cassels \cite{Cas75}, though the equivalence was only understood later by Wegmann \cite{Weg80}),
and the Hausdorff distance $d(A)$ to the convex hull, which we already introduced when
describing the Shapley-Folkman-Starr theorem. We also consider the generalized Hausdorff distance $d^{(K)}(A)$
corresponding to using a non-Euclidean norm whose unit ball is the convex body $K$.
The rest of the paper is devoted to
the examination of whether $A(k)$ becomes progressively more convex as $k$ increases,
when measured through these other functionals.

In Section~\ref{sec:c}, we develop the main positive result of this paper, Theorem~\ref{thm:c-quant-mono},
which shows that $c(A(k))$ is monotonically (strictly) decreasing in $k$, unless
$A(k)$ is already convex. Various other properties of Schneider's non-convexity index
and its behavior for Minkowski sums are also established here, including the optimal
$O(1/k)$ convergence rate for $c(A(k))$. We remark that even the question of convergence of $c(A(k))$
to 0 does not seem to have been explored in the literature.

Section~\ref{sec:r} considers the behavior of $v(A(k))$ (or equivalently $r(A(k))$). 
For this sequence, we show that monotonicity holds in dimensions 1 and 2, 
and in general dimension, monotonicity holds eventually (in particular, once $k$ exceeds $n$).
The convergence rate of $r(A(k))$ to 0 was already established
in Starr's original paper \cite{Sta69}; we review the classical proof of Cassels \cite{Cas75}
of this result.

Section~\ref{sec:d} considers the question of monotonicity of $d(A(k))$,
as well as its generalizations $d^{(K)}(A(k))$ when we consider $\R^n$ equipped with norms
other than the Euclidean norm (indeed, following \cite{BG81}, we even consider
so-called ``nonsymmetric norms''). Again here, we show that
monotonicity holds in dimensions 1 and 2, 
and in general dimension, monotonicity holds eventually (in particular, once $k$ exceeds $n$).
In fact, more general inequalities are proved that hold for Minkowski sums of different sets. 
The convergence rate of $d(A(k))$ to 0 was already established
in Starr's original paper \cite{Sta69}; we review both a classical proof,
and also provide a new very simple proof of a rate result
that is suboptimal  in dimension for the Euclidean norm but sharp in both
dimension and number $k$ of summands given that it holds for arbitrary norms. In 2004  Dyn and Farkhi \cite{DF04} conjectured that
$d^2(A+B) \leq d^2(A)+d^2(B).$ We show that this conjecture is false in $\R^n$,  $n \ge 3$.

In Section~\ref{sec:discrep}, we show that a number of results from combinatorial discrepancy theory
can be seen as consequences of the convexifying effect of Minkowski summation.
In particular, we obtain a new bound on the discrepancy for finite-dimensional Banach spaces
in terms of the Banach-Mazur distance of the space from a Euclidean one.

Finally, in Section~\ref{sec:disc}, we make various additional remarks, including on notions
of non-convexity not considered in this paper.

\vspace{.1in}
\noindent
{\bf Acknowledgments.}
Franck Barthe had independently observed that Conjecture~\ref{strongconj} holds in dimension 1, using the same proof, by 2011.
We are indebted to Fedor Nazarov for valuable discussions, in particular for the help in the construction of the 
counterexamples in Theorem~\ref{thm:counter} and Theorem \ref{thm:DF}. We would like to thank Victor Grinberg for many enlightening
discussions on the connections with discrepancy theory, which were an enormous help with putting Section~\ref{sec:discrep}
together. We also thank Franck Barthe, Dario Cordero-Erausquin, Uri Grupel, 
Bo'az Klartag, Joseph Lehec, Paul-Marie Samson, Sreekar Vadlamani, and Murali Vemuri for interesting discussions.
Some of the original results developed in this work were announced in \cite{FMMZ16}; we are grateful to Gilles Pisier
for curating that announcement. Finally we are grateful to the anonymous referee for a careful reading of the
paper and constructive comments.

\section{Measures of non-convexity}
\label{sec:meas}

\subsection{Preliminaries and Definitions}
\label{sec:defns}


Throughout this paper, we only deal with compact sets, since several of the measures
of non-convexity we consider can have rather unpleasant behavior if we do not make this assumption.

The convex hull operation interacts nicely with Minkowski summation.

\begin{lem}\label{lem:conv-sum}
Let $A,B$ be nonempty subsets of $\R^n$. Then,
$$ \conv(A+B)=\conv(A)+\conv(B). $$
\end{lem}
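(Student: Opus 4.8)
The plan is to prove the two inclusions $\conv(A+B) \subseteq \conv(A) + \conv(B)$ and $\conv(A) + \conv(B) \subseteq \conv(A+B)$ separately, both by elementary manipulation of convex combinations; no compactness is needed here, which is consistent with the hypothesis only requiring nonempty sets.

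\textbf{The easy inclusion.} First I would observe that $\conv(A) + \conv(B)$ is a convex set (a Minkowski sum of two convex sets is convex: if $x_1+y_1$ and $x_2+y_2$ lie in it with $x_i \in \conv(A)$, $y_i \in \conv(B)$, then $\lambda(x_1+y_1) + (1-\lambda)(x_2+y_2) = (\lambda x_1 + (1-\lambda)x_2) + (\lambda y_1 + (1-\lambda)y_2)$ again has the required form). Since $A + B \subseteq \conv(A) + \conv(B)$ trivially, and the convex hull is the smallest convex set containing $A+B$, we get $\conv(A+B) \subseteq \conv(A)+\conv(B)$.

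\textbf{The reverse inclusion.} For the other direction I would take a generic element $x + y$ with $x \in \conv(A)$ and $y \in \conv(B)$, write $x = \sum_{i=1}^p \alpha_i a_i$ and $y = \sum_{j=1}^q \beta_j b_j$ with $a_i \in A$, $b_j \in B$, $\alpha_i, \beta_j \geq 0$, $\sum_i \alpha_i = \sum_j \beta_j = 1$, and then use the standard tensorization trick: since $\sum_j \beta_j = 1$ we have $x = \sum_{i,j} \alpha_i \beta_j a_i$, and similarly $y = \sum_{i,j} \alpha_i \beta_j b_j$, so
\[
x + y = \sum_{i=1}^p \sum_{j=1}^q \alpha_i \beta_j (a_i + b_j).
\]
The coefficients $\alpha_i \beta_j$ are nonnegative and sum to $\big(\sum_i \alpha_i\big)\big(\sum_j \beta_j\big) = 1$, and each $a_i + b_j \in A+B$, so $x+y$ is a convex combination of points of $A+B$, hence lies in $\conv(A+B)$. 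This gives $\conv(A) + \conv(B) \subseteq \conv(A+B)$, and combining the two inclusions finishes the proof.

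\textbf{Main obstacle.} There is essentially no obstacle here — the only point requiring the slightest care is the bookkeeping in the tensorization step (making sure the weights $\alpha_i\beta_j$ genuinely sum to $1$, which is where the normalization $\sum_j \beta_j = 1$ gets used twice), and noting that the argument extends immediately by induction to finite Minkowski sums $A_1 + \cdots + A_k$, which is the form actually needed later in the paper.
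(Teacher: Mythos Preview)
Your proof is correct and follows essentially the same approach as the paper: the key step in both is the tensorization $x+y=\sum_{i,j}\alpha_i\beta_j(a_i+b_j)$ for the inclusion $\conv(A)+\conv(B)\subseteq\conv(A+B)$. You actually supply more detail than the paper for the other inclusion (the paper simply declares it ``clear''), but the substance is the same.
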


\begin{proof}
Let $x \in \conv(A) + \conv(B) $. Then $x = \sum_{i=1}^k \lambda_i a_i + \sum_{j=1}^l \mu_j b_j$, where $a_i \in A$, $b_j \in B$, $\lambda_i \geq 0$, $\mu_j \geq 0$ and $\sum_{i=1}^k \lambda_i=1$, $\sum_{j=1}^l \mu_j=1$. Thus, $x = \sum_{i=1}^k \sum_{j=1}^l \lambda_i \mu_j (a_i + b_j)$. Hence $x \in \conv(A+B)$. The other inclusion is clear.
\end{proof}

Lemma \ref{lem:conv-sum} will be used throughout the paper without necessarily referring to it. 
A useful consequence of Lemma  \ref{lem:conv-sum} is the following remark.

\begin{rem}\label{rk:referee}
If $A+\lambda\conv(A)$ is convex then 
\[
A+\lambda\conv(A)=\conv(A+\lambda\conv(A))=\conv(A)+\lambda\conv(A)=(1+\lambda)\conv(A).
\]
\end{rem}

The Shapley-Folkman lemma, which is closely related to the classical Carath\'eodory theorem, is key to our development.

\begin{lem}[Shapley-Folkman]\label{lem:SF}

Let $A_1, \dots, A_k$ be nonempty subsets of $\R^n$, with $k \geq n + 1$. Let $a\in \sum_{i \in [k]} \conv(A_i)$. Then there exists a set $I$ of cardinality at most $n$ such that
$$  a\in \sum_{i \in I} \conv(A_i) + \sum_{i \in [k] \setminus I} A_i. $$

\end{lem}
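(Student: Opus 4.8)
The natural route is to derive this from the classical Carath\'eodory theorem by a lifting argument into $\R^{n+k}$. To begin, write $a=\sum_{i=1}^k x_i$ with $x_i\in\conv(A_i)$, and use Carath\'eodory's theorem to express each $x_i$ as a \emph{finite} convex combination $x_i=\sum_{j}\lambda_{ij}a_{ij}$ with $a_{ij}\in A_i$, $\lambda_{ij}>0$, and $\sum_j\lambda_{ij}=1$; finiteness of these representations is what makes the minimality argument below legitimate. Next, to each pair $(i,j)$ associate the vector $v_{ij}=(a_{ij},e_i)\in\R^n\times\R^k$, where $e_1,\dots,e_k$ denotes the standard basis of $\R^k$. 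Summing, $\sum_{i,j}\lambda_{ij}v_{ij}=(a,\mathbf{1})$ with $\mathbf{1}=(1,\dots,1)\in\R^k$; since $\sum_{i,j}\lambda_{ij}=k$, rescaling by $1/k$ shows that $\frac1k(a,\mathbf{1})$ lies in $\conv\{v_{ij}\}$. The crucial observation is that every $v_{ij}$, as well as $\frac1k(a,\mathbf{1})$, lies in the affine hyperplane $H=\{(y,t)\in\R^n\times\R^k:\ t_1+\cdots+t_k=1\}$, which has dimension $n+k-1$.

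Now pass to a subset $T$ of pairs that is minimal (for inclusion) subject to $\frac1k(a,\mathbf{1})$ still being a convex combination $\sum_{(i,j)\in T}\mu_{ij}v_{ij}$ with all $\mu_{ij}>0$. A standard perturbation argument shows that the vectors $\{v_{ij}:(i,j)\in T\}$ are then affinely independent, and since they all lie in the $(n+k-1)$-dimensional affine subspace $H$, this gives $|T|\le n+k$. On the other hand, reading off the last $k$ coordinates of $\frac1k(a,\mathbf{1})=\sum_{(i,j)\in T}\mu_{ij}v_{ij}$ yields $\sum_{j:(i,j)\in T}\mu_{ij}=1/k>0$ for every $i\in[k]$, so each index $i$ occurs in at least one pair of $T$. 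Writing $d_i$ for the number of pairs in $T$ with first coordinate $i$, we get $\sum_{i=1}^k(d_i-1)=|T|-k\le n$, hence the set $I:=\{i\in[k]:d_i\ge 2\}$ satisfies $|I|\le n$.

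It remains to check that $I$ does the job. Put $x_i:=k\sum_{j:(i,j)\in T}\mu_{ij}a_{ij}$; the coefficients $k\mu_{ij}$ are nonnegative and sum (over $j$) to $1$, so $x_i\in\conv(A_i)$, and reading off the first $n$ coordinates gives $\sum_{i=1}^k x_i=k\cdot\frac1k a=a$. For $i\notin I$ there is a unique $j$ with $(i,j)\in T$, which forces $\mu_{ij}=1/k$ and hence $x_i=a_{ij}\in A_i$. Therefore $a=\sum_{i\in I}x_i+\sum_{i\in[k]\setminus I}x_i\in\sum_{i\in I}\conv(A_i)+\sum_{i\in[k]\setminus I}A_i$ with $|I|\le n$, as claimed. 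The only genuinely delicate point is the dimension bookkeeping: a naive application of Carath\'eodory in $\R^{n+k}$ gives merely $|T|\le n+k+1$ and thus $|I|\le n+1$, and obtaining the sharp bound $|I|\le n$ hinges on noticing that the lifted points are confined to the hyperplane $H$, which removes exactly one unit.
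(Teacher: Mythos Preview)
Your proof is correct and follows essentially the same route as the paper: both lift the problem to $\R^{n+k}$ via the vectors $(a_{ij},e_i)$ and then invoke a Carath\'eodory-type theorem to get at most $n+k$ terms, after which the counting argument (each $i$ must occur at least once, so at most $n$ indices can occur twice or more) is identical. The only cosmetic difference is that the paper applies the \emph{conic} Carath\'eodory theorem directly in $\R^{n+k}$ to get $n+k$ generators, whereas you rescale to a convex combination and apply the \emph{affine} Carath\'eodory theorem inside the hyperplane $H$ of dimension $n+k-1$; both devices save exactly the one dimension needed to go from $|I|\le n+1$ to $|I|\le n$.
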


\begin{proof}
We present below a proof taken from Proposition 5.7.1 of \cite{Ber09:book}.
Let $a \in \sum_{i \in [k]} \conv(A_i)$. Then
$$ a = \sum_{i \in [k]} a_i =  \sum_{i \in [k]}\sum_{j=1}^{t_i} \lambda_{ij} a_{ij}, $$
where $\lambda_{ij} \geq 0$, $\sum_{j=1}^{t_i} \lambda_{ij} = 1$, and $a_{ij} \in A_i$. Let us consider the following vectors of $\R^{n+k}$,
\begin{eqnarray*}
z & = & (a, 1, \cdots, 1), \\ z_{1j} & = & (a_{1j}, 1, 0, \cdots, 0), \quad j \in [t_1], \\ & \vdots & \\ z_{kj} & = & (a_{kj}, 0, \cdots, 0, 1), \quad j \in [t_k].
\end{eqnarray*}
Notice that $z = \sum_{i=1}^k \sum_{j=1}^{t_i} \lambda_{ij} z_{ij}$. Using Carath\'eodory's theorem in the positive cone generated by $z_{ij}$ in $\R^{n+k}$, one has
$$ z = \sum_{i=1}^k \sum_{j=1}^{t_i} \mu_{ij} z_{ij}, $$
for some nonnegative scalars $\mu_{ij}$ where at most $n+k$ of them are non zero. This implies that $a = \sum_{i=1}^k \sum_{j=1}^{t_i} \mu_{ij} a_{ij}$ and that $\sum_{j=1}^{t_i} \mu_{ij} = 1$, for all $i \in [k]$. Thus for each $i \in [k]$, there exists $j_i \in [t_i]$ such that $\mu_{ij_i} > 0$. But at most $n+k$ scalars $\mu_{ij}$ are positive. Hence there are at most $n$ additional $\mu_{ij}$ that are positive. One deduces that there are at least $k-n$ indices $i$ such that $\mu_{i\ell_i}=1$ for some $\ell_i \in [t_i]$, and thus $\mu_{ij} = 0$ for $j \neq \ell_i$. For these indices, one has $a_i \in A_i$. The other inclusion is clear.
\end{proof}

The Shapley-Folkman lemma may alternatively be written as the statement that, for $k \geq n + 1$,
\be
\conv(\sum_{i \in [k]} A_i) = \bigcup_{I \subset [k]: |I| \leq n} \bigg[ \sum_{i \in I} \conv(A_i) + \sum_{i \in [k] \setminus I} A_i \bigg],
\ee
where $|I|$ denotes the cardinality of $I$.
When all the sets involved are identical, and $k >n$, this reduces to the identity
\be
k\,\conv(A) = n\, \conv(A) + (k- n) \,A(k-n).
\ee

It should be noted that the Shapley-Folkman lemma is in the center of a rich vein of investigation in convex analysis
and its applications. As explained by Z.~Artstein \cite{Art80}, It may be seen as a discrete manifestation of a key lemma about extreme points that is related
to a number of ``bang-bang'' type results. It also plays an important role in the theory of vector-valued measures;
for example, it can be used as an ingredient in the proof of Lyapunov's theorem on the range of vector measures (see \cite{KR13},  \cite{DU77:book}
and references therein).

For a compact set $A$ in $\R^n$, denote by
$$R(A)=\min_{x}\{r>0: A\subset x+rB_2^n\}$$ 
the radius of the smallest ball containing $A$. By Jung's theorem \cite{Jun01}, this parameter is close to the diameter, namely one has 
\ben
\frac{\diam(A)}{2}\le R(A)\le \diam(A)\sqrt{\frac{n}{2(n+1)}}\le\frac{\diam(A)}{\sqrt{2}},
\een
where $\diam(A) = \sup_{x,y \in A} |x-y|$ is the Euclidean diameter of $A$. We also denote by 
\ben
\inr(A)=\max_{x}\{r\ge0: x+rB_2^n\subset A\}
\een
the inradius of $A$, i.e. the radius of a largest Euclidean ball included in $A$. 
There are several ways of measuring non-convexity of a set:
\begin{enumerate}
\item The Hausdorff distance from the convex hull is perhaps the most obvious measure to consider:
\ben
d(A)= d_H(A, \conv(A))= \inf\{r>0: \conv(A)\subset A+rB_2^n\} .
\een
A variant of this is to consider the Hausdorff distance when the ambient metric space is  $\R^n$ equipped with a norm different from the Euclidean norm. If $K$ is the closed unit ball of this norm
(i.e., any symmetric\footnote{We always use ``symmetric'' to mean centrally symmetric, i.e., $x\in K$ if and only if $-x\in K$.}, compact, convex set with nonempty interior), we define
\be\label{eq:dK}
d^{(K)}(A)=  \inf\{r>0: \conv(A)\subset A+rK\} .
\ee
In fact, the quantity \eqref{eq:dK} makes sense for any compact convex set containing 0 in its interior --
then it is sometimes called the Hausdorff distance with respect to a ``nonsymmetric norm''.
\item Another natural measure of non-convexity is the ``volume deficit'':
\ben
\Delta(A)=\vol_n(\conv(A)\setminus A) = \vol_n(\conv(A))- \vol_n(A).
\een
Of course, this notion is interesting only when $\vol_n(\conv(A))\not=0$.
There are many variants of this that one could consider, such as
$\log\vol_n(\conv(A))- \log\vol_n(A)$, or relative versions such as
$\Delta(A)/\vol_n(\conv(A))$ that are automatically bounded.
\item The ``inner radius'' of a compact set was defined by Starr \cite{Sta69} as follows: 
\ben
r(A) = \sup_{x\in\conv(A)} \inf \{R(T): T\subset A, x\in \conv(T) \} .
\een
\item The ``effective standard deviation'' was defined by Cassels \cite{Cas75}. For a random vector $X$ in $\R^n$, let $V(X)$ be the trace of its covariance matrix. Then the effective standard deviation of a compact set $A$ of $\R^n$ is
\ben
v^2(A) = \sup_{x\in\conv(A)} \inf \{ V(X) :  \supp(X)\subset A, |\supp(X)|<\infty, \E X=x \}.
\een
Let us notice the equivalent geometric definition of $v$:
\ben
v^2(A) &=&  \sup_{x\in\conv(A)}\inf\{\sum p_i |a_i-x|^2: x=\sum p_i a_i; p_i >0; \sum p_i=1, a_i \in A  \}\\
&=& \sup_{x\in\conv(A)}\inf\{\sum p_i |a_i|^2-|x|^2: x=\sum p_i a_i; p_i >0; \sum p_i=1, a_i \in A  \}.
\een
\item In analogy with the effective standard deviation, we define the ``effective absolute deviation'' by 
\ben
w(A) &=&  \sup_{x\in\conv(A)}\inf\bigg\{\sum p_i |a_i-x|: \, x=\sum p_i a_i; p_i >0; \sum p_i=1, a_i \in A  \bigg\}\\
&=& \sup_{x\in\conv(A)} \inf \{ \E|X-x| :  \supp(X)\subset A, |\supp(X)|<\infty, \E X=x \}.
\een

\item Another non-convexity measure was defined by Cassels \cite{Cas75} as follows:
\ben
\rho(A) = \sup_{x \in \conv(A)} \inf_{a \in A_x} |x-a|,
\een
where $A_x = \{a \in A : \exists b \in \conv(A), \exists \theta \in (0,1) \mbox{ such that } x = (1-\theta)a + \theta b\}$.
\item The ``non-convexity index'' was defined by Schneider \cite{Sch75} as follows: 
\ben
c(A) = \inf \{ \lam\geq 0: A+\lam\, \conv(A) \text{ is convex}  \}.
\een
\end{enumerate}

\subsection{Basic properties of non-convexity measures}
\label{sec:properties}

All of these functionals are 0 when $A$ is a convex set; this justifies calling them ``measures of non-convexity''. 
In fact, we have the following stronger statement
since we restrict our attention to compact sets.

\begin{lem}\label{lem:meas0}
Let $A$ be a compact set in $\R^n$. Then:
\begin{enumerate}
\item $c(A)=0$ if and only if $A$ is convex.
\item $d(A)=0$ if and only if $A$ is convex.
\item $r(A)=0$ if and only if $A$ is convex.
\item $\rho(A)=0$ if and only if $A$ is convex.
\item $v(A)=0$ if and only if $A$ is convex.
\item $w(A)=0$ if and only if $A$ is convex.
\item Under the additional assumption that $\conv(A)$ has nonempty interior, $\Delta(A)=0$ if and only if $A$ is convex.
\end{enumerate}
\end{lem}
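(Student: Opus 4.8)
The plan is to prove each of the seven equivalences. In every case the direction ``$A$ convex $\Rightarrow$ the index is $0$'' is immediate from the definitions: if $A=\conv(A)$ then one may take $\lambda=0$ in $c$, take $r=0$ in $d$, take the degenerate one-point representation $x=x$ for $r$, $v$, $w$, and $\rho$, and note $\conv(A)\setminus A=\emptyset$ for $\Delta$. So the content is entirely in the converse directions, and I would organize these from easiest to most delicate.

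First I would dispatch $d$, $\Delta$, $v$, $w$: if $d(A)=0$ then $\conv(A)\subset A+rB_2^n$ for every $r>0$, hence $\conv(A)\subset \bigcap_{r>0}(A+rB_2^n)=\bar A=A$ since $A$ is compact, giving $A=\conv(A)$. For $\Delta$, under the nonempty-interior hypothesis, $\vol_n(\conv(A)\setminus A)=0$ together with compactness of $A$ (so $A$ is closed) forces $\conv(A)\setminus A$ to be an open-in-$\conv(A)$ set of measure zero inside a full-dimensional convex body, hence empty: indeed if $x\in\conv(A)\setminus A$ there is a ball around $x$ missing $A$, and intersecting it with $\interior(\conv(A))$ gives positive volume in the deficit, a contradiction; the boundary case $x\in\partial\conv(A)$ is handled by noting $A$ must contain a dense (hence, being closed, all) subset of $\conv(A)$. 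For $v$ and $w$: if $v(A)=0$ then for each $x\in\conv(A)$ and each $\eps>0$ there is a finitely supported $X$ with $\E X=x$ and $\sum p_i|a_i-x|^2<\eps$; letting $\eps\to 0$ forces $x$ to be a limit of points $a_i\in A$, so $x\in\bar A=A$. The same argument with $|a_i-x|$ in place of $|a_i-x|^2$ handles $w$. Thus $d(A)=\Delta(A)=v(A)=w(A)=0$ each force $\conv(A)\subset A$.

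Next, $\rho$: if $\rho(A)=0$, then for every $x\in\conv(A)$ and every $\eps>0$ there is $a\in A_x$ with $|x-a|<\eps$. I would argue that this again forces $x\in A$. Fix $x\in\conv(A)$; if $x\notin A$, then since $A$ is closed, $\dist(x,A)=:\delta>0$, and any $a\in A_x\subset A$ satisfies $|x-a|\ge\delta$, contradicting $\rho(A)=0$ unless $A_x=\emptyset$; but $A_x$ is nonempty for $x$ in the relative interior of $\conv(A)$ (any $x$ strictly inside a segment $[a,b]$ with $a\in A$, $b\in\conv(A)$), and for $x$ on the boundary one reduces to a face of $\conv(A)$, which is the convex hull of $A\cap(\text{that face})$ by an extreme-point argument. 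Hence $x\in A$, so $\conv(A)=A$.

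Finally the two remaining cases, $c$ and $r$, which I expect to be the main obstacle. For $c(A)=0$: by definition the infimum is $0$, so there is a sequence $\lambda_j\downarrow 0$ with $A+\lambda_j\conv(A)$ convex. By Remark~\ref{rk:referee}, $A+\lambda_j\conv(A)=(1+\lambda_j)\conv(A)$, i.e. $A=(1+\lambda_j)\conv(A)-\lambda_j\conv(A)$; one must extract from this that $A=\conv(A)$. The cleanest route: convexity of $A+\lambda\conv(A)$ for a \emph{single} $\lambda>0$ actually already gives $A+\lambda\conv(A)=(1+\lambda)\conv(A)$, and then, since Minkowski addition by a convex body is ``cancellative'' on compact sets (if $C+K=D+K$ with $K$ convex compact and $C,D$ compact then $C=D$, via support functions on $\conv$ combined with the convexity one gets), comparing $A+\lambda\conv(A)=(1+\lambda)\conv(A)=\conv(A)+\lambda\conv(A)$ yields $A=\conv(A)$; so in fact $c(A)=0$ iff the infimum is attained at $0$ iff $A$ is convex, and one does not even need a limiting argument, though closedness of the feasible set of $\lambda$ also works. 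For $r(A)=0$: if $r(A)=0$ then for each $x\in\conv(A)$ and each $\eps>0$ there is a finite $T\subset A$ with $x\in\conv(T)$ and $R(T)<\eps$, so $T$ lies in a ball of radius $<\eps$, hence $x\in\conv(T)$ lies within $\eps$ of every point of $T\subset A$, so $\dist(x,A)<\eps$; letting $\eps\to 0$ and using that $A$ is closed gives $x\in A$, whence $\conv(A)=A$. The genuine subtlety in both $c$ and $r$ is making sure the sup/inf over $x\in\conv(A)$ being $0$ is correctly unpacked (it must hold for \emph{all} $x$), and in the $c$ case correctly invoking the cancellation property of Minkowski sums; I would state that cancellation lemma explicitly and prove it in one line via support functions.
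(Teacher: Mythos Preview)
Your direct arguments for $d$, $\Delta$, $v$, $w$, $\rho$, and $r$ are all correct and in fact more self-contained than the paper's proof, which for $r$, $\rho$, $v$, $w$ simply forward-references Wegmann's Theorem~\ref{thm:weg} (established later in Section~\ref{sec:uncond}) giving $d(A)\le\rho(A)=w(A)=v(A)=r(A)$ and then falls back on the case of $d$. Your approach avoids this forward dependence, at the cost of a little repetition. (Two minor quibbles: for $\Delta$ the separate boundary case is unnecessary, since $\conv(A)=\overline{\interior(\conv(A))}$ for a convex body, so any ball about $x\in\partial\conv(A)$ already meets $\interior(\conv(A))$; and in the $r$ argument the bound should read $\dist(x,A)<2\eps$ rather than $\eps$, which is immaterial.)

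However, your ``cleanest route'' for $c$ contains a genuine error. The cancellation claim ``if $C+K=D+K$ with $K$ convex compact and $C,D$ compact then $C=D$'' is false without convexity of $C$ and $D$: take $C=\{0,1\}$, $D=[0,1]$, $K=[0,1]$ in $\R$, so that $C+K=[0,2]=D+K$ while $C\ne D$. Support functions only see convex hulls, so they cannot rescue this. More to the point, convexity of $A+\lambda\conv(A)$ for a \emph{single} $\lambda>0$ does not force $A$ to be convex: for the same $A=\{0,1\}$ one has $A+\conv(A)=[0,2]$ convex while $A$ is not (indeed $c(A)=1$ here). What survives is precisely your fallback: the feasible set $\{\lambda\ge0:A+\lambda\conv(A)\text{ is convex}\}$ is closed, since $A+\lambda_j\conv(A)\to A+\lambda\conv(A)$ in Hausdorff metric and a Hausdorff limit of compact convex sets is convex; hence $c(A)=0$ forces $0$ to lie in this set, i.e.\ $A$ is convex. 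This limiting argument is exactly what the paper does, attributing the observation to Schneider.
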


\begin{proof}  Directly from the definition of $c(A)$ we get that $c(A)=0$ if $A$ is convex  (just select $\lambda=0$). Now assume that $c(A)=0$, then  $\{A+\frac{1}{m} \conv(A)\}_{m=1}^\infty$
is a sequence of compact convex sets, converging in Hausdorff metric to $A$, thus $A$ must be convex. Notice that this observation is due to Schneider \cite{Sch75}.
 
 The assertion about $d(A)$ follows immediately from the definition and the limiting argument similar to the above one.
 
 If $A$ is convex then, clearly $r(A)=0$, indeed we can always take $T=(rB_2^n+x) \cap A \not =\emptyset$  with $r \to 0$. Next, if $r(A)=0$, then using Theorem \ref{thm:weg} below we have $d(A)\le r(A)=0$ thus $d(A)=0$ and therefore $A$ is convex.
 
 The statements about $\rho(A)$, $v(A)$ and $w(A)$ can be deduced from the definitions, but they will also follow immediately from the Theorem \ref{thm:weg} below.
 
 Assume that $A$ is convex, then $\conv(A)=A$ and $\Delta(A)=0$. Next, assume that $\Delta(A)=0$. Assume, towards a contradiction, that $\conv(A) \not =A$. Then there exists $x \in \conv(A)$ and $r>0$ such that $(x + r B_2^n) \cap A =\emptyset$. Since $\conv(A)$ is convex and has nonempty interior, there exists a ball $y+sB_2^n\subset\conv(A)$ and one has 
 $$
\Delta(A)\ge \vol_n(\conv(A) \cap (x + r B_2^n))\ge \vol_n(\conv(x, y+sB_2^n) \cap (x + r B_2^n)) >0,
 $$ 
 which contradicts $\Delta(A)=0$.
\end{proof}

The following lemmata capture some basic properties of all these measures of non-convexity
(note that we need not separately discuss $v$, $w$ and $\rho$ henceforth owing to Theorem~\ref{thm:weg}).
The first lemma concerns the behavior of these functionals on scaling of the argument set.

\begin{lem}\label{lem:scaling}
Let $A$ be a compact subset of $\R^n$, $x \in \R^n$, and $\lam\in (0,\infty)$.
\begin{enumerate}
\item $c(\lam A + x)= c(A)$. In fact, $c$ is affine-invariant.
\item $d(\lam A + x)= \lam  d(A)$.
\item $r(\lam A + x)=\lam  r(A)$.
\item $\Delta(\lam A + x)= \lam^n \Delta(A)$. In fact, if $T(x)=Mx+b$, where
$M$ is an invertible linear transformation and $b \in \R^n$, then $\Delta(T(A))=|\det(M)| \Delta(A)$.
\end{enumerate}
\end{lem}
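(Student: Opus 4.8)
The plan is to verify each of the four scaling identities directly from the definitions, handling the affine/linear dependence by tracking how convex hulls and the relevant extremal quantities transform. The key elementary fact, used throughout, is that the convex hull commutes with affine maps: $\conv(\lambda A + x) = \lambda\,\conv(A) + x$, and more generally $\conv(T(A)) = T(\conv(A))$ for any affine map $T$; this is immediate since affine maps preserve convex combinations. I would state this at the outset so that each item reduces to bookkeeping.

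For item (1), the affine invariance of $c$, I would argue as follows. Fix an invertible affine map $T(y) = My + b$. Then for any $\lambda \ge 0$, using $\conv(T(A)) = T(\conv(A))$ and linearity of Minkowski sum under the linear part, one computes $T(A) + \lambda\,\conv(T(A)) = (M A + b) + \lambda(M\,\conv(A) + b) = M\big(A + \lambda\,\conv(A)\big) + (1+\lambda)b$, which is an affine image of $A + \lambda\,\conv(A)$. Since $M$ is invertible, $T(A) + \lambda\,\conv(T(A))$ is convex if and only if $A + \lambda\,\conv(A)$ is convex. Taking the infimum over such $\lambda$ gives $c(T(A)) = c(A)$; the special case $T(y) = \lambda y + x$ with $\lambda > 0$ gives the stated identity. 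For item (2), for any $r > 0$ one has $\conv(\lambda A + x) \subset (\lambda A + x) + rB_2^n$ iff $\lambda\,\conv(A) + x \subset \lambda A + x + rB_2^n$ iff $\conv(A) \subset A + (r/\lambda)B_2^n$; substituting $r' = r/\lambda$ and taking infima shows $d(\lambda A + x) = \lambda\, d(A)$. Item (3) is similar but requires noting how $R(\cdot)$ scales: $R(\lambda T + x) = \lambda R(T)$ for a translate-and-scale, and that $T \subset A$, $y \in \conv(T)$ transform bijectively under $y \mapsto \lambda y + x$; combining these in the sup-inf definition of $r$ yields the claimed homogeneity.

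For item (4), the behavior of $\Delta$, I would first do the general linear statement and deduce the special case. Write $T(y) = My + b$ with $M$ invertible. Then $\conv(T(A)) = T(\conv(A))$, and since Lebesgue measure transforms under $T$ by the factor $|\det M|$, we get $\vol_n(\conv(T(A))) = |\det M|\,\vol_n(\conv(A))$ and $\vol_n(T(A)) = |\det M|\,\vol_n(A)$ (measurability of $\conv(A)$ and $A$ is fine since both are compact); subtracting gives $\Delta(T(A)) = |\det M|\,\Delta(A)$. The special case $T(y) = \lambda y + x$ has $M = \lambda I_n$, so $|\det M| = \lambda^n$, giving $\Delta(\lambda A + x) = \lambda^n \Delta(A)$.

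I do not anticipate a genuine obstacle here: all four statements are routine once the affine-covariance of $\conv$ and the scaling of $\vol_n$, $R(\cdot)$, and the Euclidean ball are in hand. The only mild subtlety is making sure that in items (2) and (3) one substitutes the scaling parameter correctly inside the infima/suprema (so that one gets a genuine equality, not just an inequality), and in item (1) that invertibility of $M$ is used to transfer convexity in both directions; I would be slightly careful to spell these substitutions out rather than assert them. A secondary point worth a sentence is that $\Delta$ is only meaningful, and the identity only of interest, when $\vol_n(\conv(A)) > 0$, but the displayed equalities hold trivially (both sides zero, or both sides scaled by the same factor) in the degenerate case as well.
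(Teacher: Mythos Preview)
Your proposal is correct and follows essentially the same approach as the paper. For item (1) you reproduce the paper's argument almost verbatim (noting $\conv(TA)=T\conv(A)$ and computing $TA+\lambda\conv(TA)=M(A+\lambda\conv(A))+(1+\lambda)b$), while for items (2)--(4) you spell out details that the paper simply dismisses with ``it is easy to see from the definitions that $d$, $r$ and $\Delta$ are translation-invariant, and that $d$ and $r$ are $1$-homogeneous and $\Delta$ is $n$-homogeneous with respect to dilation.''
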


\begin{proof}
To see that $c$ is affine-invariant, we first notice that $\conv(TA)=T\conv(A)$. Moreover writing $Tx=Mx+b$, where
$M$ is an invertible linear transformation and $b \in \R^n$, we get that 
$$
TA+ \lam \conv(TA)=M(A+\lam \conv(A))+(1+\lam)b,
$$
which is convex if and only if $A+\lam \conv(A)$ is convex. 

It is easy to see from the definitions that $d$, $r$ and $\Delta$ are translation-invariant, and that $d$ and $r$ are 1-homogeneous and $\Delta$ is $n$-homogeneous with respect to dilation.
\end{proof}

The next lemma concerns the monotonicity of non-convexity measures with respect to the inclusion relation.

\begin{lem}\label{lem:dincl}
Let $A, B$ be compact sets in $\R^n$ such that $A\subset B$ and  $\conv(A)=\conv(B)$. Then:
\begin{enumerate}
\item $c(A)\geq c(B)$.
\item $d(A)\geq d(B)$.
\item $r(A)\geq r(B)$.
\item $\Delta(A)\geq \Delta(B)$.
\end{enumerate}
\end{lem}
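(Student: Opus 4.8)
The plan is to prove each of the four monotonicity statements directly from the definitions, exploiting the hypothesis $\conv(A)=\conv(B)$ together with the inclusion $A\subset B$. The unifying principle is that each non-convexity measure is built by taking a supremum over points of the common convex hull of an infimum over structures (balls, subsets, or probability measures) constrained to lie inside $A$ (respectively $B$); enlarging the set from $A$ to $B$ relaxes the inner constraint, so the inner infimum can only decrease, while the outer supremum is over the same index set $\conv(A)=\conv(B)$. Hence each quantity is monotone nonincreasing in the set. I would carry out the four cases in the order $d$, $r$, $\Delta$, $c$, since $d$ and $r$ are the most transparent instances of this principle and $c$ requires a slightly different argument.

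For $d$: since $\conv(B)\subset B+rB_2^n$ whenever $\conv(A)\subset A+rB_2^n$ (because $A\subset B$ and $\conv(A)=\conv(B)$), every admissible $r$ for $A$ is admissible for $B$, so $d(B)\le d(A)$. For $r(A)$: in the defining expression $r(A)=\sup_{x\in\conv(A)}\inf\{R(T):T\subset A,\ x\in\conv(T)\}$, the outer supremum ranges over $\conv(A)=\conv(B)$, and for each fixed $x$ any competitor $T\subset A$ with $x\in\conv(T)$ is also a competitor $T\subset B$ with $x\in\conv(T)$; therefore the inner infimum over $B$ is at most the inner infimum over $A$, giving $r(B)\le r(A)$. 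For $\Delta$: this one is even simpler and does not even use $\conv(A)=\conv(B)$ in full strength for one direction — from $A\subset B$ we get $\vol_n(A)\le\vol_n(B)$, and from $\conv(A)=\conv(B)$ we get $\vol_n(\conv(A))=\vol_n(\conv(B))$, so $\Delta(A)=\vol_n(\conv(A))-\vol_n(A)\ge\vol_n(\conv(B))-\vol_n(B)=\Delta(B)$.

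For $c$: here the argument is that if $\lambda\ge 0$ is such that $A+\lambda\conv(A)$ is convex, I want to conclude that $B+\lambda\conv(B)$ is convex. By Remark~\ref{rk:referee}, convexity of $A+\lambda\conv(A)$ forces $A+\lambda\conv(A)=(1+\lambda)\conv(A)$. Now using $A\subset B\subset\conv(B)=\conv(A)$ and $\conv(B)=\conv(A)$, we have
\[
(1+\lambda)\conv(A)=A+\lambda\conv(A)\subset B+\lambda\conv(B)\subset\conv(B)+\lambda\conv(B)=(1+\lambda)\conv(B)=(1+\lambda)\conv(A),
\]
so all these sets coincide and $B+\lambda\conv(B)=(1+\lambda)\conv(B)$ is convex. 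Hence every $\lambda$ admissible in the definition of $c(A)$ is admissible for $c(B)$, so $c(B)\le c(A)$. The main (and only) subtlety I anticipate is in the $c$ case: one must be careful that we only know convexity of $A+\lambda\conv(A)$ at the specific admissible values of $\lambda$, not for all $\lambda$, so the proof must go through Remark~\ref{rk:referee} to pin down the exact shape $(1+\lambda)\conv(A)$ rather than attempting a limiting or continuity argument; everything else is a routine unwinding of definitions.
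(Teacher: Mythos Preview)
Your proof is correct and follows essentially the same approach as the paper. The paper argues the four parts in the order $c$, $d$, $r$, $\Delta$, but the content is the same: for $c$ it uses the chain of inclusions $(1+\lambda)\conv(B)\supset B+\lambda\conv(B)\supset A+\lambda\conv(A)=(1+\lambda)\conv(B)$ via Remark~\ref{rk:referee} (applied at $\lambda=c(A)$ rather than at a generic admissible $\lambda$, but this is immaterial), for $d$ it uses the pointwise formulation $d(A)=\sup_{x\in\conv(A)}d(x,A)$ rather than your covering formulation, and for $r$ and $\Delta$ the arguments are identical to yours.
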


\begin{proof}
For the first part, observe that if $\lam=c(A)$,
\ben
(1+\lam)\conv(B) \supset B+\lam\conv(B) = B+\lam\conv(A) \supset A+\lam\conv(A) = (1+\lam)\conv(B),
\een
where in the last equation we used that $A + \lambda \conv(A)$ is convex and Remark \ref{rk:referee}. 
Hence all relations in the above display must be equalities, and $B+\lam\conv(B)$ must be convex, which means $c(A) = \lambda \geq c(B)$.

For the second part, observe that 
\ben
d(A)=\sup_{x\in \conv(A)}d(x,A)=\sup_{x\in \conv(B)}d(x,A)\ge\sup_{x\in \conv(B)}d(x,B)=d(B).
\een

For the third part, observe that
\ben
\inf \{R(T): T\subset A, x\in \conv(T) \} \geq \inf \{R(T): T\subset B, x\in \conv(T) \}.
\een
Hence $r(A) \geq r(B)$.

For the fourth part, observe that
\ben
\Delta(A) = \vol_n(\conv(B)) - \vol_n(A) \geq \vol_n(\conv(B)) - \vol_n(B) = \Delta(B).
\een
\end{proof}

As a consequence of Lemma \ref{lem:dincl}, we deduce that $A(k)$ is monotone along the 
subsequence of powers of 2, when measured through all these measures of non-convexity.

Finally we discuss topological aspects of these non-convexity functionals,
specifically, whether they have continuity properties with respect to the topology on the 
class of compact sets induced by Hausdorff distance.

\begin{lem}\label{lem:semicont}
Suppose $A_k\rad A$, where all the sets involved are compact subsets of $\R^n$. Then:
\begin{enumerate}
\item $\lim_{k\ra\infty} d(A_k)= d(A)$, i.e., $d$ is continuous.
\item $\liminf_{k\ra\infty} \Delta(A_k)\geq \Delta(A)$, i.e., $\Delta$ is lower semicontinuous.
\item $\liminf_{k\ra\infty}  c(A_k) \geq c(A)$, i.e., $c$ is lower semicontinuous. 
\item $\liminf_{k\ra\infty} r(A_k) \geq r(A)$, i.e., $r$ is lower semicontinuous. 
\end{enumerate}
\end{lem}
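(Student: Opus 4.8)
The plan is to handle each functional separately, exploiting the variational form in which each is defined and the behavior of the convex hull under Hausdorff convergence. The key preliminary fact I would establish (or recall) is that the map $B \mapsto \conv(B)$ is continuous with respect to the Hausdorff metric on compact subsets of $\R^n$: if $A_k \rad A$ then $\conv(A_k) \rad \conv(A)$. This is standard and follows, for instance, by writing points of $\conv(A_k)$ via Carathéodory as convex combinations of at most $n+1$ points of $A_k$ and passing to the limit, together with the reverse approximation. I would then also note the elementary fact that Hausdorff convergence $A_k \rad A$ means: for every $\eps>0$, eventually $A \subset A_k + \eps B_2^n$ and $A_k \subset A + \eps B_2^n$.

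\emph{Continuity of $d$.} Since $d(A) = \inf\{r>0 : \conv(A) \subset A + rB_2^n\}$ is a Hausdorff distance between the two compact sets $\conv(A)$ and $A$ (strictly, $d(A) = \sup_{x \in \conv(A)} \dist(x,A)$, a one-sided Hausdorff distance, but $A \subset \conv(A)$ so this equals the full Hausdorff distance $d_H(A,\conv(A))$), I would use that $d_H$ is itself a metric, hence satisfies the triangle inequality, together with the fact that $\conv(A_k) \rad \conv(A)$. Concretely, $|d(A_k) - d(A)| = |d_H(A_k,\conv(A_k)) - d_H(A,\conv(A))| \le d_H(A_k,A) + d_H(\conv(A_k),\conv(A)) \to 0$. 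This gives full continuity.

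\emph{Lower semicontinuity of $\Delta$, $c$, $r$.} For $\Delta$: $\vol_n$ is upper semicontinuous along Hausdorff-convergent sequences of compact sets (a decreasing-neighborhood argument: $A \subset \bigcap_{\eps} (A_k + \eps B_2^n)$ eventually, and volumes of the $\eps$-neighborhoods of $A$ shrink to $\vol_n(A)$), while $\vol_n \circ \conv$ is continuous since $\conv(A_k)\rad\conv(A)$ and volume is continuous on convex bodies (or identically zero in the degenerate case, where the inequality $\Delta(A) \le 0$ forces $\Delta(A)=0$ trivially if $\conv(A)$ has empty interior — one should check that case separately but it is immediate). Hence $\liminf_k \Delta(A_k) = \liminf_k [\vol_n(\conv(A_k)) - \vol_n(A_k)] \ge \vol_n(\conv(A)) - \limsup_k \vol_n(A_k) \ge \vol_n(\conv(A)) - \vol_n(A) = \Delta(A)$. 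For $c$: suppose $\liminf_k c(A_k) = L$; passing to a subsequence, $c(A_k) \to L$ with $\lambda_k := c(A_k)$, and by definition $A_k + \lambda_k \conv(A_k)$ is convex for each $k$. Since $A_k \rad A$, $\lambda_k \to L$, and $\conv(A_k) \rad \conv(A)$, the sets $A_k + \lambda_k\conv(A_k)$ converge in Hausdorff metric to $A + L\,\conv(A)$; a Hausdorff limit of convex sets is convex, so $A + L\,\conv(A)$ is convex, whence $c(A) \le L$. (One must be slightly careful that $\lambda_k \to L < \infty$: but $c$ is bounded on bounded sets — e.g.\ $c(A) \le $ something controlled by $\diam$ and $\inr$ of $\conv(A)$ via the Shapley-Folkman machinery — so if $\liminf c(A_k) = \infty$ there is nothing to prove, and otherwise we restrict to a bounded subsequence.) For $r$: here I would invoke Theorem \ref{thm:weg} (relating $r$, $v$, $w$, $\rho$, $d$) only if convenient; more directly, one can argue that if $r(A_k) \to L$ along a subsequence, then for each $k$ there is $x_k \in \conv(A_k)$ with $\inf\{R(T): T \subset A_k, x_k \in \conv(T)\}$ close to $r(A_k)$; extract $x_k \to x \in \conv(A)$ (using $\conv(A_k) \rad \conv(A)$), and show that any $T \subset A$ with $x \in \conv(T)$ can be perturbed to a nearby $T_k \subset A_k$ with $x_k \in \conv(T_k)$ and $R(T_k)$ close to $R(T)$, forcing $R(T) \gtrsim L$; taking inf over $T$ and sup over $x$ gives $r(A) \ge L$.

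\textbf{Main obstacle.} The delicate point is the lower semicontinuity of $r$ (part 4): unlike $c$, where convexity of the limit set is a clean closed condition, and unlike $\Delta$, where one only needs semicontinuity of volume, the functional $r$ involves a \emph{sup over $x$ of an inf over subsets $T$}, and the inner infimum ranges over a non-compact family (finite subsets of $A_k$ of unbounded cardinality, though one can restrict to $|T| \le n+1$ by Carathéodory applied to the requirement $x \in \conv(T)$). Showing that near-optimal configurations $(x_k, T_k)$ for $A_k$ converge to an admissible configuration for $A$ — in particular that the constraint $x_k \in \conv(T_k)$ survives the limit and that one cannot do strictly better for $A$ — requires a careful compactness argument on the at-most-$(n+1)$-point sets $T$, and this is where I expect to spend most of the effort; reducing via Theorem \ref{thm:weg} to the (more tractable) semicontinuity of one of the equivalent quantities $v$, $w$, or $\rho$ may well be the cleanest route and is the approach I would try first.
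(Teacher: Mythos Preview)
Your treatment of parts 1--3 is correct and essentially identical to the paper's: continuity of $d$ via the triangle inequality for $d_H$ together with $\conv(A_k)\rad\conv(A)$; lower semicontinuity of $\Delta$ via upper semicontinuity of volume on compact sets and continuity on convex sets; and lower semicontinuity of $c$ by extracting a convergent subsequence of $\lambda_k=c(A_k)$ (bounded, as the paper notes, by Theorem~\ref{thm:sch75}) and using that a Hausdorff limit of convex sets is convex.

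Part 4, however, contains a genuine directional error. Trace your own logic: if the perturbation step worked, you would get $R(T)\gtrsim L$ for every admissible $T$, hence $r_A(x)=\inf_T R(T)\ge L$ for the specific limit point $x$, and then $r(A)\ge r_A(x)\ge L$. But $r(A)\ge L=\liminf r(A_k)$ is \emph{upper} semicontinuity, not lower, and it is false --- in Example~\ref{ex:sch} one has $A_k\rad A$ with $r(A_k)=1/\sqrt{2}$ and $r(A)=0$. The perturbation step is exactly where the argument breaks: in that example $x=(1/2,1/2)\in A$, so $T=\{x\}$ is admissible with $R(T)=0$, yet any $T_k\subset A_k$ with $x_k\in\conv(T_k)$ must contain points on both sides of the dent and has $R(T_k)\approx 1/\sqrt{2}$. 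The paper runs the argument in the opposite direction: for an \emph{arbitrary} $x\in\conv(A)$, choose $x_m\in\conv(A_{k_m})$ with $x_m\to x$, take $T_m\subset A_{k_m}$ with $x_m\in\conv(T_m)$ and $R(T_m)\le r(A_{k_m})$, and use Blaschke selection to extract $T_{m_i}\to T$; then $T\subset A$, $x\in\conv(T)$, and $R(T)\le l$, giving $r_A(x)\le l$ for every $x$ and hence $r(A)\le l$. The witnesses must be pulled \emph{from} the approximating sets and passed to the limit, not pushed the other way.
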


\begin{proof}
%
Let us first observe that for any compact sets $A,B$
\be\label{eq:lip} 
d_H(\conv(A),\conv(B))\le d_H(A,B),
\ee
by applying the convex hull operation to the inclusions
$B \subset A+ d B_2^n$ and $A \subset B+ d B_2^n$,
and invoking Lemma~\ref{lem:conv-sum}. 
Thus $A_k\rad A$ implies $\conv(A_k)\rad \conv(A)$.
\\

1. Observe that by the triangle inequality for the Hausdorff metric, we have the inequality
\ben
d(B) = d_H(B, \conv(B)) 
\leq d_H(B, A) + d_H(A, \conv(A)) + d_H(\conv(A), \conv(B)). 
\een
Using \eqref{eq:lip} one deduces that $d(B)-d(A) \leq 2d_H(B, A)$.
Changing the role of $A$ and $B$, we get
\ben
|d(B)-d(A)| \leq 2d_H(B, A).
\een
This proves the continuity of $d$.

2. Recall that, with respect to the Hausdorff distance, the volume is upper semicontinuous on the class of compact sets  (see, e.g., \cite[Theorem 12.3.6]{SW08:book})
and continuous on the class of compact convex sets  (see, e.g., \cite[Theorem 1.8.20]{Sch14:book}). Thus
\ben
\limsup_{k\ra\infty} \vol_n(A_k) \leq \vol_n(A)
\een
and
\ben
\lim_{k\ra\infty} \vol_n(\conv(A_k)) = \vol_n(\conv(A)) ,
\een
so that subtracting the former from the latter yields the desired semicontinuity of $\Delta$.

3. Observe that by definition,
\ben
A_k+\lam_k \conv(A_k)= (1+\lam_k) \conv(A_k) ,
\een
where $\lam_k=c(A_k)$.
Note that from Theorem \ref{thm:sch75} below due to Schneider \cite{Sch75} one has $\lam_k \in [0, n]$, thus there exists a convergent subsequence $\lambda_{k_n} \to \lam_*$ and 
\ben
A+\lam_* \conv(A)= (1+\lam_*) \conv(A) ,
\een
 Thus $\lam_* \ge c(A)$, which is the desired semicontinuity of $c$.

4. Using $A_k\rad A$ we get that $R(A_k)$ is bounded and thus $r(A_k)$ is bounded and there is a convergent subsequence $r(A_{k_m}) \to l$.
Our goal is to show that $r(A) \le l$. Let $x \in \conv(A)$. Then there exits $x_m \in A_{k_m}$ such that $x_m \to x$. From the definition of $r(A_{k_m})$ we get that there exists $T_m \subset A_{k_m}$ such that $x_m \in \conv(T_m)$ and $R(T_m) \le r(A_{k_m})$. We can select a convergent subsequence  $T_{m_i} \to T$, where $T$ is compact (see \cite[Theorem  1.8.4]{Sch14:book}), then $T\subset A$ and $x \in \conv (T)$ and $R(T_{m_i}) \to R(T)$ therefore $R(T)\le l$. Thus $r(A) \le l$.
\end{proof}

We emphasize that the semicontinuity assertions in Lemma~\ref{lem:semicont} are not
continuity assertions for a reason and even adding the assumption of nestedness of the sets would not help. 

\begin{ex}\label{ex:sch}
Schneider \cite{Sch75}  observed that $c$ is not continuous with respect to the Hausdorff distance, 
even if restricted to the compact sets with nonempty interior. His example consists of taking a triangle in the plane,
and replacing one of its edges by the two segments which join the endpoints of the edge to an interior point (see Figure \ref{fig:sch}). 
More precisely, let $a_k=(\frac{1}{2}-\frac{1}{k},\frac{1}{2}-\frac{1}{k})$, $A_k=\conv((0,0); (1,0); a_k)\cup \conv((0,0); (0,1); a_k)$, and $A= \conv((0,0) ; (0,1); (1,0))=\conv(A_k)$. Then $d_H(A_k,A)\to 0$. But one has $r(A)=c(A)=0$ since $A$ is convex. Moreover one can notice that $c(A_k)=1$. Indeed on one hand $A\subset \frac{A+A_k}{2}$, which implies that $c(A_k)\le1$, on the other hand for every $\lambda<1$ the point $(\frac{1}{2},\frac{1}{2})\in A\setminus\frac{A_k+\lambda A}{1+\lambda}$, thus $c(A_k)=1$. Notice also that $r(A_k)=1/\sqrt{2}$. Indeed $A_k\subset (\frac{1}{2},\frac{1}{2})+\frac{1}{\sqrt{2}}B_2^2$ hence $r(A_k)\le\frac{1}{\sqrt{2}}$ and the opposite inequality is not difficult to see since the supremum in the definition of $r$ is attained at the point $(\frac{1}{2},\frac{1}{2})$.
\end{ex}

\begin{figure}[h!]
\begin{center}
\includegraphics[scale=0.4]{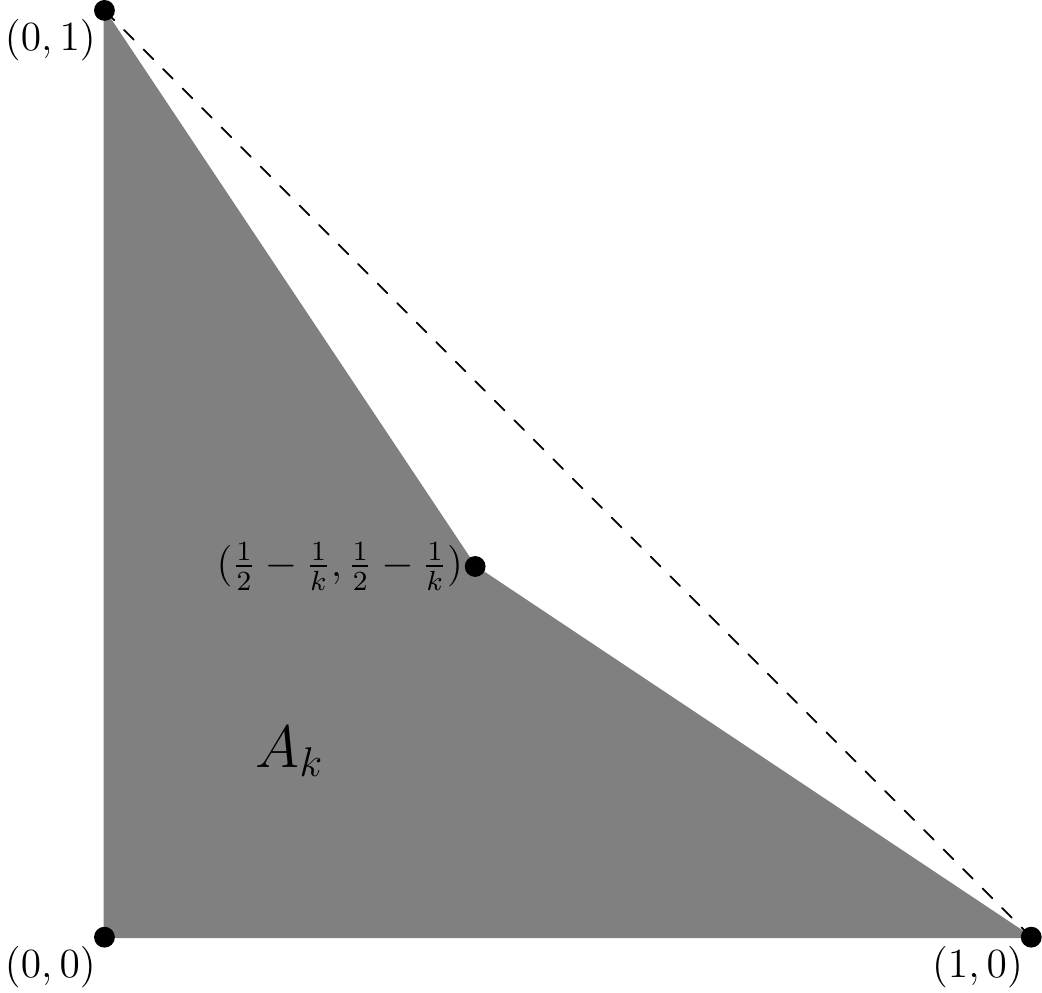}
\end{center}
\caption{Discontinuity of $c$ and $r$ with respect to Hausdorff distance (Example \ref{ex:sch}). }\label{fig:sch}
\end{figure}

\begin{ex}\label{ex:dis}
To see that there is no continuity for $\Delta$, consider a sequence of discrete nested sets converging in $d$ to $[0,1]$, more precisely: $A_k=\{\frac{m}{2^k}; 0\le m\le 2^k\}$.
\end{ex}


\subsection{Special properties of Schneider's index}
\label{sec:schneider}

All these functionals other than $c$ can be unbounded. The boundedness of $c$
follows for the following nice inequality due to Schneider \cite{Sch75}.

\begin{thm}\label{thm:sch75}\cite{Sch75}
For any subset $A$ of $\R^n$,
\ben
c(A) \leq n.
\een
\end{thm}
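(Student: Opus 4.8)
The plan is to show that for any compact $A\subset\R^n$, the set $A+n\,\conv(A)$ is already convex, which immediately gives $c(A)\le n$. Fix $x,y\in A+n\,\conv(A)$ and let $z=\frac{x+y}{2}$; I want to show $z\in A+n\,\conv(A)$. Write $x=a+n\,u$ and $y=b+n\,v$ with $a,b\in A$ and $u,v\in\conv(A)$. The issue is that naively $z=\frac{a+b}{2}+n\cdot\frac{u+v}{2}$, and $\frac{a+b}{2}$ need not lie in $A$, so this decomposition does not obviously work. The trick will be to absorb one ``half'' of $A$ into the $\conv(A)$ part and pull out a genuine element of $A$ from the convex part, using the Shapley-Folkman lemma to control how many coordinates fail to be extreme.

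Concretely, I would argue as follows. Note $2z=x+y=a+b+n(u+v)$, so $z\in\frac{1}{2}\big(A+A+n\,\conv(A)+n\,\conv(A)\big)=\frac{1}{2}\big(A+A\big)+n\,\conv(A)$, using that $\conv(A)+\conv(A)=2\,\conv(A)$. So it suffices to show $\frac{1}{2}(A+A)\subset A+\conv(A)$; more generally, I expect the right lemma is: for any compact $A$ and any integer $m\ge 1$,
\[
\frac{1}{m}\underbrace{(A+\cdots+A)}_{m}\subset A+\Big(1-\frac1m\Big)\conv(A)\subset A+\conv(A),
\]
which when combined with the previous reduction applied to the $2$-fold sum gives $z\in A + \conv(A) + (n-1)\conv(A) \subset A+n\,\conv(A)$ — wait, this only uses $1$ copy. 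Let me instead structure it so the full $n$ copies of $\conv(A)$ are genuinely needed: I would iterate, showing $A(2)\subset \frac{1}{2}A + \frac12\conv(A)$ is false in general but $A(2)\subset$ something like $A+\conv(A)$ holds, and then note $A+n\conv(A)$ is a fixed point of the operation $S\mapsto \frac12(S+S)$ once it is shown that $\frac{1}{2}\big((A+n\conv(A))+(A+n\conv(A))\big)\subset A+n\,\conv(A)$.

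The heart of the matter, then, is the inclusion $A+A\subset 2A+2\conv(A)$... no — precisely $\frac12(A+A)\subset A+\conv(A)$ is what I need, equivalently $A+A\subset 2A+2\conv(A)$, which is trivially true. So the real content is iterating to get the sum closed under midpoints with only $n$ copies of $\conv(A)$. Here is where Shapley-Folkman (Lemma~\ref{lem:SF}) enters: given $w_1,w_2\in A+n\,\conv(A)$, write $w_1+w_2$ as an element of $\sum_{i=1}^{k}\conv(B_i)$ for a suitable large family consisting of copies of $A$ (the ``$A$'' parts give single points so are already extreme), and apply Shapley-Folkman to conclude that all but at most $n$ of the convex-hull slots collapse to points of $A$, so that $w_1+w_2\in 2A+$ (at most $2n$ leftover $\conv(A)$ slots, but halving gives $n$), i.e. $\frac12(w_1+w_2)\in A+n\,\conv(A)$.

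The main obstacle I anticipate is the bookkeeping in the Shapley-Folkman step: one must set up exactly the right collection of sets (some equal to singletons $\{a\}$ with $a\in A$, some equal to $\conv(A)$) so that the ``at most $n$'' surviving convex slots, after the factor-$\frac12$ rescaling, fit inside exactly $n$ copies of $\conv(A)$ and the remaining slots contribute a point of $A+A$ that can be split as $2\cdot\frac{a+a'}{2}$ — but $\frac{a+a'}{2}\notin A$ in general, so one actually keeps it as living in $\conv(A)$, which costs a slot. Balancing this accounting so the total is $n$ and not $n+1$ or $2n$ is the delicate point; I suspect the cleanest route is the identity $k\,\conv(A)=n\,\conv(A)+(k-n)A(k-n)$ already displayed after Lemma~\ref{lem:SF}: dividing by $k$ gives $\conv(A)=\frac nk\conv(A)+\frac{k-n}{k}A(k-n)$, showing $A(k-n)\subset$ a set that converges to $\conv(A)$; running this with the right telescoping should directly yield that $A+n\,\conv(A)$ is convex. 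I would therefore try to prove $A+n\,\conv(A)=(1+n)\conv(A)$ by showing ``$\supseteq$'' via that identity (taking $k$ large, $\conv(A)=\lim A(k)$ and $A(k)=\frac{n}{k}\,A(k)\text{-piece}+\cdots$), then invoke Remark~\ref{rk:referee}.
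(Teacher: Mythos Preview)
Your proposal circles the correct idea but never lands on it. The paper's proof is one line: apply the Shapley--Folkman lemma (Lemma~\ref{lem:SF}) with $A_1=\cdots=A_{n+1}=A$ to get $(n+1)\conv(A)\subset A+n\,\conv(A)$; the reverse inclusion is trivial, so $A+n\,\conv(A)=(n+1)\conv(A)$ is convex and $c(A)\le n$. This is exactly the identity $k\,\conv(A)=n\,\conv(A)+(k-n)A(k-n)$ you cite at the end, specialized to $k=n+1$ (so that $A(k-n)=A(1)=A$). No midpoint argument, no limit $k\to\infty$, no Remark~\ref{rk:referee} is needed.

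Your midpoint detour is not wrong, but it is circular in a subtle way: after reducing to $\tfrac12(A+A)+n\,\conv(A)\subset A+n\,\conv(A)$, you observe $\tfrac12(A+A)\subset\conv(A)$, which only gives $\tfrac12(A+A)+n\,\conv(A)\subset (n+1)\conv(A)$; to finish you still need $(n+1)\conv(A)\subset A+n\,\conv(A)$, i.e.\ precisely the Shapley--Folkman consequence you could have invoked at the outset. The attempted bookkeeping with ``at most $2n$ leftover $\conv(A)$ slots, but halving gives $n$'' is incorrect as stated (halving a sum of $2n$ copies of $\conv(A)$ gives $n\,\conv(A)$, but the Shapley--Folkman lemma applied to $2(n+1)$ copies only guarantees $n$ convex slots survive, not $2n$, so the accounting is actually easier than you feared). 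The limiting argument at the end is unnecessary and vague; just set $k=n+1$ in the displayed identity and you are done.
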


\begin{proof}
Applying the Shapley-Folkman lemma (Lemma \ref{lem:SF}) to $A_1 = \cdots = A_{n+1} = A$, where $A \subset \R^n$ is a fixed compact set, one deduces that $(n+1)\conv(A) = A + n\conv(A)$. Thus $c(A) \leq n$.
\end{proof}

Schneider \cite{Sch75} showed that $c(A)=n$ if and only if $A$ consists of $n+1$ affinely independent points. Schneider also showed that if $A$ is unbounded or connected, one has the sharp bound $c(A)\leq n-1$.

Let us note some alternative representations of Schneider's non-convexity index. First, we would like to remind the definition of the {\it Minkowski functional} of a compact convex set $K$ containing zero:
$$
\|x\|_K=\inf\{t > 0: x \in  tK\},
$$
with the usual convention that $\|x\|_K=+\infty$ if $\{t > 0: x \in  tK\}=\emptyset$. Note that $K=\{x\in \R^n: \|x\|_K\le 1\}$ and  $\|x\|_K$ is a norm if $K$ is symmetric with non empty interior.

For any compact set $A\subset\R^n$, define
\ben
A_\lam= \frac{1}{1+\lam} [A+\lam\, \conv(A)],
\een
and observe that
\ben
\conv(A_\lam) = \frac{1}{1+\lam} \conv(A+\lam\, \conv(A)) = \frac{1}{1+\lam} [\conv(A) + \lambda \conv(A)] = \conv(A).
\een
Hence, we can express
\be\label{eq:c-alam}
c(A)= \inf \{ \lam\geq 0: A_\lam \text{ is convex}  \} = \inf \{ \lam\geq 0: A_\lam = \conv(A) \} .
\ee

Rewriting this yet another way, we see that if $c(A)<t$, then for each $x\in\conv(A)$, there
exists $a\in A$ and $b\in\conv(A)$ such that
\ben
x=\frac{a+tb}{1+t} ,
\een
or equivalently, $x-a=t(b-x)$. In other words, 
$x-a\in t K_x$ where $K_x=\conv(A)-x$,
which can be written as $\|x-a\|_{K_x} \leq t$ using the Minkowski functional.
Thus
\ben
c(A)=\sup_{x\in\conv(A)} \inf_{a\in A} \|x-a\|_{K_x} .
\een
This representation is nice since it allows for comparison with the representation of $d(A)$
in the same form but with $K_x$ replaced by the Euclidean unit ball.

\begin{rem}
 Schneider \cite{Sch75} observed that there are many closed {\it unbounded} sets $A\subset \R^n$ that satisfy $c(A)=0$, 
but are not convex. Examples he gave include the set of integers in $\R$, or a parabola in the plane. 
This makes it very clear that if we are to use $c$ as a measure of non-convexity, we should
restrict attention to compact sets.
\end{rem}

\subsection{Unconditional relationships}
\label{sec:uncond}

It is natural to ask how these various measures of non-convexity are related. First we note that $d$ and $d^{(K)}$ are equivalent. To prove this we would like to present an elementary but useful observation:

\begin{lem}\label{lem:d^K-d^L} 
Let $K\subset \R^n$ be an arbitrary convex body containing $0$ in its interior. 
Consider a convex body $L \subset \R^n$ such that $K \subset L$ and $t>0$. Then  for any compact set $A \subset \R^n$, 
$$
d^{(K)} (A) \geq d^{(L)} (A)
$$
and
$$
d^{(tK)}(A)=\frac{1}{t}d^{(K)}(A).
$$
\end{lem}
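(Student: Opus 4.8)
The plan is to unwind both assertions directly from the definition
$$
d^{(K)}(A)=\inf\{r>0:\conv(A)\subset A+rK\},
$$
using only the monotonicity of Minkowski summation with respect to inclusion and the homogeneity $r(tK)=(rt)K$.

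\textbf{First inequality.} Suppose $K\subset L$ and let $r>0$ be such that $\conv(A)\subset A+rK$. Since $K\subset L$ implies $rK\subset rL$, we get $A+rK\subset A+rL$, hence $\conv(A)\subset A+rL$. Therefore every $r$ admissible in the infimum defining $d^{(K)}(A)$ is also admissible in the infimum defining $d^{(L)}(A)$, so the latter infimum is over a larger set and $d^{(L)}(A)\le d^{(K)}(A)$. (One should note in passing that $L$ also contains $0$ in its interior, since $K$ does and $K\subset L$, so $d^{(L)}$ is well-defined.)

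\textbf{Scaling identity.} Fix $t>0$. For any $r>0$ one has $r(tK)=(rt)K$, so the condition $\conv(A)\subset A+r(tK)$ is identical to $\conv(A)\subset A+(rt)K$. Thus $r$ is admissible for $d^{(tK)}(A)$ if and only if $rt$ is admissible for $d^{(K)}(A)$; equivalently, the admissible set for $d^{(tK)}(A)$ is exactly $\tfrac{1}{t}$ times the admissible set for $d^{(K)}(A)$. Taking infima and using that $\inf(\tfrac{1}{t}S)=\tfrac{1}{t}\inf S$ for $S\subset(0,\infty)$ and $t>0$ gives $d^{(tK)}(A)=\tfrac{1}{t}d^{(K)}(A)$.

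I do not anticipate a genuine obstacle here: both parts are immediate consequences of the definition, and the only points requiring a word of care are (i) checking that $0\in\interior(L)$ so that $d^{(L)}$ makes sense, and (ii) being slightly careful that the infimum is attained or at least that the set-level inclusions pass to the infimum, which is routine since the relevant sets of admissible radii are (relatively open or half-line) subsets of $(0,\infty)$ closed upward. The lemma is used subsequently, together with the fact that every norm ball $K$ satisfies $rB_2^n\subset K\subset RB_2^n$ for suitable $0<r\le R$, to conclude that $d$ and $d^{(K)}$ are equivalent up to constants depending only on $K$.
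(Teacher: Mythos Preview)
Your proof is correct and follows essentially the same approach as the paper: both argue directly from the definition via the inclusion $rK\subset rL$ for the first inequality and the homogeneity $r(tK)=(rt)K$ for the second. Your version is in fact slightly more careful (working with an arbitrary admissible $r$ rather than plugging in the infimum value, and explicitly noting the bijection of admissible sets for the scaling identity), but the underlying idea is identical.
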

\begin{proof}
Notice that
$$ A+d^{(K)}(A) L \supset A+d^{(K)}(A)K \supset \conv(A). $$
Hence, $d^{(K)} (A) \geq d^{(L)} (A)$. In addition, one has
$$ A + d^{(K)}(A) K = A + \frac{1}{t}d^{(K)}(A) t K. $$
Hence,  $d^{(tK)}(A)=\frac{1}{t}d^{(K)}(A)$.
\end{proof}

The next lemma follows immediately from  Lemma \ref{lem:d^K-d^L}:

\begin{lem}\label{lem:d-d^K}
Let $K$ be an arbitrary convex body containing 0 in its interior. For any compact set $A \subset \R^n$, one has
$$ r d^{(K)}(A) \leq d(A) \leq R d^{(K)}(A), $$
where $r,R>0$ are such that $rB_2^n \subset K \subset RB_2^n$.
\end{lem}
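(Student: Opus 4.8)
The plan is to derive everything from Lemma~\ref{lem:d^K-d^L}. We are given a convex body $K$ with $0 \in \interior(K)$, and we pick $r, R > 0$ with $rB_2^n \subset K \subset RB_2^n$; such $r, R$ exist precisely because $K$ is a convex body containing $0$ in its interior (bounded, so it fits in a large ball; $0$ interior, so it contains a small ball). The goal is the two-sided bound $r\, d^{(K)}(A) \leq d(A) \leq R\, d^{(K)}(A)$, where $d(A) = d^{(B_2^n)}(A)$.

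First I would prove the right-hand inequality. From $K \subset RB_2^n$, set $L = RB_2^n$; this is a convex body containing $0$ in its interior and $K \subset L$, so the first conclusion of Lemma~\ref{lem:d^K-d^L} gives $d^{(K)}(A) \geq d^{(RB_2^n)}(A)$. By the scaling part of the same lemma (with $t = R$ and the base body $B_2^n$), $d^{(RB_2^n)}(A) = \frac{1}{R} d^{(B_2^n)}(A) = \frac{1}{R} d(A)$. Combining, $d^{(K)}(A) \geq \frac{1}{R} d(A)$, i.e. $d(A) \leq R\, d^{(K)}(A)$.

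Next the left-hand inequality, symmetrically. From $rB_2^n \subset K$, apply the monotonicity part of Lemma~\ref{lem:d^K-d^L} with the smaller body $rB_2^n$ in the role of $K$ and the larger body $K$ in the role of $L$: this yields $d^{(rB_2^n)}(A) \geq d^{(K)}(A)$. Again by the scaling part (with $t = r$), $d^{(rB_2^n)}(A) = \frac{1}{r} d(A)$, so $\frac{1}{r} d(A) \geq d^{(K)}(A)$, i.e. $r\, d^{(K)}(A) \leq d(A)$. Putting the two halves together gives the claimed chain of inequalities.

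There is essentially no obstacle here — the statement is a direct corollary, as the text already signals with ``follows immediately''. The only points requiring a word of care are (i) noting that the auxiliary bodies $rB_2^n$ and $RB_2^n$ indeed satisfy the hypotheses of Lemma~\ref{lem:d^K-d^L} (they are convex bodies containing $0$ in the interior, and the required inclusions with $K$ hold by the choice of $r$ and $R$), and (ii) correctly tracking which body plays the role of ``$K$'' versus ``$L$'' in each of the two applications, since the monotonicity in Lemma~\ref{lem:d^K-d^L} is \emph{order-reversing} in the body. No computation beyond substituting $t = r$ and $t = R$ into the homogeneity identity is needed.
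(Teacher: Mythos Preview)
Your proof is correct and is exactly the argument the paper has in mind: the paper simply states that the lemma ``follows immediately from Lemma~\ref{lem:d^K-d^L}'' without further detail, and you have correctly unpacked this by applying the monotonicity and scaling parts of that lemma to the inclusions $rB_2^n \subset K$ and $K \subset RB_2^n$.
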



It is also interesting to note a special property of $d^{(\conv(A))}(A)$:

\begin{lem}\label{lem:d^conv}
Let $A$ be a compact set in $\R^n$. If $0 \in \conv(A)$, then
$$ d^{(\conv(A))}(A)\le c(A). $$
If $0 \in A$, then
$$ d^{(\conv(A))}(A)\le \min\{1, c(A)\}. $$
\end{lem}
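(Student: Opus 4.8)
The plan is to express both $d^{(\conv(A))}(A)$ and $c(A)$ in the ``$\sup_{x}\inf_{a}$'' form and compare the quantities pointwise. Write $K=\conv(A)$, and for $x\in\conv(A)$ set $K_x=\conv(A)-x$. Recall from the representation derived just above Lemma~\ref{lem:d^conv} that
\[
c(A)=\sup_{x\in\conv(A)}\inf_{a\in A}\|x-a\|_{K_x}.
\]
On the other hand, unwinding the definition \eqref{eq:dK} with $K=\conv(A)$ gives
\[
d^{(\conv(A))}(A)=\inf\{r>0:\conv(A)\subset A+r\conv(A)\}.
\]
First I would rewrite this in a pointwise form as well: $\conv(A)\subset A+r\conv(A)$ means that for every $x\in\conv(A)$ there exist $a\in A$ and $y\in\conv(A)$ with $x=a+ry$, i.e.\ $x-a=ry\in r\conv(A)$, which says $\|x-a\|_{\conv(A)}\le r$. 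Hence
\[
d^{(\conv(A))}(A)=\sup_{x\in\conv(A)}\inf_{a\in A}\|x-a\|_{\conv(A)}.
\]
So the only difference between the two quantities is that $c(A)$ uses the ``recentered'' gauge $\|\cdot\|_{\conv(A)-x}$ while $d^{(\conv(A))}(A)$ uses the fixed gauge $\|\cdot\|_{\conv(A)}$.

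For the first inequality, assuming $0\in\conv(A)$, the key observation is the set inclusion $\conv(A)\subset \conv(A)-x$ for every $x\in\conv(A)$: indeed if $z\in\conv(A)$ then $z-x$ is a difference of two points of the convex set $\conv(A)$... actually more carefully, I want $\conv(A)\subseteq\conv(A)-x$, i.e.\ $\conv(A)+x\subseteq\conv(A)$ — this is not true in general. Instead I would use that since $0\in\conv(A)$ and $x\in\conv(A)$, convexity gives the containment of gauges the other way: for any vector $v$, if $x+v\in\conv(A)$ (so $v\in K_x$) we want to compare with $v\in\conv(A)$. The clean route is: $\|x-a\|_{\conv(A)}$ versus $\|x-a\|_{K_x}$. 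Since $0\in\conv(A)$, the point $x-a = x + (-a)$... The honest approach is to use monotonicity of the Hausdorff distance $d^{(\cdot)}$ in its second argument (Lemma~\ref{lem:d^K-d^L}): it suffices to show $\conv(A)\subset\conv(A)-x$ fails, so instead I directly estimate. Let $\lambda=c(A)$; then $A+\lambda\conv(A)=(1+\lambda)\conv(A)$ by \eqref{eq:c-alam}. Given $x\in\conv(A)$, write $(1+\lambda)x = a+\lambda b$ with $a\in A$, $b\in\conv(A)$; then $x-a=\lambda(b-x)$. Now $b-x\in\conv(A)-x$, and I want $b-x\in\conv(A)$ as well, which holds when $x$ and $-b+2x\cdots$ — this still needs $0\in\conv(A)$: since $b\in\conv(A)$ and $0\in\conv(A)$ and we're subtracting $x\in\conv(A)$, I would instead note $b-x = b+(-x)$ and use that $\conv(A)$ being convex and containing $0$ gives $\mu(b - x)\in\conv(A)-x'$... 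The robust statement I will actually prove is: for $x\in\conv(A)$, $\conv(A)\subseteq \conv(A)+ (\conv(A)-x)$ trivially since $0\in\conv(A)-x$ iff $x\in\conv(A)$ — yes! So $x-a=\lambda(b-x)$ with $b-x\in\conv(A)-x=K_x$, giving $\|x-a\|_{\conv(A)}\le\|x-a\|_{\text{something}}$; I then convert using $0\in\conv(A)$ to conclude $\|x-a\|_{\conv(A)}\le\lambda$. This gives $d^{(\conv(A))}(A)\le\lambda=c(A)$.

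For the second inequality, when $0\in A$, I additionally need $d^{(\conv(A))}(A)\le 1$. This is immediate: since $0\in A$, we have $A+\conv(A)\supset \{0\}+\conv(A)=\conv(A)$, so $r=1$ is admissible in the infimum defining $d^{(\conv(A))}(A)$, whence $d^{(\conv(A))}(A)\le 1$. Combining with the first part (which applies since $0\in A\subset\conv(A)$) yields $d^{(\conv(A))}(A)\le\min\{1,c(A)\}$.

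\textbf{Main obstacle.} The delicate point is the first inequality: correctly handling the distinction between the fixed gauge $\|\cdot\|_{\conv(A)}$ and the moving gauge $\|\cdot\|_{\conv(A)-x}$, and verifying that the hypothesis $0\in\conv(A)$ is exactly what is needed to pass between them (concretely, that $0\in\conv(A)$ guarantees $\conv(A)-x\subseteq$ a dilate of $\conv(A)$ controlled by the relevant quantities, or more simply that the witness $b-x$ landing in $\conv(A)-x$ can be reinterpreted so that $\|x-a\|_{\conv(A)}\le c(A)$). I expect that once the pointwise reformulation $d^{(\conv(A))}(A)=\sup_x\inf_a\|x-a\|_{\conv(A)}$ is in hand, the rest is a short chase through the definition of $c$ via \eqref{eq:c-alam}, but getting the gauge bookkeeping exactly right — including the role of $0\in\conv(A)$ versus the stronger $0\in A$ — is where care is required.
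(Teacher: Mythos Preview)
Your second inequality ($0\in A \Rightarrow d^{(\conv(A))}(A)\le 1$) is correct and matches the paper exactly. The gap is in the first inequality, and you have correctly identified it yourself as the ``Main obstacle'' without resolving it.

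The concrete error is that you decompose the wrong point. You write $(1+\lambda)x = a + \lambda b$ with $a\in A$, $b\in\conv(A)$, which gives $x-a=\lambda(b-x)$, and then you need $b-x\in\conv(A)$ to conclude $\|x-a\|_{\conv(A)}\le\lambda$. But $b-x$ lies in the difference body $\conv(A)-\conv(A)$, not in $\conv(A)$, so this does not follow, and your attempts to repair it via gauge comparisons $\conv(A)$ versus $\conv(A)-x$ go nowhere because neither inclusion $\conv(A)\subset\conv(A)-x$ nor its reverse holds in general.

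The fix is to decompose $x$ itself rather than $(1+\lambda)x$. This is exactly where the hypothesis $0\in\conv(A)$ enters: since $0\in\conv(A)$ and $\conv(A)$ is convex, one has $\conv(A)\subset(1+\lambda)\conv(A)$ (for $y\in\conv(A)$, $\frac{1}{1+\lambda}y$ is a convex combination of $y$ and $0$, hence in $\conv(A)$, so $y\in(1+\lambda)\conv(A)$). Combined with $(1+\lambda)\conv(A)=A+\lambda\conv(A)$, this gives $x\in A+\lambda\conv(A)$ directly, i.e.\ $x=a+\lambda b$ with $a\in A$ and $b\in\conv(A)$, whence $x-a=\lambda b\in\lambda\conv(A)$ and $\|x-a\|_{\conv(A)}\le\lambda$. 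This is precisely the paper's proof (stated there as the single set inclusion $\conv(A)\subset(1+c(A))\conv(A)=A+c(A)\conv(A)$), just unwound pointwise. So once the decomposition is corrected, your approach and the paper's coincide.
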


\begin{proof}
If $0 \in \conv(A)$, then $\conv(A) \subset (1+c(A)) \conv(A)$. But,
$$
(1+c(A)) \conv(A) = A+c(A)\conv(A),
$$
where we used the fact that by definition of $c(A)$, $A + c(A) \conv(A)$ is convex. Hence, $d^{(\conv(A))}(A)\le c(A)$.

If $0 \in A$, in addition to the above argument, we also have
$$ \conv(A) \subset A + \conv(A). $$
Hence, $d^{(\conv(A))}(A)\le 1$.
\end{proof}

Note that the inequality in the above lemma cannot be reversed even with the cost of an additional multiplicative constant. Indeed, take the sets $A_k$ from Example  \ref{ex:sch},   then $c(A_k)=1$ but $d^{(\conv(A_k))}(A_k)$ tends to $0$.

Observe that $d, r, \rho$ and $v$ have some similarity in definition. Let us introduce the point-wise definitions of above notions: Consider $x\in \conv(A)$, define
\begin{itemize}
\item $d_A(x)=\inf\limits_{a\in A} |x-a|.$ \\
More generally, if $K$ is a compact convex set in $\R^n$ containing the origin,
\item $d^{(K)}_A(x)=\inf\limits_{a\in A} \|x-a\|_K.$
\item $r_A(x) = \inf \{R(T): T\subset A, x\in \conv(T) \} .$
\item $v^2_A (x)=  \inf\{\sum p_i |a_i|^2-|x|^2: x=\sum p_i a_i; p_i >0; \sum p_i=1, a_i \in A  \}.$
\item $w_A(x) =\inf\bigg\{\sum p_i |a_i-x|: \, x=\sum p_i a_i; p_i >0; \sum p_i=1, a_i \in A  \bigg\}$.
\item $\rho_A(x) =  \inf_{a \in A_x} |x-a|, $
where $$A_x = \{a \in A : \exists b \in \conv(A), \exists \theta \in (0,1) \mbox{ such that } x = (1-\theta)a + \theta b\}.$$
\end{itemize}

Below we present a Theorem due to Wegmann \cite{Weg80} which proves that $r, \rho$ and $v$ are equal for compact sets and that they are equal also to $d$ under an additional assumption. For the sake of completeness we will present the proof of  Wegmann \cite{Weg80}  which is simplified here for the case of compact sets.


\begin{thm}[Wegmann \cite{Weg80}]\label{thm:weg}
Let $A$ be a compact set in $\R^n$, then
\ben
d(A)\le \rho(A) =w(A)= v(A)= r(A).
\een
Moreover if $v_A(x_0)=v(A)$, for some  $x_0$ in the relative interior of $\conv(A)$, then $d(A)=v(A)=w(A)=r(A)=\rho(A)$.
\end{thm}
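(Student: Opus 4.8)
The plan is to establish the chain of (in)equalities by proving a handful of one-directional inclusions among the pointwise functionals $d_A(x)$, $r_A(x)$, $v_A(x)$, $w_A(x)$, $\rho_A(x)$, and then taking suprema over $x\in\conv(A)$. Several of the comparisons are immediate from the definitions: for a fixed $x\in\conv(A)$ and a fixed finitely-supported measure $\sum p_i\delta_{a_i}$ with barycenter $x$, Jensen's inequality gives $|\sum p_i(a_i-x)|=0\le \sum p_i|a_i-x|\le (\sum p_i|a_i-x|^2)^{1/2}$, so after taking infima over such representations we get $w_A(x)\le v_A(x)$; and since the support $T=\{a_i\}$ is a subset of $A$ with $x\in\conv(T)$, each $a_i$ satisfies $|x-a_i|\le \diam(T)\le 2R(T)$, which after optimizing yields a bound relating $w_A$ (or $v_A$) and $r_A$ in one direction, while Carathéodory's theorem lets one always take $|T|\le n+1$ so the infimum defining $r_A(x)$ is attained. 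The genuinely substantive directions are $r_A(x)\le \rho_A(x)$ and the reverse-type bound $v_A(x)\le w_A(x)$ (giving $v=w$), together with $d(A)\le\rho(A)$.

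The core of Wegmann's argument — and the step I expect to be the main obstacle — is showing $r_A(x)\le v_A(x)$ (equivalently, that the ``second-moment'' optimal representation can be converted into a small enclosing ball for a subset of $A$), and dually that the optimal enclosing-ball configuration yields a measure with comparably small variance, so that in fact $r=v=w=\rho$. The trick is a compactness-plus-convexity argument: fix $x$, let $T\subset A$ be a finite set with $x\in\conv(T)$ achieving (up to $\eps$) the infimum defining $r_A(x)$, let $c$ be the circumcenter of $T$ and $R=R(T)$; one writes $x=\sum p_i a_i$ with $a_i\in T$, and the key identity $\sum p_i|a_i-x|^2 = \sum p_i|a_i-c|^2 - |x-c|^2 \le R^2 - |x-c|^2 \le R^2$ shows $v_A(x)^2\le r_A(x)^2$. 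Conversely, given a near-optimal representation $x=\sum p_i a_i$ for $v_A(x)$, one must produce a subset $T\subset A$ whose circumradius is controlled by $(\sum p_i|a_i-x|^2)^{1/2}$; here one uses that the set $\{a_i\}$ already lies in a ball, but to get the circumradius rather than just a containing ball one invokes the characterization that the circumcenter of a finite set is a convex combination of the points, and an extremality/variational argument on the measure achieving $v_A(x)$ forces the support to lie on a sphere about its barycenter. This is exactly where ``simplified for the case of compact sets'' does work: compactness guarantees the relevant infima/suprema are attained, and one can pass to convergent subsequences of finite configurations as in the proof of Lemma~\ref{lem:semicont}.

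For the inequality $d(A)\le\rho(A)$: fix $x\in\conv(A)$; I would show $d_A(x)\le\rho_A(x)$ pointwise. If $x\in A$ both sides are $0$. Otherwise, since $x\in\conv(A)$, every point of $A$ ``visible'' from $x$ lies in $A_x$ (a point $a\in A$ with $x$ strictly between $a$ and some $b\in\conv(A)$), and a short geometric argument — move from $x$ towards the nearest point of $\conv(A)\setminus\{x\}$ on the boundary, or use that the nearest point of $A$ to $x$ must be attainable along a segment through $x$ — shows the distance from $x$ to the nearest point of $A$ is realized (or bounded) by a point in $A_x$, giving $d_A(x)\le\rho_A(x)$. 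Taking $\sup_x$ gives $d(A)\le\rho(A)$.

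Finally, the ``moreover'' clause: suppose the supremum in $v$ is attained at $x_0\in\relint\conv(A)$, i.e. $v_A(x_0)=v(A)$. Since $x_0$ is in the relative interior, one can perturb the optimal measure in every direction of $\conv(A)$; writing the first-order optimality (Lagrange) condition for the problem $\inf\{\sum p_i|a_i-x_0|^2: \sum p_i a_i = x_0\}$, one finds that at the optimum all atoms $a_i$ in the support satisfy $\langle a_i - x_0, \cdot\rangle$ equalized in the sense that they lie on a common sphere centered at a point at distance $v(A)$ — hence the nearest atom is at distance exactly $v(A)$, forcing $d_A(x_0)\ge v(A)$; combined with $d(A)\ge d_A(x_0)$ and the already-proved $d(A)\le\rho(A)=w(A)=v(A)=r(A)$, all five quantities coincide. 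The delicate point here is justifying the interior-perturbation/Lagrange step rigorously, which is why the hypothesis $x_0\in\relint\conv(A)$ is needed; I would handle it by an explicit two-atom perturbation argument (replace $a_i,a_j$ in the support by nearby convex combinations keeping the barycenter fixed) rather than invoking abstract Lagrange multiplier theory.
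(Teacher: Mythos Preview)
Your outline has the right skeleton (a pointwise chain $d_A(x)\le\rho_A(x)\le w_A(x)\le v_A(x)\le r_A(x)$, then reverse inequalities at the global level, then the interior-maximizer clause), and the identity $\sum p_i|a_i-x|^2=\sum p_i|a_i-c|^2-|x-c|^2\le R(T)^2$ is exactly how one gets $v_A(x)\le r_A(x)$. But the hard reverse direction is not handled, and there is a concrete error in what you sketch.

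First, a small point: $d_A(x)\le\rho_A(x)$ is immediate from $A_x\subset A$ (infimum over a smaller set is larger), so no geometric ``visibility'' argument is needed. And you never address $\rho_A(x)\le w_A(x)$; the paper gets this by noting that for any decomposition $x=\sum p_i a_i$ the closest $a_i$ to $x$ lies in $A_x$, and $\sum p_i|a_i-x|\ge\min_i|a_i-x|\ge\rho_A(x)$.

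The real gap is in the reverse step $r(A)\le v(A)$ (and the companion $v(A)\le\rho(A)$). Your plan is: take an optimal representation $x_0=\sum p_i a_i$ for $v_A(x_0)$, argue by a variational/Lagrange condition that the $a_i$ ``lie on a sphere about the barycenter,'' and conclude. This is wrong on a crucial detail: the $a_i$ do \emph{not} lie on a sphere about $x_0$. They lie on a sphere about a \emph{different} point $b$ (a circumcenter in the sense of Delaunay triangulation), and the relation is $v_A^2(x_0)=R^2-|x_0-b|^2$ with $|a_i-b|=R$ for all $i$; in general $|a_i-x_0|$ varies. The way the paper finds $b$ and $R$ is by lifting: form $Q=\conv\{(a,|a|^2):a\in A\}\subset\R^{n+1}$, observe that the lower boundary of $Q$ encodes $f(x)=v_A^2(x)+|x|^2$, and take a supporting hyperplane to $Q$ at $(x_0,f(x_0))$. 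The hyperplane has the form $t=2\langle b,x\rangle+\alpha$; then $R^2:=|b|^2+\alpha$, the optimal $a_i$ satisfy $|a_i-b|=R$, and for all $a\in A$ one has $|a-b|\ge R$. With this in hand one projects $b$ onto $W=\conv\{a_i\}$ to get a point $w$ and shows $|a_i-w|^2\le R^2-|b-w|^2=v_A^2(w)\le v(A)^2$, giving $R(T)\le v(A)$ with $x_0\in\conv(T)$. None of this is visible from a ``two-atom perturbation'' or Lagrange-multiplier heuristic; the paraboloid lifting (or, equivalently, the Delaunay-circumball picture in Proposition~\ref{prop:vgeom}) is the missing idea. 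A further piece you omit is the boundary case: when $x_0\in\partial\conv(A)$ the supporting hyperplane can be vertical, and one must pass to the face $F$ containing $x_0$ in its relative interior and induct on dimension; the same face argument is needed again for $v(A)\le\rho(A)$.

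For the ``moreover'' clause, once $b$ and $R$ are available the argument is: maximality of $v_A$ at an interior point forces $x_0=b$, and since $|a-b|\ge R$ for all $a\in A$ one gets $d_A(x_0)\ge R=v_A(x_0)=v(A)$. Your version (``nearest atom is at distance exactly $v(A)$'') again relies on the incorrect picture that the sphere is centered at $x_0$.
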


\begin{proof}
1) First observe that $ d(A)\le \rho(A) \leq w(A)\leq v(A)\leq r(A)$ by easy arguments; in fact, this relation holds point-wise, i.e. $d_A(x) \le \rho_A(x)\le w_A(x)\leq v_A(x)\le r_A(x)$. 

Indeed the first inequality  follows directly from the definitions, because $A_x \subset A$. 

To prove the second inequality consider any convex decomposition of $x\in \conv(A)$, i.e. $x=\sum p_i a_i$,
with $p_i >0, \sum p_i=1, a_i \in A$.
Without loss of generality we may assume that $|x-a_1|\le |x-a_i|$ for all $i\le m$. Then
$$
\sum p_i |x-a_i| \ge  |x-a_1| \ge \rho_A(x),
$$
because $a_1 \in A_x$ (indeed, $x=p_1 a_1 + (1-p_1) \sum\limits_{i\ge 2} \frac{p_i}{1-p_1}  a_i$). 

The third inequality $w_A(x)\leq v_A(x)$ immediately follows from the Cauchy-Schwarz inequality.

To prove the fourth inequality let $T=\{a_1,\dots, a_m\} \subset A$  be such that   $x\in \conv(T)$. Let $p_1,\dots,p_m>0$ be such that $\sum p_i=1$ and $x=\sum p_ia_i$.  Let $c$ be the center of the smallest Euclidean ball containing $T$. Notice that  the minimum of $\sum p_i |x-a_i|^2$ is reached for $x=\sum p_i a_i$,  thus
$$
v_A^2(x)\le \sum p_i |x-a_i|^2 \le \sum p_i |c-a_i|^2 \le R^2(T),
$$
and we take infimum over all $T$ to finish the proof of the inequality.\\

2) Consider $x_0\in\conv(A)$. To prove the theorem we will first show that  $r_A(x_0) \le v(A)$. After this we will show that $v_A(x_0) \le \rho(A)$ and finally we will prove if $x_0$ is in the relative interior of $\conv(A)$ and maximizes $v_A(x)$, among $x\in\conv(A)$ then $d_A(x_0) \ge v(A)$.  \\

2.1) Let us prove that $r_A(x_0) \le v(A)$. Assume first that $x_0$ is an interior point of $\conv (A)$. 
Let us define the compact convex set $Q \subset \R^{n+1}$ by
$$
Q=\conv\{ (a,|a|^2); a\in A\}.
$$
Next we define the function $f: \conv(A) \to \R^+$ by $f(x)=\min\{y:  (x,y) \in Q\}$, note that
\ben
f(x)&=&\min\{y: (x,y)=\sum\lambda_i (a_i, |a_i|^2); \lambda_1,\dots,\lambda_m >0 \mbox{ and   }  a_1, \dots, a_m \in A\}\\
&=&\min\{\sum\lambda_i|a_i|^2: \lambda_1,\dots,\lambda_m >0 \mbox{ and   }  a_1, \dots, a_m \in A, \sum \lambda_i=1;  x=\sum\lambda_i a_i\}\\
&=&v^2_A(x)+|x|^2.
\een
Note that $(x_0, f(x_0))$ is a boundary point of $Q$ hence there exists a support hyperplane $H$ of $Q$ at $(x_0, f(x_0))$. Since $x_0$ is an interior point of $\conv (A)$, the hyperplane $H$ cannot be vertical because a vertical support plane would separate $x_0$ from boundary points of $\conv (A)$ and thus separate $(x_0, f(x_0))$ from boundary points of $Q$. Thus there exist $b \in \R^n$ and $\alpha \in \R$ such that $H=\{(x,t)\in\R^{n+1}: t=2 \langle b, x\rangle +\alpha\}$. Since $(x_0,f(x_0))\in H$ one has
\begin{equation}\label{eq:ba}
f(x_0) = 2 \langle b, x_0\rangle +\alpha
\end{equation}
and 
$$
f(x) \ge 2 \langle b, x\rangle +\alpha, \mbox{   for all   } x \in \conv (A).
$$
By definition of $f$, there exists $a_1,\dots, a_m\in A$ and
$\lambda_1, \dots, \lambda_m>0$, $\sum \lambda_i=1$ such that $x_0=\sum \lambda_i a_i$ and
$$
f(x_0)=\sum\lambda_i |a_i|^2 =\sum\lambda_i f(a_i).
$$
From the convexity of $Q$ we get that $(a_i, f(a_i))\in H\cap Q$, for any $i$; indeed we note that 
$$
f(x_0)=2 \langle b, x_0\rangle +\alpha=\sum_i\lambda_i(2 \langle b, a_i\rangle +\alpha)\le \sum_i \lambda_if(a_i)=f(x_0).
$$
Thus $2 \langle b, a_i\rangle +\alpha=f(a_i)$  for all $i$.
Let $T=\{a_1, \dots a_m\}$ and $W=\conv (T)$. Note that for any $x\in W \cap A$ we have
$$
|x|^2=f(x)=2\langle b, x \rangle +\alpha
$$
thus $\alpha+|b|^2 =|x-b|^2 \ge 0$. Define 
\be\label{def:weg-R}
R^2 = \alpha+|b|^2.
\ee 
Notice that for any $x\in \conv(A)$
\begin{equation}\label{eq:va}
v^2_A(x)=f(x)-|x|^2 \ge 2\langle b, x \rangle +\alpha - |x|^2=R^2-|b-x|^2,
\end{equation}
with equality if $x \in W$, in particular, $0\le v^2_A(x)=R^2-|b-x|^2 \le R^2$, for every $x\in W$. Consider the point $w \in W$ such that 
$$
v_A^2(w)=\max\limits_{x \in W} v^2_A(x) = \max\limits_{x \in W}(R^2-|b-x|^2) =R^2-\inf_{x\in W}|b-x|^2.
$$
Then one has $|b-w|=\inf_{x\in W}|b-x|$, which means $w$ is the projection of the point $b$ on the convex set $W$. This implies that, for every $x\in W$, one has
$\langle x-b,w-b\rangle\ge|w-b|^2$, thus
$$
|x-w|^2=|x-b|^2-2\langle x-b,w-b\rangle+|w-b|^2\le |x-b|^2-|w-b|^2\le R^2-|w-b|^2=v_A^2(w).
$$
%
We get $T\subset W \subset w+v_A(w)B_2^n$ and 
$$
R(T) \le v_A(w) = \max\limits_{x \in W} v_A(x).
$$
Using that $x_0\in W =\conv (T)$ and $T\subset A$, we conclude from the definition of $r_A$ that 
$$r_A(x_0)\le R(T)\le\max\limits_{x \in W} v_A(x)\le v(A).$$
If $x_0$ is a boundary point of $\conv (A)$, then using the boundary structure of the polytope $\conv(A)$ (see \cite[Theorem 2.1.2, p. 75 and Remark 3, p. 78]{Sch14:book}) 
$x_0$ belongs to the relative interior of an exposed face $F$ of $\conv( A)$.  By the definition of the notion of 
exposed face (see \cite[p. 75]{Sch14:book}) we get that if $x=\sum \lambda_i a_i$  for $a_i \in A$ and $\lambda_i>0$ with $\sum\lambda_i=1$, then $a_i \in A \cap F$. Thus
\begin{equation}\label{eq:face}
v_A(x_0)=v_{A \cap F}(x_0),  r_A(x_0)=r_{A \cap F}(x_0) \mbox{   and  }  \rho_A(x_0)=\rho_{A \cap F}(x_0).
\end{equation}
If $\mbox{dim}(F) =0$ then $x_0 \in A$ and thus all proposed inequalities are trivial, otherwise we can reproduce the above argument for $A \cap F$ instead of $A$.\\

2.2) Now we will prove that  $v_A(x_0) \le \rho(A)$.  Consider $b, \alpha$ and $R$ defined in \eqref{eq:ba} and \eqref{def:weg-R}. 
Using that $v_A(a)=0$, for every $a\in A$ and (\ref{eq:va}), we get $|b-a|\ge R$, for all $a\in A$. We will need to consider two cases
\begin{enumerate}
\item If $b\in \conv(A)$, then from the above $d_A(b)=\inf\limits_{a\in A} |b-a| \ge R$ thus
\begin{equation}\label{eq:vd}
 v_A(x_0) \le R\le d_A(b) \le \rho_A(b)\le \rho(A).
\end{equation}
\item If $b\not\in \conv(A)$, then there exists $y \in \partial(\conv(A)) \cap [w, b]$, thus $|b-y| \le |b-w|$. So, from (\ref{eq:va}) we have
$$
v_A^2(y) \ge R^2 -|b-y|^2 \ge R^2-|b-w|^2=v^2_A(w) \ge v^2_A(x_0),
$$
so it is enough to prove $v_A(y) \le \rho(A)$, where $y \in \partial(\conv(A))$.  Let $F$ be the face of $\conv(A)$ containing $y$ in its relative interior. Thus we can use the approach from (\ref{eq:face}) and reproduce the above argument for $A\cap F$ instead of $A$, in the end of which we will again get two cases (as above), in the first case we get $v_A(y)=v_{A\cap F}(y) \le \rho(A\cap F)\le \rho(A)$. In the second case, there exists $z \in \partial(\conv(A\cap F))$ such that $v_{A\cap F}(z) \ge  v_{A\cap F}(y)$ and we again reduce the dimension of the set under consideration. Repeating this argument we will arrive to the dimension $1$ in which the proof can be completed by verifying that $b \in \conv(A)$ (indeed, in this case $W=[a_1, a_2]$, $a_1, a_2 \in A$  and $|a_1 -b|=|a_2-b|$, thus $b=(a_1+a_2)/2 \in \conv(A)$) and thus  $v_A(x_0) \le \rho(A)$.
\end{enumerate}

2.3) Finally, assume $v_A(x_0)=v(A)$, where  $x_0$ is in the relative interior of $\conv(A)$. We may assume that $\conv(A)$ is $n$-dimensional (otherwise we would work in the affine subspace generated by $A$). Then using (\ref{eq:va}) we get that $v_A^2(x_0)=R^2- |b-x_0|^2$ and $v_A^2(a) \ge R^2- |b-a|^2,$ for all $a\in \conv(A)$, thus
$$
0 \le v_A^2(x_0)-v_A^2(a) \le |b-a|^2 - |b-x_0|^2, 
$$
for all $a \in \conv(A)$. So $|b-x_0| \le |b-a|$ for all $a \in \conv(A)$, this means that the  minimal distance  between $b$ and  $a \in \conv(A)$ is reached at $a=x_0$. Notice that if $b \not\in \conv(A)$  then $x_0$ must belong to $\partial(\conv(A))$, which contradicts our hypothesis. Thus $b \in \conv(A)$ and $x_0=b$, and we can use (\ref{eq:vd}) to conclude that
$v(A)=v_A(x_0) \le d_A(x_0) \le d(A)$.
\end{proof}

\begin{rem}\label{delaunay}
The method used in the proof of Theorem \ref{thm:weg} is reminiscent of the classical approach to Voronoi diagrams and Delaunay triangulation
(see, e.g., \cite[section 5.7]{Mat02:book}). Moreover the point $b$ constructed above is exactly the center of the ball  
circumscribed to the simplex of the Delaunay triangulation to which the point $x_0$ belongs.
\end{rem}


Next we present a different proof of $r(A)=v(A)$ from Theorem \ref{thm:weg}, which essentially uses Remark \ref{delaunay}
and is more geometric. The proof will be deduced from the following proposition that better describes the geometric properties of the function $v_A$.

\begin{prop}\label{prop:vgeom} Let $A$ be a compact set in $\R^n$ and $x \in \conv(A)$.  
\begin{enumerate}
\item Then there exists an integer $1\le m\le n+1$, $m$ affinely independent points $a_1,\dots, a_m \in A$ and $m$ real numbers $p_1, \dots, p_m>0$ such that $\sum_{i=1}^{m} p_i=1$, $x=\sum_{i=1}^{m} p_ia_i$ and 
\ben
v^2_A (x)= \sum_{i=1}^{m} p_i |a_i|^2-|x|^2.
\een
\item Let $S=\{a_1, \dots, a_m\}$. Then there exists $c \in {\rm aff}S$ and $R_c>0$, such that $|a_i-c|=R_c$, for all $1\le i\le m$ and 
\ben
v^2_A (x)=R_c^2-|x-c|^2.
\een
Moreover $|a-c|\ge R_c$, for all $a\in A\cap {\rm aff}S$. 
\item For every $y\in\conv(S)$ there exists $q_1,\dots, q_m\ge0$ such that $\sum_{i=1}^{m} q_i=1$, $y=\sum_{i=1}^{m} q_ia_i$ and 
\ben
v^2_A (y)= \sum_{i=1}^{m} q_i |a_i|^2-|y|^2=R_c^2-|y-c|^2.
\een
\end{enumerate}
\end{prop}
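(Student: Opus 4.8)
The plan is to run everything through the lifting $Q=\conv\{(a,|a|^{2}):a\in A\}\subset\R^{n+1}$ and the function $f\colon\conv(A)\to\R$, $f(z)=\min\{t:(z,t)\in Q\}$, already used in the proof of Theorem~\ref{thm:weg}, which satisfies $f(z)=v_{A}^{2}(z)+|z|^{2}$. Two elementary facts drive the argument: $f$ is convex (since $Q$ is convex, $(z_{1},f(z_{1})),(z_{2},f(z_{2}))\in Q$ forces $f$ below the chord), and $f(a)=|a|^{2}$ for every $a\in A$ because $v_{A}(a)=0$. Since $A$ is compact, $Q$ is compact, so the minimum defining $f(x)$ is attained: $(x,f(x))\in Q$ is a finite convex combination of generators $(a_{i},|a_{i}|^{2})$, and discarding zero weights gives $x=\sum_{i}p_{i}a_{i}$ with $p_{i}>0$, $\sum_{i}p_{i}=1$, $a_{i}\in A$ and $v_{A}^{2}(x)=\sum_{i}p_{i}|a_{i}|^{2}-|x|^{2}$. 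For part~(1) I take such an optimal representation with the number of points minimal and claim the $a_{i}$ are then affinely independent: if $\sum_{i}\mu_{i}a_{i}=0$ with $\sum_{i}\mu_{i}=0$ nontrivially, the perturbation $p_{i}\mapsto p_{i}+t\mu_{i}$ keeps $x$ and the constraint $\sum=1$ intact while changing the cost by $t\sum_{i}\mu_{i}|a_{i}|^{2}$, so optimality forces $\sum_{i}\mu_{i}|a_{i}|^{2}=0$; then the cost is constant in $t$ and pushing $t$ until some weight hits $0$ produces an optimal representation with fewer points, a contradiction. Affine independence then forces $m\le n+1$. (If $m=1$ then $x=a_{1}\in A$ and parts~(2)--(3) are trivial, so assume $m\ge2$.)

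For part~(2), set $S=\{a_{1},\dots,a_{m}\}$, $V={\rm aff}(S)$, and let $c\in V$ be the circumcenter of the nondegenerate simplex $S$; it exists and is unique as the intersection inside $V$ of the $m-1$ perpendicular bisectors of $\{a_{1},a_{i}\}$, whose normals $a_{i}-a_{1}$ span $V-a_{1}$, and $R_{c}:=|a_{1}-c|=\cdots=|a_{m}-c|>0$. Expanding $\sum_{i}q_{i}|a_{i}-c|^{2}=R_{c}^{2}$ yields the single identity $\sum_{i}q_{i}|a_{i}|^{2}-|y|^{2}=R_{c}^{2}-|y-c|^{2}$ for every affine combination $y=\sum_{i}q_{i}a_{i}$, $\sum_{i}q_{i}=1$; applied with $q_{i}=p_{i}$, $y=x$ it gives $v_{A}^{2}(x)=R_{c}^{2}-|x-c|^{2}$. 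For the inequality $|a-c|\ge R_{c}$ when $a\in A\cap V$: writing $a=\sum_{i}\mu_{i}a_{i}$ with $\sum_{i}\mu_{i}=1$, the point $x=\varepsilon a+\sum_{i}(p_{i}-\varepsilon\mu_{i})a_{i}$ is, for small $\varepsilon>0$, a legitimate representation of $x$ by points of $A$ whose cost equals $f(x)+\varepsilon\big(|a|^{2}-\sum_{i}\mu_{i}|a_{i}|^{2}\big)=f(x)+\varepsilon\big(|a-c|^{2}-R_{c}^{2}\big)$, and $|a-c|<R_{c}$ would make this smaller than $f(x)$, contradicting optimality.

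For part~(3), fix $y\in\conv(S)$ with (unique) barycentric coordinates $q_{i}\ge0$; the identity above already gives $\sum_{i}q_{i}|a_{i}|^{2}-|y|^{2}=R_{c}^{2}-|y-c|^{2}$, so it only remains to check that this representation is optimal, i.e.\ that $f(y)=g(y)$ where $g$ is the affine function $z\mapsto R_{c}^{2}-|c|^{2}+2\langle z,c\rangle$, which coincides with $\sum_{i}q_{i}|a_{i}|^{2}$ on $V$. Convexity of $f$ together with $f(a_{i})=|a_{i}|^{2}=g(a_{i})$ gives $f\le g$ on $\conv(S)$, while $x$ lies in the relative interior of $\conv(S)$ (as $p_{i}>0$) with $f(x)=g(x)$ by part~(1); since a convex function lying below an affine one and meeting it at a relative interior point must agree with it throughout (realize any $y_{0}\in\conv(S)$ on a segment through $x$), we get $f=g$ on $\conv(S)$, hence $v_{A}^{2}(y)=f(y)-|y|^{2}=\sum_{i}q_{i}|a_{i}|^{2}-|y|^{2}=R_{c}^{2}-|y-c|^{2}$. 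The conceptual core is the two exchange/perturbation steps (minimal support in (1), and inserting the point $a$ in (2)); once $f$ is recognized as convex with $f\equiv|\cdot|^{2}$ on $A$, parts (2) and (3) collapse onto the one circumcenter identity. I expect the fussiest point to be the bookkeeping in part~(1) --- simultaneously arranging attainment of the minimum, positivity of the weights, and minimality of the support --- rather than any real obstacle.
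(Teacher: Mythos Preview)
Your proof is correct and shares its core with the paper's argument: both work through the paraboloid lift $Q=\conv\{(a,|a|^{2}):a\in A\}$ and the convex function $f(z)=v_{A}^{2}(z)+|z|^{2}$, and part~(1) is essentially the same Carath\'eodory-style reduction in both. The tactical choices diverge in parts~(2) and~(3). For the inequality $|a-c|\ge R_{c}$ in part~(2), the paper argues geometrically---it observes that $x$ lies in some face $\conv\{a,\{a_{i}\}_{i\neq k}\}$ and computes the cost of that new representation directly---whereas you use an infinitesimal perturbation, inserting $\varepsilon a$ and subtracting $\varepsilon\mu_{i}$ from each $p_{i}$; your route avoids the facet-selection bookkeeping and is a line shorter. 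In part~(3) the paper again embeds $y$ into a representation of $x$ (writing $x=\mu_{k}y+\sum_{i\neq k}\mu_{i}a_{i}$) and pushes the optimality of $\{a_{i}\}$ through by hand, while you invoke the single principle ``convex $f\le$ affine $g$ with equality at a relative interior point forces $f\equiv g$ on $\conv(S)$''. Your version packages the same content more cleanly; the paper's version has the minor advantage of not needing to isolate the convexity of $f$ as a separate observation. One small cosmetic point: in the $m=1$ case the proposition as stated asks for $R_{c}>0$, which is not literally available; both you and the paper gloss over this, and it is harmless.
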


\begin{proof} {\it 1.} Recall that 
$$v^2_A (x)=  \inf\left\{\sum_{i=1}^{m} \lambda_i |a_i|^2-|x|^2: m\in\N, x=\sum_{i=1}^{m}\lambda_i a_i; \lambda_i >0; \sum_{i=1}^{m} \lambda_i=1, a_i \in A  \right\}.$$
Following the standard proof of Carath\'eodory's theorem, we will show that for any decomposition of $x$ in the form $x=\sum\lambda_i a_i$, with $a_1, \dots, a_m$ being affinely dependent, the quantity $\sum \lambda_i |a_i|^2$ is not minimal. Thus the infimum in the definition of $v^2_A (x)$ may be reduced to affinely independent decompositions of $x$, thus with $m\le n+1$ points. Hence the infimum is taken on a compact set and is reached. 

So let $x=\sum\lambda_i a_i$ and assume that the sequence $a_1, \dots, a_m$ is affinely dependent then there exists a sequence of real numbers $\{\mu_i\}_{i=1}^m$, not all zeros, such that $\sum \mu_i a_i=0$ and $\sum \mu_i=0$. We note that (by multiplying, if needed, all $\mu_i$ by $-1$) we may also assume that
\begin{equation}\label{eq:poss}
\sum \mu_i|a_i|^2\ge0.
\end{equation}
And there is some $i$ such that $\mu_i>0$. Consider $k \in \{1,\dots, m\}$ such that
$$
\frac{\lambda_k}{\mu_k}=\min\{\frac{\lambda_i}{\mu_i}:\,\,\,\, \mu_i>0\}.
$$
Next, using that $a_k=-\sum\limits_{i\not=k}  \frac{\mu_i}{\mu_k} a_i$ we get
$$
x=\sum\limits_{i\not=k}\lambda_i a_i - \lambda_k \sum\limits_{i\not=k}  \frac{\mu_i}{\mu_k} a_i=\sum\limits_{i\not=k}\left(\lambda_i  -\lambda_k \frac{\mu_i}{\mu_k}\right) a_i,
$$
where $\left(\lambda_i -\lambda_k \frac{\mu_i}{\mu_k}\right) \ge 0$ for all $i$ and $\sum \left(\lambda_i -\lambda_k \frac{\mu_i}{\mu_k}\right) =1$, so we reduce the number of elements in sequence $\{a_i\}$. Thus, the only thing left is to show that
$$
\sum_{i\not=k}\left(\lambda_i  -\lambda_k \frac{\mu_i}{\mu_k}\right) |a_i|^2 \le  \sum_{i=1}^m \lambda_i  |a_i|^2.
$$
Using that $\mu_k>0$, the above is equivalent to $\sum \mu_i|a_i|^2\ge0$, which is exactly (\ref{eq:poss}). Therefore, we may assume that infimum in the definition of $v_A^2(x)$ is  is reached on affinely independent points and is actually a minimum. Hence, there exists an integer $1\le m\le n+1$, $m$ affinely independent points $a_1,\dots, a_m \in A$ and $m$ real numbers $p_1, \dots, p_m>0$ such that $\sum_{i=1}^{m} p_i=1$, $x=\sum_{i=1}^{m} p_ia_i$ and 
$v^2_A (x)= \sum_{i=1}^{m} p_i |a_i|^2-|x|^2.$\\

{\it 2.} One has $x=\sum_{i=1}^{m} p_ia_i$, with $p_i>0$ and $\sum_{i=1}^{m} p_i=1$ thus $x$ is in the relative interior of $\conv(S)$. Since $a_1,\dots, a_m$ are affinely independent, $\conv S$ is a $m$-dimensional simplex and there exists $c \in {\rm aff}S$ and $R_c>0$, such that $S \subset c+ R_c S^{n-1} $. Then $|a_i-c|=R_c$, for all $1\le i\le m$. Thus $|a_i|^2=R_c^2+2\langle c,a_i\rangle-|c|^2$, for all $1\le i\le m$. Hence
 \begin{eqnarray*}
 v^2_A (x)&=& \sum_{i=1}^{m} p_i |a_i|^2-|x|^2=\sum_{i=1}^{m} p_i (R_c^2-|c|^2+2\langle c,a_i\rangle)-|x|^2\\&=&R_c^2-|c|^2+2\langle c,x\rangle-|x|^2=R_c^2-|c-x|^2.
\end{eqnarray*}
 Assume now that there is $a\in A\cap {\rm aff}S$ such that $|a-c|< R_c$. Notice that we can select $k\in \{1, \dots, m\}$ such that  $x \in \conv \{ a, \{a_i\}_{i\not=k}\}$. Indeed, consider $a'=a+e_{n+1}\in\R^{n+1}$ and note that
the orthogonal projection of $\conv\{S, a'\}$ on ${\rm aff}S$ is equal to  $\conv\{S, a\}$
 and thus 
 $$
 \conv\{S\}  \subseteq \conv\{S, a\}= \bigcup_{k=1}^m \conv\{a, \{a_i\}_{i\not=k}\}.
 $$
Thus, there exists $\lambda_1,\dots, \lambda_m \ge0$, with $\sum_{i=1}^m \lambda_i=1$ such that $x = \sum_{i=1}^m \lambda_i \tilde{a}_i$, where $\tilde{a}_i=a_i$ for $i\neq k$ and $\tilde{a}_k=a$. Moreover, since $x$ is in the relative interior of $\conv(S)$ one has $\la_k>0$. Then
 \begin{eqnarray*}
\sum_{i=1}^{m} \lambda_i |\tilde{a}_i|^2-|x|^2 &=&  \sum\limits_{i=1}^{m} \lambda_i |\tilde{a}_i-c+c|^2-|x|^2=  \sum\limits_{i=1}^{m} \lambda_i |\tilde{a}_i-c|^2+2 \langle x-c, c\rangle +|c|^2-|x|^2\\
 &=&  \sum\limits_{i=1}^{m} \lambda_i |\tilde{a}_i-c|^2-|x-c|^2 < R_c^2 -|x-c|^2=v_A^2(x),
\end{eqnarray*}
which contradicts the minimality of the sequence $a_1, \dots, a_m$.\\

{\it 3.} Let $y \in \conv\{S\}$, then there exists $q_i\ge0, \sum q_i=1$ such that $y=\sum_{i=1}^{m} q_i a_i$. Consider another sequence $\{b_i\} \subset  A$, with  $y=\sum \lambda_i b_i$, and $\lambda_i>0, \sum \lambda_i=1$. 
Using the fact that $x, y \in {\rm aff} S$ we get,
as in {\it 2.} that $x=\mu_k y +\sum_{i\not=k} \mu_i a_i$, for some $\mu_i\ge0, \sum \mu_i=1$. Note that $\mu_k \not=0$, because $x$ is in the relative interior of $\conv S$. Thus
$$
x=\mu_k \sum \lambda_i b_i +\sum_{i\not=k} \mu_i a_i.
$$
The minimality of the sequence $S$ with respect to $v^2_A(x)$ implies that for any other convex combination $x=\sum \tilde{p}_i \tilde{a}_i$,  $\{\tilde{a}_i\} \subset A$, we get
$
\sum \tilde{p}_i|\tilde{a}_i - c|^2 \ge R_c.
$
Thus
$$
\sum \mu_k \lambda_i |b_i-c|^2 +\sum\limits_{i\not=k} \mu_i |a_i-c|^2\ge R_c^2.
$$
Using that $|a_i-c|=R_c$ and the fact that $\sum\limits_{i\not=k} \mu_i =1- \mu_k$ we get
$$
\sum  \lambda_i |b_i-c|^2  \ge R_c^2,
$$
which is exactly what we need to finish the proof. Indeed, again
$$
\sum\lambda_i|b_i|^2-|y|^2 =\sum \lambda_i|b_i-c|^2 - |c-y|^2 \ge R_c^2 -|c-y|^2=\sum q_i|a_i|^2-|y|^2,
$$
and thus $v^2_A (y)= \sum q_i |a_i|^2-|y|^2=R_c^2-|y-c|^2$ and $S$ is a minimizing sequence for $v^2_A(y)$.
\end{proof}

Now we are ready to use the above proposition to show that $v(A) \ge r(A)$.
For every $x \in \conv(A)$ let $S=\{a_1,\dots, a_m\}$ be the simplex obtained from Proposition \ref{prop:vgeom} and let $c$ and $R_c$ denote the center and the radius of the circumscribed ball of $S$. Then
$$
\sup\limits_{y\in \conv(S)} v^2_A(y)=R^2_c - \inf\limits_{y\in \conv(S)} |y-c|^2 =R^2_c-|c- w|^2,
$$
where $w=P_{\conv S}(c)$ denotes the projection of $c$ onto the convex set $\conv S$, i.e. the nearest point to $c$ from $\conv(S)$. 
For every $i$, one has $\langle a_i-w,c-w\rangle\le0$ thus $\langle a_i-c,c-w\rangle\le-|c-w|^2$, hence
$$
|a_i-w|^2=|a_i-c+c-w|^2=|a_i-c|^2+2\langle a_i-c,c-w\rangle+|c-w|^2\le R_c^2-|c-w|^2.
$$
Thus $S$ is contained in the ball of radius $\rho=\sqrt{R_c^2-|c-w|^2}=\sup_{y\in \conv(S)} v_A(y)\le v(A)$.
Hence $r_A(x)\le\rho\le v(A)$ which finishes the proof of $r(A) \le v(A)$.
\qed
\par\vspace{.1in}

\begin{table}[!h]
\begin{center}
\begin{tabular}{|c|l|l|l|l|}
\hline
{$\Rightarrow$} & {$d$} & $r$ & $c$ & $\Delta$  \\
\hline
$d$ &  = & N (Ex. \ref{ex:sch}, \ref{ex:delta-r}) & N (Ex. \ref{ex:sch}, \ref{ex:c}) & N (Ex. \ref{ex:dis}) \\
\hline
$r$ &  Y (Th. \ref{thm:weg}) & = & N (Ex. \ref{ex:c}, \ref{ex:rc}) & N (Ex. \ref{ex:dis}) \\
\hline
$c$ &  N (Ex. \ref{ex:c}, \ref{ex:delta-cd}) & N (Ex. \ref{ex:c}) & = & N (Ex. \ref{ex:dis}, \ref{ex:c})\\
\hline
$\Delta$ &  N (Ex. \ref{ex:delta-d}, \ref{ex:delta-cd}) & N (Ex. \ref{ex:sch}, \ref{ex:delta-r}) & N (Ex. \ref{ex:c}, \ref{ex:rc}) & = \\
\hline
\end{tabular}
\end{center}
\caption{When does convergence to 0 for one measure of non-convexity unconditionally imply the same for another?}\label{tab:1}
\end{table}

The above relationships (summarized in Table~\ref{tab:1}) are the only unconditional relationships that exist
between these notions in general dimension. To see this, we list below some examples that show why no other
relationships can hold in general.

\begin{ex}\label{ex:c}
By Lemma~\ref{lem:scaling}, we can scale a non convex set to get examples
where $c$ is fixed but $d,r$ and $\Delta$ converge to 0, for example, take $A_k=\{0; \frac{1}{k}\}$; 
or to get examples where $c$ goes to 0 but $d,r$ are fixed and $\Delta$ diverges, for example take $A_k=\{0, 1, \dots, k\}$. 
\end{ex}

%
%

\begin{ex}\label{ex:delta-r}
 An example where $\Delta(A_k) \ra 0$, $d(A_k)\ra 0$ but $r(A_k)$ is bounded away
from 0 is given by a right triangle from which a piece is shaved off leaving a protruding edge, see Figure \ref{fig:dcr}.
\end{ex}

\begin{figure}[h!]
\begin{center}
\includegraphics[scale=0.4]{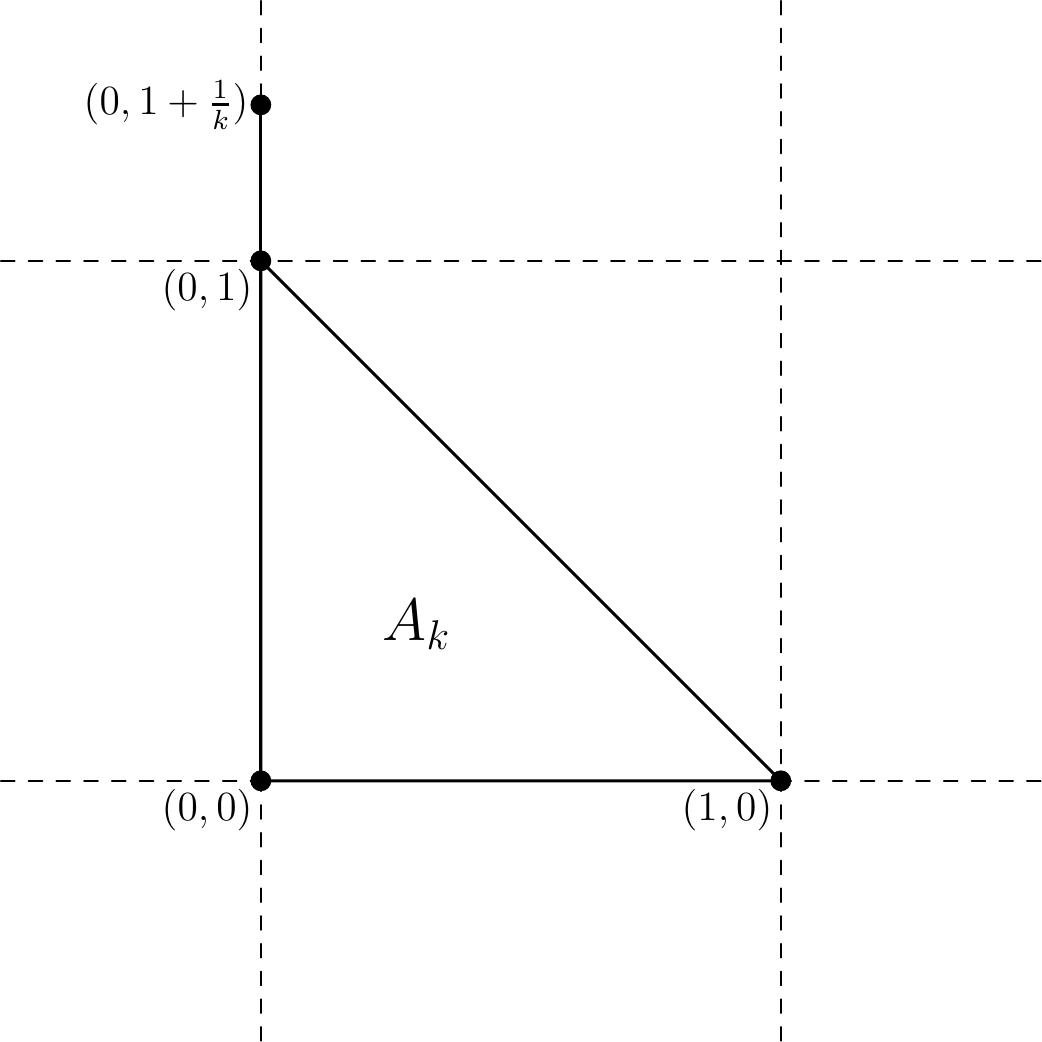}
\end{center}
\caption{$\Delta(A_k) \ra 0$  but $r(A_k)>\sqrt{2}/2$  (Example \ref{ex:delta-r}).}\label{fig:dcr}
\end{figure}

\begin{ex}\label{ex:delta-d}
 An example where $\Delta(A_k)\ra 0$ but both $c(A_k)$ and $d(A_k)$ are bounded away from 0
is given by taking a 3-point set with 2 of the points getting arbitrarily closer but staying away from the third, see Figure \ref{fig:DcD}.
\end{ex}

\begin{figure}[h!]
\begin{center}
\includegraphics[scale=0.4]{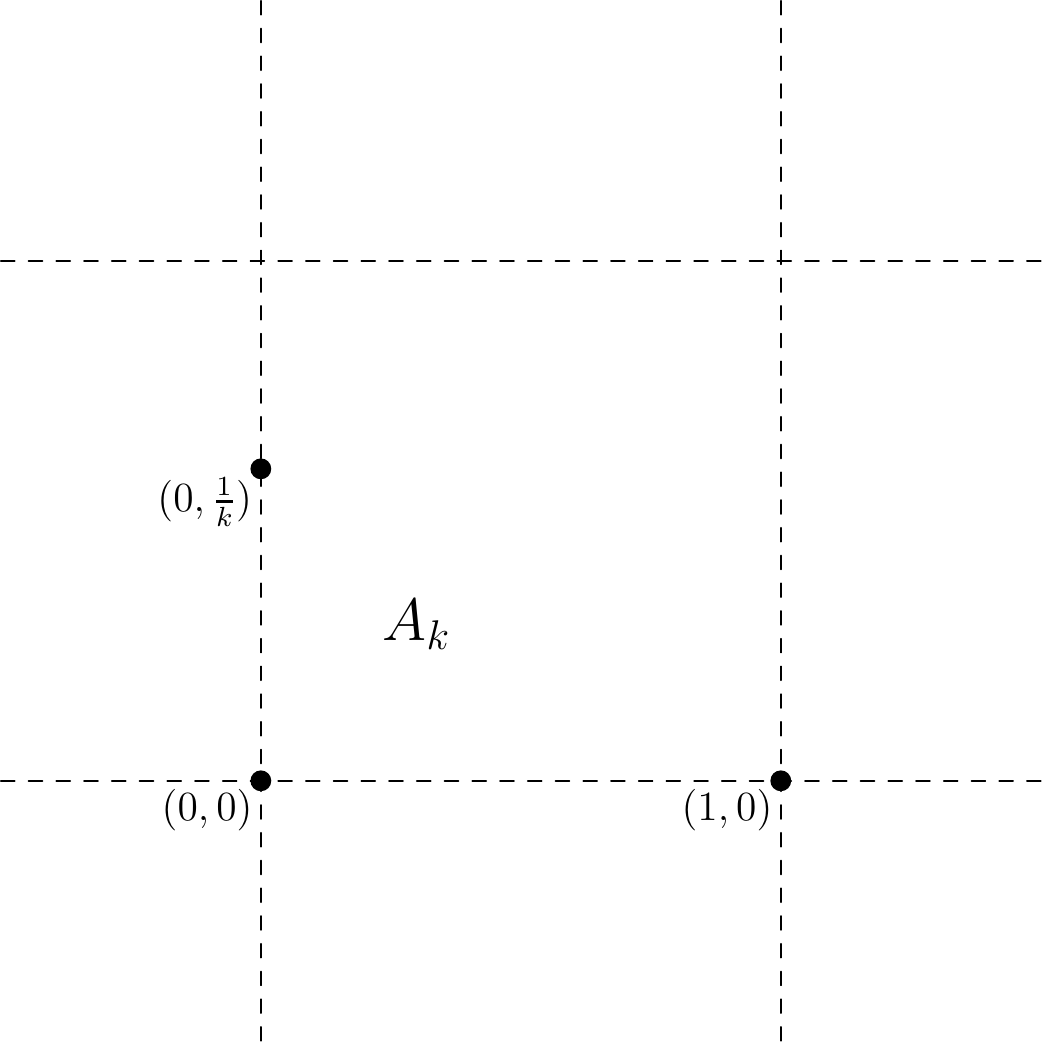}
\end{center}
\caption{$\Delta(A_k)\ra 0$ but  $c(A_k) \ge 1$ and $d(A_k)\ge 1/2$  (Example \ref{ex:delta-d}).}\label{fig:DcD}
\end{figure}

\begin{ex}\label{ex:delta-cd}
 An example where $\Delta(A_k) \ra 0$ and  $c(A_k)\ra 0$ but $d(A_k)>1/2$ can be found in  Figure \ref{fig:dcr-same}.
\end{ex}

\begin{figure}[h!]
\begin{center}
\includegraphics[scale=0.8]{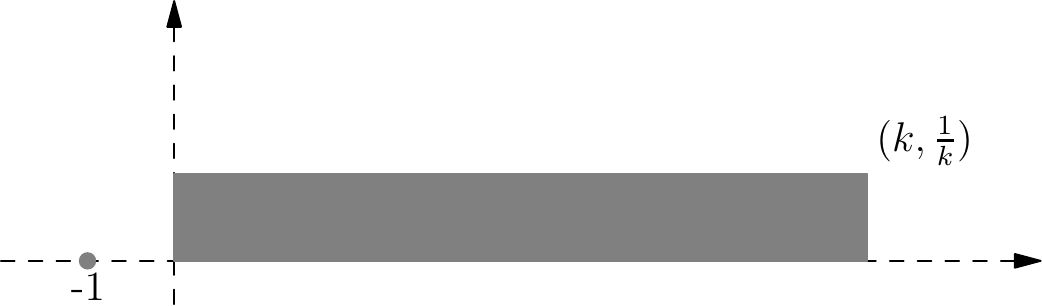}
\end{center}
\caption{$\vol_2(A_k) \ge 1$, $\Delta(A_k) \ra 0$ and  $c(A_k)\ra 0$ but $d(A_k)>1/2$  (Example \ref{ex:delta-cd}).}\label{fig:dcr-same}
\end{figure}


\subsection{Conditional relationships}
\label{sec:cond}

There are some more relationships between different notions of non-convexity that emerge if we impose some natural conditions on the sequence of sets (such as ruling out escape to infinity, or vanishing to almost nothing). 

A first observation of this type is that Hausdorff distance to convexity is dominated by Schneider's index of non-convexity if $A$ is contained in a ball of known radius.

\begin{lem}\label{lem:c-d}
For any compact set $A \subset \R^n$, 
\be\label{eq:d-c}
d(A)\leq R(A) c(A) .
\ee
\end{lem}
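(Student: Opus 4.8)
The plan is to compare the two ``radial'' representations of $d(A)$ and $c(A)$ that were recorded earlier in the excerpt. Recall that
\[
c(A)=\sup_{x\in\conv(A)}\inf_{a\in A}\|x-a\|_{K_x},\qquad K_x=\conv(A)-x,
\]
and that $d(A)=\sup_{x\in\conv(A)}\inf_{a\in A}|x-a|$. So it suffices to fix $x\in\conv(A)$ and show, for every $a\in A$, the pointwise bound
\[
|x-a|\leq R(A)\,\|x-a\|_{K_x}.
\]
First I would observe that since $A\subset \conv(A)\subset x+R(A)B_2^n$ (using that $\conv(A)$ and $A$ have the same circumradius), the translated body $K_x=\conv(A)-x$ is contained in $R(A)B_2^n$. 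Hence for any vector $y\in\R^n$ one has $\|y\|_{K_x}\geq \|y\|_{R(A)B_2^n}=|y|/R(A)$, simply because $K_x\subset R(A)B_2^n$ implies $\|y\|_{K_x}\geq \|y\|_{R(A)B_2^n}$ (a smaller gauge body gives a larger Minkowski functional). Applying this with $y=x-a$ gives exactly $|x-a|\leq R(A)\|x-a\|_{K_x}$.

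Taking the infimum over $a\in A$ and then the supremum over $x\in\conv(A)$ on both sides yields
\[
d(A)=\sup_{x\in\conv(A)}\inf_{a\in A}|x-a|\leq R(A)\sup_{x\in\conv(A)}\inf_{a\in A}\|x-a\|_{K_x}=R(A)\,c(A),
\]
which is \eqref{eq:d-c}. One subtlety to address carefully: the representation $c(A)=\sup_x\inf_a\|x-a\|_{K_x}$ derived in Section~\ref{sec:schneider} was obtained under the reading ``if $c(A)<t$ then for each $x$ there are $a,b$ with $x-a=t(b-x)$''; I would want to check that the supremum is attained / that one indeed has equality (not merely $\leq$), or alternatively just run the argument with $t>c(A)$ arbitrary: for such $t$, $A+t\conv(A)$ is convex, so $(1+t)\conv(A)=A+t\conv(A)$, hence every $x\in\conv(A)$ can be written $x=\frac{a+tb}{1+t}$ with $a\in A$, $b\in\conv(A)$, giving $x-a=t(b-x)\in t(\conv(A)-x)=tK_x$, i.e. $\|x-a\|_{K_x}\leq t$, and then $|x-a|\leq R(A)\|x-a\|_{K_x}\leq R(A)t$; letting $t\downarrow c(A)$ finishes it.

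The main (and only real) obstacle is the bookkeeping around the Minkowski functional and the correct inclusion: one must be sure that it is $\conv(A)$ that sits inside $x+R(A)B_2^n$ for \emph{every} $x\in\conv(A)$ — this is where $R(A)$ being the circumradius of the convex set matters, since $\diam(\conv(A))=\diam(A)$ and any point of a set of circumradius $R$ serves as a center for a ball of radius $\operatorname{diam}\le 2R$; but in fact one does not even need the sharp constant here, only that $\conv(A)-x\subset R(A)B_2^n$, which holds because $\conv(A)\subset c+R(A)B_2^n$ for the circumcenter $c$ and hence $|x-a|\le 2R(A)$... — so to get the clean constant $R(A)$ rather than $2R(A)$ I would instead argue directly: for $x,a\in\conv(A)$, the segment from $a$ through $x$ stays in $\conv(A)$ until it exits, and $\conv(A)\subset x+R(A)B_2^n$ is what is actually needed. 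Since $x\in\conv(A)$ and $\conv(A)$ has circumradius $R(A)$, the point $x$ itself need not be the circumcenter, so one should note $K_x=\conv(A)-x$ may not be contained in $R(A)B_2^n$ in general — the honest fix is that we only ever apply the gauge to the \emph{specific} vector $x-a$ with $a\in A$ realizing (approximately) the outer witness, and $\|x-a\|_{K_x}\le t$ already forces $a+t^{-1}\cdot$(stuff); re-examining, the pointwise claim we need is exactly ``$u\in tK_x\Rightarrow |u|\le R(A)t$'' for $u=x-a$, i.e. $x-a\in t(\conv(A)-x)$ means $x+\tfrac1t(x-a)\in\conv(A)$, call it $b$; then both $x-a$ and its scaling live in $\conv(A)-x$ translated, and $|x-a|\le \operatorname{diam}(\conv(A))\le 2R(A)$ again gives only $2R(A)$. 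So the cleanest route, which I would actually write, is the one in the previous paragraph using $t>c(A)$ together with the observation that $b\in\conv(A)$, $a\in A\subset\conv(A)$, hence $b,x,a$ are three points of $\conv(A)$ with $x$ between $a$ and $b$ and $|b-x|=\tfrac1t|x-a|$; then $|x-a|=t|x-b|\le t\cdot\operatorname{dist}(x,\partial\text{-side})$ — and to extract the factor $R(A)$ rather than $\operatorname{diam}$ one uses that $|x-b|\le R(A)$ whenever... this is the point that needs the most care, and I expect the paper's actual proof pins it down by choosing $x$ to be (near) the circumcenter or by a compactness/extreme-point reduction; I would follow whichever of these makes the constant come out to exactly $R(A)$.
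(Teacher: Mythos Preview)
Your approach has a genuine gap that you yourself identified but did not resolve: the pointwise inequality $|x-a|\leq R(A)\,\|x-a\|_{K_x}$ is simply false in general, because $K_x=\conv(A)-x$ is \emph{not} contained in $R(A)B_2^n$ unless $x$ happens to be the circumcenter. For a generic $x\in\conv(A)$ one only has $K_x\subset \diam(A)\,B_2^n$, and your alternate route via $x-a=t(b-x)$ runs into exactly the same wall: it gives $|x-a|=t|b-x|$ with $b,x\in\conv(A)$, hence only $|b-x|\leq\diam(A)\leq 2R(A)$, yielding $d(A)\leq 2R(A)c(A)$ rather than the stated $R(A)c(A)$. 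Your closing speculation that ``the paper pins it down by choosing $x$ near the circumcenter'' is not how the argument goes, and no extreme-point reduction is needed.

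The paper's proof bypasses the pointwise gauge comparison entirely with a one-line set inclusion. Translate so that the circumscribing ball is centered at the origin, i.e.\ $A\subset R(A)B_2^n$; then $\conv(A)\subset R(A)B_2^n$ as well, and (crucially) $0\in\conv(A)$, since the center of the smallest enclosing ball always lies in the convex hull. Now use $0\in\conv(A)$ to write
\[
\conv(A)\ \subset\ \conv(A)+c(A)\conv(A)\ =\ A+c(A)\conv(A)\ \subset\ A+c(A)R(A)B_2^n,
\]
where the equality is Remark~\ref{rk:referee}. This gives $d(A)\leq c(A)R(A)$ immediately. The point you were missing is that one should apply the containment $\conv(A)\subset R(A)B_2^n$ to the \emph{offset} $c(A)\conv(A)$ in the decomposition $x=a+c(A)b'$ (so $|x-a|=c(A)|b'|\leq c(A)R(A)$), rather than to the difference $b-x$ coming from the representation in Section~\ref{sec:schneider}. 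These two decompositions are not the same, and only the former gives the sharp constant.
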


\begin{proof}
By translation invariance, we may assume that $A\subset R(A) B_2^n$. Then $0\in \conv(A)$, and it follows that
\ben
\conv(A) \subset \conv(A) + c(A)\conv(A) = A + c(A)\conv(A) \subset A + c(A)R(A)B_2^n.
\een
Hence $d(A)\leq R(A) c(A)$.
\end{proof}

This bound is useful only if $c(A)$ is smaller than $1$, because we already know that $d(A)\le r(A)\le R(A)$.

In dimension 1, all of the non-convexity measures are tightly connected.

\begin{lem}\label{lem:meas-1d}
Let $A$ be a compact set in $\R$. Then
\be\label{eq:c-d-1d}
r(A) = d(A) = R(A)c(A)\le \frac{\Delta(A)}{2}.
\ee
\end{lem}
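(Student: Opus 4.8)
The plan is to express every quantity in the statement in terms of a single number, the supremal length $\ell=\ell(A)$ of a ``gap'' of $A$. Writing $\conv(A)=[a,b]$ (the case $a=b$ being trivial, as then all of $d(A),r(A),c(A),\Delta(A)$ vanish), the relatively open set $[a,b]\setminus A$ decomposes --- since $a,b\in A$ --- into a countable disjoint union of open intervals $(c_i,d_i)$ with $c_i,d_i\in A$, and I set $\ell=\sup_i(d_i-c_i)$, so that $\ell=0$ exactly when $A$ is convex. The goal is then to prove $d(A)=r(A)=R(A)c(A)=\ell/2$ and, separately, $\ell\le\Delta(A)$.

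First I would compute $d(A)=\ell/2$: since $A\subset\conv(A)$ one has $d(A)=\sup_{x\in\conv(A)}\dist(x,A)$, and for $x$ in a gap $(c_i,d_i)$ one has $\dist(x,A)=\min(x-c_i,d_i-x)\le(d_i-c_i)/2\le\ell/2$, with equality at the midpoint; taking the supremum over all $x$ (using gaps whose lengths approach $\ell$ if necessary) gives $d(A)=\ell/2$. Next, $r(A)=\ell/2$: Theorem~\ref{thm:weg} already gives $d(A)\le r(A)$, so it suffices to bound $r(A)$ from above, and for $x\notin A$ lying in a gap $(c_i,d_i)$ the two-point set $T=\{c_i,d_i\}\subset A$ satisfies $x\in\conv(T)$ and $R(T)=(d_i-c_i)/2\le\ell/2$ (a singleton works when $x\in A$), so $r(A)\le\ell/2$. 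Hence $d(A)=r(A)=\ell/2$.

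The third equality is the step I expect to require the most care. The smallest ball in $\R$ containing $A$ is $[a,b]$, so $R(A)=(b-a)/2$, and Lemma~\ref{lem:c-d} already gives $R(A)c(A)\ge d(A)=\ell/2$. For the reverse inequality I would show that with $\lambda:=\ell/(b-a)\in[0,1]$ the set $A+\lambda\conv(A)$ is convex, whence $c(A)\le\lambda$ and $R(A)c(A)\le\ell/2$. Here $A+\lambda[a,b]=\bigcup_{a'\in A}[a'+\lambda a,\,a'+\lambda b]\subseteq[(1+\lambda)a,(1+\lambda)b]$, and a point $y$ of the right-hand interval fails to be covered precisely when the closed interval $I_y:=[y-\lambda b,\,y-\lambda a]$, which has length $\lambda(b-a)=\ell\le b-a$, is disjoint from $A$. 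But $I_y$ cannot protrude past both ends of $[a,b]$; if it protrudes past $a$ it contains $a\in A$, if past $b$ it contains $b\in A$, and if $I_y\subseteq[a,b]$ with $I_y\cap A=\emptyset$ then $I_y$ lies inside a single gap $(c_i,d_i)$, forcing $\ell=|I_y|<d_i-c_i\le\ell$ --- the inequality being strict because $I_y$ is closed and the gap open --- a contradiction. So every $y$ is covered, $A+\lambda\conv(A)=[(1+\lambda)a,(1+\lambda)b]$ is convex, and $R(A)c(A)=\ell/2$.

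Finally, $\ell\le\Delta(A)$ is immediate since $\Delta(A)=\vol_1([a,b]\setminus A)=\sum_i(d_i-c_i)\ge\sup_i(d_i-c_i)=\ell$, so altogether $R(A)c(A)=r(A)=d(A)=\ell/2\le\Delta(A)/2$. The only genuinely delicate point is the borderline case in the third step in which the supremum $\ell$ is attained by some gap: there, the distinction between a closed interval of length $\ell$ and an open gap of length $\ell$ is exactly what prevents $A+\lambda\conv(A)$ from having a hole, and one must also not forget the intervals $I_y$ that stick out past $a$ or $b$; everything else is routine bookkeeping.
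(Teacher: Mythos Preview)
Your proof is correct. Both your argument and the paper's proceed via the one-dimensional gap structure of $\conv(A)\setminus A$, so the approaches are essentially the same; your explicit introduction of $\ell=\sup_i(d_i-c_i)$ is a clean organizing device that the paper leaves implicit.

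One point of comparison worth noting: for the inequality $R(A)c(A)\le d(A)$, the paper's argument is shorter than yours. After normalizing to $\conv(A)=[-1,1]$ and setting $d=d(A)$, the paper simply invokes the \emph{definition} of $d(A)$ to get $[-1,1]=\conv(A)\subset A+[-d,d]$, and then observes (using $\pm1\in A$) that
\[
A+d\,\conv(A)=A+[-d,d]\supset(-1+[-d,d])\cup[-1,1]\cup(1+[-d,d])=[-1-d,1+d],
\]
so $A+d\,\conv(A)$ is convex and $c(A)\le d$. Your case analysis on whether the interval $I_y$ protrudes past $a$, past $b$, or lies inside $[a,b]$ is essentially reproving, from scratch, the containment $\conv(A)\subset A+[-d,d]$ that you already have for free from the definition of $d(A)$. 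Both arguments are valid; the paper's is just more economical here.
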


\begin{proof}
We already know that $d(A)\le r(A)$. Let us prove that $r(A)\le d(A)$. From the definition of $r(A)$ and $d(A)$, we have
$$r(A)=\sup_{x\in\conv(A)}\inf\left\{\frac{\beta-\alpha}{2}; \alpha, \beta\in A, \alpha\le x\le \beta\right\},
\quad d(A)=\sup_{y\in\conv(A)}\inf_{\alpha\in A}|y-\alpha|.$$
Thus we only need to show that for every $x\in\conv(A)$, there exists $y\in\conv(A)$ such that
$$\inf\left\{\frac{\beta-\alpha}{2}; \alpha, \beta\in A, \alpha\le x\le \beta\right\}\le\inf_{\alpha\in A}|y-\alpha|.$$
By compactness there exists $\alpha, \beta\in A$, with $\alpha\le x\le \beta$ achieving the infimum in the left hand side. Then we only need to choose $y=\frac{\alpha+\beta}{2}$ in the right hand side to conclude that $r(A)\le d(A)$. In addition, we get $(\alpha,\beta)\subset \conv(A)\setminus A$ thus $2r(A)=\beta-\alpha\le\Delta(A)$.\\

Now we prove that $d(A)=R(A)c(A)$. From Lemma \ref{lem:c-d}, we  have $d(A) \leq R(A) c(A)$. Let us prove that $R(A) c(A)\le d(A)$. By an affine transform, we may reduce to the case where $\conv(A)=[-1,1]$, thus $-1=\min(A)\in A$ and $1=\max(A)\in A$. Notice that $R(A)=1$ and denote $d:=d(A)$. By the definition of $d(A)$, one has $[-1,1]=\conv(A)\subset A+[-d,d]$. Thus using that $-1\in A$ and $1\in A$, we get
$$
A + d(A) \conv(A) =  A+[-d,d] \supset  (-1+[-d,d]) \cup [-1,1]\cup (1+[-d,d])=[-1-d, 1+d],
$$
we conclude that $A + d(A) \conv(A)\supset (1+d(A))\conv(A)$ and thus $R(A) c(A)=c(A)\le d(A)$.
\end{proof}

Notice that the inequality on $\Delta$ of Lemma \ref{lem:meas-1d} cannot be reversed as shown by Example \ref{ex:dis}.
The next lemma provides a connection between $r$ and $c$ in $\R^n$.

\begin{lem}\label{lem:c-r}
For any compact set $A \subset \R^n$, 
\be\label{eq:r-c}
r(A) \leq 2 \frac{c(A)}{1+c(A)} R(A).
\ee
\end{lem}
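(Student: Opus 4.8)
The plan is to \emph{not} bound $r(A)$ directly, but instead to bound Cassels' quantity $\rho(A)$, exploiting the equality $r(A)=\rho(A)$ furnished by Theorem~\ref{thm:weg}. The reason is that $\rho$ interacts transparently with the definition of $c(A)$: a direct attempt to bound $r_A(x)$ by exhibiting a small finite set $T\subset A$ with $x\in\conv(T)$ seems to recover only the trivial estimate $r(A)\le R(A)$, because the Carath\'eodory decomposition of the point $b\in\conv(A)$ appearing below scatters over all of $A$ and the resulting $T$ need not lie in a small ball. The point of $\rho_A(x)$, however, is that it only requires \emph{one} point $a\in A$ lying ``behind'' $x$ (i.e. with $x=(1-\theta)a+\theta b$ for some $b\in\conv(A)$ and $\theta\in(0,1)$) and close to $x$ --- and the definition of $c(A)$ hands us exactly such a point.

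Concretely, I would proceed as follows. If $A$ is convex then $c(A)=0=r(A)$ by Lemma~\ref{lem:meas0}, so assume $A$ is not convex and fix any $\lambda>0$ for which $A+\lambda\conv(A)$ is convex (such $\lambda$ exists since $c(A)\le n<\infty$ by Theorem~\ref{thm:sch75}). Then $A+\lambda\conv(A)$ equals its own convex hull, so Lemma~\ref{lem:conv-sum} gives $A+\lambda\conv(A)=\conv(A)+\lambda\conv(A)=(1+\lambda)\conv(A)$, the last equality because $\conv(A)$ is convex. Hence for any $x\in\conv(A)$ we have $(1+\lambda)x\in(1+\lambda)\conv(A)=A+\lambda\conv(A)$, so there are $a\in A$ and $b\in\conv(A)$ with
\[
x=\frac{1}{1+\lambda}\,a+\frac{\lambda}{1+\lambda}\,b .
\]
Since $\theta:=\frac{\lambda}{1+\lambda}\in(0,1)$ and $b\in\conv(A)$, this shows $a\in A_x$, and moreover
\[
|x-a|=\frac{\lambda}{1+\lambda}\,|b-a|\le\frac{\lambda}{1+\lambda}\,\diam(A)\le\frac{2\lambda}{1+\lambda}\,R(A),
\]
using $\diam(\conv(A))=\diam(A)$ and the elementary half $\diam(A)\le 2R(A)$ of Jung's inequality recalled above. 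Therefore $\rho_A(x)\le\frac{2\lambda}{1+\lambda}R(A)$ for every $x\in\conv(A)$, whence $\rho(A)\le\frac{2\lambda}{1+\lambda}R(A)$.

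To conclude, I would optimize over $\lambda$: the map $\lambda\mapsto\frac{2\lambda}{1+\lambda}$ is continuous and non-decreasing, and $c(A)=\inf\{\lambda\ge 0: A+\lambda\conv(A)\text{ is convex}\}$, so passing to the infimum over admissible $\lambda$ gives $\rho(A)\le\frac{2c(A)}{1+c(A)}R(A)$ --- note this requires no knowledge of whether that infimum is attained. Combining with $r(A)=\rho(A)$ from Theorem~\ref{thm:weg} yields the stated inequality. I do not expect a genuine technical obstacle here: once one makes the initial observation that the right functional to estimate is $\rho$ (rather than $r$, $v$, or $w$ directly), the argument is a two-line computation; the only ``hard'' part is recognizing that the crude decomposition coming from $c(A)$ controls $\rho$ but not $r$ head-on.
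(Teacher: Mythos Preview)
Your proof is correct and follows essentially the same approach as the paper: bound $\rho(A)$ rather than $r(A)$ directly, use the convex decomposition $x=\frac{1}{1+\lambda}a+\frac{\lambda}{1+\lambda}b$ coming from $A+\lambda\conv(A)=(1+\lambda)\conv(A)$ to exhibit $a\in A_x$ with $|x-a|\le\frac{\lambda}{1+\lambda}\diam(A)\le\frac{2\lambda}{1+\lambda}R(A)$, and then invoke Theorem~\ref{thm:weg}. The only minor differences are cosmetic: the paper works directly at $\lambda=c(A)$ (implicitly using that the infimum is attained) and only at a maximizing point $x^*$, whereas you work with arbitrary admissible $\lambda>0$ and arbitrary $x\in\conv(A)$ before passing to the infimum, which is slightly cleaner but not substantively different.
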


\begin{proof}
Consider $x^*$ the point in $\conv(A)$ that realizes the maximum in the definition of $\rho(A)$ (it exists since $\conv(A)$ is closed). Then, for every $a \in A_{x^*}$, one has $\rho(A) \leq |x^*-a|$. By definition,
$$ c(A) = \inf \{\lambda \geq 0 : \conv(A) = \frac{A + \lambda \conv(A)}{1+\lambda} \}. $$
Hence,
$$ x^* = \frac{1}{1+c(A)}a + \frac{c(A)}{1+c(A)}b, $$
for some $a \in A$ and $b \in \conv(A)$. Since $\frac{1}{1+c(A)} + \frac{c(A)}{1+c(A)} = 1$, one deduces that $a \in A_{x^*}$. Thus,
$$ \rho(A) \leq |x^*-a|. $$
But,
$$ x^* - a = \frac{1}{1+c(A)}a + \frac{c(A)}{1+c(A)}b - a = \frac{c(A)}{1+c(A)} (b-a). $$
It follows that
$$ \rho(A) \leq |x^*-a| = \frac{c(A)}{1+c(A)} |b-a| \leq \frac{c(A)}{1+c(A)} \diam(A) \leq 2 \frac{c(A)}{1+c(A)} R(A). $$
As shown by Wegmann (cf. Theorem \ref{thm:weg}), if $A$ is closed then $\rho(A) = r(A)$. We conclude that
$$ r(A) \leq 2 \frac{c(A)}{1+c(A)} R(A). $$
\end{proof}

Our next result says that the only reason for which we can find examples
where the volume deficit goes to 0, but the Hausdorff distance from convexity does not,
is because we allow the sets either to shrink to something of zero volume, or run off to infinity.

\begin{thm}\label{thm:delta-d}
Let $A$ be a compact set in $\R^n$ with nonempty interior. Then 
\be\label{eq:counter-size}
d(A)\le \left(\frac{n}{\vol_{n-1}(B_2^{n-1})}\right)^\frac{1}{n}\left(\frac{2R(A)}{\inr(\conv(A))}\right)^\frac{n-1}{n}\Delta(A)^\frac{1}{n}.
\ee
\end{thm}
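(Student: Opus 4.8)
The plan is to quantify the following geometric fact: if the Hausdorff distance $d(A)$ is large, then $\conv(A)$ must contain a whole slab-like region near its boundary that is disjoint from $A$, and such a region has volume comparable to $d(A)$ times an $(n-1)$-dimensional cross-section, which forces $\Delta(A)$ to be large. Concretely, by the definition of $d(A)=d(A)$ there is a point $x_0\in\conv(A)$ with $d_A(x_0)=d(A)$, i.e. the open ball $x_0+d(A)\,\interior B_2^n$ misses $A$. Since $A$ is compact with nonempty interior, $\conv(A)$ has nonempty interior too, and $\inr(\conv(A))>0$; pick a ball $y_0+\rho B_2^n\subset\conv(A)$ with $\rho=\inr(\conv(A))$. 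First I would form the convex hull $C=\conv\big(\{x_0\}\cup(y_0+\rho B_2^n)\big)\subset\conv(A)$, which is a ``spherical cone'' with apex $x_0$, and intersect it with the ball $x_0+d(A)B_2^n$: every point of this intersection lies in $\conv(A)$ but not in $A$, so it contributes to $\Delta(A)$.

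The main computation is then to bound from below $\vol_n\big(C\cap(x_0+d(A)B_2^n)\big)$. Writing $L=|x_0-y_0|\le 2R(A)$ (since both points lie in $A$, up to translating $A$ into a ball of radius $R(A)$, or more simply since $\diam(\conv(A))\le 2R(A)$), the cone $C$ near its apex looks, after rescaling, like the cone over a disc of radius $\rho$ at distance $L$, i.e. it has half-opening determined by $\rho/L$. Slicing $C$ by hyperplanes orthogonal to the axis $[x_0,y_0]$ at distance $t$ from $x_0$ (for $0\le t\le d(A)$, which we may assume is $\le L$ since otherwise $x_0+d(A)B_2^n$ already contains $y_0$ and the estimate is only easier), the slice is an $(n-1)$-ball of radius at least $t\rho/L$. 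Hence
\ben
\Delta(A)\ \ge\ \vol_n\big(C\cap(x_0+d(A)B_2^n)\big)\ \ge\ \int_0^{d(A)} \vol_{n-1}(B_2^{n-1})\Big(\frac{t\rho}{L}\Big)^{n-1}\,dt\ =\ \frac{\vol_{n-1}(B_2^{n-1})}{n}\Big(\frac{\rho}{L}\Big)^{n-1} d(A)^n.
\een
Rearranging and substituting $L\le 2R(A)$, $\rho=\inr(\conv(A))$ gives exactly
\ben
d(A)\ \le\ \left(\frac{n}{\vol_{n-1}(B_2^{n-1})}\right)^{1/n}\left(\frac{2R(A)}{\inr(\conv(A))}\right)^{(n-1)/n}\Delta(A)^{1/n},
\een
which is \eqref{eq:counter-size}.

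I expect the one point needing care to be the claim that the conical slice at height $t$ has $(n-1)$-radius at least $t\rho/L$: this is the statement that $C$ contains the cone over $y_0+\rho B_2^n$ with apex $x_0$, truncated, and that orthogonal slices of that cone scale linearly from $0$ at the apex to $\rho$ at $y_0$. This is elementary convexity (each such slice contains the segment from $x_0$ to the corresponding slice of $y_0+\rho B_2^n$, hence contains a homothet of $\rho B_2^{n-1}$ with ratio $t/L$), but one must be slightly careful that the relevant slices genuinely lie within the ball $x_0+d(A)B_2^n$ — which holds because a point in the slice at axial distance $t\le d(A)$ and transverse distance $\le t\rho/L\le t$ from the axis is at Euclidean distance $\le t\sqrt{2}$... so in fact one should slice by the ball directly rather than by the cylinder, i.e. integrate the radius $\min\{t\rho/L,\ \sqrt{d(A)^2-t^2}\}$; near the apex the first term dominates, and a clean way to finish is to restrict the integral to $t\in[0,d(A)/\sqrt2]$ or simply to note $t\rho/L\le t\le d(A)$ so points stay in the ball, absorbing any constant. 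The bookkeeping of which constant ends up in front is the only real obstacle; the geometry itself is straightforward.
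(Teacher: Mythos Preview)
Your approach is exactly the paper's: pick $x_0\in\conv(A)$ realizing $d(A)$, pick an inscribed ball $y_0+\rho B_2^n\subset\conv(A)$, form $C=\conv\big(\{x_0\}\cup(y_0+\rho B_2^n)\big)$, and lower-bound $\vol_n\big(C\cap(x_0+d(A)B_2^n)\big)$, then use $|x_0-y_0|\le 2R(A)$. The only difference is in how the volume lower bound is extracted, and this is precisely the point you flag as unresolved.

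Your axial slicing computes the volume of the straight cone with apex $x_0$, axis along $[x_0,y_0]$, height $d(A)$ and base radius $d(A)\rho/L$. As you noticed, the outer rim of that base sits at Euclidean distance $d(A)\sqrt{1+\rho^2/L^2}>d(A)$ from $x_0$, so that cone is \emph{not} contained in $x_0+d(A)B_2^n$; your displayed integral therefore overcounts, and cutting the range to $[0,d(A)/\sqrt{2}]$ or similar spoils the constant. The paper sidesteps this cleanly: let $z$ be the point where the axis $[x_0,y_0]$ meets the sphere $x_0+d(A)S^{n-1}$, and let $S_h$ be the $(n-2)$-sphere where the lateral surface of $C$ meets that same sphere, so $h=d(A)\sin\alpha=d(A)\rho/L$. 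The bipyramid $\conv(x_0,S_h,z)$ has all its extreme points in both $C$ and $x_0+d(A)B_2^n$, hence lies in the intersection; and since its two apices $x_0,z$ are at axial distance $d(A)$ apart, its volume is
\[
\frac{d(A)}{n}\,\vol_{n-1}(B_2^{n-1})\,h^{n-1}=\frac{\vol_{n-1}(B_2^{n-1})}{n}\Big(\frac{\rho}{L}\Big)^{n-1}d(A)^n,
\]
which is exactly the quantity you wanted. So the fix for your ``bookkeeping obstacle'' is: replace the straight truncated cone by this bipyramid inscribed in the spherical cap; the volume is identical (same base, same total height), but now the inclusion in the ball is automatic.
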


\begin{proof}
From the definition of $d(A)$ there exists $x\in\conv(A)$ such that $\vol_n((x+d(A)B_2^n)\cap A)=0$. Thus 
$\Delta(A)\ge \vol_n(\conv(A) \cap (x + d(A) B_2^n))$. Let us denote $r=\inr(\conv(A))$. From the definition of $\inr(\conv(A))$, there exists $y\in\conv(A)$ such that $y+rB_2^n\subset \conv(A)$. Hence
$$\Delta(A)\ge \vol_n(\conv(x, y+rB_2^n) \cap (x + d(A) B_2^n))\ge \frac{1}{n}\vol_{n-1}(B_2^{n-1})\left(\frac{rd(A)}{2R(A)}\right)^{n-1}. $$
Let $\{z\}=[x,y]\cap (x + d(A) S^{n-1})$ be the intersection point of the sphere centered at $x$ and the segment $[x,y]$ and let $h$ be the radius of the $(n-1)$-dimensional sphere $S_h=\partial(\conv(x, y+rB_2^n)) \cap (x + d(A) S^{n-1}))$. Then $h=\frac{d(A)r}{|x-y|}$ and $\conv(x, y+rB_2^n) \cap (x + d(A) B_2^n)\supset\conv(x,S_h,z)$. Thus
$$\Delta(A)\ge \vol_n(\conv(x,S_h,z))=\frac{d(A)}{n}\vol_{n-1}(B_2^{n-1})h^{n-1}\ge\frac{d(A)^n}{n}\vol_{n-1}(B_2^{n-1})\left(\frac{r}{|x-y|}\right)^{n-1}.$$
\end{proof}

Observe that the first term on the right side in inequality \eqref{eq:counter-size} is just a dimension-dependent constant,
while the second term depends only on the ratio of the radius of the smallest Euclidean ball containing $A$
to that of the largest Euclidean ball inside it.



The next lemma enables to compare the inradius, the outer radius and the volume of convex sets. Such estimates were studied in \cite{BHS03},  \cite {San88} where, in some cases, optimal  inequalities were proved in dimension 2 and 3. 

\begin{lem}
Let $K$ be a convex body in $\R^n$. Then 
$$
\vol_n(K)\le(n+1)\vol_{n-1}(B_2^{n-1})\inr(K)(2R(K))^{n-1}.
$$
\end{lem}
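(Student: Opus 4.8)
The plan is to mimic the mechanism used in Theorem~\ref{thm:delta-d}: a convex body $K$ with large volume must contain, alongside a large inball, points that are ``far'' in the appropriate directions, and the convex hull of the inball with such a far point already accounts for a controlled fraction of the volume of $K$. Concretely, set $r=\inr(K)$ and $R=R(K)$, and fix a ball $y+rB_2^n\subset K$ realizing the inradius. Pick a unit vector $u$ and let $a_+ \in K$ maximize $\langle u,\cdot\rangle$ and $a_- \in K$ minimize it; then $K$ is contained in the slab $\{x : \langle u,x-a_-\rangle \in [0,L_u]\}$ where $L_u=\langle u, a_+-a_-\rangle \le 2R$. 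The cone $\conv(\{a_+\}\cup (y+rB_2^n))$ has base an $(n-1)$-ball of radius $r$ and height at least $\mathrm{dist}(a_+, y+\R u^\perp)$-ish; a cleaner route is to note $\conv(\{a_-,a_+\}\cup(y+rB_2^n))\subset K$ and this double cone has volume at least $\tfrac{1}{n}\vol_{n-1}(B_2^{n-1})r^{n-1}L_u$ (two cones over a common $(n-1)$-dimensional ball of radius $\ge$ something times $r$, total height $L_u$), up to the usual issue that the ball's cross-section in the direction of the cone axis is only guaranteed to have radius $\ge r \cdot (\text{something})$.

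So the first step is to make that volume lower bound rigorous: for a fixed direction $u$, $\vol_n(K)\ge \vol_n(\conv(\{a_-,a_+\}\cup(y+rB_2^n)))\ge \frac{1}{n}\vol_{n-1}(B_2^{n-1})\,\rho_u^{\,n-1}\,L_u$, where $\rho_u$ is the radius of the largest $(n-1)$-ball centered on the segment $[a_-,a_+]$-parallel axis through $y$ that still sits inside $y+rB_2^n$; elementarily $\rho_u$ can be taken $\ge r/\sqrt 2$ or even handled by choosing $u$ along the axis. The second step is to choose $u$ well: pick $u$ to be (close to) the direction realizing $\diam(K)$ or, better, pick $u$ so that $L_u$ is large. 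Since $K\supset y+rB_2^n$ already forces width $\ge 2r$ in every direction, the interesting case is when $R$ is much larger than $r$, and then there is a direction $u$ with $L_u \ge$ (roughly) $R$; indeed $\diam(K)\ge R(K)$ always (from $R(A)\le \diam(A)$ in the Jung bound recalled in the excerpt), so one can arrange $L_u\ge R$. Feeding $L_u\ge R\ge R(K)$ and $\rho_u \ge$ constant $\cdot r$ into the volume bound yields $\vol_n(K)\ge c_n\, r^{n-1} R$ with $c_n$ comparable to $\vol_{n-1}(B_2^{n-1})/n$.

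The third step is bookkeeping of constants to land exactly on $\vol_n(K)\le (n+1)\vol_{n-1}(B_2^{n-1})\,\inr(K)(2R(K))^{n-1}$. Rearranged, this is the claim $\vol_n(K)\big/\big(\inr(K)(2R(K))^{n-1}\big)\le (n+1)\vol_{n-1}(B_2^{n-1})$, i.e. an upper bound on volume in terms of $r R^{n-1}$, not a lower bound — so the double-cone argument above is the \emph{wrong direction}. The correct approach is: bound $K$ from outside. Since $\inr(K)=r$, the John-type / slab argument gives that in \emph{every} direction the width is at most $2R$, and moreover $K$ lies within distance $2R$ of any of its points; pick the contact point structure of the inball. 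Precisely, write $K$ as a union over the $(n-1)$-sphere of directions: for the inball $y+rB_2^n\subset K$, every supporting hyperplane of $K$ is at distance $\ge r$ from $y$. Decompose $K = \bigcup_{u}\,[\,\text{chord of }K\text{ through }y\text{ in direction }u\,]$, integrate in polar-like coordinates around $y$: $\vol_n(K)=\int_{S^{n-1}}\int_0^{\ell(u)} t^{n-1}\,dt\,du /$(normalization), where $\ell(u)\le \diam(K)\le 2R$ bounds the radial extent. That gives $\vol_n(K)\le \frac{(2R)^n}{n}\,\sigma(S^{n-1})$-type bound, which has the wrong power of $r$ (namely none). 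To bring in $r$: use that the solid angle subtended at $y$ by a far piece of $\partial K$ is small because the inball forces $\partial K$ to stay outside $y+rB_2^n$, so the radial direction can only ``reach out'' far in a cone of small angular aperture; the aperture at radius $t$ is $O(r/t)$ in each of the $n-1$ transverse directions, contributing $(r/t)^{n-1}$ to the angular measure. Thus $\vol_n(K)\le \int \big(\frac{r}{t}\big)^{n-1} t^{n-1}\,dt$ over $t\in[r,2R]$ times a constant, $= c_n\, r^{n-1}(2R-r)\le c_n\, r^{n-1}(2R)$, and tracking $c_n=(n+1)\vol_{n-1}(B_2^{n-1})$ via a careful convexity (tangent-cone) estimate finishes the proof.

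\textbf{Main obstacle.} The crux — and the step I'd expect to consume the real work — is the angular estimate: proving that for a convex body containing $y+rB_2^n$, the portion of $K$ at radial distance $\ge t$ from $y$ subtends solid angle $\lesssim (r/t)^{n-1}$, with the sharp constant giving exactly $(n+1)\vol_{n-1}(B_2^{n-1})$ after integration. This is essentially a statement about how a convex cone with apex outside a ball of radius $r$ and apex-to-center distance $t$ can intersect the unit sphere; one shows the cross-section of $K$ by the sphere of radius $t$ about $y$ is contained in an $(n-1)$-ball of radius $\le r$ (by convexity: the supporting hyperplane at the far end and the inball pinch it), hence has $\mathcal H^{n-1}$-measure $\le \vol_{n-1}(B_2^{n-1})r^{n-1}$, uniformly in $t$, giving $\vol_n(K)=\int_0^{2R}\mathcal H^{n-1}(K\cap \partial(y+tB_2^n))\,dt\le 2R\cdot \vol_{n-1}(B_2^{n-1})r^{n-1}$, and squeezing out the extra factor $(n+1)$ and the $2^{n-1}$ normalization by being careful about whether $t$ ranges over $[0,2R]$ or one can do better near $t\le r$. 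Getting the cross-section radius bound to be exactly $r$ (not $Cr$) is where convexity of $K$ must be used in full strength.
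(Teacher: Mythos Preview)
Your proposal does not work, and in fact the final bound you write down is not even the statement of the lemma. You conclude with
\[
\vol_n(K)\le 2R\cdot \vol_{n-1}(B_2^{n-1})\,r^{n-1},
\]
which scales as $r^{n-1}R$, whereas the lemma asserts a bound scaling as $r\,R^{n-1}$. These are not the same: take $K$ to be (approximately) a slab of thickness $2r$ inside a ball of radius $R$, i.e.\ $K=\{x:|x_1|\le r\}\cap RB_2^n$. Then $\inr(K)=r$, $R(K)\sim R$, and $\vol_n(K)\sim 2r\vol_{n-1}(B_2^{n-1})R^{n-1}$, which can be arbitrarily larger than $r^{n-1}R$ when $R\gg r$. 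So the inequality you are aiming for is false, and the slab already shows it.

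The same slab kills your key geometric claim, that ``the cross-section of $K$ by the sphere of radius $t$ about $y$ is contained in an $(n-1)$-ball of radius $\le r$.'' For the slab with $y=0$, the set $K\cap\partial(tB_2^n)$ is a spherical band of angular width $\sim 2r/t$ that wraps all the way around the $(n-2)$-dimensional equator; its $\mathcal H^{n-1}$-measure is of order $r\,t^{n-2}$, not $r^{n-1}$, and it is certainly not contained in any ball of radius $r$. So the angular estimate you identify as ``the crux'' is simply wrong, and no amount of constant-chasing will repair it.

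The paper's argument goes in a completely different direction. It exploits the structure of the \emph{maximal} inscribed ball: after normalizing so that $B_2^n\subset K$ is the inball, the center $0$ must lie in the convex hull of the contact points $S^{n-1}\cap\partial K$ (otherwise one could translate the ball and enlarge it). By Carath\'eodory there are at most $n+1$ contact points $a_1,\dots,a_{k+1}$ with $0\in\conv(a_1,\dots,a_{k+1})$, and $K$ is contained in the intersection of the half-spaces $\{\langle x,a_i\rangle\le 1\}$. Since $0$ is a convex combination of the $a_i$, every point has $\langle x,a_i\rangle\ge 0$ for some $i$, so $K$ is covered by at most $n+1$ slabs of the form $[0,a_i]\times a_i^\perp$; intersecting with the ball $\diam(K)B_2^n$ and taking volumes gives the bound directly. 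The factor $(n+1)$ is the number of slabs, the factor $\inr(K)$ is their thickness, and $(2R(K))^{n-1}$ bounds the $(n-1)$-dimensional cross-section. None of the radial or angular integration you attempt is needed.
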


\begin{proof}
From the definition of $\inr(K)$, there exists $y\in K$ such that $y+\inr(K)B_2^n\subset K$. Without loss of generality, we may assume that $y=0$ and that $\inr(K)=1$, which means that $B_2^n$ is the Euclidean ball of maximal radius inside $K$. This implies that $0$ must be in the convex hull of the contact points of $S^{n-1}$ and $\partial(K)$, because if it is not, then there exists an hyperplane separating $0$ from these contact points and one may construct a larger Euclidean ball inside $K$. 
Hence from Caratheodory,
 there exists $1\le k\le n$ and $k+1$ contact points $a_1,\dots,a_{k+1}$ so that $0\in\conv(a_1,\dots,a_{k+1})$ and $K\subset S=\{x: \langle x, a_i\rangle\le 1, \forall i\in\{1,\dots, k+1\}\}$. Since $0\in\conv(a_1,\dots,a_{k+1})$, there exists $\la_1, \dots, \la_{k+1}\ge0$ such that $\sum_{i=1}^{k+1}\la_ia_i=0$. Thus for every $x\in\R^n$, $\sum_{i=1}^{k+1}\la_i\langle x,a_i\rangle=0$ hence there exists $i$ such that $\langle x,a_i\rangle\ge0$. Hence
$$
S\subset\bigcup_{i=1}^{k+1}[0,a_i]\times\{x : \langle x, a_i\rangle=0\}.
$$
Moreover $K\subset\diam(K)B_2^n$ thus 
$$
K\subset S\cap \diam(K)B_2^n\subset\bigcup_{i=1}^{k+1}[0,a_i]\times\{x\in\diam(K)B_2^n : \langle x, a_i\rangle=0\}.
$$
Passing to volumes and using that $a_i\in S^{n-1}$, we get 
$$\vol_n(K)\le  (k+1)\vol_{n-1}(B_2^{n-1})(\diam(K))^{n-1}\le(n+1)\vol_{n-1}(B_2^{n-1})(2R(K))^{n-1}.
$$
%
\end{proof}

An immediate corollary of the above theorem and lemma is the following.

\begin{coro}\label{co:d-del}
Let $A$ be a compact set in $\R^n$. Then
$$
d(A)\le c_n\frac{R(A)^{n-1}}{\vol_n(\conv(A))^\frac{n-1}{n}}\Delta(A)^\frac{1}{n},
$$
where $c_n$ is an absolute constant depending on $n$ only. Thus for any sequence of compact sets $(A_k)$ in $\R^n$ such that 
$\sup_k R(A_k)<\infty$  and $\inf_k \vol_n(A_k)  >0$,
the convergence $\Delta(A_k)\ra 0$ implies that $d(A_k)\ra 0$.

\end{coro}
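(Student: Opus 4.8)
The plan is to chain the preceding Lemma (bounding the volume of a convex body in terms of its inradius and circumradius) together with Theorem~\ref{thm:delta-d}. First I would apply that Lemma to the convex body $K=\conv(A)$. Since a Euclidean ball is convex, a ball contains $A$ iff it contains $\conv(A)$, so $R(\conv(A))=R(A)$, and the Lemma reads
$$\vol_n(\conv(A))\le (n+1)\,\vol_{n-1}(B_2^{n-1})\,\inr(\conv(A))\,(2R(A))^{n-1}.$$
Rearranging gives the lower bound
$$\inr(\conv(A))\ge \frac{\vol_n(\conv(A))}{(n+1)\,\vol_{n-1}(B_2^{n-1})\,(2R(A))^{n-1}},$$
which is exactly what is needed to control the factor $\inr(\conv(A))$ sitting in the denominator in Theorem~\ref{thm:delta-d}. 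If $\vol_n(\conv(A))=0$ the asserted inequality is vacuous, so we may assume $\conv(A)$ has nonempty interior and all quantities below are finite and positive.

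Next I would substitute. From the displayed lower bound,
$$\frac{2R(A)}{\inr(\conv(A))}\le \frac{2R(A)\,(n+1)\,\vol_{n-1}(B_2^{n-1})\,(2R(A))^{n-1}}{\vol_n(\conv(A))}=\frac{(n+1)\,2^n\,\vol_{n-1}(B_2^{n-1})\,R(A)^n}{\vol_n(\conv(A))}.$$
Raising to the power $\tfrac{n-1}{n}$ and plugging into Theorem~\ref{thm:delta-d},
$$d(A)\le \left(\frac{n}{\vol_{n-1}(B_2^{n-1})}\right)^{\frac1n}\left(\frac{(n+1)\,2^n\,\vol_{n-1}(B_2^{n-1})\,R(A)^n}{\vol_n(\conv(A))}\right)^{\frac{n-1}{n}}\Delta(A)^{\frac1n}.$$
The power of $R(A)$ produced is $n\cdot\tfrac{n-1}{n}=n-1$, the denominator is $\vol_n(\conv(A))^{\frac{n-1}{n}}$, and everything else is a constant depending only on $n$; collecting these yields
$$d(A)\le c_n\,\frac{R(A)^{n-1}}{\vol_n(\conv(A))^{\frac{n-1}{n}}}\,\Delta(A)^{\frac1n},\qquad c_n=\left(\frac{n}{\vol_{n-1}(B_2^{n-1})}\right)^{\frac1n}\big((n+1)\,2^n\,\vol_{n-1}(B_2^{n-1})\big)^{\frac{n-1}{n}}.$$

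Finally, for the asymptotic assertion I would use the inclusion $A_k\subset\conv(A_k)$, hence $\vol_n(\conv(A_k))\ge \vol_n(A_k)$, so that under the hypotheses $\inf_k\vol_n(A_k)>0$ and $\sup_k R(A_k)<\infty$ the prefactor $c_n R(A_k)^{n-1}\vol_n(\conv(A_k))^{-\frac{n-1}{n}}$ is bounded above by a constant $C$ independent of $k$; therefore $d(A_k)\le C\,\Delta(A_k)^{1/n}\to 0$ whenever $\Delta(A_k)\to 0$. There is no genuine obstacle here — the corollary really is immediate — and the only points requiring a moment's care are the reduction to the case $\vol_n(\conv(A))>0$ (so that Theorem~\ref{thm:delta-d} applies and no division by zero occurs) and checking that the exponent of $R(A)$ collapses to exactly $n-1$ after multiplying the $\tfrac1n$ and $\tfrac{n-1}{n}$ powers.
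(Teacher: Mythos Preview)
Your proof is correct and follows exactly the approach the paper intends: the corollary is stated immediately after the Lemma bounding $\vol_n(K)$ by $\inr(K)$ and $R(K)$, and the paper's own ``proof'' is just the phrase ``An immediate corollary of the above theorem and lemma.'' You have simply written out the chaining of that Lemma (applied to $K=\conv(A)$, using $R(\conv(A))=R(A)$) with Theorem~\ref{thm:delta-d}, and your algebra and handling of the degenerate case $\vol_n(\conv(A))=0$ are fine.
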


\begin{table}[!h]
\begin{center}
\begin{tabular}{|c|l|l|l|l|}
\hline
{$\Rightarrow$} & {$d$} & $r$ & $c$ & $\Delta$  \\
\hline
$d$ &  = & N (Ex.  \ref{ex:sch},  \ref{ex:ak}) & N (Ex.   \ref{ex:sch}, \ref{ex:ak}) & N (Ex. \ref{ex:hw}) \\
\hline
$r$ &  Y & = & N (Ex. \ref{ex:rc}) & N (Ex. \ref{ex:hw}) \\
\hline
$c$ &  Y (Lem. \ref{lem:c-d}) & Y (Lem. \ref{lem:c-r})& = & N (Ex. \ref{ex:hw})\\
\hline
$\Delta$ & Y  (Cor. \ref{co:d-del})& N (Ex. \ref{ex:sch},  \ref{ex:ak}) & N (Ex.  \ref{ex:sch}, \ref{ex:ak})& = \\
\hline
\end{tabular}
\end{center}
\caption{When does convergence to 0 for one measure of non-convexity imply the same for another
when we assume the sequence lives in a big ball and has positive limiting volume?}
\end{table}

From the preceding discussion, it is clear that $d(A_k)\ra 0$ is a much weaker statement than
either $c(A_k)\ra 0$ or $r(A_k)\ra 0$.

\par\vspace{.1in}

\begin{ex}\label{ex:hw} Consider a unit square with a set of points in the neighboring unit square, where the set of points becomes more dense as $k \to \infty$ (see Figure \ref{fig:dD}). This example shows  that the convergence in the Hausdorff sense
is weaker than convergence in the volume deficit sense even when the volume
of the sequence of sets is bounded away from 0.

\begin{figure}[h!]
\begin{center}
\includegraphics[scale=0.6]{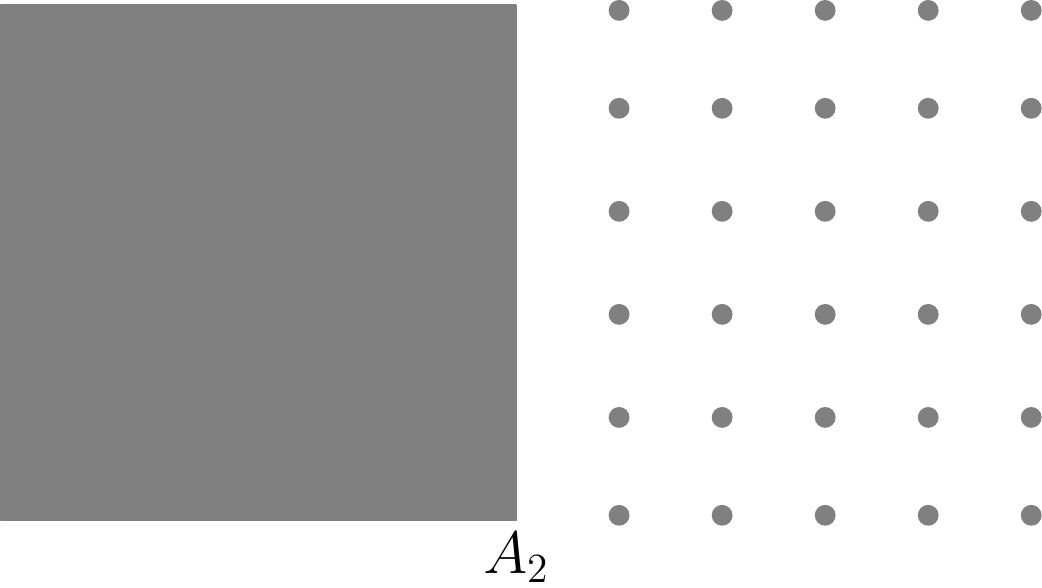} \hskip 2cm \includegraphics[scale=0.6]{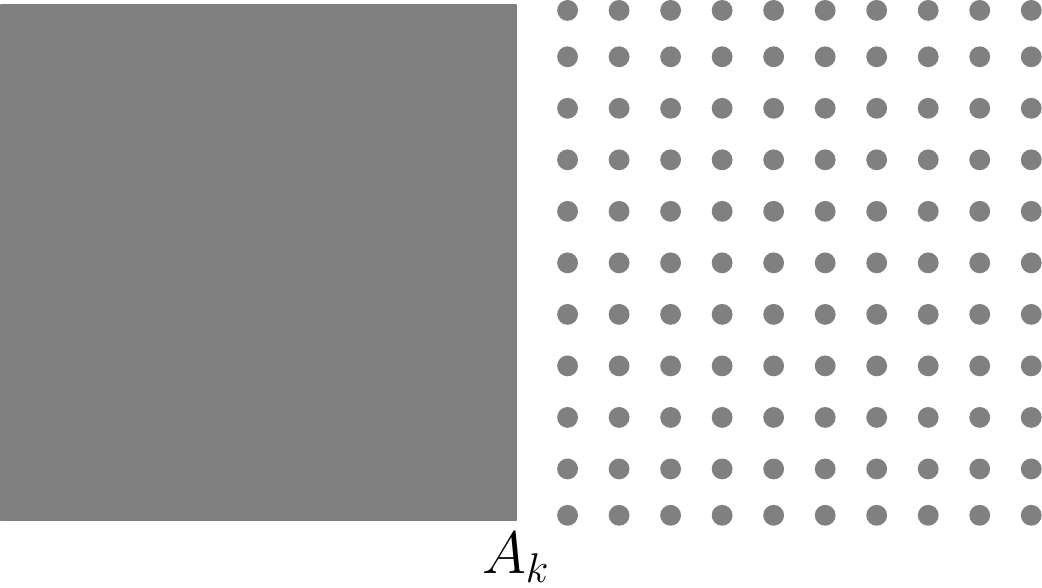}
\end{center}
\caption{$d(A_k) \ra 0$ and  $\vol_2(A_k)> c$ but $\Delta(A_k)>c$  (Example \ref{ex:hw}).}\label{fig:dD}
\end{figure}
\end{ex}
The following example shows that convergence in $\Delta$ does not imply convergence in $r$ nor $c$: 
\begin{ex} \label{ex:ak}
Consider the set $A_k = \{ (1-\frac{1}{k}, 0)\} \cup ([1,2]\times[-1,1])$ in the plane.
\end{ex}
Note that the Example \ref{ex:ak} also shows that convergence in $d$ does not imply convergence in $r$ nor $c$. 
The following example shows that convergence in $r$ does not imply convergence in $c$: 
\begin{ex} \label{ex:rc}
Consider the set $A_k=B_2^2\cup\{(1+1/k, 1/k) ; (1+1/k, -1/k)\}$ in the plane, the union of the Euclidean ball and 
two points close to it and close to each other (see Figure \ref{fig:cr}). 
Then we have $c(A_k)=1$ by applying the same argument as in Example \ref{ex:sch} to the point $(1+1/k,0)$.
But for $r(A_k)$, we see that because of the roundness of the ball, one has $r(A_k)=\frac{\sqrt{k+1}}{\sqrt{2}k} \to 0$, when $k$ grows.

\begin{figure}[h!]
\begin{center}
\includegraphics[scale=0.6]{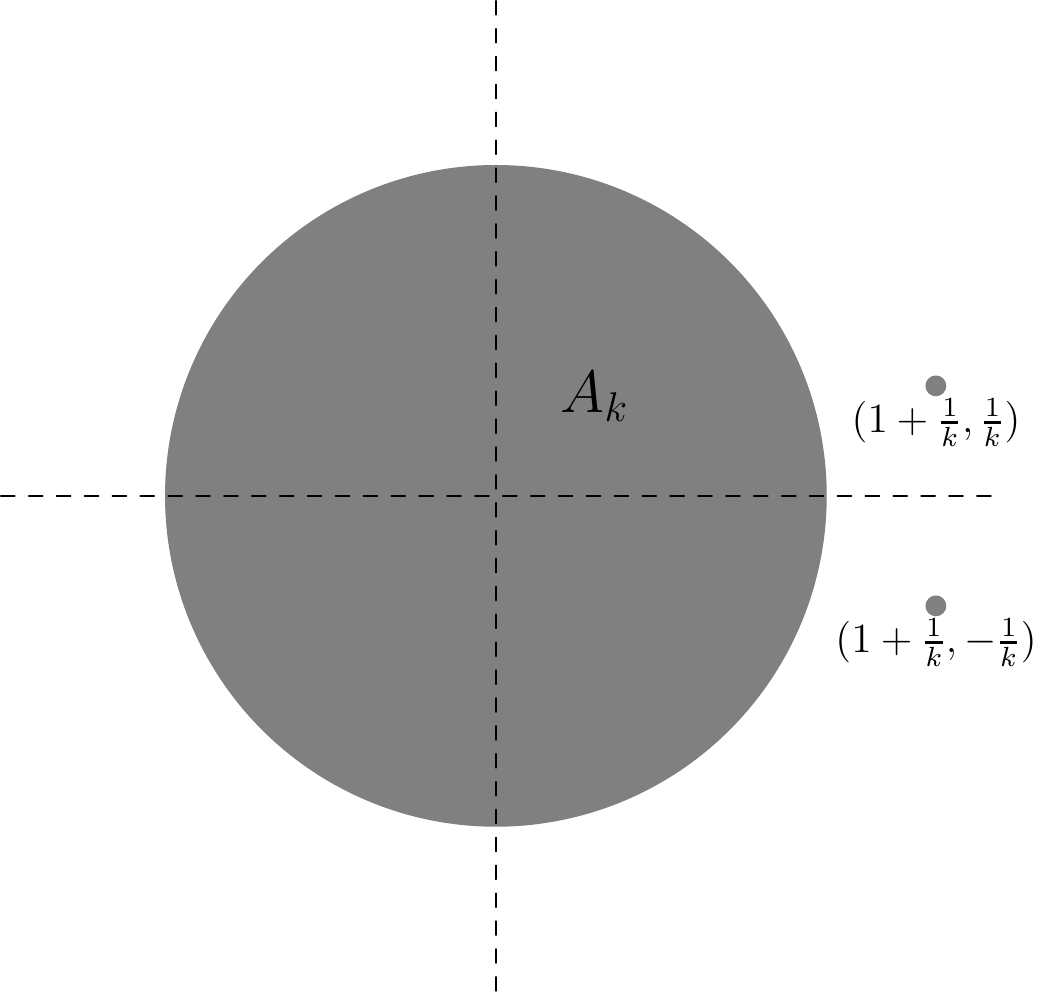} 
\end{center}
\caption{$c(A_k)=1$ but $r(A_k) \to 0$, when $k$ grows  (Example \ref{ex:rc}). }\label{fig:cr}
\end{figure}

\end{ex}

\section{The behavior of volume deficit}
\label{sec:Delta}

In this section we study the volume deficit. Recall its definition: for $A$ compact in $\R^n$,
\ben
\Delta(A)=\vol_n(\conv(A)\setminus A) = \vol_n(\conv(A))- \vol_n(A).
\een 

\subsection{Monotonicity of volume deficit in dimension one and for Cartesian products}
\label{sec:1d}

In this section, we observe that Conjecture \ref{strongconj} holds in dimension one and also for 
products of one-dimensional compact sets. In fact, more generally, we prove that Conjecture \ref{strongconj} passes to Cartesian products.

\begin{thm}\label{thm:1d}
Conjecture \ref{strongconj} holds in dimension one. In other words, if $k\ge2$ is an integer and $A_1, \dots, A_k$ are compact sets in $\R$, then
\begin{eqnarray}\label{eq:thm-fsa_0}
\vol_1\left(\sum_{i=1}^kA_i\right) \ge \frac{1}{k-1}\sum_{i=1}^k\vol_1\left(\sum_{j\in[k]\setminus\{i\}}A_j\right).
\end{eqnarray}
\end{thm}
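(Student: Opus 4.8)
The plan is to reduce \eqref{eq:thm-fsa_0} to a statement about lengths of intervals, exploiting the fact that in $\R$, the only obstruction to additivity of Lebesgue measure under Minkowski summation comes from the ``gaps'' in the sets. First I would note that the Minkowski sum of compact sets in $\R$ is compact, and that passing to the convex hull of each $A_i$ only decreases the left-hand side faster than (or as fast as) the right-hand side is a wrong turn — instead, the key classical fact is the reverse: for compact $A, B \subset \R$ one has $\vol_1(A+B) \geq \vol_1(A) + \vol_1(B)$ (the one-dimensional Brunn--Minkowski inequality, which here is elementary: translate so that $\sup A = \inf B$, then $A \cup B \subset A+B$ and the union is essentially disjoint). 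More is true and is what I would actually use: writing $L_i = \vol_1(A_i)$ and $D_i = \diam(\conv(A_i)) = \vol_1(\conv(A_i))$, one has the two-sided bound
\[
\sum_{i \in I} L_i \;\leq\; \vol_1\!\Big(\sum_{i \in I} A_i\Big) \;\leq\; \sum_{i \in I} D_i
\]
for every index set $I$, where the lower bound iterates the elementary inequality above and the upper bound is just $\sum A_i \subset \sum \conv(A_i) = \conv(\sum A_i)$, an interval of length $\sum D_i$.

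The heart of the matter is a sharper lower bound that ``sees'' the gaps. I would prove: for compact $A_1, \dots, A_k \subset \R$,
\[
\vol_1\!\Big(\sum_{i=1}^k A_i\Big) \;\geq\; \sum_{i=1}^k L_i + \max_{1 \le i \le k}(D_i - L_i)
\;=\; \sum_{i=1}^k L_i + \max_{1 \le i \le k}\Delta(A_i).
\]
The idea: relabel so that $D_1 - L_1 = \max_i(D_1-L_i)$ is the largest ``deficit''. Since $\vol_1(\sum_{i\ge2} A_i) \geq \sum_{i \ge 2} L_i$ by the iterated elementary bound, and since adding $A_1$ to any compact set $C$ satisfies $\vol_1(A_1 + C) \geq \vol_1(A_1) + \vol_1(C)$ — but I want the extra $D_1 - L_1$. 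For that I would use that $A_1 + C \supset \conv(A_1) + C'$ whenever $C$ contains a translate of an interval of length $\geq D_1$ minus \dots; more carefully, the correct elementary lemma is: if $C \subset \R$ is compact with $\vol_1(C) = \ell$ and $\conv(C)$ has length $\geq D_1$, then $A_1 + C$ contains an interval of length $D_1$ (namely $\conv(A_1)$ translated appropriately is covered as $a$ ranges over $A_1$ and one point of $C$ is fixed — no: rather, fix $a \in A_1$ with $a = \min A_1$, then $a + C$ together with $(\text{shifts filling the gaps of } A_1)$...). I expect this ``gap-filling lemma'' to be the main obstacle and the step requiring the most care; the clean way is probably: if $\vol_1(C) \ge D_1$ then $A_1 + C \supset $ an interval of length $D_1 + \vol_1(C) - D_1 = \vol_1(C)$... which is not enough. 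The honest extra gain comes instead from applying the two-sided sandwich together with superadditivity more cleverly, or by a direct induction on $k$.

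So the cleanest route I would actually pursue is \textbf{induction on $k$} using the sandwich inequalities. Fix the notation $S = \sum_{i=1}^k A_i$ and $S_{\hat\imath} = \sum_{j \ne i} A_j$. We want $\vol_1(S) \geq \frac{1}{k-1}\sum_i \vol_1(S_{\hat\imath})$. Using $\vol_1(S) = \vol_1(A_i + S_{\hat\imath}) \geq L_i + \vol_1(S_{\hat\imath})$ for each $i$ and summing gives $k\,\vol_1(S) \geq \sum_i L_i + \sum_i \vol_1(S_{\hat\imath})$, which is almost what we want but off by the $\sum_i L_i$ term versus needing a coefficient $\frac{1}{k-1}$. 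Rearranging, it suffices to show $\vol_1(S) \geq \sum_i L_i$ — wait, that gives $k\,\vol_1(S) - \sum_i \vol_1(S_{\hat\imath}) \geq \sum_i L_i$, and we'd need the left side $\geq \vol_1(S)$, i.e. $(k-1)\vol_1(S) \geq \sum_i \vol_1(S_{\hat\imath}) - \sum_i L_i + \vol_1(S)$; since $\vol_1(S_{\hat\imath}) \le \vol_1(S) - L_i$ is false in general... Let me reorganize: from $\vol_1(S) \ge L_i + \vol_1(S_{\hat\imath})$ we get $\vol_1(S_{\hat\imath}) \le \vol_1(S) - L_i$, hence $\sum_i \vol_1(S_{\hat\imath}) \le k\,\vol_1(S) - \sum_i L_i$. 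This is an \emph{upper} bound on the RHS-sum, so to conclude $\frac{1}{k-1}\sum_i \vol_1(S_{\hat\imath}) \le \vol_1(S)$ it suffices that $k\,\vol_1(S) - \sum_i L_i \le (k-1)\vol_1(S)$, i.e. $\vol_1(S) \le \sum_i L_i$ — which is \emph{false}. Hence the naive superadditivity bound alone cannot work, and the genuine content is a \textbf{supermodularity-type refinement}: I would prove $\vol_1(A + B + C) + \vol_1(C) \geq \vol_1(A+C) + \vol_1(B+C)$ for compact sets in $\R$ (the one-dimensional instance of Theorem~\ref{thm:smod}-type supermodularity, valid even for non-convex compact sets in dimension one because lengths behave like a valuation), and then derive \eqref{eq:thm-fsa_0} by a standard telescoping/averaging over orderings argument exactly as \eqref{newEPI} is derived from the $k=2$ case by submodularity of entropy. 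The main obstacle is thus establishing this one-dimensional supermodular inequality for compact (not necessarily convex) sets; I would attack it by writing each set as a finite union of intervals up to measure $\epsilon$, reducing to the piecewise-interval case, and checking supermodularity there by a direct covering/overlap count of the component intervals, then passing to the limit by inner regularity.
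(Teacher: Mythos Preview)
Your final proposed route has a fatal flaw: the supermodularity inequality
\[
\vol_1(A+B+C) + \vol_1(C) \;\geq\; \vol_1(A+C) + \vol_1(B+C)
\]
for general compact sets in $\R$ is \emph{false}. Take $C=\{0,1\}$ and $A=B=[0,1]$. Then $A+B+C=[0,3]$ has length $3$, $\vol_1(C)=0$, while $A+C=B+C=[0,2]$ each has length $2$; so the left side is $3$ and the right side is $4$. (This is exactly Proposition~\ref{prop:smod-counter} of the paper.) So your plan to reduce to a piecewise-interval supermodularity check is doomed: the target inequality does not hold even for the simplest mixed example, and no approximation or limiting argument can repair this. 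Your earlier correct observation --- that bare superadditivity $\vol_1(S)\ge L_i+\vol_1(S_{\hat\imath})$ goes in the wrong direction for the final bound --- already signaled that some genuinely new idea is required, but supermodularity is not it.

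The paper's proof avoids this trap entirely by working with explicit set inclusions rather than any valuation-type inequality. One picks extreme points $a_i=\min A_i$, $b_i=\max A_i$, defines thresholds $s_i=\sum_{j<i}a_j+\sum_{j>i}b_j$, splits each leave-one-out sum $S_i$ into a lower part $S_i^-$ and an upper part $S_i^+$ at the threshold $s_i$, and observes the disjoint inclusion $S\supset (a_i+S_i^-)\cup(b_{i+1}+S_{i+1}^+)$ for each $i\in[k-1]$. Summing the resulting $k-1$ volume inequalities telescopes because $S_1^-=S_1$ and $S_k^+$ is $S_k$ minus a point. This argument (adapted from Gyarmati--Matolcsi--Ruzsa's discrete version) uses nothing beyond the order structure of $\R$ and elementary set algebra; there is no appeal to supermodularity, and indeed the same inclusion argument, applied slice-by-slice in a fixed direction, yields Theorem~\ref{thm:fsa} in all dimensions.
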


\begin{proof}
We adapt a proof of Gyarmati, Matolcsi and Ruzsa \cite[Theorem 1.4]{GMR10} who established the same kind of inequality for finite subsets of the integers and cardinality instead of volume. The proof is based on set inclusions. Let $k \geq 1$. Set $S=A_1+ \cdots +A_k$ and for  $i \in[k]$, let $a_i=\min A_i$, $b_i=\max A_i$, 
$$S_i = \sum_{j\in[k]\setminus\{i\}}A_j,$$
$s_i=\sum_{j<i}a_j+\sum_{j>i}b_j$, $S_i^-=\{x\in S_i ; x\le s_i\}$ and $S_i^+=\{x\in S_i ; x> s_i\}$. For all $i \in [k-1]$, one has 
$$ S\supset (a_i+S_i^-)\cup(b_{i+1}+S_{i+1}^+).$$
Since $a_i+s_i=\sum_{j\le i}a_j+\sum_{j>i}b_j=b_{i+1}+s_{i+1}$, the above union is a disjoint union. Thus for $i \in [k-1]$
$$\vol_1(S) \ge \vol_1(a_i+S_i^-)+\vol_1(b_{i+1}+S_{i+1}^+) 
= \vol_1(S_i^-) + \vol_1(S_{i+1}^+).$$
Notice that $S_1^-=S_1$ and $S_k^+=S_k\setminus\{s_k\}$, thus adding the above $k-1$ inequalities we obtain
\begin{eqnarray*}
(k-1)\vol_1(S) & \ge & \sum_{i=1}^{k-1}\left(\vol_1(S_i^-) + \vol_1(S_{i+1}^+) \right) \\ 
& = & \vol_1(S_1^-) + \vol_1(S_k^+) + \sum_{i=2}^{k-1}\vol_1(S_i) \\
&=& \sum_{i=1}^k\vol_1(S_i).
\end{eqnarray*}
We have thus established Conjecture \ref{strongconj} in dimension 1.
\end{proof}

\begin{rem}
As mentioned in the proof, Gyarmati, Matolcsi and Ruzsa \cite{GMR10} earlier obtained a discrete version of Theorem~\ref{thm:1d}
for cardinalities of sums of subsets of the integers. There are also interesting upper bounds on cardinalities of sumsets
in the discrete setting that have similar combinatorial structure, see, e.g., \cite{GMR10, BB12, MMT12} and references therein.
Furthermore, as discussed in the introduction for the continuous domain, there are also discrete entropy analogues of these
cardinality inequalities, explored in depth in \cite{Ruz09:1, Tao10, MK10:isit, BB12, MMT12, HAT14, WWM14:isit, MWW17:1, MWW17:2} and references therein.
We do not discuss discrete analogues further in this paper.
\end{rem}

Now we prove that Conjecture \ref{strongconj} passes to Cartesian products.

\begin{thm}\label{thm:product}
Let $k,m\ge 2$ and $n_1, \dots, n_m\ge 1$ be integers. Let $n=n_1+\cdots +n_m$. For $1\le i\le k$ and $1\le l\le m$, let $A_i^l$ be some compact sets in $\R^{n_l}$. Assume that for any $1\le l\le m$ the $k$ compact sets $A_1^l,\dots, A_k^l\subset \R^{n_l}$ satisfy Conjecture \ref{strongconj}. 
For $1\le i\le k$, let $A_i=A_i^1\times \cdots\times A_i^m\subset\R^n=\R^{n_1}\times\cdots\times\R^{n_m}$. Then Conjecture \ref{strongconj} holds for $A_1,\dots, A_k$.
\end{thm}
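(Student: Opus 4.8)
The plan is to exploit the fact that both Minkowski summation and Lebesgue measure factor through the Cartesian product structure, and then to reduce the statement to the classical superadditivity of the weighted geometric mean (which is also the standard route to proving that the Brunn--Minkowski inequality passes to products).

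First I would record the elementary identity that, for any index set $J\subseteq[k]$,
\[
\sum_{i\in J}A_i=\sum_{i\in J}\bigl(A_i^1\times\cdots\times A_i^m\bigr)=\Bigl(\sum_{i\in J}A_i^1\Bigr)\times\cdots\times\Bigl(\sum_{i\in J}A_i^m\Bigr),
\]
which is immediate from the coordinatewise description of addition in $\R^n=\R^{n_1}\times\cdots\times\R^{n_m}$. Since the Lebesgue measure of a product of sets is the product of the Lebesgue measures of the factors, setting $V^l=\vol_{n_l}\bigl(\sum_{i=1}^kA_i^l\bigr)$ and $V_i^l=\vol_{n_l}\bigl(\sum_{j\in[k]\setminus\{i\}}A_j^l\bigr)$ yields
\[
\vol_n\Bigl(\sum_{i=1}^kA_i\Bigr)=\prod_{l=1}^mV^l,\qquad
\vol_n\Bigl(\sum_{j\in[k]\setminus\{i\}}A_j\Bigr)=\prod_{l=1}^mV_i^l .
\]
We may assume the right-hand side of the inequality to be proved is strictly positive, since otherwise there is nothing to prove; this forces $V^l>0$ for every $l$ and, for every $l$, $\sum_{i=1}^kV_i^l>0$ (indeed, if $V^{l_0}=0$ for some $l_0$, then, picking $a_i\in A_i^{l_0}\neq\emptyset$, the translate $a_i+\sum_{j\neq i}A_j^{l_0}$ lies inside $\sum_jA_j^{l_0}$, so $V_i^{l_0}=0$ for all $i$ and the right-hand side vanishes).

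Now set $\theta_l=n_l/n\in(0,1]$, so that $\sum_{l=1}^m\theta_l=1$, and put $x_l=(V^l)^{1/n_l}>0$, $y_i^l=(V_i^l)^{1/n_l}\ge0$. Using $1/n=\theta_l/n_l$ for each $l$, the two volume identities above become
\[
\vol_n\Bigl(\sum_{i=1}^kA_i\Bigr)^{1/n}=\prod_{l=1}^mx_l^{\theta_l},\qquad
\vol_n\Bigl(\sum_{j\in[k]\setminus\{i\}}A_j\Bigr)^{1/n}=\prod_{l=1}^m(y_i^l)^{\theta_l}.
\]
The hypothesis that Conjecture~\ref{strongconj} holds for the sets $A_1^l,\dots,A_k^l$ says exactly that $x_l\ge\frac1{k-1}\sum_{i=1}^ky_i^l$ for every $l$; since all quantities are nonnegative, raising to the $\theta_l$-th power and multiplying over $l$ (using $\sum_l\theta_l=1$) gives
\[
\prod_{l=1}^mx_l^{\theta_l}\ \ge\ \frac1{k-1}\prod_{l=1}^m\Bigl(\sum_{i=1}^ky_i^l\Bigr)^{\theta_l}.
\]
It remains to establish the superadditivity of the weighted geometric mean,
\[
\prod_{l=1}^m\Bigl(\sum_{i=1}^ky_i^l\Bigr)^{\theta_l}\ \ge\ \sum_{i=1}^k\prod_{l=1}^m(y_i^l)^{\theta_l},
\]
which follows by dividing the $i$-th summand on the right by the (positive) left-hand side, applying the weighted AM--GM inequality $\prod_l t_l^{\theta_l}\le\sum_l\theta_lt_l$ with $t_l=y_i^l\big/\sum_{i'}y_{i'}^l$, and summing over $i\in[k]$, which produces a total at most $\sum_l\theta_l=1$. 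Chaining the last three displays gives precisely Conjecture~\ref{strongconj} for $A_1,\dots,A_k$.

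I do not anticipate any real obstacle: this is essentially a bookkeeping of exponents together with a classical inequality. The two points that require a little care are keeping the exponents $1/n$ and $1/n_l$ properly aligned --- which is exactly the role of the weights $\theta_l=n_l/n$ --- and the (routine) treatment of the degenerate zero-volume cases, needed so that the product-monotonicity and division steps are valid.
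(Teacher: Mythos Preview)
Your proof is correct and follows essentially the same route as the paper: factor the Minkowski sums and volumes through the Cartesian product, apply the hypothesis coordinatewise, and combine using the superadditivity of the weighted geometric mean. The only cosmetic differences are that the paper quotes this last inequality as ``Minkowski's inequality for $\|\cdot\|_0$'' from Hardy--Littlewood--P\'olya rather than deriving it from AM--GM, and that you are more explicit about the zero-volume degenerate cases.
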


\begin{proof}
Let $S=\sum_{i=1}^kA_i$ and let $S_i = \sum_{j\neq i} A_j$ then let us prove that 
$$(k-1)\vol_n(S)^\frac{1}{n}\ge \sum_{i=1}^k \vol_n(S_i)^\frac{1}{n}.
$$
For all $1\le i\le k$, one has 
$$S_i=\sum_{j\neq i} A_j=\sum_{j\neq i}\prod_{l=1}^mA_j^l=\prod_{l=1}^m\left(\sum_{j\neq i}A_j^l\right).$$
For $1\le i\le k$, denote $\sigma_i=(\vol_{n_l}(\sum_{j\neq i}A_j^l)^\frac{1}{n_l})_{1\le l\le m}\in\R^m$, and for $x=(x_l)_{1\le l\le m}\in\R^m$, denote $\|x\|_0=\prod_{l=1}^m |x_l|^\frac{n_l}{n}$. Then, using Minkowski's inequality for $\|\cdot\|_0$ (see, for example, Theorem 10 in \cite{HLP88:book}),  we deduce that
$$
\sum_{i=1}^k \vol_n(S_i)^\frac{1}{n}=\sum_{i=1}^k \prod_{l=1}^m\vol_{n_l}\left(\sum_{j\neq i}A_j^l\right)^\frac{1}{n}=\sum_{i=1}^k\|\sigma_i\|_0\le\left\| \sum_{i=1}^k\sigma_i\right\|_0=\prod_{l=1}^m\left(\sum_{i=1}^k\sigma_i^l\right)^\frac{n_l}{n}.
$$
Using that for any $1\le l\le m$ the $k$ compact sets $A_1^l,\dots, A_k^l\subset \R^{n_l}$ satisfy Conjecture \ref{strongconj}, we obtain
$$
\sum_{i=1}^k\sigma_i^l=\sum_{i=1}^k\vol_{n_l}\left(\sum_{j\neq i}A_j^l\right)^\frac{1}{n_l}\le (k-1)\vol_{n_l}\left(\sum_{i=1}^kA_i^l\right)^\frac{1}{n_l}.
$$
Thus
$$
\sum_{i=1}^k \vol_n(S_i)^\frac{1}{n}\le \prod_{l=1}^m\left((k-1)\vol_{n_l}\left(\sum_{i=1}^kA_i^l\right)^\frac{1}{n_l}\right)^\frac{n_l}{n}=(k-1)\vol_n(S)^\frac{1}{n}.
$$
\end{proof}

From Theorems \ref{thm:1d} and \ref{thm:product}, and the fact that Conjecture \ref{strongconj} holds for convex sets, 
we deduce that Conjecture \ref{strongconj} holds for Cartesian products of one-dimensional compact sets and convex sets.

\subsection{A counterexample in dimension $\ge 12$}
\label{sec:counter}

In contrast to the positive results for compact product sets, both the conjectures of 
Bobkov, Madiman and Wang \cite{BMW11} fail in general for even moderately high dimension.

\begin{thm}\label{thm:counter}
For every $k \geq 2$, there exists $n_k \in \N$ such that for every $n\ge n_k$ there is a compact set $A \subset \R^n$ such that $\vol_n(A(k+1))<\vol_n(A(k))$. Moreover, one may take
$$ n_k =\min\left\{n\in k\Z: n> \frac{\log(k)}{\log\left(1+\frac{1}{k}\right) - \frac{\log(2)}{k}}\right\}. $$
In particular, one has $n_2=12$, whence Conjectures~\ref{weakconj} and Conjecture~\ref{strongconj} are false in $\R^n$ for $n\ge12$.
\end{thm}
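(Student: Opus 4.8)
The plan is to produce, for each $k\ge 2$ and each dimension $n$ that is a multiple of $k$ and exceeds the stated bound, one explicit compact set $A\subset\R^n$ for which $\vol_n(A(k))$ and $\vol_n(A(k+1))$ can be computed essentially in closed form, and then to tune the free parameters so that the comparison collapses to an elementary numerical inequality. The natural candidate is a set whose self-averages are transparent: I would take $A=B_2^n\cup\{p\}$, a Euclidean unit ball together with a single point $p$, leaving $\rho=\lvert p\rvert$ as a parameter. Since the set of sums of $m$ points of $B_2^n$ is exactly $mB_2^n$, the averaging identity gives, for every $m\ge 1$,
\[
A(m)=\bigcup_{j=0}^{m}\Big(\tfrac{j}{m}p+\tfrac{m-j}{m}B_2^n\Big)=\bigcup_{j=0}^{m}B\!\left(\tfrac{j}{m}p,\ \tfrac{m-j}{m}\right),
\]
a union of $m+1$ balls with collinear centres and radii decreasing linearly from $1$ to $0$. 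Thus $A(m)$ is a discretisation ``from inside'' of the ice-cream cone $\conv(A)=\conv(B_2^n\cup\{p\})$, and $\Delta(A(m))=\vol_n\big(\conv(A)\setminus A(m)\big)$ splits as a sum of gap contributions, one between each pair of consecutive balls.

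The heart of the matter — and the step I expect to be the main obstacle — is estimating $\vol_n(A(m))$ sharply enough in terms of $m$, $n$ and $\rho$. The point is that the gaps behave in two different regimes: for small $j$ the consecutive balls overlap and the gap between their union and the cone is a small ``lune'', whereas for $j$ close to $m$ the radius $\tfrac{m-j}{m}$ is so small relative to the spacing $\rho/m$ that the balls become disjoint, and the connecting piece of the cone then contributes a comparatively large gap; the disjoint end contributes a term of the shape $\vol_n(B_2^n)\sum_{j}\big(\tfrac{m-j}{m}\big)^{n}=\tfrac{\vol_n(B_2^n)}{m^{n}}\sum_i i^{n}$, which for large $n$ is dominated by its top term. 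Choosing $\rho$ of the right order (so that the crossover between the two regimes lands where it should) and doing the bookkeeping of the overlaps, I would aim to reduce the desired inequality $\vol_n(A(k+1))<\vol_n(A(k))$ to the elementary inequality
\[
\Big(1+\tfrac{1}{k}\Big)^{\!n}>k\cdot 2^{\,n/k}.
\]

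It then remains to read off $n_k$. Taking logarithms, the displayed inequality is equivalent to $n\big(\log(1+\tfrac1k)-\tfrac{\log 2}{k}\big)>\log k$, hence to $n>\dfrac{\log k}{\log(1+1/k)-(\log 2)/k}$; the denominator is positive for every $k\ge 2$ because $(1+\tfrac1k)^k>2$. Since the construction lives in a dimension that is a multiple of $k$, and since the left side of $(1+\tfrac1k)^n>k\,2^{n/k}$ grows faster in $n$ than the right side (again because $(1+\tfrac1k)^k>2$), the inequality holds for every multiple of $k$ that is at least $n_k=\min\{n\in k\Z:\ n>\tfrac{\log k}{\log(1+1/k)-(\log2)/k}\}$. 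An arbitrary dimension $n\ge n_k$ is then handled by replacing $A$ by $A\times K$ for a convex body $K$ of the right dimension: a convex $K$ satisfies $K(m)=K$, so this merely multiplies every volume by the fixed factor $\vol(K)$ and leaves the $k$-versus-$(k+1)$ comparison unchanged. For $k=2$ one computes $\tfrac{\log 2}{\log(3/2)-(\log 2)/2}\approx 11.77$, and the least even integer exceeding this is $12$, so $n_2=12$.

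Finally, $\vol_n(A(k+1))<\vol_n(A(k))$ says precisely that $\Delta(A(k+1))>\Delta(A(k))$, so $\{\Delta(A(k))\}_{k\ge 1}$ is not non-increasing and Conjecture~\ref{weakconj} fails in $\R^n$; and applying Conjecture~\ref{strongconj} to $A_1=\dots=A_k=A$ would force exactly the monotonicity asserted in Conjecture~\ref{weakconj}, so Conjecture~\ref{strongconj} fails in $\R^n$ as well, for every $n\ge 12$.
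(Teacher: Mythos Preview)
Your proposal is a plan, not a proof, and the missing step is exactly the one that matters. You admit that estimating $\vol_n(A(m))$ for $A=B_2^n\cup\{p\}$ sharply enough is ``the main obstacle'', and then simply assert that the comparison ``would'' reduce to $(1+\tfrac1k)^n>k\cdot 2^{n/k}$. Nothing in your construction explains where the factor $2^{n/k}$ could come from, or why the construction should require $n\in k\Z$; both of those features are read off the \emph{theorem statement}, not derived from your example. In fact, in the regime where all the balls in $A(k)$ and $A(k+1)$ are disjoint (large $\rho$), one has
\[
\frac{\vol_n(A(m))}{\vol_n(B_2^n)}=\sum_{j=0}^{m}\Big(\tfrac{m-j}{m}\Big)^{n}=1+\Big(\tfrac{m-1}{m}\Big)^{n}+\cdots,
\]
and since $\tfrac{k}{k+1}>\tfrac{k-1}{k}$, for large $n$ this is \emph{larger} for $m=k+1$ than for $m=k$ --- the opposite of what you want. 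Passing to the overlapping regime only makes both sides smaller and there is no argument offered for why the balance should tip.

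The paper's proof uses a completely different set: it splits $\R^{n}$ into $k$ mutually orthogonal $d$-dimensional subspaces $F_1,\dots,F_k$ (so $n=kd$, whence $n\in k\Z$) and takes $A=I_1\cup\cdots\cup I_k$ with each $I_i$ a convex body in $F_i$. Then $kA(k)=I_1+\cdots+I_k$ is (up to a set of measure zero) the product $I_1\times\cdots\times I_k$, so $\vol_n(A(k))$ is computable exactly; and $(k+1)A(k+1)$ is a union of $k$ such products in each of which exactly one factor $I_j$ is doubled, so a crude union bound gives $\vol_n(A(k+1))\le k\,2^{d}\,(k/(k+1))^{n}\vol_n(A(k))$. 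This is what produces the $k\cdot 2^{n/k}$ term and the divisibility constraint. Your ball-and-point example has none of this structure; the ``essentially one-dimensional'' geometry along the segment $[0,p]$ is precisely the setting in which the volume sequence \emph{is} monotone (Theorem~\ref{thm:1d}), which is further reason to doubt it can serve as a counterexample. The final two paragraphs of your proposal --- passing from $n_k$ to all $n\ge n_k$ by a convex Cartesian factor, and deducing the failure of both conjectures --- are correct and match the paper, but they rest on a construction you have not actually carried out.
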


\begin{proof}
Let $k \geq 2$ be fixed and let $n_k$ be defined as in the statement of Theorem \ref{thm:counter} so that  
$$n_k> \frac{\log(k)}{\log\left(1+\frac{1}{k}\right) - \frac{\log(2)}{k}}$$ 
and $n_k=kd$, for a certain $d \in \N$. Let $F_1, \dots, F_k$ be $k$ linear subspaces of $\R^{n_k}$ of dimension $d$ orthogonal to each other such that $\R^{n_k} = F_1 \oplus \cdots \oplus F_k$. Set $A = I_1 \cup \cdots \cup I_k$, where for every $i \in [k]$, $I_i$ is a convex body in $F_i$. Notice that for every $l \geq 1$,
$$ \underset{l\ {\rm times}}{\underbrace{A + \cdots + A}} = \bigcup_{m_i \in \{0, \cdots, l\}, \sum_{i=1}^k m_i = l} (m_1 I_1 + \cdots + m_k I_k), $$
where we used the convexity of each $I_i$ to write the Minkowski sum of $m_i$ copies of $I_i$ as $m_i I_i$.
Thus
$$ k^{n_k} \vol_{n_k}(A(k)) = \vol_{n_k}(I_1 + \cdots + I_k) = \vol_{n_k}(I_1 \times \cdots \times I_k), $$
and 
\begin{eqnarray*}
(k+1)^{n_k} \vol_{n_k}(A(k+1)) & = & \vol_{n_k}((2I_1 + I_2 + \cdots + I_k) \cup \cdots \cup (I_1 + \cdots + I_{k-1} + 2I_k)) \\ & = & \vol_{n_k}((2I_1 \times I_2 \times \cdots \times I_k) \cup \cdots \cup (I_1 \times \cdots \times I_{k-1} \times 2I_k)) \\ & \leq & \vol_{n_k}(2I_1 \times I_2 \times \cdots \times I_k) + \cdots + \vol_{n_k}(I_1 \times \cdots \times I_{k-1} \times 2I_k) \\ & = & k2^d \vol_{n_k}(I_1 \times \cdots \times I_k) \\ & = & k^{n_k+1}2^d \vol_{n_k}(A(k)).
\end{eqnarray*}
The hypothesis on $n_k$ enables us to conclude that  $\vol_{n_k}(A(k+1)) < \vol_{n_k}(A(k))$. Now for $n\ge n_k$, we define $\tilde{A}=A\times[0,1]^{n-n_k}$. For every $l$, one has $\tilde{A}(l)=A(l)\times[0,1]^{n-n_k}$, thus $\vol_n(\tilde{A}(l)) = \vol_{n_k}(A(l))$. Therefore $\vol_n(\tilde{A}(k+1)) < \vol_n(\tilde{A}(k))$, which establishes that $\tilde{A}$ gives a counterexample in $\R^n$.

The sequence $\left\{\frac{\log(k)}{\log\left(1+\frac{1}{k}\right) - \frac{\log(2)}{k}}\right\}_{k \geq 2}$ is increasing and $\frac{\log(2)}{\log\left(1+\frac{1}{2}\right) - \frac{\log(2)}{2}} \approx 11.77$. Hence, Conjecture~\ref{weakconj} is false for $n \geq 12$.
\end{proof}

\begin{rem}
\begin{enumerate}
\item It is instructive to visualize the counterexample for $k=2$, which is done in Figure~\ref{fig:counter} by representing
each of the two orthogonal copies of $\R^6$ by a line.

\begin{figure}[h!]

\begin{center}
\includegraphics[scale=0.45]{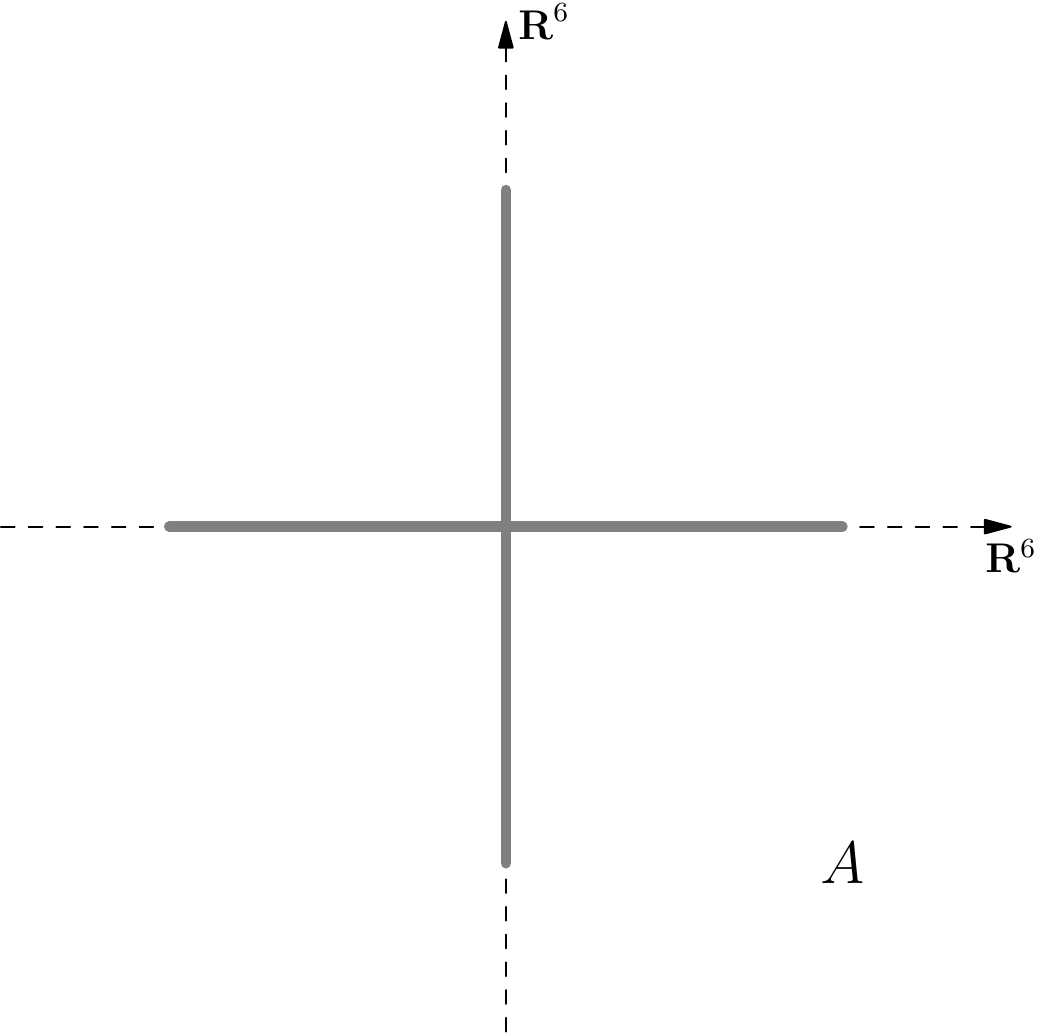} \,
\includegraphics[scale=0.45]{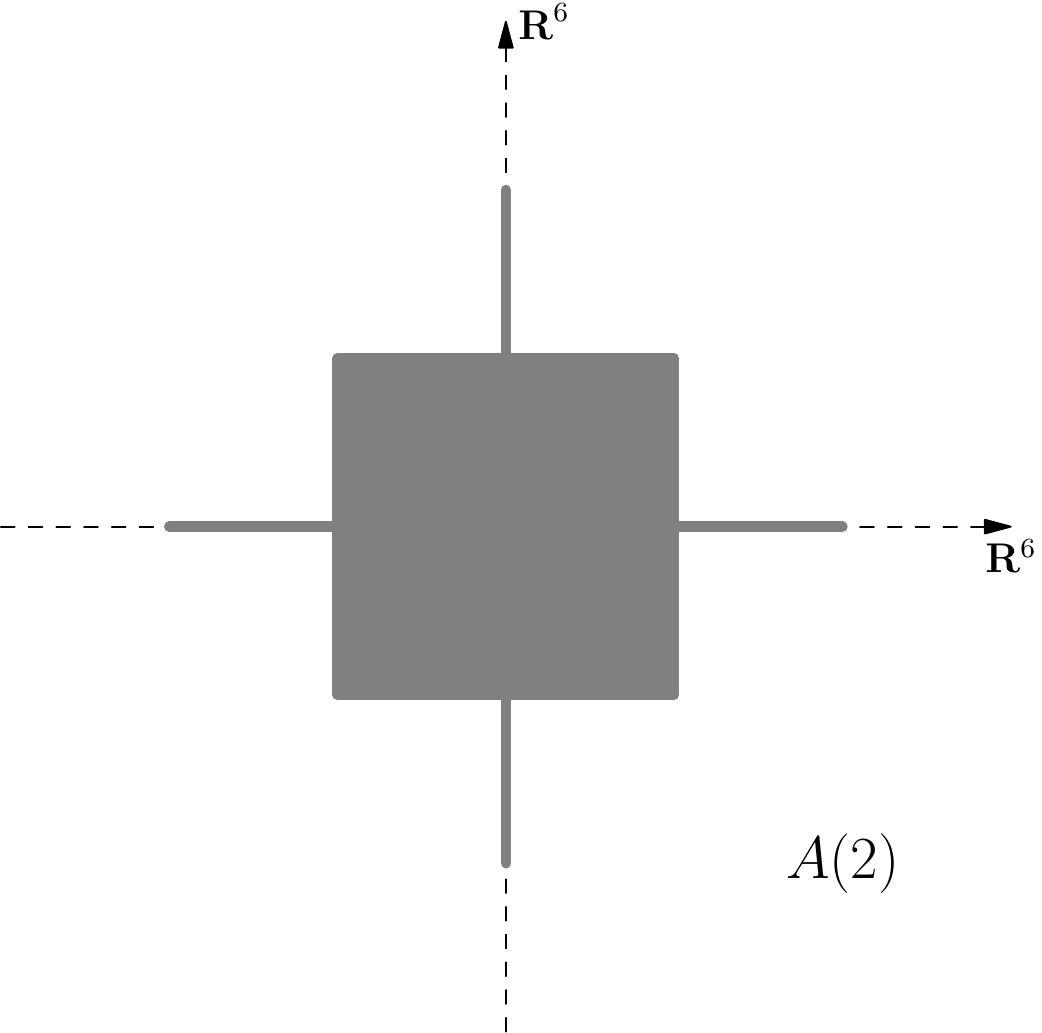}\,
\includegraphics[scale=0.45]{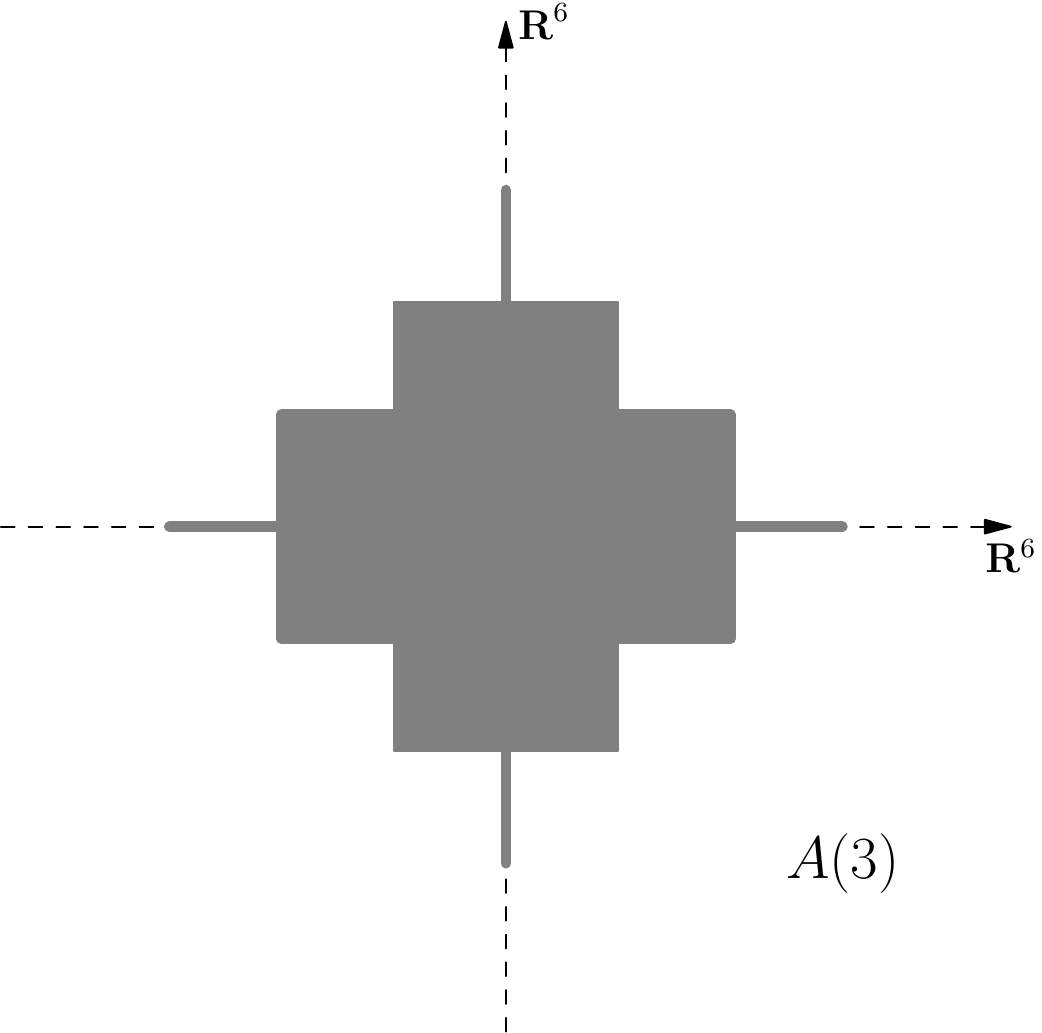}
\end{center}
\caption{A counterexample in $\R^{12}$.}\label{fig:counter}
\end{figure}

\item It was shown by Bobkov, Madiman and Wang \cite{BMW11} that Conjecture \ref{strongconj} is true for convex sets. 
The constructed counterexample is a union of convex sets and is symmetric and star-shaped.
\item Notice that in the above example one has $\vol_n(A(k-1))=0$. By adding to $A$ a ball with sufficiently small radius, one obtains a counterexample satisfying $\vol_n(A(k)) > \vol_n(A(k-1)) > 0$ and $\vol_n(A(k)) > \vol_n(A(k+1))$.
\item The counterexample also implies that Conjecture 1.1 in \cite{BMW11}, which suggests a fractional version
of Young's inequality for convolution with sharp constant, is false. It is still possible that it may be true for a 
restricted class of functions (like the log-concave functions).
\item Conjectures \ref{strongconj} and \ref{weakconj} are still open in dimension $n \in \{2, \ldots, 11\}$.
\end{enumerate}
\end{rem}

\subsection{Convergence rates for $\Delta$}
\label{sec:Delta-rate}

The asymptotic behavior of $\Delta(A(k))$ has been extensively studied by Emerson and Greenleaf \cite{EG69}.
In analyzing $\Delta(A(k))$, the following lemma about convergence of $A(k)$ to 0 in Hausdorff distance is useful.

\begin{lem}\label{lem:advance-sfs}
If $A$ is a  compact set in $\R^n$, 
\begin{eqnarray}\label{emerson}
\conv(A) \subset A(k) + \frac{n \,\diam(A)}{k}B_2^n.
\end{eqnarray}
\end{lem}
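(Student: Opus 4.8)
The plan is to reduce the statement to a pointwise estimate: it suffices to show that every $x\in\conv(A)$ lies within Euclidean distance $n\,\diam(A)/k$ of some point of $A(k)$. To manufacture such a point I would start from a Carath\'eodory representation $x=\sum_{i=0}^{n}\lambda_i a_i$ with $a_i\in A$, $\lambda_i\ge 0$, $\sum_{i=0}^n\lambda_i=1$ (so at most $n+1$ of the weights are nonzero), and then round each weight $\lambda_i$ down to the nearest multiple of $1/k$, repairing the resulting deficit by adding a few extra copies of one of the $a_i$.

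Concretely, I would set $m_i=\lfloor k\lambda_i\rfloor$ and let $r=k-\sum_{i=0}^{n}m_i$ be the deficit. Since only $n+1$ weights get truncated, one has $k-(n+1)<\sum_i m_i\le k$, hence $0\le r\le n$. The candidate point is $y=\frac1k\bigl((m_0+r)a_0+\sum_{i=1}^{n}m_i a_i\bigr)$, i.e.\ the arithmetic mean of exactly $k$ points of $A$ ($m_i$ copies of each $a_i$, together with $r$ additional copies of $a_0$), so $y\in A(k)$ by construction.

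For the error bound I would write $\delta_i=\lambda_i-m_i/k\ge 0$, note that $\sum_{i=0}^{n}\delta_i=1-\frac1k\sum_i m_i=r/k$, and simplify
$$ x-y=\Bigl(\delta_0-\tfrac{r}{k}\Bigr)a_0+\sum_{i=1}^{n}\delta_i a_i=\sum_{i=1}^{n}\delta_i(a_i-a_0), $$
where the $a_0$-coefficient collapses because $\delta_0-r/k=-\sum_{i=1}^{n}\delta_i$. Then
$$ |x-y|\le\sum_{i=1}^{n}\delta_i\,|a_i-a_0|\le\diam(A)\sum_{i=1}^{n}\delta_i\le\frac{r}{k}\diam(A)\le\frac{n\,\diam(A)}{k}, $$
which is exactly \eqref{emerson}. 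Since $x\in\conv(A)$ was arbitrary, this gives $\conv(A)\subset A(k)+\frac{n\,\diam(A)}{k}B_2^n$.

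I do not anticipate a genuine obstacle here; this is really just the constructive content of the Shapley--Folkman lemma, and the only step needing a little care is the combinatorial count $r\le n$, which rests entirely on the fact that a Carath\'eodory representation in $\R^n$ uses at most $n+1$ points, so at most $n+1$ of the quantities $k\lambda_i$ are rounded. (One could alternatively extract a comparable bound from Lemma~\ref{lem:SF}, or from the identity $k\,\conv(A)=n\,\conv(A)+(k-n)A$ recorded just after it, but the direct rounding argument has the merit of keeping the Euclidean constant explicit and the proof self-contained.)
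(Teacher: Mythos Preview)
Your proof is correct, but it takes a genuinely different route from the paper's. The paper proceeds via Schneider's non-convexity index: after translating so that $0\in\conv(A)$, it writes
\[
\conv(A)\subset (1+c(A(k)))\conv(A)=A(k)+c(A(k))\conv(A),
\]
then invokes the bound $c(A(k))\le c(A)/k$ (Theorem~\ref{thm:c-rate}, a forward reference to Section~\ref{sec:c}) together with Schneider's inequality $c(A)\le n$, and finally uses $\conv(A)\subset\diam(A)B_2^n$. Your argument instead goes directly through Carath\'eodory and an explicit rounding of the barycentric coefficients to multiples of $1/k$. The advantage of your approach is that it is entirely self-contained and avoids the forward reference to the monotonicity theory of $c$; the paper's approach, on the other hand, makes transparent that the lemma is really an immediate consequence of the convergence rate for Schneider's index, and yields the slightly stronger intermediate conclusion $\conv(A)\subset A(k)+\frac{n}{k}\conv(A)$ before passing to the Euclidean ball.
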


\begin{proof}
Using   invariance of  (\ref{emerson}) under the shifts of $A$, we may assume that $0 \in \conv(A)$,
$$ \conv(A) = \conv(A(k)) \subset (1+c(A(k))) \conv(A) = A(k) + c(A(k)) \conv(A). $$
Using $c(A(k)) \leq \frac{c(A)}{k}$ (see Theorem \ref{thm:c-rate} in Section~\ref{sec:c}), as well as $c(A) \leq n$ (see Theorem \ref{thm:sch75}), we deduce that
$$ \conv(A) \subset A(k) + \frac{n}{k} \conv(A). $$
To conclude, we note that since $0 \in \conv(A)$, one has $|x| \leq \diam(A)$ for every $x \in \conv(A)$. Hence, $\conv(A) \subset \diam(A) B_2^n$. 
Finally, we obtain
$$ \conv(A) \subset A(k) + \frac{n \,\diam(A)}{k}B_2^n. $$
\end{proof}

Note that Lemma~\ref{lem:advance-sfs} is 
similar but weaker than the Shapley-Folkman-Starr theorem
discussed in the introduction, and which we will prove in Section~\ref{sec:d-rate}.
Lemma~\ref{lem:advance-sfs} was contained in \cite{EG69}, but with an extra factor of 2.

One clearly needs assumption beyond compactness to have asymptotic vanishing of $\Delta(A(k))$.
Indeed, a simple counterexample would be a finite set $A$ of points,
for which $\Delta(A(k))$ always remains at $\vol_n(\conv(A))$ and fails
to converge to 0. Once such an assumption is made, however, one has the following result.

\begin{thm}\cite{EG69}
Let $A$ be a compact set in $\R^n$ with nonempty interior. Then
\ben
\Delta(A(k)) \leq \frac{C}{k}  \vol_n(\conv(A)),
\een
for some constant $C$ possibly depending on $n$.
\end{thm}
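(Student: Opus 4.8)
The plan is to bound $\Delta(A(k)) = \vol_n(\conv(A(k))) - \vol_n(A(k)) = \vol_n(\conv(A)) - \vol_n(A(k))$, using the fact that $\conv(A(k)) = \conv(A)$. So it suffices to show that $A(k)$ fills up most of $\conv(A)$, quantitatively: specifically, that the set $\conv(A)\setminus A(k)$ is contained in a thin shell near the boundary of $\conv(A)$ whose volume is $O(1/k)$.

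First I would invoke Lemma~\ref{lem:advance-sfs}, which gives the Hausdorff bound $\conv(A)\subset A(k) + \frac{n\,\diam(A)}{k}B_2^n$. This says every point of $\conv(A)$ is within distance $\delta_k := \frac{n\,\diam(A)}{k}$ of $A(k)$. However, this alone does not immediately say that $\conv(A)\setminus A(k)$ is a thin boundary shell — a priori $A(k)$ could have interior holes. The key extra ingredient is the Shapley-Folkman identity in its exact form: for $k > n$,
\[
k\,\conv(A) = n\,\conv(A) + (k-n)\,A(k-n),
\]
equivalently $\conv(A) = \frac{n}{k}\conv(A) + \frac{k-n}{k}A(k-n)$. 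This shows that if $x\in\conv(A)$ lies at distance at least (roughly) $\frac{n}{k}\diam(A)$ from the boundary of $\conv(A)$ — more precisely if $x$ can be written with a slightly shrunk copy $\frac{n}{k}\conv(A)$ still fitting inside $\conv(A)$ after translation — then $x\in A(k)$ up to the self-average rescaling. So I would argue that the ``deep'' part of $\conv(A)$, namely $(1-\frac{cn}{k})\conv(A)$ suitably positioned (using the inradius of $\conv(A)$ to center things), is actually contained in $A(k)$.

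Concretely: let $x_0$ be the incenter of $\conv(A)$ with inradius $\rho = \inr(\conv(A)) > 0$ (positive since $A$ has nonempty interior). For $t\in(0,1)$ the shrunk body $x_0 + t(\conv(A)-x_0)$ has the property that $x_0 + t(\conv(A)-x_0) + (1-t)\cdot(\text{something of size} \le (1-t)R)$ still sits in $\conv(A)$ when $t$ is close enough to $1$. Choosing $t = 1 - \frac{n}{k}$ and using the identity above, I would show $x_0 + (1-\tfrac{n}{k})(\conv(A) - x_0) \subset A(k)$. Then
\[
\Delta(A(k)) = \vol_n(\conv(A)) - \vol_n(A(k)) \le \vol_n(\conv(A)) - \left(1-\tfrac{n}{k}\right)^n \vol_n(\conv(A)) = \left(1 - \left(1-\tfrac{n}{k}\right)^n\right)\vol_n(\conv(A)),
\]
and $1-(1-\frac{n}{k})^n \le \frac{n^2}{k}$ for $k\ge n$, giving the claimed bound with $C = n^2$ (for $k \le n$ the bound is trivial by taking $C$ large enough that $C/k \ge 1$, e.g.\ $C\ge n$, since $\Delta(A(k))\le\vol_n(\conv(A))$ always).

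The main obstacle is the middle step: carefully justifying that the shrunk convex body $x_0 + (1-\frac{n}{k})(\conv(A)-x_0)$ is genuinely contained in $A(k)$ rather than merely close to it. This requires combining the exact Shapley-Folkman decomposition $\conv(A) = \frac{n}{k}\conv(A) + \frac{k-n}{k}A(k-n)$ with the observation that $\frac{n}{k}(\conv(A) - x_0) + \frac{k-n}{k}\conv(A)$-translates can be reabsorbed, and one has to track the translation by $x_0$ and the fact that $\conv(A) - x_0 \supset \rho B_2^n$. A clean way is: write any $y$ in the shrunk body as $y = x_0 + (1-\frac{n}{k})(z - x_0)$ with $z\in\conv(A)$; then $y = \frac{n}{k}x_0 + \frac{k-n}{k}z + (1-\frac nk)\cdot 0$... one must massage this into the form $\frac{n}{k}u + \frac{k-n}{k}v$ with $u\in\conv(A)$ and $v\in A(k-n)$, which works because $x_0\in\conv(A) = \conv(A(k-n))$ and a convex combination of a point of $\conv(A)$ sufficiently interior with $A(k-n)$ lands in $\frac{k}{k-n}\cdot(\text{target})$; the positivity of $\rho$ is exactly what gives the room. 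I would present this as the crux lemma and then the volume estimate follows immediately.
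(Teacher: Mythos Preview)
Your central containment claim --- that $x_0 + (1-\tfrac{n}{k})(\conv(A)-x_0) \subset A(k)$ for $x_0$ the incenter of $\conv(A)$ --- is false in general. Take $n=2$, $A = \eps B_2^2 \cup \{\pm e_1,\pm e_2\}$ with $\eps$ small. Then $\conv(A)$ is the $\ell_1$ unit ball, $x_0=0$, and for $k=3$ your shrunk body is $\tfrac13\conv(A)$, which contains $(\tfrac16,\tfrac16)$. But $3A(3)=A+A+A$ consists of integer lattice points (from summing $\pm e_i$'s) fattened by at most $3\eps$, and a direct check shows the nearest such point to $(\tfrac12,\tfrac12)$ is at distance $1/\sqrt2$. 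So $(\tfrac16,\tfrac16)\notin A(3)$ once $\eps<1/(3\sqrt2)$. The problem is structural: your inclusion does not involve the inradius $\rho$ at all, yet you correctly sense $\rho$ should matter. You are using that $\conv(A)$ has large inradius, but the hypothesis gives you something stronger --- that $A$ itself contains a ball --- and this distinction is exactly what your argument loses.

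The paper's proof exploits the interior of $A$ directly: since some translate of $\delta B_2^n$ sits inside $A$ (hence inside every $A(m)$), one can choose $k_0$ with $\tfrac{n\,\diam(A)}{k_0}B_2^n\subset A(k_0)$, and then the Hausdorff bound of Lemma~\ref{lem:advance-sfs} gives
\[
\conv(A)\subset A(k)+\tfrac{n\,\diam(A)}{k}B_2^n\subset A(k)+\tfrac{k_0}{k}A(k_0)=\tfrac{k+k_0}{k}A(k+k_0),
\]
from which the volume inequality follows immediately. The key move you are missing is absorbing the small ball from the Hausdorff estimate into a copy of $A(k_0)$ rather than into $\conv(A)$.
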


\begin{proof}

By translation-invariance, we may assume that $\delta B_2^n\subset A$ for some $\delta>0$. Then
$\delta B_2^n\subset A(k_0)$, and by taking $k_0 \geq \frac{n\, \diam(A)}{\delta}$,
we have 
$$\frac{n \,\diam(A)}{k_0}B_2^n\subset A(k_0).$$
Hence using (\ref{emerson}) we get
\ben
\conv(A)\subset A(k) + \frac{k_0}{k} A(k_0)=\frac{k+k_0}{k}A(k+k_0) ,
\een
so that by taking the volume we have
\ben
\vol_n(\conv(A))\leq \bigg( 1+ \frac{k_0}{k}\bigg)^n \vol_n(A(k+k_0)) ,
\een
and
\ben
\Delta(A(k+k_0)) \leq \bigg[ \bigg( 1+ \frac{k_0}{k}\bigg)^n -1 \bigg] \vol_n(A(k+k_0)) = O\bigg(\frac{1}{k}\bigg) \vol_n(\conv(A)).
\een
\end{proof}

%
%
%
%
%
%
%
%
%

\section{Volume inequalities for Minkowski sums}
\label{sec:vol}

\subsection{A refined superadditivity of the volume for compact sets} 
\label{sec:fsa}

In this section, we observe that if the exponents of $1/n$ in Conjecture~\ref{strongconj} are removed,
then the modified inequality is true (though unfortunately one can no longer directly relate this to 
a law of large numbers for sets).

\begin{thm}\label{thm:fsa}
Let $n\ge1$, $k\ge2$ be integers and let $A_1, \dots, A_k$ be $k$ compact sets in $\R^n$. Then 
\begin{eqnarray}\label{eq:thm-fsa}
\vol_n\left(\sum_{i=1}^kA_i\right) \ge \frac{1}{k-1}\sum_{i=1}^k\vol_n\left(\sum_{j\in[k]\setminus\{i\}}A_j\right).
\end{eqnarray}
\end{thm}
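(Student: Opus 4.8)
The plan is to reduce \eqref{eq:thm-fsa} to the case where every $A_i$ is a box, where it is elementary, and then to bridge the gap by a cutting argument. First, if $\vol_n\big(\sum_{i=1}^kA_i\big)=0$ then, since $\sum_{i=1}^kA_i$ contains a translate of $\sum_{j\neq i}A_j$ for each $i$, monotonicity of $\vol_n$ gives $\vol_n\big(\sum_{j\neq i}A_j\big)=0$ for all $i$ and \eqref{eq:thm-fsa} is trivial; so assume the left-hand side is positive. Next, it is enough to prove \eqref{eq:thm-fsa} when each $A_i$ is a finite union of axis-parallel boxes: approximate $A_i$ from outside by $A_i\subset A_i+\varepsilon B_2^n\subset U_i^{(p)}$, where $U_i^{(p)}$ is the union of the dyadic cubes of side $2^{-p}$ meeting $A_i+\varepsilon B_2^n$, and observe that for every $\beta\subseteq[k]$ the set $\sum_{i\in\beta}U_i^{(p)}$ decreases (as $p\to\infty$) to $\sum_{i\in\beta}(A_i+\varepsilon B_2^n)=\big(\sum_{i\in\beta}A_i\big)+|\beta|\varepsilon B_2^n$, which in turn decreases (as $\varepsilon\to 0$) to $\sum_{i\in\beta}A_i$; continuity of $\vol_n$ along these decreasing families lets \eqref{eq:thm-fsa} pass to the limit.

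\textbf{The box case.} Let $A_i=\prod_{l=1}^n I_i^{(l)}$ with $I_i^{(l)}$ a compact interval. Then $\sum_{i\in\beta}A_i=\prod_{l=1}^n\big(\sum_{i\in\beta}I_i^{(l)}\big)$, so writing $x_i^{(l)}=|I_i^{(l)}|$ gives $\vol_n\big(\sum_{i\in\beta}A_i\big)=\prod_l\big(\sum_{i\in\beta}x_i^{(l)}\big)$. If some $\sum_i x_i^{(l)}=0$ both sides of \eqref{eq:thm-fsa} vanish; otherwise, normalizing so that $\sum_i x_i^{(l)}=1$ for every $l$, \eqref{eq:thm-fsa} becomes
\[
\sum_{\iota=1}^k\ \prod_{l=1}^n\big(1-x_\iota^{(l)}\big)\ \le\ k-1 .
\]
Since each $x_\iota^{(l)}\in[0,1]$ we have $1-x_\iota^{(l)}\in[0,1]$, hence $\prod_l\big(1-x_\iota^{(l)}\big)\le 1-x_\iota^{(1)}$, and summing over $\iota$ yields $\sum_\iota\prod_l(1-x_\iota^{(l)})\le\sum_\iota(1-x_\iota^{(1)})=k-\sum_\iota x_\iota^{(1)}=k-1$. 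In dimension $1$ this is an equality, so the constant $1/(k-1)$ is sharp; the same one-line computation also shows that \eqref{eq:thm-fsa} is preserved under Cartesian products, cf.\ Theorem~\ref{thm:product}.

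\textbf{From boxes to unions of boxes, and the main obstacle.} It remains to prove \eqref{eq:thm-fsa} for finite unions of boxes, and I would attempt this by induction on the total number of boxes, via a Hadwiger--Ohmann-type cut: when some $A_{i_0}$ is a union of $\ge 2$ boxes, separate two of them by a coordinate hyperplane $\{x_l=t\}$, cut every $A_i$ by a parallel hyperplane $\{x_l=t_i\}$ into $A_i'=A_i\cap\{x_l\le t_i\}$ and $A_i''=A_i\cap\{x_l\ge t_i\}$, arrange that each is a union of at most as many boxes as $A_i$ (with $A_{i_0}',A_{i_0}''$ strictly fewer), and use $\vol_n\big(\sum_iA_i\big)\ge\vol_n\big(\sum_iA_i'\big)+\vol_n\big(\sum_iA_i''\big)$, which holds since $\sum_iA_i'\subset\{x_l\le\sum_it_i\}$ and $\sum_iA_i''\subset\{x_l\ge\sum_it_i\}$, together with the inductive hypotheses for $(A_i')_i$ and $(A_i'')_i$. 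The hard part — the heart of the matter — is to choose the levels $t_i$ so that this really closes the induction: unlike in the Hadwiger--Ohmann proof of Brunn--Minkowski, where one needs only a lower bound on a single Minkowski sum and may discard the ``mixed'' pieces $\sum_{i\in T}A_i'+\sum_{i\notin T}A_i''$, here one simultaneously needs \emph{upper} bounds on each $\vol_n\big(\sum_{j\neq i}A_j\big)$, and it is precisely those mixed pieces that obstruct such bounds; so the $t_i$ must be selected so that, on every sub-sum $\sum_{j\neq i}A_j$, the mixed pieces fall (up to null sets) inside the two pure ones, and verifying that a single compatible choice exists and works is what the whole argument hinges on. For contrast, the purely Brunn--Minkowski estimates $\vol_n\big(\sum_iA_i\big)^{1/n}\ge\vol_n\big(\sum_{j\neq i}A_j\big)^{1/n}+\vol_n(A_i)^{1/n}$ and $\sum_i\vol_n(A_i)^{1/n}\le\vol_n\big(\sum_iA_i\big)^{1/n}$ yield only $\sum_i\vol_n\big(\sum_{j\neq i}A_j\big)\le k\,\vol_n\big(\sum_iA_i\big)$, i.e.\ plain iterated superadditivity, which falls short of \eqref{eq:thm-fsa} — so the reduction to boxes, or some comparably concrete combinatorial input, really is needed.
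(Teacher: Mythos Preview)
Your proposal is incomplete: the reduction to boxes and the box case are fine, but the passage from single boxes to finite unions of boxes is not carried out. You yourself flag the obstacle --- the Hadwiger--Ohmann cut gives a lower bound on $\vol_n\big(\sum_i A_i\big)$ by discarding mixed pieces, but closing the induction for \eqref{eq:thm-fsa} also requires the complementary upper bounds $\vol_n\big(\sum_{j\neq i}A_j\big)\le \vol_n\big(\sum_{j\neq i}A_j'\big)+\vol_n\big(\sum_{j\neq i}A_j''\big)$ simultaneously for every $i$, and these fail in general because the mixed terms are genuinely there. You say the levels $t_i$ ``must be selected'' so that the mixed pieces collapse into the pure ones on every sub-sum, but you do not produce such a choice, and in fact there is no reason one should exist: as soon as two of the $A_j$ are not singletons in the $l$-th coordinate, the sub-sum $\sum_{j\neq i}A_j$ has nontrivial mixed pieces for every choice of the $t_j$. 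So the induction, as sketched, does not close.

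The paper's proof avoids all of this. It runs the one-dimensional argument of Theorem~\ref{thm:1d} verbatim in $\R^n$ by picking a single direction $\theta\in S^{n-1}$ and slicing each $S_i=\sum_{j\neq i}A_j$ at the level $s_i=\sum_{j<i}a_j+\sum_{j>i}b_j$, where $a_j=\min_{x\in A_j}\langle x,\theta\rangle$ and $b_j=\max_{x\in A_j}\langle x,\theta\rangle$. The same inclusions $S\supset(a_i+S_i^-)\cup(b_{i+1}+S_{i+1}^+)$ hold, the union is disjoint (the two pieces live in complementary half-spaces), and summing the resulting $k-1$ inequalities gives \eqref{eq:thm-fsa} directly --- no approximation, no induction, no box reduction. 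The point you were missing is that the Gyarmati--Matolcsi--Ruzsa argument is not intrinsically one-dimensional: it only uses a linear order, and projecting onto any fixed direction supplies one.
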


\begin{proof}
We use arguments similar to the proof of Theorem~\ref{thm:1d}.
Indeed, let us define the sets $S$ and $S_i$ in the same way as in the proof of Theorem~\ref{thm:1d}. 
Let $\theta\in S^{n-1}$ be any fixed unit vector and  let us define $a_i=\min\{\langle x,\theta\rangle ; x\in A_i\}$, 
$b_i=\max \{\langle x,\theta\rangle ; x\in A_i\}$, $s_i=\sum_{j<i}a_j+\sum_{j>i}b_j$, $S_i^-=\{x\in S_i ; \langle x,\theta\rangle\le s_i\}$ 
and $S_i^+=\{x\in S_i ; \langle x,\theta\rangle> s_i\}$. Then, the same inclusions hold true and thus we obtain
\begin{eqnarray*}
(k-1)\vol_n(S) 
& \ge & \sum_{i=1}^{k-1}\left(\vol_n(S_i^-) + \vol_n(S_{i+1}^+)\right) \\ 
& = & \vol_n(S_1^-) + \vol_n(S_k^+) + \sum_{i=2}^{k-1}\vol_n(S_i) \\
&=& \sum_{i=1}^k\vol_n(S_i).
\end{eqnarray*}
\end{proof}

Applying Theorem~\ref{thm:fsa} to $A_1=\cdots=A_k=A$ yields the following positive result.

\begin{cor}\label{cor:fsa-weak}
Let $A$ be a compact set in $\R^n$ and $A(k)$ be defined as in (\ref{defAk}). Then
\begin{eqnarray}\label{weak}
\vol_n(A(k)) \ge \left(\frac{k-1}{k}\right)^{n-1}\vol_n(A(k-1)).
\end{eqnarray}
\end{cor}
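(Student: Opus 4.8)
The plan is to specialize Theorem~\ref{thm:fsa} to the case where all summands coincide and then exploit the homogeneity of Lebesgue measure under dilations. Concretely, I would apply \eqref{eq:thm-fsa} with $A_1=\cdots=A_k=A$. The left-hand side becomes $\vol_n\big(\underbrace{A+\cdots+A}_{k}\big)$, which by the definition \eqref{defAk} equals $\vol_n(kA(k))$. Each term on the right-hand side is $\vol_n\big(\sum_{j\in[k]\setminus\{i\}}A\big)=\vol_n\big(\underbrace{A+\cdots+A}_{k-1}\big)=\vol_n((k-1)A(k-1))$, and there are $k$ such identical terms, so Theorem~\ref{thm:fsa} yields
\[
\vol_n(kA(k))\ \ge\ \frac{k}{k-1}\,\vol_n((k-1)A(k-1)).
\]

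Next I would use that $\vol_n(\lambda B)=\lambda^n\vol_n(B)$ for $\lambda>0$ and any compact $B\subset\R^n$, applied with $\lambda=k$ on the left and $\lambda=k-1$ on the right. This turns the displayed inequality into
\[
k^n\,\vol_n(A(k))\ \ge\ \frac{k}{k-1}\,(k-1)^n\,\vol_n(A(k-1))\ =\ k\,(k-1)^{n-1}\,\vol_n(A(k-1)).
\]
Dividing both sides by $k^n$ gives exactly \eqref{weak}:
\[
\vol_n(A(k))\ \ge\ \frac{k\,(k-1)^{n-1}}{k^n}\,\vol_n(A(k-1))\ =\ \left(\frac{k-1}{k}\right)^{n-1}\vol_n(A(k-1)).
\]

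There is essentially no obstacle here beyond bookkeeping: the only things to be careful about are that $k\ge 2$ so that $A(k-1)$ is well defined (the case $k=1$ being vacuous), and that Theorem~\ref{thm:fsa} is indeed stated for arbitrary compact sets so the specialization to equal sets is legitimate. One might also remark that the inequality is a genuine weakening of Conjecture~\ref{weakconj} (which would assert the stronger bound $\vol_n(A(k))\ge\vol_n(A(k-1))$), but it nonetheless gives a quantitative lower bound that already suffices to control the decay rate of $\Delta(A(k))$ when $\conv(A)$ has nonempty interior, since $\big(\frac{k-1}{k}\big)^{n-1}\to 1$.
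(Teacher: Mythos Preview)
Your proof is correct and is exactly the approach the paper takes: the corollary is stated immediately after Theorem~\ref{thm:fsa} with the remark that it follows by applying that theorem to $A_1=\cdots=A_k=A$, and your homogeneity computation fills in the one-line algebra the paper leaves implicit.
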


In the following proposition, we improve Corollary~\ref{cor:fsa-weak} under additional assumptions on the set $A \subset \R^n$, for $n \geq 2$.

\begin{prop}\label{asymptote}
Let $A$ be a compact subset of $\R^n$ and $A(k)$ be defined as in (\ref{defAk}). If there exists a hyperplane $H$ such that $\vol_{n-1}(P_H(A)) = \vol_{n-1}(P_H(\conv(A)))$, where $P_H(A)$ denotes the orthogonal projection of $A$ onto $H$, then
$$ \vol_n(A(k)) \ge \frac{k-1}{k}\vol_n(A(k-1)). $$
\end{prop}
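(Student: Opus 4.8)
The plan is to reduce to a one‑dimensional comparison along the fibers of the projection $P_H$, using that the hypothesis is precisely the statement that $P_H(A)$ fills $P_H(\conv(A))=\conv(P_H(A))$ up to an $(n-1)$-dimensional null set.

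Choosing coordinates with $H=e_n^{\perp}$, I would write points of $\R^n$ as $(y,t)$, $y\in\R^{n-1}$, $t\in\R$, and for compact $K\subset\R^n$ put $K_y=\{t\in\R:(y,t)\in K\}$, so that Fubini gives $\vol_n(K)=\int_{\R^{n-1}}\vol_1(K_y)\,dy$ (the integrand is measurable since $K$ is compact). As $P_H$ is linear, $P_H(A(j))=(P_H(A))(j)\subset\conv(P_H(A))=P_H(\conv(A))=:Q$ for every $j\ge1$, and the hypothesis becomes $\vol_{n-1}(Q\setminus P_H(A))=0$.

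The core of the argument is the fiberwise inequality $\vol_1(A(k)_y)\ge\frac{k-1}{k}\vol_1(A(k-1)_y)$, valid for a.e.\ $y$. Using $A(k)=\frac1k A+\frac{k-1}{k}A(k-1)$ (immediate from the definition), fix $y\in P_H(A)$ and pick $s_0$ with $(y,s_0)\in A$; then for every $u\in A(k-1)_y$ one has $\frac1k(y,s_0)+\frac{k-1}{k}(y,u)=\big(y,\tfrac1k s_0+\tfrac{k-1}{k}u\big)\in A(k)$, so $A(k)_y$ contains the translate $\tfrac1k s_0+\tfrac{k-1}{k}A(k-1)_y$ and therefore $\vol_1(A(k)_y)\ge\frac{k-1}{k}\vol_1(A(k-1)_y)$. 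This holds for all $y\in P_H(A)$, hence for a.e.\ $y\in Q$; and for $y\notin Q$ we have $A(k-1)_y=\emptyset$, so the inequality holds trivially there as well. Integrating in $y$ yields $\vol_n(A(k))\ge\frac{k-1}{k}\vol_n(A(k-1))$.

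I do not anticipate a real obstacle; the only point requiring attention is that in the fiber step I pair the base coordinate $y$ of $A(k)$ with the \emph{same} $y$ as the base coordinate of the $\frac1k A$ summand (the ``identity selection'' $a'=y$). This is what removes the Jacobian factor $\big(\frac{k-1}{k}\big)^{n-1}$ that the map $c'\mapsto\frac1k a'+\frac{k-1}{k}c'$ would otherwise introduce — the factor responsible for the weaker exponent in Corollary~\ref{cor:fsa-weak} — and it is legitimate for a.e.\ $y$ exactly because $P_H(A)$ has full measure in $Q$. Note also that no nontrivial inequality is used, not even one-dimensional Brunn--Minkowski, since a single translate of $A(k-1)_y$ already lies inside $A(k)_y$; and for $n=1$ the hypothesis is vacuous and the statement is in any case subsumed by the results of Subsection~\ref{sec:1d}.
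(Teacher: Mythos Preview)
Your proof is correct and takes a genuinely different route from the paper's. The paper invokes the Bonnesen inequality (the ``concave Brunn--Minkowski inequality'', citing \cite{BF87:book, Ohm55}) as a black box: after observing that $\vol_{n-1}(P_H(A(j)))=\vol_{n-1}(P_H(\conv(A)))$ for every $j$, it applies Bonnesen directly to the decomposition $A(k)=\tfrac{k-1}{k}A(k-1)+\tfrac1k A$ to obtain the stronger intermediate bound $\vol_n(A(k))\ge\tfrac{k-1}{k}\vol_n(A(k-1))+\tfrac1k\vol_n(A)$, and then discards the second term. Your argument is more elementary: you unpack the projection hypothesis via Fubini and establish the fiberwise containment $A(k)_y\supset\tfrac1k s_0+\tfrac{k-1}{k}A(k-1)_y$ for each $y\in P_H(A)$, which needs nothing beyond translation-invariance of one-dimensional Lebesgue measure. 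In effect your proof \emph{is} an ad hoc derivation of the weaker half of Bonnesen's inequality (the $\tfrac{k-1}{k}\vol_n(A(k-1))$ term) tailored to this situation --- you lose the extra $\tfrac1k\vol_n(A)$ but gain self-containment and sidestep any question of whether the cited form of Bonnesen applies to non-convex compact summands.
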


\begin{proof}
By assumption, $\vol_{n-1}(P_H(A))=\vol_{n-1}(P_H(\conv(A)))$. Thus, for every $k \geq 1$, $\vol_{n-1}(P_H(A(k)))=\vol_{n-1}(P_H(\conv(A)))$. Indeed, one has $A \subset A(k) \subset \conv(A)$. Thus, $P_H(A) \subset P_H(A(k)) \subset P_H(\conv(A))$. Hence,
$$ \vol_{n-1}(P_H(A)) \leq \vol_{n-1}(P_H(A(k))) \leq \vol_{n-1}(P_H(\conv(A))) = \vol_{n-1}(P_H(A)). $$
It follows by the Bonnesen inequality (concave Brunn-Minkowski inequality, see \cite{BF87:book, Ohm55}) that for every $k \geq 2$,
\begin{eqnarray*}
\vol_n(A(k)) & = & \vol_n\left( \frac{k-1}{k} A(k-1) + \frac{1}{k} A \right) \\ & \geq & \frac{k-1}{k} \vol_n(A(k-1)) + \frac{1}{k} \vol_n(A) \geq \frac{k-1}{k} \vol_n(A(k-1)).
\end{eqnarray*}
\end{proof}

\begin{rem}
\begin{enumerate}
\item By considering the set $A=\{0, 1\}$ and $\delta_{\frac{1}{2}}$ the Dirac measure at $\frac{1}{2}$, one has
$$ \delta_{\frac{1}{2}}(A(2)) = 1 > 0 = \delta_{\frac{1}{2}}(A(3)). $$
Hence Conjecture~\ref{weakconj} does not hold in general for log-concave measures in dimension~1.
\item If $A$ is countable, then for every $k \geq 1$, $\vol_n(A(k)) = 0$, thus the sequence $\{\vol_n(A(k))\}_{k \geq 1}$ is constant and equal to $0$. 
\item If there exists $k_0 \geq 1$ such that $A(k_0) = \conv(A)$, then for every $k \geq k_0$, $A(k) = \conv(A)$. Indeed, 
\begin{eqnarray*}
(k_0+1)A(k_0+1) &=& k_0 A(k_0) + A = k_0 \conv(A) + A \\
& \supset & \conv(A) + k_0 A(k_0)  =  (k_0+1)\conv(A).
\end{eqnarray*}
It follows that $A(k_0+1)=\conv(A)$. We conclude by induction. Thus, in this case, the sequence $\{\vol_n(A(k))\}_{k \geq 1}$ is stationary to $\vol_n(\conv(A))$, for $k \geq k_0$. 
\item It is natural to ask if the refined superadditivity of volume
can be strengthened to fractional  superadditivity as defined in Definition~\ref{def:fsa} below.
While this appears to be a difficult question in general, 
it was shown recently in \cite{BMW18} 
that fractional superadditivity is true in the case of compact subsets of $\R$. 
\end{enumerate}
\end{rem}

\subsection{Supermodularity of volume for convex sets}
\label{sec:smod}

If we restrict to convex sets, an even stronger inequality is true from which
we can deduce Theorem~\ref{thm:fsa} for convex sets.

\begin{thm}\label{thm:smod}
Let $n\in\Nat$. For compact convex subsets $B_1, B_2, B_3$ of $\R^{n}$, one has
\be\label{set}
\vol_n(B_1+B_2+B_3) + \vol_n(B_1) \geq \vol_n(B_1+B_2) + \vol_n(B_1+B_3)  .
\ee
\end{thm}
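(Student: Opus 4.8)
The plan is to deduce \eqref{set} from the classical polynomial expansion of the volume of a Minkowski sum of convex bodies in terms of mixed volumes, combined with the monotonicity of mixed volumes under inclusion. First, since the volume of a Minkowski sum of convex bodies is unchanged when any summand is translated, I may assume without loss of generality that $0\in B_2$, so that $B_1\subseteq B_1+B_2$.

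Next I rewrite \eqref{set} in the equivalent form
\[
\vol_n\big((B_1+B_2)+B_3\big)-\vol_n(B_1+B_2)\ \ge\ \vol_n(B_1+B_3)-\vol_n(B_1).
\]
For convex bodies $K,L$ in $\R^n$, Minkowski's theorem on mixed volumes (see, e.g., \cite[Section 5.1]{Sch14:book}) gives
\[
\vol_n(K+L)=\sum_{j=0}^{n}\binom{n}{j}V\big(K[n-j],L[j]\big),
\]
where $V(\cdot)$ denotes the mixed volume and the $j=0$ term equals $\vol_n(K)$. Applying this first with $K=B_1+B_2$ and then with $K=B_1$ (both of which are convex bodies, being Minkowski sums of convex bodies), and subtracting the $j=0$ terms, the left- and right-hand sides of the displayed inequality become
\[
\sum_{j=1}^{n}\binom{n}{j}V\big((B_1+B_2)[n-j],B_3[j]\big)\qquad\text{and}\qquad\sum_{j=1}^{n}\binom{n}{j}V\big(B_1[n-j],B_3[j]\big),
\]
respectively.

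It then suffices to compare these two sums term by term. Mixed volumes are monotone under set inclusion in each argument (this follows, for instance, from the representation of a mixed volume as an integral of a support function against a nonnegative mixed surface area measure; see \cite[Section 5.1]{Sch14:book}). Since $B_1\subseteq B_1+B_2$, for every $j\in\{1,\dots,n\}$ one has $V\big(B_1[n-j],B_3[j]\big)\le V\big((B_1+B_2)[n-j],B_3[j]\big)$, and summing against the positive coefficients $\binom{n}{j}$ yields \eqref{set}. There is no serious obstacle in this argument: every ingredient is classical, and the only points requiring care are the harmless reduction to $0\in B_2$ (which is what makes the inclusion $B_1\subseteq B_1+B_2$ available) and invoking the monotonicity of mixed volumes in the correct slot-by-slot form. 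An alternative route avoiding mixed-volume machinery would be an induction on $n$ using the Cauchy--Kubota projection formula, or a Steiner-symmetrization argument, but the expansion above is by far the most transparent.
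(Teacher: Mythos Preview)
Your proof is correct and takes a genuinely different route from the paper's. The paper proves \eqref{set} by invoking the Alesker--Dar--Milman theorem (Theorem~\ref{thm:ADM}) to parametrize $B_1,B_2,B_3$ simultaneously by volume-preserving diffeomorphisms from a common domain, and then applies a pointwise supermodularity inequality for determinants of positive-semidefinite matrices (Lemma~\ref{lem:det}) to the Jacobians. Your argument, by contrast, is entirely classical: translate $B_2$ so that $0\in B_2$, expand $\vol_n(K+B_3)$ as a Steiner-type polynomial in mixed volumes, and compare term by term using monotonicity of mixed volumes under the inclusion $B_1\subseteq B_1+B_2$. Your route is shorter and uses nothing beyond Chapter~5 of \cite{Sch14:book}; the paper's route, while heavier, is aligned with the information-theoretic analogies stressed throughout (the determinant lemma is exactly the Gaussian case of an entropic supermodularity statement), and the transport technique it showcases is of independent interest. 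One minor point of care in your write-up: the statement is for compact convex \emph{sets}, which may have empty interior, so strictly speaking the objects are not ``convex bodies''; but mixed volumes, the polynomial expansion, and monotonicity all extend to this class, so nothing is lost.
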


We first observe that Theorem~\ref{thm:smod} is actually equivalent to a 
formal strengthening of it, namely  Theorem~\ref{thm:smod2} below.
Let us first recall the notion of a supermodular set function. 

\begin{defn}
A set function $f:2^{[k]}\ra\R$ is {\it supermodular} if 
\be\label{supmod:defn}
f(\setS\cup\setT)+f(\setS\cap\setT) \geq f(\setS) + f(\setT)
\ee
for all subsets $\setS, \setT$ of $[k]$.
\end{defn}

\begin{thm}\label{thm:smod2}
Let $B_1, \ldots, B_k$ be compact convex subsets of $\R^n$, and
define 
\be\label{def:setfn-v}
v(\setS)=\vol_n\bigg(\sum_{i\in\setS} B_i \bigg) 
\ee
for each $\setS\subset [k]$.
Then $v:2^{[k]}\ra [0,\infty)$ is a supermodular set function.
\end{thm}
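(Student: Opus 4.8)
The plan is to deduce Theorem~\ref{thm:smod2} from the three-set inequality of Theorem~\ref{thm:smod} by exploiting the fact that supermodularity of a set function is a \emph{local} property. Recall that $v$ (or indeed any $f:2^{[k]}\to\R$) satisfies \eqref{supmod:defn} as soon as it satisfies the minimal two-element instance of \eqref{supmod:defn}: for every $\setS\subseteq[k]$ and all distinct $i,j\in[k]\setminus\setS$,
\begin{equation}\label{eq:localsupmod}
f(\setS\cup\{i,j\})+f(\setS)\ \geq\ f(\setS\cup\{i\})+f(\setS\cup\{j\}).
\end{equation}
So the first step would be to prove this reduction, and the second to check \eqref{eq:localsupmod} for the specific function $v$, where it turns out to be exactly Theorem~\ref{thm:smod}.

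For the reduction: necessity of \eqref{eq:localsupmod} is immediate from \eqref{supmod:defn} with $\setS\cup\{i\}$ and $\setS\cup\{j\}$ in place of $\setS$ and $\setT$. For sufficiency, I would first rewrite \eqref{eq:localsupmod} as the one-step increasing-differences statement $f(\setU\cup\{i\})-f(\setU)\geq f(\setU'\cup\{i\})-f(\setU')$ in the case $\setU=\setU'\cup\{j\}$, and obtain it for arbitrary $\setU'\subseteq\setU$ with $i\notin\setU$ by adding the elements of $\setU\setminus\setU'$ to $\setU'$ one at a time and summing the resulting one-step inequalities. Then, given $\setS,\setT\subseteq[k]$, write $\setS\setminus\setT=\{i_1,\dots,i_m\}$ and add $i_1,\dots,i_m$ successively both to $\setS\cap\setT$ (reaching $\setS$) and to $\setT$ (reaching $\setS\cup\setT$); at the $r$-th step, the increasing-differences inequality just proved bounds the increment over $\setT\cup\{i_1,\dots,i_{r-1}\}$ below by the increment over $(\setS\cap\setT)\cup\{i_1,\dots,i_{r-1}\}$ (note the latter set is contained in the former, and $i_r$ lies in neither), and summing the $m$ steps gives $f(\setS\cup\setT)-f(\setT)\geq f(\setS)-f(\setS\cap\setT)$, which is \eqref{supmod:defn}.

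It then remains to verify \eqref{eq:localsupmod} for $v$ defined in \eqref{def:setfn-v}. Fix $\setS\subseteq[k]$ and distinct $i,j\in[k]\setminus\setS$, and set $B=\sum_{l\in\setS}B_l$, with the convention $B=\{0\}$ when $\setS=\emptyset$. A Minkowski sum of finitely many compact convex sets is compact and convex, so $B,B_i,B_j$ are compact convex subsets of $\R^n$ and Theorem~\ref{thm:smod} yields
\[
\vol_n(B+B_i+B_j)+\vol_n(B)\ \geq\ \vol_n(B+B_i)+\vol_n(B+B_j),
\]
which is precisely \eqref{eq:localsupmod} applied to $v$. Hence $v$ is supermodular. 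The entire weight of the argument sits in Theorem~\ref{thm:smod}; the reduction is soft, and the only point demanding a little care is the bookkeeping of index sets in the telescoping. (Conversely, Theorem~\ref{thm:smod} is recovered as the case $k=3$, $\setS=\{1\}$, $i=2$, $j=3$ of Theorem~\ref{thm:smod2}, so the two statements are indeed equivalent.)
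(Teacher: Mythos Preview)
Your proof is correct, but you take a longer route than necessary. You first establish the standard ``local-to-global'' reduction (supermodularity follows from the two-element increment inequality \eqref{eq:localsupmod}) and then verify the local case via Theorem~\ref{thm:smod}. The paper bypasses this reduction entirely: for arbitrary $\setS,\setT\subset[k]$, it simply sets $B_1=\sum_{i\in\setS\cap\setT}B_i$, $B_2=\sum_{i\in\setS\setminus\setT}B_i$, $B_3=\sum_{i\in\setT\setminus\setS}B_i$ (each a compact convex set, with the empty sum read as $\{0\}$), and then Theorem~\ref{thm:smod} applied to this triple is \emph{already} the supermodularity inequality $v(\setS\cup\setT)+v(\setS\cap\setT)\ge v(\setS)+v(\setT)$. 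So your telescoping argument, while valid, is not needed here: the three-set inequality is strong enough to hit the general supermodular inequality in one shot, not just the minimal instance.
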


Theorem~\ref{thm:smod2} implies Theorem~\ref{thm:smod}, namely
\be\label{vset}
\vol_n(B_1+B_2+B_3) + \vol_n(B_1) \geq \vol_n(B_1+B_2) + \vol_n(B_1+B_3)  
\ee
for compact convex subsets $B_1, B_2, B_3$ of $\R^{n}$,
since the latter is a special case of Theorem~\ref{thm:smod2} when $k=3$.
To see the reverse, apply the inequality \eqref{vset} to
\ben
B_1=\sum_{i\in\setS\cap\setT} A_i , \quad B_2=\sum_{i\in\setS\setminus\setT} A_i, \quad B_3=\sum_{i\in\setT\setminus\setS} A_i .
\een

Our proof of Theorem~\ref{thm:smod} combines a property of determinants that seems to have been
first explicitly observed by Ghassemi and Madiman \cite{MG17} with
a use of optimal transport inspired by Alesker, Dar and Milman \cite{ADM99}.
Let us prepare the ground by stating these results.

\begin{lem}\label{lem:det}\cite{MG17} 
Let $K_1, K_2$ and $K_3$ be $n\times n$ positive-semidefinite matrices. Then
\begin{equation*}
\det(K_1 + K_2 + K_3) + \det(K_1) \geq \det(K_1 + K_2) +\det(K_1 + K_3)  .
\end{equation*}
\end{lem}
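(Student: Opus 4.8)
The plan is to reduce the four-term inequality to a scalar statement about $\det(I+\cdot)$ and then expand in principal minors, where the superadditivity of the determinant on positive semidefinite matrices finishes the job.

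First I would dispose of a degenerate case: replacing $K_1$ by $K_1+\varepsilon I$ and letting $\varepsilon\downarrow 0$ at the very end (all four determinants are continuous in the matrix entries), I may assume $K_1$ is positive definite. Setting $M_j:=K_1^{-1/2}K_jK_1^{-1/2}$ for $j\in\{2,3\}$, which are positive semidefinite, and using the factorizations $K_1+K_j=K_1^{1/2}(I+M_j)K_1^{1/2}$ and $K_1+K_2+K_3=K_1^{1/2}(I+M_2+M_3)K_1^{1/2}$, the claim becomes, after dividing through by $\det(K_1)>0$,
\[
\det(I+M_2+M_3)+1\ \geq\ \det(I+M_2)+\det(I+M_3).
\]

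Next I would invoke the elementary-symmetric-function expansion of the characteristic polynomial: for any $n\times n$ matrix $M$,
\[
\det(I+M)=\sum_{S\subseteq[n]}\det\bigl(M[S]\bigr),
\]
where $M[S]$ is the principal submatrix of $M$ on the rows and columns indexed by $S$, with the convention $\det(M[\emptyset]):=1$. Applying this with $M=M_2+M_3$, $M=M_2$, and $M=M_3$ — and noting $(M_2+M_3)[S]=M_2[S]+M_3[S]$ — the terms indexed by $S=\emptyset$ contribute $1$ to each of the three determinants, so after cancellation it suffices to prove, for every nonempty $S\subseteq[n]$,
\[
\det\bigl(M_2[S]+M_3[S]\bigr)\ \geq\ \det\bigl(M_2[S]\bigr)+\det\bigl(M_3[S]\bigr).
\]
Since $M_2[S]$ and $M_3[S]$ are positive semidefinite (being principal submatrices of positive semidefinite matrices), this follows from the Minkowski determinant inequality $\det(A+B)^{1/|S|}\geq\det(A)^{1/|S|}+\det(B)^{1/|S|}$ combined with the elementary bound $(x+y)^p\geq x^p+y^p$ for $x,y\geq 0$ and $p\geq 1$; alternatively, expand $\det(M_2[S]+M_3[S])$ by multilinearity in the columns, observe that the two ``pure'' terms are exactly $\det(M_2[S])$ and $\det(M_3[S])$, and that each remaining mixed term is a mixed discriminant of positive semidefinite matrices, hence nonnegative.

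I do not expect a genuine obstacle here: the only points that need care are the reduction to invertible $K_1$ (a routine continuity argument) and recalling the principal-minor expansion of $\det(I+M)$. A second, arguably shorter route I would keep in reserve is to expand all four determinants directly into mixed discriminants $D(\,\cdot\,,\dots,\,\cdot\,)$ via multilinearity; after the cancellations the difference of the two sides equals $\sum_{i+j+l=n,\ j\geq 1,\ l\geq 1}\tfrac{n!}{i!\,j!\,l!}\,D\bigl(K_1^{[i]},K_2^{[j]},K_3^{[l]}\bigr)$, which is nonnegative because mixed discriminants of positive semidefinite matrices are nonnegative (Alexandrov). I would present the first route as the main argument since it is the most self-contained.
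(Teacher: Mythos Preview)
Your argument is correct. The reduction to invertible $K_1$ by the perturbation $K_1+\varepsilon I$, the conjugation that turns the inequality into $\det(I+M_2+M_3)+1\geq\det(I+M_2)+\det(I+M_3)$, the principal-minor expansion $\det(I+M)=\sum_{S\subseteq[n]}\det(M[S])$, and the termwise superadditivity $\det(A+B)\geq\det(A)+\det(B)$ for positive semidefinite $A,B$ are all valid and cleanly assembled. The alternate mixed-discriminant route you sketch at the end is also sound.

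As for comparison: the paper does not supply its own proof of this lemma; it quotes the statement from \cite{MG17} and uses it as a black box in the proof of Theorem~\ref{thm:smod}. So there is nothing in the paper to compare your argument against. Your write-up is self-contained and would serve well as a proof here; the mixed-discriminant version you mention is closer in spirit to how such inequalities are often presented in the convex-geometry literature, but the principal-minor approach you lead with is equally rigorous and arguably more elementary.
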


We state the deep result of \cite{ADM99} directly for $k$ sets instead of for two sets as in \cite{ADM99}
(the proof is essentially the same, with obvious modifications). 

\begin{thm}[Alesker-Dar-Milman \cite{ADM99}]\label{thm:ADM}
Let $A_1,\ldots,A_{k} \subset \R^{n}$ be open, convex sets
with $|A_i|=1$ for each $i\in [k]$.
Then there exist $C^1$-diffeomorphisms $\psi_i:A_1\ra A_i$
preserving Lebesgue measure, such that 
\ben
\sum_{i\in [k]} \lam_i A_i = \bigg\{ \sum_{i\in [k]} \lam_i \psi_i(x): x\in A_1  \bigg\} ,
\een
for any $\lam_1,\ldots,\lam_{k} >0$.
\end{thm}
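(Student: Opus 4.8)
The plan is to follow Alesker--Dar--Milman \cite{ADM99}, proving the statement by induction on the dimension $n$ and carrying out their two-body construction for $k$ bodies. Since the asserted identity is homogeneous of degree $1$ in $(\lam_1,\dots,\lam_k)$, I may restrict to the simplex $\lam_i\ge 0$, $\sum_i\lam_i=1$ and regard $\Phi_\lam(x)=\sum_{i}\lam_i\psi_i(x)$ (with $\psi_1=\mathrm{id}$) as a barycentre map parametrizing the Minkowski combination $K_\lam:=\sum_i\lam_i A_i$ by $A_1$. It then suffices to produce $\psi_i$ that are simultaneously measure preserving and such that, for every $\lam$, $\Phi_\lam$ is a $C^1$ diffeomorphism of $A_1$ onto $K_\lam$. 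An approximation argument (Hausdorff approximation by smooth, strictly convex bodies together with the continuity of the relevant quantities, cf.\ Lemma~\ref{lem:semicont}) lets me assume throughout that the $A_i$ are smooth and strictly convex.

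In the base case $n=1$ each $A_i$ is an interval of length $1$; taking $\psi_i$ to be the orientation-preserving isometry $A_1\to A_i$ (a translation), $\Phi_\lam$ is affine and increasing, hence maps $A_1$ bijectively and measure-preservingly onto the interval $\sum_i\lam_i A_i$, settling both requirements at once.

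For the inductive step the key observation is that the boundary correspondence is forced and is compatible with all $\lam$ simultaneously. Writing $h_{A_i}$ for the support function, the point of $\partial A_i$ with outer unit normal $u$ is $\nabla h_{A_i}(u)$, and the face of $K_\lam$ with normal $u$ is $\sum_i\lam_i\nabla h_{A_i}(u)$. Thus if I define $\psi_i$ on $\partial A_1$ by the Gauss matching $\nabla h_{A_1}(u)\mapsto\nabla h_{A_i}(u)$, then $\Phi_\lam(\partial A_1)=\partial K_\lam$ for \emph{every} $\lam$, and the trajectory $\lam\mapsto\Phi_\lam(x)$ of each boundary point is the straight segment $\sum_i\lam_i\nabla h_{A_i}(u)$. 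The interior map is built by induction: fibering $\R^n=\R^{n-1}\times\R$ and matching the transverse marginals $t\mapsto\vol_{n-1}(A_i\cap\{x_n=t\})$ (which compose correctly along $K_\lam$ by the concavity in \eqref{BMI} and Pr\'ekopa's theorem), I apply the $(n-1)$-dimensional case of the theorem to the sections to obtain the remaining coordinates, and finally correct the construction to be exactly volume preserving by solving the associated continuity (Monge--Amp\`ere type) equation, whose regularity theory furnishes the $C^1$ smoothness.

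The step I expect to be the main obstacle is the \emph{set equality} $\Phi_\lam(A_1)=K_\lam$, i.e.\ surjectivity onto the \emph{entire} Minkowski combination for all $\lam$ at once. The difficulty is genuine and structural: a hyperplane section of $K_\lam$ is a \emph{union} $\bigcup_{\sum_i\lam_i t_i=h}\sum_i\lam_i(A_i\cap\{x_n=t_i\})$ of many sections rather than a single matched one, so no triangular (slice-to-slice) map can succeed and $\Phi_\lam$ must genuinely mix the fibres. Reconciling exact measure preservation with the requirement that $\det D\Phi_\lam=\det\big(\sum_i\lam_i D\psi_i\big)$ remain positive throughout the simplex---so that, together with the boundary matching above and invariance of domain, the image is forced to fill all of $K_\lam$---is the heart of the argument. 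Once the construction is in place for $k=2$, the passage to general $k$ is routine, since both the barycentric interpolation $\sum_i\lam_i\psi_i$ and the normal-matching of boundaries make sense verbatim for any number of summands; this is the ``obvious modification'' referred to in the statement.
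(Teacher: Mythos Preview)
The paper does not prove this theorem at all: it is stated as a quotation from \cite{ADM99}, with the remark that the $k$-set version follows from the two-set version ``with obvious modifications,'' and is then used as a black box in the proof of Theorem~\ref{thm:smod}. There is therefore no proof in the paper against which to compare your proposal.

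As to the sketch itself: it is a plausible outline of the actual Alesker--Dar--Milman argument, but it remains a plan rather than a proof. The genuine content of \cite{ADM99} is precisely the step you flag as the ``main obstacle'': constructing a single measure-preserving $C^1$ diffeomorphism $\psi_2:A_1\to A_2$ (and, in the $k$-body case, a compatible family) such that $\det\big(\sum_i\lam_i D\psi_i\big)>0$ \emph{and} the image of $\Phi_\lam$ is all of $\sum_i\lam_i A_i$ for every $\lam$. Your inductive fibering idea does not by itself achieve this --- as you correctly note, a slice of the Minkowski sum is a union of sums of slices, not a single matched sum --- and ``solving the associated continuity (Monge--Amp\`ere type) equation'' and invoking ``regularity theory'' is exactly the hard part that \cite{ADM99} carries out (via the Knothe map combined with a delicate topological and analytic argument). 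So what you have written identifies the right difficulties but does not resolve them; to turn it into a proof you would essentially have to reproduce \cite{ADM99}. For the purposes of the present paper, citing the result is appropriate.
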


\noindent{\it Proof of Theorem~\ref{thm:smod}.}
By adding a small multiple of the
Euclidean ball $B_2^n$ and then using the continuity of $\varepsilon\mapsto \vol_n(B_i+\varepsilon B_2^n)$
as $\varepsilon\ra 0$, we may assume that each of the $B_i$ satisfy $\vol_n(B_i)>0$.
Then choose $\lam_i$ such that $B_i=\lam_i A_i$ with $|A_i|=1$, so that
\ben\begin{split}
\vol_n(B_1+B_2+B_3) \, &=\, \vol_n\bigg( \sum_{i \in [3]} \lam_i A_i \bigg) 
\,=\, \int  1_{\sum_{i \in [3]} \lam_i A_i}(x) dx \\
&=\, \int  1_{\big\{ \sum_{i \in [3]} \lam_i \psi_i(y): y\in A_1  \big\} }(x) dx ,
\end{split}\een
using Theorem~\ref{thm:ADM}. 
Applying a change of coordinates using the diffeomorphism $x=\sum_{i\in [3]} \lam_i \psi_i(y)$,
\ben\begin{split}
V &:=\vol_n(B_1+B_2+B_3) = \int  1_{A_1}(y) \det\bigg(\sum_{i \in [3]} \lam_i D\psi_i\bigg)(y) dy \\
&\geq \int_{A_1}  \det[(\lam_1 D\psi_1+\lam_2 D\psi_2)(y)] + \det[(\lam_1 D\psi_1+\lam_3 D\psi_3)(y)] - \det[\lam_1 D\psi_1(y)] dy \\
&= \int 1_{A_1}(y) d[(\lam_1 \psi_1+\lam_2 \psi_2)(y)] + \int 1_{A_1}(y) d[(\lam_1 \psi_1+\lam_3 \psi_3)(y)]  - \int 1_{A_1}(y) d[\lam_1 \psi_1(y)] dy \\
&= \int 1_{\{ \lam_1 \psi_1(y)+\lam_2 \psi_2(y): y\in A_1\} }(z) dz + \int 1_{\{ \lam_1 \psi_1(y)+\lam_3 \psi_3(y): y\in A_1\} }(z') dz' \\
& \quad\quad\quad\quad\quad - \int 1_{\{ \lam_1 \psi_1(y): y\in A_1\} }(z'') dz'' 
\end{split}\een
where the inequality follows from Lemma~\ref{lem:det}, 
and the last equality is obtained by 
making multiple appropriate coordinate changes.
Using Theorem~\ref{thm:ADM} again, 
\ben\begin{split}
\vol_n(B_1+B_2+B_3) \, 
&\geq \int  1_{\lam_1 A_1+\lam_2 A_2}(z) dz + \int  1_{\lam_1 A_1+\lam_3 A_3}(z) dz - \int  1_{\lam_1 A_1}(z) dz \\
&= \vol_n(B_1+B_2) + \vol_n(B_1+B_3) - \vol_n(B_1) .
\end{split}\een
\qed

For the purposes of discussion below, it is useful to collect some well known
facts from the theory of supermodular set functions. 
Observe that if $v$ is supermodular and $v(\emptyset)=0$, then considering disjoint 
$\setS$ and $\setT$ in \eqref{supmod:defn} implies that 
$v$ is superadditive. In fact, a more general structural result
is true. To describe it, we need some terminology.

\begin{defn}\label{def:frac}
Given a collection $\collS$ of subsets of $[k]$, a function $\alpha:\collS \to \R^+$,
is called a {\em fractional partition}, if for each $i\in [k]$, we have
$\sum_{\setS\in \collS:i\in \setS} \as = 1$.
\end{defn}

The reason for the terminology is that this notion extends the familiar notions of 
a partition of sets (whose indicator function  can be defined
precisely as in Definition~\ref{def:frac} but with range restriction to $\{0,1\}$) 
by allowing fractional values. 
An important example of a fractional partition of $[k]$ is the collection 
$\collS_m=\binom{[k]}{m}$ of all subsets of size $m$,
together with the coefficients $\as=\binom{k-1}{m-1}^{-1}$.

\begin{defn}\label{def:fsa}
A function $f:2^{[k]}\ra\R$ is {\it fractionally superadditive} if
for any fractional partition $(\collS, \beta)$,
\ben
f([k]) \geq \sumS \bs f(\setS) .
\een
\end{defn}

The following theorem has a long history and is implicit in results from
cooperative game theory in the 1960's but to our knowledge, it was first explicitly stated
by Moulin Ollagnier and Pinchon \cite{MP82}. 

\begin{thm}\label{prop:BS}\cite{MP82}
If $f:2^{[k]}\ra\R$ is supermodular and $f(\emptyset)=0$, then $f$ is fractionally superadditive.
\end{thm}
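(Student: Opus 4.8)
The plan is to reduce the statement to the ``increasing marginal returns'' reformulation of supermodularity, and then to telescope every member of the fractional partition against one fixed maximal chain of subsets of $[k]$, after which the defining identity of a fractional partition makes the weights collapse.

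So let $(\collS,\beta)$ be a fractional partition of $[k]$; I must show $f([k])\ge\sumS\bs f(\setS)$. Fix the natural order on $[k]=\{1,\dots,k\}$, put $[i]=\{1,\dots,i\}$ with $[0]=\emptyset$, and for $\setS\subseteq[k]$ and an index $i$ write $\setS^{<i}=\setS\cap[i-1]$. First I would record the elementary consequence of supermodularity: applying \eqref{supmod:defn} to the pair $A\cup\{e\}$ and $B$, where $A\subseteq B$ and $e\notin B$ (so that $(A\cup\{e\})\cup B=B\cup\{e\}$ and $(A\cup\{e\})\cap B=A$), gives
\begin{equation}\label{eq:incr-marg}
f(A\cup\{e\})-f(A)\ \le\ f(B\cup\{e\})-f(B)\qquad\text{whenever }A\subseteq B,\ e\notin B .
\end{equation}
That is, marginal increments of a supermodular function are nondecreasing in the base set.

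Next I would prove the pointwise bound that for every $\setS\in\collS$,
\begin{equation}\label{eq:chain-bound}
f(\setS)\ \le\ \sum_{i\in\setS}\bigl(f([i])-f([i-1])\bigr) .
\end{equation}
To see this, list $\setS=\{i_1<\dots<i_m\}$ and telescope $f$ along $\emptyset\subset\{i_1\}\subset\{i_1,i_2\}\subset\cdots\subset\setS$; since $\{i_1,\dots,i_{\ell-1}\}=\setS^{<i_\ell}$ and $f(\emptyset)=0$, this reads $f(\setS)=\sum_{i\in\setS}\bigl(f(\setS^{<i}\cup\{i\})-f(\setS^{<i})\bigr)$. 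For each $i\in\setS$ one has $\setS^{<i}\subseteq[i-1]$ and $i\notin[i-1]$, so \eqref{eq:incr-marg} bounds the $i$-th term by $f([i-1]\cup\{i\})-f([i-1])=f([i])-f([i-1])$, which yields \eqref{eq:chain-bound}.

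Finally I would assemble the conclusion: since $\bs\ge0$, multiplying \eqref{eq:chain-bound} by $\bs$, summing over $\setS\in\collS$, interchanging the two finite sums, using the fractional-partition identity $\sum_{\setS\in\collS:\,i\in\setS}\bs=1$ for each $i$, and noting $\sum_{i=1}^k\bigl(f([i])-f([i-1])\bigr)=f([k])-f(\emptyset)=f([k])$, one gets
\[
\sumS\bs f(\setS)\ \le\ \sum_{i=1}^{k}\bigl(f([i])-f([i-1])\bigr)\sum_{\setS\in\collS:\,i\in\setS}\bs\ =\ \sum_{i=1}^{k}\bigl(f([i])-f([i-1])\bigr)\ =\ f([k]),
\]
which is precisely fractional superadditivity. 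I do not foresee a genuine obstacle here: the only steps requiring attention are the routine verification of \eqref{eq:incr-marg} and the two telescopings in the previous paragraph, while the one mildly clever move is to compare \emph{every} $\setS\in\collS$ against the \emph{same} maximal chain $[0]\subset[1]\subset\cdots\subset[k]$, since that is exactly what lets the weights $\bs$ be summed away via the fractional-partition identity.
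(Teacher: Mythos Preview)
Your proof is correct. The increasing-marginals inequality \eqref{eq:incr-marg} is indeed the standard reformulation of supermodularity, the telescoping bound \eqref{eq:chain-bound} follows exactly as you wrote, and the final summation step is clean.

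As for comparison: the paper does not actually prove Theorem~\ref{prop:BS}. It is stated with a citation to Moulin Ollagnier and Pinchon \cite{MP82} and the reader is referred to \cite{MT10} for proofs and history. So your argument supplies what the paper omits. The route you take (compare every $\setS$ to a single maximal chain and collapse via the fractional-partition identity) is in fact the standard short proof; it is essentially the ``chain-telescoping'' argument that appears in the combinatorics and game-theory literature underlying the references the paper cites.
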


A survey of the history of Theorem~\ref{prop:BS}, along with various
strengthenings of it and their proofs, and discussion of several applications,
can be found in \cite{MT10}.
If $\{A_i, i\in [k]\}$ are compact convex sets and $u(s)=\vol_n(\sum_{i\in s} A_i)$ as defined in \eqref{def:setfn-v},
then $u(\emptyset)=0$ and Theorem~\ref{thm:smod2} says that $u$  is supermodular,
whence Theorem~\ref{prop:BS} immediately implies
that $u$ is fractionally superadditive. 

\begin{cor}\label{cor:vol}
Let $B_1, \ldots, B_k$ be compact convex subsets of $\R^n$ and let $\beta$ 
be any fractional partition using a collection $\collS$ of subsets of $[k]$. Then
\ben
\vol_n\bigg(\sum_{i\in [k]} B_i\bigg) \geq \sumS \bs \vol_n\bigg(\sum_{i\in s} B_i\bigg) .
\een
\end{cor}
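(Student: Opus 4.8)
The plan is to derive this immediately from the two structural facts recalled just above: Theorem~\ref{thm:smod2} (volume is supermodular on Minkowski sums of convex bodies) and Theorem~\ref{prop:BS} (a supermodular set function vanishing at $\emptyset$ is fractionally superadditive). So the ``proof'' will really just be a matter of checking that the hypotheses line up.

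First I would set $v(\setS)=\vol_n\big(\sum_{i\in\setS}B_i\big)$ for $\setS\subset[k]$, exactly as in \eqref{def:setfn-v}, using the standard convention that an empty Minkowski sum equals $\{0\}$ so that $v(\emptyset)=\vol_n(\{0\})=0$. Verifying $v(\emptyset)=0$ is the only bookkeeping point, and it is a convention rather than a genuine difficulty. Then Theorem~\ref{thm:smod2} says $v$ is supermodular, and since also $v(\emptyset)=0$, Theorem~\ref{prop:BS} applies and tells us $v$ is fractionally superadditive in the sense of Definition~\ref{def:fsa}.

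Finally I would unwind Definition~\ref{def:fsa} for the given fractional partition $(\collS,\beta)$, which yields
\[
\vol_n\bigg(\sum_{i\in[k]}B_i\bigg)=v([k])\ \geq\ \sumS\bs\, v(\setS)=\sumS\bs\,\vol_n\bigg(\sum_{i\in\setS}B_i\bigg),
\]
and this is precisely the asserted inequality. There is no hard step remaining here; the real work lies upstream, in the proof of Theorem~\ref{thm:smod2} (which in turn relies on the determinant inequality of Lemma~\ref{lem:det} and the Alesker--Dar--Milman transport theorem, Theorem~\ref{thm:ADM}) and in Theorem~\ref{prop:BS} of \cite{MP82}. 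Should one prefer not to cite Theorem~\ref{prop:BS} as a black box, an alternative plan is to prove fractional superadditivity directly from supermodularity by a short induction/telescoping argument over the fractional partition, but invoking \cite{MP82} keeps the argument cleanest.
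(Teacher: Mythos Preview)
Your proposal is correct and follows exactly the same approach as the paper: define the set function $v(\setS)=\vol_n(\sum_{i\in\setS}B_i)$, note $v(\emptyset)=0$, invoke Theorem~\ref{thm:smod2} for supermodularity, and then apply Theorem~\ref{prop:BS} to conclude fractional superadditivity. The paper states this derivation in the paragraph immediately preceding the corollary rather than as a separate proof environment.
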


Corollary~\ref{cor:vol} implies that for each $m<k$,
\be\label{eq:vol-deg}
\vol_n\bigg(\sum_{i\in [k]} B_i\bigg) \geq \binom{k-1}{m-1}^{-1} \sum_{|\setS|=m}  \vol_n\bigg(\sum_{i\in s} B_i\bigg) .
\ee
Let us discuss whether these inequalities contain anything novel. On the one hand,
if we consider the case $m=1$ of inequality \eqref{eq:vol-deg}, the resulting inequality is not new
and in fact implied by the Brunn-Minkowski inequality:
\ben
\vol_n\bigg(\sum_{i\in [k]} B_i\bigg) \geq \big[ \sum_{i\in [k]} \vol_n(B_i)^\nth \big]^n 
\geq \sum_{i\in [k]} \vol_n(B_i)  .
\een
On the other hand, applying the inequality \eqref{eq:vol-deg} to $m=k-1$ yields
precisely Theorem~\ref{thm:fsa} for convex sets $B_i$, i.e.,
\be\label{f-sup-appl}\begin{split}
\vol_n \bigg(\sum_{i\in [k]}  B_i\bigg) &\geq \frac{1}{k-1} \sum_{i\in [k]}  \vol_n\bigg(\sum_{j\neq i}  B_j\bigg).
\end{split}\ee
Let us compare this with what is obtainable 
from the refined Brunn-Minkowski inequality for convex sets proved in \cite{BMW11},
which says that
\be\label{bm-sup-appl}\begin{split}
\vol_n\bigg(\sum_{i\in [k]}  B_i\bigg) &\geq \bigg(\frac{1}{k-1}\bigg)^n \bigg[\sum_{i\in [k]}  \vol_n\big(\sum_{j\neq i}  B_j\big)^\nth \bigg]^n .
\end{split}\ee
Denote the right hand sides of \eqref{f-sup-appl} and \eqref{bm-sup-appl} by $R_{\eqref{f-sup-appl}}$
and $R_{\eqref{bm-sup-appl}}$. Also set 
\ben
c_i=\vol_n\bigg(\sum_{j\neq i}  B_j\bigg)^\nth ,
\een
and write $c=(c_1, \ldots, c_k)\in [0,\infty)^k$, so that
$R_{\eqref{f-sup-appl}}^{\nth} =(k-1)^{-\nth} \|c\|_n$
and
$R_{\eqref{bm-sup-appl}}^{\nth} = (k-1)^{-1} \|c\|_1$.
Here, for $m\geq 1$, $\|c\|_m = \left( \sum_{i=1}^k c_i^m \right)^{\frac{1}{m}}$. In other words,
\ben
\bigg[\frac{R_{\eqref{f-sup-appl}}}{R_{\eqref{bm-sup-appl}}} \bigg]^\nth
= (k-1)^{1-\nth} \frac{\|c\|_n}{\|c\|_1} .
\een
Let us consider $n=2$ for illustration. Then we have
\ben
\bigg[\frac{R_{\eqref{f-sup-appl}}}{R_{\eqref{bm-sup-appl}}} \bigg]^\half
= \sqrt{k-1} \frac{\|c\|_2}{\|c\|_1} ,
\een
which ranges between $\sqrt{1-\frac{1}{k}}$ and $\sqrt{k-1}$,
since $\|c\|_2/\|c\|_1 \in [k^{-\half} ,1]$. In particular, neither bound is uniformly better;
so the  inequality \eqref{eq:vol-deg} and Corollary~\ref{cor:vol} do indeed have
some potentially useful content.

Motivated by the results of this section, it is natural to ask if the volume of Minkowski 
sums is supermodular even without the convexity assumption on the sets involved,
as this would strengthen Theorem~\ref{thm:fsa}. In fact, this is not the case.

\begin{prop}\label{prop:smod-counter}
There exist compact sets $A, B, C \subset \R$ such that
\ben
\vol_1(A+B+C) + \vol_1(A) < \vol_1(A+B) + \vol_1(A+C) .
\een
\end{prop}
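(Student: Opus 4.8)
The plan is to exhibit an explicit counterexample in $\R$, mirroring the structure of the dimension-$12$ counterexample in Theorem~\ref{thm:counter} but now exploiting that supermodularity is a \emph{stronger} requirement than superadditivity even when the sets are so small that the relevant Brunn--Minkowski-type obstruction disappears. Concretely, I would take $A$ to be a single point (or a very short interval), say $A=\{0\}$, so that $\vol_1(A)=0$ and $A+B=B$, $A+C=C$, $A+B+C=B+C$; the desired inequality then collapses to the demand that $\vol_1(B+C)<\vol_1(B)+\vol_1(C)$, i.e.\ that volume be \emph{strictly subadditive} for some pair of compact sets $B,C\subset\R$. This is of course false for intervals by Brunn--Minkowski, but it is easy for disconnected sets: the point is that a Minkowski sum of two ``gappy'' sets can have its gaps filled in, so the sum need not be larger than the parts.

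The key step is therefore to choose $B$ and $C$ with $\vol_1(B+C)<\vol_1(B)+\vol_1(C)$. A clean choice: let $B=C=[0,1]\cup[2,3]$, so $\vol_1(B)=\vol_1(C)=2$, while $B+C=[0,2]\cup[2,4]\cup[4,6]=[0,6]$ has $\vol_1(B+C)=6<4$? No --- that fails, so I would instead shrink the intervals relative to the gap. Take $B=C=[0,\eps]\cup[1,1+\eps]$ for small $\eps>0$: then $\vol_1(B)=\vol_1(C)=2\eps$, and $B+C=[0,2\eps]\cup[1,1+2\eps]\cup[2,2+2\eps]$, which for $\eps<1/4$ is a disjoint union of three intervals of length $2\eps$, giving $\vol_1(B+C)=6\eps$. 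That is \emph{larger}, not smaller. The correct mechanism is the opposite: make the two summands ``interleave'' so their sum overlaps itself. Take $B=[0,1]\cup[3,4]$ and $C=[0,1]\cup\{2\}$ --- better yet, simply recall the standard example $B=[0,1]\cup[2,3]$ and $C=[0,1]$: then $B+C=[0,2]\cup[2,4]=[0,4]$, so $\vol_1(B+C)=4=\vol_1(B)+\vol_1(C)=3+1$; equality, still not strict. To get \emph{strict} subadditivity one needs genuine overlap: take $B=[0,1]\cup[2,3]$ and $C=[0,1]\cup[1.5,2.5]$; then $B+C\supset[0,2]\cup[1.5,3.5]\cup[2,4]\cup[3.5,5.5]=[0,5.5]$, so $\vol_1(B+C)\le 5.5$ while $\vol_1(B)+\vol_1(C)=2+2=4$ --- again the sum is bigger. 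The honest resolution is that for $A=\{pt\}$ subadditivity \emph{cannot} fail in $\R$ (it is a consequence of $B,C\subset B+C$ up to translation when one of them is large), so $A$ must not be a single point.

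So the real plan is: do \emph{not} degenerate $A$, but instead choose $A,B,C$ all disconnected so that $A+B+C$ is already an interval (maximally ``saturated''), while $A+B$ and $A+C$ still have gaps. The inequality $\vol_1(A+B+C)+\vol_1(A)<\vol_1(A+B)+\vol_1(A+C)$ can then hold because the left side is capped (the sum has filled up all available room and adding more summands cannot increase its length past the convex hull) while the right side counts the lengths of two sets that are strictly shorter than their convex hulls but still fairly long. I would look for sets of the form $A=\{0,a\}$, $B=\{0,b\}$, $C=[0,c]$ or small perturbations thereof, tuning $a,b,c$ so that $A+C$ and $B+C$ each consist of two disjoint intervals of length $c$ (total $2c$ each relevant piece), while $A+B+C$ is a single interval. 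The main obstacle --- and the place I expect to spend the most care --- is the bookkeeping of which translated copies of the intervals overlap and which stay disjoint: one must simultaneously force \emph{overlap} in $A+B+C$ (to cap its volume) and \emph{non-overlap} in $A+B$ and in $A+C$ (to keep their volumes large), and verify all the resulting interval-arithmetic inequalities are strict. Once a valid choice of parameters is pinned down, the verification is a finite computation of Lebesgue measures of finite unions of intervals, and the proposition follows; I would present the explicit sets, compute the four volumes, and check the strict inequality numerically.
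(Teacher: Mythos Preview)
Your proposal never actually produces a counterexample, and the specific template you settle on at the end --- $A=\{0,a\}$, $B=\{0,b\}$, $C=[0,c]$ --- provably \emph{cannot} work. With $A$ and $B$ both finite, $\vol_1(A)=\vol_1(A+B)=0$, so the desired inequality becomes $\vol_1(A+B+C)<\vol_1(A+C)$; but $0\in B$ forces $A+C\subset A+B+C$, making this impossible. Your intuition that one of the sets should be a two-point set is correct, but you have assigned the roles backwards: it is $A$ alone that should be the disconnected two-point set, while $B$ and $C$ should be \emph{intervals}, so that $\vol_1(A+B)$ and $\vol_1(A+C)$ each pick up the doubling from the two translates while $\vol_1(A+B+C)$ saturates once the translated intervals merge.

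The paper's example is simply $A=\{0,1\}$ and $B=C=[0,1]$. Then $A+B=A+C=[0,2]$ have volume $2$, while $A+B+C=\{0,1\}+[0,2]=[0,3]$ has volume $3$ and $\vol_1(A)=0$, giving $3+0<2+2$. This is exactly the ``overlap in the triple sum, no overlap in the double sums'' mechanism you describe, but with $B$ and $C$ convex rather than finite. Most of your exploratory paragraph (the attempts with $A$ a point, and the various two-interval choices for $B,C$) is chasing configurations that are ruled out by the one-dimensional Brunn--Minkowski inequality; once you notice that the obstruction is on the \emph{non-convexity of $A$} rather than of $B$ or $C$, the example becomes immediate.
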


\begin{proof}
Consider $A=\{0,1\}$ and $B=C=[0,1]$. Then,
$$ \vol_1(A+B+C) + \vol_1(A) = 3 < 4 = \vol_1(A+B) + \vol_1(A+C). $$
\end{proof}

On the other hand, the desired inequality is true in dimension 1 if the set $A$ is convex. 
More generally, in dimension 1, one has the following result.

\begin{prop}\label{prop:smod-1d}
If $A, B, C\subset \R$ are compact, then
\ben
\vol_1(A+B+C) +\vol_1(\conv(A))\ge \vol_1(A+B) + \vol_1(A+C) .
\een
\end{prop}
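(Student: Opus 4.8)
The plan is to prove the inequality by a short set-inclusion argument, in the spirit of the proof of Theorem~\ref{thm:1d} (and of the one-dimensional Brunn--Minkowski inequality, which will in fact be a special case). First I would normalize: each of the four quantities $\vol_1(A+B+C)$, $\vol_1(\conv(A))$, $\vol_1(A+B)$, $\vol_1(A+C)$ is unchanged if $B$ and $C$ are translated independently, since translating $B$ or $C$ merely translates $A+B$, $A+C$ and $A+B+C$ and does not affect $A$, hence not $\conv(A)$. So I may assume $\min B = 0$ and $\max C = 0$; in particular $B\subset[0,\infty)$ and $C\subset(-\infty,0]$, while $0\in B$ and $0\in C$. (If one of $A,B,C$ is empty there is nothing to prove.)

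Next I would set $P := A+B$ and $Q := A+C$ and locate both of them inside $A+B+C$: since $0=\max C\in C$ one has $P = A+B+\{0\}\subset A+B+C$, and since $0=\min B\in B$ one has $Q = A+\{0\}+C\subset A+B+C$, so $P\cup Q\subset A+B+C$. At the same time, $B\subset[0,\infty)$ forces $P=A+B\subset[\min A,\infty)$ and $C\subset(-\infty,0]$ forces $Q=A+C\subset(-\infty,\max A]$, whence $P\cap Q\subset[\min A,\max A]=\conv(A)$ and therefore $\vol_1(P\cap Q)\le\vol_1(\conv(A))$.

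Finally, combining these with monotonicity of Lebesgue measure and inclusion--exclusion gives
\[
\vol_1(A+B+C)\ \ge\ \vol_1(P\cup Q)\ =\ \vol_1(P)+\vol_1(Q)-\vol_1(P\cap Q)\ \ge\ \vol_1(A+B)+\vol_1(A+C)-\vol_1(\conv(A)),
\]
which is exactly the claim after rearranging. I do not anticipate a genuine obstacle: the only delicate point is to translate \emph{only} $B$ and $C$ and never $A$, so that $\conv(A)$ --- and hence the correction term --- is preserved throughout the argument. (Taking $A$ to be a single point recovers one-dimensional Brunn--Minkowski, which suggests the argument is essentially best possible in its simplicity.)
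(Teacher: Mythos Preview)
Your proof is correct and takes essentially the same approach as the paper's: translate so that $0$ lies in both $B$ and $C$ at the appropriate endpoint, deduce $(A+B)\cup(A+C)\subset A+B+C$, and control the intersection by $\conv(A)$ via inclusion--exclusion. The only cosmetic difference is your choice of normalization ($\min B=0$, $\max C=0$) versus the paper's ($\max B=0=\min C$), which merely swaps the roles of $B$ and $C$.
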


\begin{proof}
Assume, as one typically does in the proof of the one-dimensional Brunn-Minkowski inequality,  
that $\max B=0=\min C$. (We can do this without loss of generality since translation does not 
affect volumes.) This implies that $B\cup C\subset B+C$, whence 
\ben
(A+B)\cup(A+C)=A+(B\cup C)\subset A+B+C. 
\een
Hence
\ben\begin{split}
\vol_1(A+B+C) &\ge \vol_1((A+B)\cup(A+C))\\
&= \vol_1(A+B)+\vol_1(A+C)-\vol_1((A+B)\cap(A+C)) .
\end{split}\een
We will show that $(A+B)\cap(A+C)\subset \conv(A)$, which together with the preceding
inequality yields the desired conclusion
$\vol_1(A+B+C)\ge \vol_1(A+B)+\vol_1(A+C)- \vol_1(\conv(A))$.

To see that $(A+B)\cap(A+C)\subset \conv(A)$, consider $x\in(A+B)\cap(A+C)$. 
One may write $x=a_1+b=a_2+c$, with $a_1,a_2\in A$, $b\in B$ and $c\in C$. 
Since $\max B=0=\min C$ one has $b\le 0\le c$ and one deduces that $ a_2\le x\le a_1$ 
and thus $x\in \conv(A)$. This completes the proof.
\end{proof}

\begin{rem}
\begin{enumerate}
\item One may wonder if Proposition \ref{prop:smod-1d} extends to higher dimension. More particularly, we do
not know if the supermodularity inequality 
$$\vol_n(A+B+C) +\vol_n(A)\ge \vol_n(A+B) + \vol_n(A+C) $$
holds true in the case where $A$ is  convex and $B$ and $C$ are any compact sets.

\item It is also natural to ask in view of the results of this section whether the fractional superadditivity
\eqref{conjdimn} of $\vol_n^{1/n}$ for convex sets proved in \cite{BMW11} follows from a more general supermodularity
property, i.e., whether
\be\label{eq:q-supmod}
\vol_n^{1/n}(A+B+C) + \vol_n^{1/n}(A) \geq \vol_n^{1/n}(A+B) + \vol_n^{1/n}(A+C)
\ee
for convex sets $A, B, C\subset \R^n$. It follows from results of \cite{MG17} that such a result
does not hold (their counterexample to the determinant version of \eqref{eq:q-supmod} corresponds in our context
to choosing ellipsoids in $\R^2$). Another simple explicit counterexample is the following:
Let 
$A = [0,2] \times [0, 1/2]$, 
$B= [0,1/2] \times [0,2]$,
and $C=\eps B_2^2$, with $\eps > 0$. Then,
$$ \vol_2(A)^{1/2} = 1, \quad \vol_2(A+B+C)^{1/2} = \sqrt{25/4 + 10\eps + \pi \eps^2}, $$
$$ \vol_2(A+B)^{1/2} = 5/2, \quad \vol_2(A+C)^{1/2} = \sqrt{1 + 5\eps + \pi \eps^2}. $$
Hence,
\ben\begin{split}
\vol_2(A+B+C)^{1/2} + \vol_2(A)^{1/2} &= 1 + 5/2 + 2\eps + o(\eps) \\
\vol_2(A+B)^{1/2} + \vol_2(A+C)^{1/2} &= 1 + 5/2 + (5/2)\eps + o(\eps)
\end{split}\een
For $\eps$ small enough, this yields a counterexample to \eqref{eq:q-supmod}.

\item It is shown in \cite{MG17} that the entropy analogue of Theorem \ref{thm:smod} 
does not hold, i.e., there exist independent real-valued random variables $X, Y, Z$ with log-concave distributions
such that
\ben
e^{2h(X+Y+Z)}+e^{2h(Z)} < e^{2h(X+Z)}+e^{2h(Y+Z)}.
\een


\end{enumerate}
\end{rem}

\section{The behavior of Schneider's non-convexity index}
\label{sec:c}

In this section we study Schneider's non-convexity index. Recall its definition: for $A$ compact in $\R^n$,
\ben
c(A) = \inf \{ \lam\geq 0: A+\lam\, \conv(A) \text{ is convex}  \}.
\een 

\subsection{The refined monotonicity of Schneider's non-convexity index}
\label{sec:c-mono}

In this section, our main result is that Schneider's non-convexity index $c$ satisfies a strong kind of monotonicity in any dimension. 

%
%

We state the main theorem of this section, and will subsequently deduce corollaries 
asserting monotonicity in the Shapley-Folkman-Starr theorem from it.

\begin{thm}\label{thm:fracsubofc}
Let $n\ge1$ and let $A, B, C$ be subsets of $\R^n$. Then
\ben
c(A+B+C) \leq \max\{c(A+B), c(B+C)\}.
\een
\end{thm}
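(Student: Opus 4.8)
The natural strategy is to work directly with the definition of Schneider's index and the characterization \eqref{eq:c-alam}. Set $\lambda = \max\{c(A+B), c(B+C)\}$; I want to show that $(A+B+C) + \lambda\,\conv(A+B+C)$ is convex, since this gives $c(A+B+C) \le \lambda$. By Lemma~\ref{lem:conv-sum}, $\conv(A+B+C) = \conv(A)+\conv(B)+\conv(C)$, and likewise for the pairwise sums, so everything can be written in terms of the convex hulls of the individual sets. The key algebraic observation I would exploit is that $\lambda \ge c(A+B)$ means $(A+B)+\lambda\conv(A+B)$ is convex, i.e. equals $(1+\lambda)\conv(A)+(1+\lambda)\conv(B)$ (using Remark~\ref{rk:referee}); similarly $(B+C)+\lambda\conv(B+C) = (1+\lambda)\conv(B)+(1+\lambda)\conv(C)$.

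\textbf{Main step.} The plan is to write
\[
(A+B+C) + \lambda\,\conv(A+B+C) = \bigl[(A+B) + \lambda\conv(A)+\lambda\conv(B)\bigr] + \bigl[C\bigr] + \lambda\conv(C),
\]
and try to reorganize the Minkowski sum so that one of the convex blocks $(A+B)+\lambda\conv(A+B)$ or $(B+C)+\lambda\conv(B+C)$ appears as a summand, because a Minkowski sum of a convex set with anything, plus enough of its own convex hull, tends to stay convex. More precisely, I would split the ``budget'' of $\lambda\conv(B)$: write $A+B+C+\lambda\conv(A)+\lambda\conv(B)+\lambda\conv(C)$ and group it as $\bigl(A+\lambda\conv(A)+\lambda\conv(B)+B\bigr) + \bigl(C + \lambda\conv(C)\bigr)$. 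The first bracket equals $(A+B)+\lambda\conv(A+B)$, which is convex. So the whole set is (convex set) $+\ C\ +\ \lambda\conv(C)$. Now the obstacle: this is not obviously convex, since $C+\lambda\conv(C)$ need not be convex unless $\lambda \ge c(C)$, which we don't know. This is where the hypothesis $\lambda \ge c(B+C)$ must re-enter, and the argument has to be more symmetric than a single regrouping.

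\textbf{The real argument.} I expect the correct approach is pointwise: take $x \in (A+B+C)+\lambda\conv(A+B+C)$; I must show $x$ lies in this set whenever $x$ is a convex combination of two of its points — or better, show the set equals $(1+\lambda)\conv(A+B+C)$ directly by proving $\supseteq$ (the $\subseteq$ direction being automatic since $A+B+C \subseteq \conv(A+B+C)$). So fix $u \in \conv(A)$, $v\in\conv(B)$, $w\in\conv(C)$; I need $(1+\lambda)(u+v+w) \in (A+B+C) + \lambda\conv(A+B+C)$. Since $(1+\lambda)(u+v) \in (A+B)+\lambda\conv(A+B)$ by convexity of the latter (it equals $(1+\lambda)\conv(A+B)$), write $(1+\lambda)(u+v) = (a+b) + \lambda(u'+v')$ with $a\in A,b\in B, u'\in\conv(A), v'\in\conv(B)$. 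Then $(1+\lambda)(u+v+w) = (a+b) + \lambda u' + \lambda v' + (1+\lambda)w$. Now I want to ``pass'' the $(1+\lambda)w$ across $b$ using the convexity of $(B+C)+\lambda\conv(B+C)$: since $b\in B\subseteq\conv(B)$ and $w\in\conv(C)$, we have $(1+\lambda)(b+w)\in (B+C)+\lambda\conv(B+C)$, so $(1+\lambda)(b+w) = (b''+c) + \lambda(b'+w')$ with $b''\in B, c\in C, b'\in\conv(B), w'\in\conv(C)$. Substituting and collecting the $A$-term $a$, the $B$-term $b''$, the $C$-term $c$, and the leftover $\lambda u' + \lambda v' + \lambda b' + \lambda w' - \lambda b$ — here I need this leftover to lie in $\lambda\conv(A+B+C) = \lambda(\conv(A)+\conv(B)+\conv(C))$, which requires $v' + b' - b \in \conv(B)$. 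That is not automatic, so the bookkeeping must be done so that the subtracted $b$ is reabsorbed: the cleanest fix is to first apply the $(B+C)$ convexity to decompose $(1+\lambda)(v+w)$ (with $v\in\conv(B)$) rather than to $b+w$, obtaining $(1+\lambda)(v+w) = (b_0+c_0)+\lambda(v_0+w_0)$, and then combine $(1+\lambda)u + (1+\lambda)v + (1+\lambda)w$ by using the $(A+B)$-step on $(1+\lambda)(u+v_0)$. I would carefully orchestrate these two substitutions so that every element lands in $A$, $B$, $C$, or the appropriate convex hull with nonnegative, correctly-normalized weights. The main obstacle, and where care is needed, is exactly this reconciliation — ensuring the two applications of Remark~\ref{rk:referee}/the pairwise convexity compose without leaving a ``negative'' leftover; I expect that choosing to decompose convex-combination representatives (elements of $\conv(B)$) rather than raw set elements on the shared middle coordinate makes the cancellation go through cleanly.
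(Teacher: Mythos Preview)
Your strategy is correct and is essentially the paper's: set $\lambda=\max\{c(A+B),c(B+C)\}$ and show the containment $(A+B+C)+\lambda\,\conv(A+B+C)\supset(1+\lambda)\,\conv(A+B+C)$. Your ``Main step'' already reaches
\[
(A+B+C)+\lambda\,\conv(A+B+C)=(1+\lambda)\,\conv(A+B)+C+\lambda\,\conv(C),
\]
which is exactly where the paper is after two lines. But at this point you abandon the set-level argument and switch to a pointwise decomposition, which is where all your bookkeeping trouble comes from.

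The move you are missing is simple and stays entirely at the set level: from $(1+\lambda)\,\conv(A+B)=(1+\lambda)\,\conv(A)+(1+\lambda)\,\conv(B)$, use the trivial inclusion $(1+\lambda)\,\conv(B)\supset B+\lambda\,\conv(B)$ (since $B\subset\conv(B)$) to \emph{extract a genuine $B$}:
\[
(1+\lambda)\,\conv(A+B)+C+\lambda\,\conv(C)\ \supset\ (1+\lambda)\,\conv(A)+\bigl[B+C+\lambda\,\conv(B+C)\bigr].
\]
Now the bracket equals $(1+\lambda)\,\conv(B+C)$ because $\lambda\ge c(B+C)$, and you are done. This is exactly your pointwise ``real argument'' translated into one set inclusion; there is no need to track individual representatives $u,v,w,b_0,v_0,\dots$ or to worry about cancellation. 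Your final suggested fix (decompose $(1+\lambda)(v+w)$ first, then feed the result into the $(A+B)$ identity) does work if carried through---what you call $(1+\lambda)u+b_0+\lambda v_0$ lies in $(1+\lambda)\conv(A+B)$, so it decomposes---but it is just the pointwise transcription of the one-line inclusion above.
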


\begin{proof}
Let us denote $\lam=\max\{c(A+B), c(B+C)\}$. Then 
\begin{eqnarray*}
A+B+C+\lam\conv(A+B+C)&=&A+B+\lam\conv(A+B)+C+\lam\conv(C)\\
&=&(1+\lam)\conv(A+B)+C+\lam\conv(C)\\
&\supset& (1+\lam)\conv(A)+B+\lam\conv(B)+C+\lam\conv(C)\\
&=&(1+\lam)\conv(A)+ (1+\lam)\conv(B+C)\\
&=& (1+\lam)\conv(A+B+C).
\end{eqnarray*}
Since the opposite inclusion is clear, we deduce that $A+B+C+\lam\conv(A+B+C)$ is convex, which means that $c(A+B+C) \leq \lam=\max\{c(A+B), c(B+C)\}.$
\end{proof}

Notice that the same kind of proof also shows that if $A+B$ and $B+C$ are convex then $A+B+C$ is also convex. Moreover, Theorem~\ref{thm:fracsubofc} has an equivalent formulation for  $k\ge2$ 
subsets of $\R^n$, say $A_1, \dots, A_k$: if $s, t\subset [k]$
with $s\cup t=[k]$, then
\be\label{eq:c-multi}
c\left(\sum_{i\in[k]}A_i\right) \leq \max \bigg\{ c\left(\sum_{i\in s}A_i\right), c\left(\sum_{i\in t}A_i\right)  \bigg\}.
\ee
To see this, apply Theorem~\ref{thm:fracsubofc}  to
\ben
B=\sum_{i\in\setS\cap\setT} A_i , \quad A=\sum_{i\in\setS\setminus\setT} A_i, \quad C=\sum_{i\in\setT\setminus\setS} A_i .
\een

From the inequality \eqref{eq:c-multi}, the following corollary, expressed in a more
symmetric fashion, immediately follows.

\begin{cor}\label{cor:fracsubofc}
Let $n\ge1$ and $k\ge2$ be integers and let $A_1, \dots, A_k$ be $k$ sets in $\R^n$. Then
$$
c\left(\sum_{l\in[k]}A_l\right) \leq \max_{i\in[k]}c\left(\sum_{l\in[k]\setminus\{i\}}A_l\right).
$$
\end{cor}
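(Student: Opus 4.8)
The plan is to derive Corollary~\ref{cor:fracsubofc} directly from the multi-set formulation \eqref{eq:c-multi}, which was itself obtained from Theorem~\ref{thm:fracsubofc} by the substitution $B=\sum_{i\in s\cap t}A_i$, $A=\sum_{i\in s\setminus t}A_i$, $C=\sum_{i\in t\setminus s}A_i$. So the only work remaining is to choose the covering pair $\{s,t\}$ of $[k]$ cleverly and then iterate.

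First I would observe that for any two indices $i\neq j$ in $[k]$, setting $s=[k]\setminus\{i\}$ and $t=[k]\setminus\{j\}$ gives $s\cup t=[k]$ (since $k\geq 2$), so \eqref{eq:c-multi} yields
\[
c\!\left(\sum_{l\in[k]}A_l\right)\leq\max\left\{c\!\left(\sum_{l\in[k]\setminus\{i\}}A_l\right),\;c\!\left(\sum_{l\in[k]\setminus\{j\}}A_l\right)\right\}.
\]
In particular, fixing any single index $i_0$ and letting $j$ range over $[k]\setminus\{i_0\}$, each such bound is at most $\max_{i\in[k]}c\!\left(\sum_{l\in[k]\setminus\{i\}}A_l\right)$, which is exactly the desired right-hand side. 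So in fact one application of \eqref{eq:c-multi} with $s=[k]\setminus\{1\}$, $t=[k]\setminus\{2\}$ already suffices:
\[
c\!\left(\sum_{l\in[k]}A_l\right)\leq\max\left\{c\!\left(\sum_{l\in[k]\setminus\{1\}}A_l\right),\;c\!\left(\sum_{l\in[k]\setminus\{2\}}A_l\right)\right\}\leq\max_{i\in[k]}c\!\left(\sum_{l\in[k]\setminus\{i\}}A_l\right).
\]

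I do not anticipate any genuine obstacle here: the content is entirely in Theorem~\ref{thm:fracsubofc}, and the corollary is a cosmetic repackaging. The one point to be slightly careful about is the case $k=2$: then $[k]\setminus\{1\}=\{2\}$ and $[k]\setminus\{2\}=\{1\}$, and the inequality reads $c(A_1+A_2)\leq\max\{c(A_2),c(A_1)\}$, which is indeed the instance of \eqref{eq:c-multi} (equivalently Theorem~\ref{thm:fracsubofc} with $B=\emptyset$, or rather with the middle set trivial). I would simply note that the argument above is uniform in $k\geq 2$, so no separate treatment is needed, and write the proof as the two displayed lines above with a sentence identifying $s$ and $t$.

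Concretely, the proof to insert is: "Apply inequality \eqref{eq:c-multi} with $s=[k]\setminus\{1\}$ and $t=[k]\setminus\{2\}$; since $k\geq 2$ one has $s\cup t=[k]$, so
\[
c\!\left(\sum_{l\in[k]}A_l\right)\leq\max\left\{c\!\left(\sum_{l\in[k]\setminus\{1\}}A_l\right),\;c\!\left(\sum_{l\in[k]\setminus\{2\}}A_l\right)\right\}\leq\max_{i\in[k]}c\!\left(\sum_{l\in[k]\setminus\{i\}}A_l\right).
\]"
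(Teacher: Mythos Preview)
Your proposal is correct and matches the paper's approach: the paper simply states that the corollary ``immediately follows'' from inequality \eqref{eq:c-multi}, and your choice $s=[k]\setminus\{1\}$, $t=[k]\setminus\{2\}$ is exactly the obvious way to make that immediate. The only minor quibble is your parenthetical about $k=2$: the middle set $B$ is the empty Minkowski sum $\{0\}$, not $\emptyset$, but you catch this yourself with ``or rather with the middle set trivial,'' and the argument is unaffected.
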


The $k=2$ case of Corollary~\ref{cor:fracsubofc} follows directly from the definition of $c$
and was observed by Schneider in \cite{Sch75}.
Applying Corollary~\ref{cor:fracsubofc} for $A_1=\cdots =A_k=A$, where $A$ is a fixed subset of $\R^n$, 
and using the scaling invariance of $c$, one deduces that the sequence $c(A(k))$ is non-increasing.
In fact, for identical sets, we prove something even stronger in the following theorem.

\begin{thm}\label{thm:c-quant-mono}
Let $A$ be a subset of $\R^n$ and $k\ge2$ be an integer. Then 
\ben
c\left(A(k)\right)\le\frac{k-1}{k}c\left(A(k-1)\right).
\een
\end{thm}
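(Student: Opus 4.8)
The plan is to reduce the inequality to a single Minkowski-sum identity that exploits the fact that all summands in $A(k)$ coincide. Write $N = k-1$, so the goal is $c(A(N+1)) \le \frac{N}{N+1}c(A(N))$. Since $c$ is affine-invariant (Lemma~\ref{lem:scaling}) and $\conv(A(j)) = \conv(A)$ for all $j$, it suffices to show that for every $\mu > c(A(N))$ the set $(N+1)A(N+1) + N\mu\,\conv(A)$ is convex: rescaling by $\frac{1}{N+1}$ then gives $c(A(N+1)) \le \frac{N\mu}{N+1}$, and letting $\mu \downarrow c(A(N))$ yields the claim. (Note that merely applying Corollary~\ref{cor:fracsubofc} with all sets equal gives only $c(A(N+1)) \le c(A(N))$, without the factor $\frac{N}{N+1}$; the improvement comes from keeping track of the ``extra'' copy of $A$.)

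First I would rewrite that set. For $\mu > c(A(N))$ the set $A(N) + \mu\,\conv(A)$ is convex --- it is a Minkowski sum of a convex set with a dilate that covers the remaining non-convexity, so convexity of $A(N)+\lambda\conv(A)$ for some $\lambda \le \mu$ upgrades to $\mu$ --- hence it equals its convex hull $(1+\mu)\conv(A)$. Using $(N+1)A(N+1) = A + \underbrace{A + \cdots + A}_{N} = A + N\,A(N)$ and $N\,A(N) + N\mu\,\conv(A) = N\big(A(N) + \mu\,\conv(A)\big) = N(1+\mu)\conv(A)$, one gets
\[
(N+1)A(N+1) + N\mu\,\conv(A) = A + N(1+\mu)\,\conv(A).
\]
So the whole statement reduces to the key claim $A + N(1+\mu)\,\conv(A) = \big(1+N(1+\mu)\big)\conv(A)$, which in particular is convex.

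For the key claim, the inclusion $\subseteq$ is immediate from $A \subseteq \conv(A)$. For $\supseteq$ I would expand $\big(1+N(1+\mu)\big)\conv(A) = \conv(A) + N(1+\mu)\conv(A) = \conv(A) + \underbrace{A + \cdots + A}_{N} + N\mu\,\conv(A)$, again via $(1+\mu)\conv(A) = A(N)+\mu\conv(A)$. A point of the right-hand side has the form $x = c_0 + a_1 + \cdots + a_N + N\mu q$ with $c_0, q \in \conv(A)$ and $a_i \in A$; peeling off the single vertex $a_1 \in A$, the remainder $c_0 + a_2 + \cdots + a_N + N\mu q$ lies in $N\,\conv(A) + N\mu\,\conv(A) = N(1+\mu)\conv(A)$, so $x \in a_1 + N(1+\mu)\conv(A) \subseteq A + N(1+\mu)\conv(A)$. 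This ``peel off one vertex'' step --- a Shapley--Folkman-type move --- is the heart of the argument, and it is exactly here that having $N+1$ copies of $A$ (one more than is needed to absorb the convex hulls of the other $N$) gets used. The only point requiring a little care, though routine, is the convexity of $A(N)+\mu\conv(A)$ for $\mu$ strictly above $c(A(N))$; working with $\mu > c(A(N))$ rather than with $\mu = c(A(N))$ avoids having to argue that the infimum defining Schneider's index is attained.
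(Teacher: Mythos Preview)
Your proof is correct and follows essentially the same approach as the paper's: both write $kA(k)=A+(k-1)A(k-1)$, use that $(k-1)A(k-1)+(k-1)\lambda\,\conv(A)=(k-1)(1+\lambda)\conv(A)$, and then perform the same ``peel off one vertex'' inclusion to show $A+(k-1)(1+\lambda)\conv(A)\supset\conv(A)+(k-1)A(k-1)+(k-1)\lambda\conv(A)$. The only cosmetic differences are that the paper works with the normalized sets $A(k)$ and takes $\lambda=c(A(k-1))$ directly (the infimum in the definition of $c$ is attained, so this is legitimate), whereas you scale up by $k$ and pass to the limit $\mu\downarrow c(A(k-1))$.
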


\begin{proof}
Denote $\lam=c\left(A(k-1)\right)$. Since $\conv(A(k-1))=\conv(A)$, from the definition of $c$, one knows that 
$A(k-1)+\lam\conv(A)=\conv(A)+\lam\conv(A)=(1+\lam)\conv(A)$. Using that $A(k)=\frac{A}{k}+\frac{k-1}{k}A(k-1)$, one has 
\begin{eqnarray*}
A(k)+\frac{k-1}{k}\lam\conv(A) &=& \frac{A}{k}+\frac{k-1}{k}A(k-1)+\frac{k-1}{k}\lam\conv(A)\\
&=&\frac{A}{k}+\frac{k-1}{k}\conv(A)+\frac{k-1}{k}\lam\conv(A)\\
&\supset&\frac{\conv(A)}{k}+\frac{k-1}{k}A(k-1)+\frac{k-1}{k}\lam\conv(A)\\
&=&\frac{\conv(A)}{k}+\frac{k-1}{k}(1+\lam)\conv(A)\\
&=&\left(1+\frac{k-1}{k}\lam\right)\conv(A).
\end{eqnarray*}
Since the other inclusion is trivial, we deduce that $A(k)+\frac{k-1}{k}\lam\conv(A)$ is convex which proves that 
$$c(A(k))\le \frac{k-1}{k}\lam=\frac{k-1}{k}c\left(A(k-1)\right).$$
\end{proof}

%
%

\begin{rem}
\begin{enumerate}
\item We do not know if $c$ is fractionally subadditive; for example, we do not know if 
$2\,c(A+B+C)\le c(A+B)+c(A+C)+c(B+C)$. We know it with a better constant if $A=B=C$, 
as a consequence of Theorem \ref{thm:c-quant-mono}. We also know it if we take a large enough number of sets; this is a consequence of the Shapley-Folkman lemma (Lemma~\ref{lem:SF}).
\item The Schneider index $c$ (as well as any other measure of non-convexity) cannot be submodular.
This is because, if we consider $A=\{0,1\}$, $B=C=[0,1]$, then $c(A+B)=c(A+C)=c(A+B+C)=0$ but $c(A)>0$, hence
\ben
c(A+B+C) + c(A) > c(A+B) + c(A+C).
\een
\end{enumerate}
\end{rem}

\subsection{Convergence rates for Schneider's non-convexity index}
\label{sec:c-rate}

We were unable to find any examination in the literature of rates, or indeed, even of 
sufficient conditions for convergence as measured by $c$.

Let us discuss convergence in the Shapley-Folkman-Starr theorem
using the Schneider non-convexity index. In dimension 1, we can get an
$O(1/k)$ bound on $c(A(k))$ by using the close relation \eqref{eq:c-d-1d} between
$c$ and $d$ in this case. In general dimension, the same bound also holds: by applying Theorem~\ref{thm:c-quant-mono} inductively, we get the following theorem.

\begin{thm}\label{thm:c-rate}
Let $A$ be a compact set in $\R^n$. Then
\ben
c(A(k)) \leq \frac{c(A)}{k} .
\een
In particular, $c(A(k))\ra 0$ as $k\ra\infty$.
\end{thm}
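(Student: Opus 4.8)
The plan is to obtain the bound by iterating the refined monotonicity estimate of Theorem~\ref{thm:c-quant-mono}. That theorem asserts $c(A(k)) \le \frac{k-1}{k}\, c(A(k-1))$ for every integer $k \ge 2$, and the cleanest route is a direct induction on $k$ starting from the identity $A(1) = A$. So first I would record the base case: for $k=1$, $A(1)=A$ and the claimed inequality $c(A(1)) \le c(A)$ holds (with equality).

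For the inductive step, suppose $c(A(k-1)) \le \frac{c(A)}{k-1}$ for some $k \ge 2$. Applying Theorem~\ref{thm:c-quant-mono} and then the inductive hypothesis,
\[
c(A(k)) \le \frac{k-1}{k}\, c(A(k-1)) \le \frac{k-1}{k}\cdot\frac{c(A)}{k-1} = \frac{c(A)}{k},
\]
which closes the induction. Equivalently, one may simply unfold the recursion $c(A(k)) \le \frac{k-1}{k}\,c(A(k-1)) \le \frac{k-1}{k}\cdot\frac{k-2}{k-1}\,c(A(k-2)) \le \cdots$ and observe the telescoping of the product $\prod_{j=2}^{k} \frac{j-1}{j} = \frac{1}{k}$, using $A(1)=A$ at the last step.

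For the final assertion that $c(A(k)) \to 0$, it suffices to know that $c(A)$ is a finite constant. Since $A$ is compact, Theorem~\ref{thm:sch75} gives $c(A) \le n < \infty$, so in fact $c(A(k)) \le \frac{n}{k} \to 0$ as $k \to \infty$; alternatively one can invoke the finiteness of $c(A)$ directly without the explicit bound.

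There is no genuine obstacle here: the entire substantive content is already packaged in Theorem~\ref{thm:c-quant-mono} (itself a short consequence of the convexifying identity $A(k-1)+\lambda\conv(A)=(1+\lambda)\conv(A)$ at $\lambda=c(A(k-1))$). The only points requiring any care are identifying $A(1)$ with $A$ so that the induction has a valid base case, and appealing to compactness (via Schneider's bound $c(A)\le n$) to guarantee $c(A)<\infty$ for the convergence statement.
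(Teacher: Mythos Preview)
Your proof is correct and is exactly the approach the paper takes: it states that Theorem~\ref{thm:c-rate} follows by applying Theorem~\ref{thm:c-quant-mono} inductively, which is precisely the telescoping argument you wrote out.
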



Let us observe that the $O(1/k)$ rate of convergence cannot be improved,
either for $d$ or for $c$. To see this simply consider the case where $A= \{0,1\}\subset \R$.
Then $A(k)$ consists of the $k+1$ equispaced points $j/k$, where $j\in \{0, 1, \ldots, k\}$,
and $c(A(k))=2d(A(k))= 1/k$ for every $k\in\Nat$.

\section{The behavior of the effective standard deviation $v$}
\label{sec:r}

In this section we study the effective standard deviation $v$. Recall its definition: for $A$ compact in $\R^n$,
\ben
v^2(A) =  \sup_{x\in\conv(A)}\inf\{\sum p_i |a_i-x|^2: x=\sum p_i a_i; p_i >0; \sum p_i=1, a_i \in A  \}.
\een

\subsection{Subadditivity of $v^2$}


Cassels \cite{Cas75} showed that $v^2$ is subadditive.

\begin{thm}[\cite{Cas75}]\label{thm:v-subadd}
Let $A,B$ be compact sets in $\R^n$. Then,
\ben
v^2(A+B) \leq v^2(A)+v^2(B).
\een
\end{thm}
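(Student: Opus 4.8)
The plan is to use the variational (probabilistic) definition of $v^2$ directly: $v^2(A+B) = \sup_{x \in \conv(A+B)} v_{A+B}^2(x)$, where $v_{A+B}^2(x)$ is the infimal variance $V(Z)$ over finitely supported random vectors $Z$ with $\supp(Z) \subset A+B$ and $\E Z = x$. By Lemma~\ref{lem:conv-sum}, $\conv(A+B) = \conv(A) + \conv(B)$, so any $x$ in $\conv(A+B)$ can be written as $x = x_A + x_B$ with $x_A \in \conv(A)$, $x_B \in \conv(B)$. First I would fix such a decomposition and, given $\epsilon > 0$, choose finitely supported random vectors $X$ with $\supp(X) \subset A$, $\E X = x_A$, $V(X) \le v_A^2(x_A) + \epsilon \le v^2(A) + \epsilon$, and similarly $Y$ with $\supp(Y) \subset B$, $\E Y = x_B$, $V(Y) \le v^2(B) + \epsilon$, which exist by definition of $v^2$ (and compactness guarantees $v^2(A), v^2(B) < \infty$).

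\textbf{Key step.} The heart of the argument is to take $X$ and $Y$ to be \emph{independent}, and set $Z = X + Y$. Then $\supp(Z) \subset A + B$ (a finite set), $\E Z = x_A + x_B = x$, and by independence $V(Z) = V(X) + V(Y)$ since the covariance matrices add and hence so do their traces. This gives $v_{A+B}^2(x) \le V(Z) = V(X) + V(Y) \le v^2(A) + v^2(B) + 2\epsilon$. Letting $\epsilon \to 0$ yields $v_{A+B}^2(x) \le v^2(A) + v^2(B)$ for every $x \in \conv(A+B)$, and taking the supremum over $x$ gives the claim. The one point requiring a word of care is that the decomposition $x = x_A + x_B$ need not be unique and we only need \emph{some} valid decomposition; any choice works since the bound we obtain is uniform in the choice.

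\textbf{Main obstacle.} There is no serious obstacle here — the essential content is just that variance is additive under independent summation and that the constraint sets behave well under Minkowski addition. The only thing to be slightly careful about is bookkeeping: ensuring the infimum in the definition of $v_{A+B}^2(x)$ is over the right class (finitely supported, mean $x$, support in $A+B$) and that the product construction lands in that class, which it plainly does since a finite sum of a finite set with a finite set is finite. One could alternatively phrase the whole proof using the purely geometric formulation with explicit convex weights $p_i$ and points $a_i$, forming the product weights $p_i q_j$ at points $a_i + b_j$; I would mention this as the ``elementary'' rephrasing but the probabilistic language is cleaner. I would also remark in passing that the same argument shows $v^2(A_1 + \cdots + A_k) \le \sum_i v^2(A_i)$ by induction, which is what is used later for $v(A(k))$.
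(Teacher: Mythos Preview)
Your proof is correct and is essentially the same as the paper's: the paper constructs the product weights $\lambda_i\mu_j$ at points $a_i+b_j$ explicitly and verifies the variance identity \eqref{eq:var-add} by hand, then remarks afterward that this is precisely the additivity of variance for independent sums---which is exactly your argument in probabilistic dress. The only cosmetic difference is that you use an $\epsilon$-approximation where the paper takes the infimum directly.
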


\begin{proof}

Recall that $v(A)=\sup_{x\in\conv(A)}v_A(x)$, where 
$$v_A^2(x)=\inf\{\sum_{i\in I} \lambda_i |a_i-x|^2:  (\lambda_i,a_i)_{i\in I}\in\Theta_A(x)\}, $$
and $\Theta_A(x)=\{(\lambda_i,a_i)_{i\in I}: I\ \hbox{finite},\ x=\sum \lambda_i a_i; \lambda_i>0; \sum \lambda_i=1, a_i \in A  \}.$
Thus 
$$v(A+B)=\sup_{x\in\conv(A+B)}v_{A+B}(x)=\sup_{x_1\in\conv(A)}\sup_{x_2\in\conv(B)}v_{A+B}(x_1+x_2).$$
And one has 
$$v_{A+B}^2(x_1+x_2)=\inf\{\sum_{i\in I} \nu_i |c_i-x_1-x_2|^2:  (\nu_i,c_i)_{i\in I}\in\Theta_{A+B}(x_1+x_2)\}.$$
For $(\lambda_i,a_i)_{i\in I}\in\Theta_A(x_1)$ and $(\mu_j,b_j)_{j\in J}\in\Theta_B(x_2)$ one has 
$$(\lambda_i\mu_j,a_i+b_j)_{(i,j)\in I\times J}\in\Theta_{A+B}(x_1+x_2), $$ 
and 
\begin{equation}\label{eq:var-add}
\begin{aligned}
&\sum_{(i,j)\in I\times J}\lambda_i\mu_j |a_i+b_j-x_1-x_2|^2 \\
&=\sum_{i\in I} \lambda_i |a_i-x_1|^2+\sum_{j\in J} \mu_j |b_j-x_2|^2 + 2\sum_{(i,j)\in I\times J}\lambda_i\mu_j \langle a_i-x_1, b_j-x_2\rangle \\
&=\sum_{i\in I} \lambda_i |a_i-x_1|^2+\sum_{j\in J} \mu_j |b_j-x_2|^2 + 2 \langle \sum_{i\in I} \lambda_i a_i-x_1, \sum_{j\in J}\mu_j b_j-x_2\rangle \\
&= \sum_{i\in I} \lambda_i |a_i-x_1|^2+\sum_{j\in J} \mu_j |b_j-x_2|^2 .
\end{aligned}
\end{equation}
Thus 
\ben\begin{split}
v_{A+B}^2(x_1+x_2)&\le \inf_{(\lambda_i,a_i)_{i\in I}\in\Theta_A(x_1)}\inf_{(\mu_j,b_j)_{j\in J}\in\Theta_B(x_2)}\sum_{i\in I} \lambda_i |a_i-x_1|^2+\sum_{j\in J} \mu_j |b_j-x_2|^2 \\
&=v_A^2(x_1)+v_B^2(x_2).
\end{split}\een
Taking the supremum in $x_1\in\conv(A)$ and $x_2\in\conv(B)$, we conclude.
\end{proof}

Observe that we may interpret the proof probabilistically. Indeed, a key point in the proof is the identity \eqref{eq:var-add},
which is just the fact that the variance of a sum of independent random variables is the sum of the individual variances 
(written out explicitly for readability).

\subsection{Strong fractional subadditivity for large $k$}

In this section, we prove that the effective standard deviation $v$ satisfies a strong fractional subadditivity when considering sufficient large numbers of sets.

\begin{thm}\label{v-large-k}
Let $A_1, \dots, A_k$ be compact sets in $\R^n$, with $k \geq n + 1$. Then,
$$ v\left(\sum_{i \in [k]} A_i\right) \leq \max_{I \subset [k]: |I| \leq n} \min_{i \in [k] \setminus I} v\left(\sum_{j \in [k] \setminus \{i\}} A_j \right). $$
\end{thm}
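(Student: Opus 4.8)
The plan is to combine the Shapley--Folkman lemma (Lemma~\ref{lem:SF}) with the pointwise definition of the effective standard deviation. Write $S=\sum_{i\in[k]}A_i$ and recall that $v(S)=\sup_{x\in\conv(S)}v_S(x)$, where for $x\in\conv(S)$ one has
\[
v_S^2(x)=\inf\Big\{\textstyle\sum_l p_l|b_l-x|^2:\ x=\sum_l p_lb_l,\ p_l>0,\ \sum_l p_l=1,\ b_l\in S\Big\}.
\]
Since $\conv(S)=\sum_{i\in[k]}\conv(A_i)$ (Lemma~\ref{lem:conv-sum}), I would fix an arbitrary $x\in\conv(S)$ and apply Lemma~\ref{lem:SF}: as $k\ge n+1$, there is a set $I=I(x)\subset[k]$ with $|I|\le n$ and a representation $x=\sum_{i\in I}c_i+\sum_{i\in[k]\setminus I}a_i$ with $c_i\in\conv(A_i)$ and $a_i\in A_i$.

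Since $|I|\le n<k$, the set $[k]\setminus I$ is nonempty; pick any $i_0\in[k]\setminus I$ and set $w=\sum_{i\in I}c_i+\sum_{i\in[k]\setminus I,\,i\ne i_0}a_i$, so that $w\in\conv\big(\sum_{j\ne i_0}A_j\big)$ and $x=w+a_{i_0}$. The key elementary point is that translating any convex decomposition of $w$ by the fixed vector $a_{i_0}$ gives a convex decomposition of $x$ by points of $S$ with exactly the same spread: if $w=\sum_l p_lb_l$ with $b_l\in\sum_{j\ne i_0}A_j$, then $x=\sum_l p_l(b_l+a_{i_0})$ with $b_l+a_{i_0}\in S$ and $\sum_l p_l|(b_l+a_{i_0})-x|^2=\sum_l p_l|b_l-w|^2$. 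Taking the infimum over such decompositions yields $v_S^2(x)\le v_{\sum_{j\ne i_0}A_j}^2(w)\le v^2\big(\sum_{j\ne i_0}A_j\big)$. (Alternatively, this is the pointwise subadditivity extracted from the proof of Theorem~\ref{thm:v-subadd}, applied to $S=\big(\sum_{j\ne i_0}A_j\big)+A_{i_0}$, together with $v_{A_{i_0}}(a_{i_0})=0$.)

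Finally, since $i_0$ was arbitrary in $[k]\setminus I(x)$, minimizing over it gives $v_S^2(x)\le\min_{i\in[k]\setminus I(x)}v^2\big(\sum_{j\ne i}A_j\big)$, and because $I(x)$ is a particular subset of size at most $n$ this is bounded above by $\max_{J\subset[k]:|J|\le n}\min_{i\in[k]\setminus J}v^2\big(\sum_{j\ne i}A_j\big)$. Taking the supremum over $x\in\conv(S)$ and then square roots (monotonicity of $t\mapsto\sqrt t$ lets the root pass through the $\sup$, $\max$ and $\min$) gives the asserted inequality. I do not expect a genuine obstacle: the proof is bookkeeping around the Shapley--Folkman decomposition, and the only delicate point is that $I(x)$ varies with $x$ while the right-hand side must be a single $\max$-$\min$ expression, which is handled by observing that every $I(x)$ is among the sets over which the outer maximum is taken.
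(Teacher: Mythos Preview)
Your proposal is correct and follows essentially the same argument as the paper: apply the Shapley--Folkman lemma to an arbitrary $x\in\conv(S)$, pick $i_0$ outside the exceptional index set, use translation-invariance of the pointwise functional $v_A(\cdot)$ to compare $v_S(x)$ with $v_{\sum_{j\ne i_0}A_j}(w)$, and then take the min over $i_0$, the max over $I$, and the sup over $x$. Your handling of the bookkeeping (in particular the observation that $I(x)$ depends on $x$ but is absorbed by the outer $\max$) matches the paper's reasoning.
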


\begin{proof}
Let $x \in \conv(\sum_{i\in [k]} A_i)$, where $k \geq n + 1$. By using the Shapley-Folkman lemma (Lemma~\ref{lem:SF}), there exists a set $I$ of at most $n$ indexes such that
$$
x \in \sum_{i\in I} \conv(A_i) + \sum_{i \in [k] \setminus I} A_i.
$$
Let $i_0 \in [k] \setminus I$. In particular, we have
$$ x \in \conv\bigg(\sum_{i \in [k] \setminus \{i_0\}} A_i\bigg) + A_{i_0}. $$
Hence, by definition of the convex hull,
$$ x = \sum_m p_m a_m + a_{i_0} = z + a_{i_0}, $$
where $z=\sum_m p_m a_m$, $\sum_m p_m =1$, $a_m \in \sum_{i \in [k] \setminus \{i_0\}} A_i$ and $a_{i_0} \in A_{i_0}$. Thus, by denoting $A_{\{i_0\}} = \sum_{i \in [k] \setminus \{i_0\}} A_i$, we have
\begin{eqnarray*}
v_{A_{\{i_0\}}}^2(z) & = & \inf \bigg\{ \sum_m p_m |a_m - z|^2 : z = \sum_m p_m a_m; \sum_m p_m = 1; a_m \in A_{ \{i_0\} } \bigg\} \\ 
& = & \inf \bigg\{ \sum_m p_m |a_m + a_{i_0} - (z + a_{i_0})|^2 : z = \sum_m p_m a_m; \sum_m p_m = 1; a_m \in A_{ \{i_0\}} \bigg\} \\ 
& \geq & \inf \bigg\{ \sum_m p_m |a_m^* - (z + a_{i_0})|^2 : z + a_{i_0} = \sum_m p_m a_m^* ; \sum_m p_m = 1 ; a_m^* \in \sum_{i \in [k]} A_i \bigg\} \\ 
& = & v_{\sum_{i \in [k]} A_i}^2(x).
\end{eqnarray*}
Taking supremum over all $z \in \conv(\sum_{i \in [k] \setminus \{i_0\}} A_i)$, we deduce that
$$ v_{\sum_{i \in [k]} A_i}(x) \leq v\bigg(\sum_{i \in [k] \setminus \{i_0\}} A_i\bigg). $$
Since this is true for every $i_0 \in [k] \setminus I$, we deduce that
$$ v_{\sum_{i \in [k]} A_i}(x) \leq \min_{i \in [k] \setminus I} v\bigg(\sum_{j \in [k] \setminus \{i\}} A_j\bigg). $$
Taking the supremum over all set $I \subset [k]$ of cardinality at most $n$ yields
$$ v_{\sum_{i \in [k]} A_i}(x) \leq \max_{I \subset [k]: |I| \leq n} \min_{i \in [k] \setminus I} v\bigg(\sum_{j \in [k] \setminus \{i\}} A_j\bigg). $$
We conclude by taking the supremum over all $x \in \conv(\sum_{i\in [k]} A_i)$.

\end{proof}

An immediate consequence of Theorem~\ref{v-large-k} is that if $k \geq n + 1$, then
$$ v\left(\sum_{i \in [k]} A_i\right) \leq \max_{i \in [k]}  v\left(\sum_{j \in [k] \setminus \{i\}} A_j \right). $$
By iterating this fact as many times as possible (i.e., as long as the number of sets is at least $n+1$), we obtain
the following corollary.

\begin{cor}\label{cor:v-large-k}
Let $A_1, \dots, A_k$ be compact sets in $\R^n$, with $k \geq n + 1$. Then,
$$ v\left(\sum_{i \in [k]} A_i\right) \leq \max_{I \subset [k]: |I| = n} v\left(\sum_{j \in I} A_j \right). $$
\end{cor}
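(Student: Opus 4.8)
The plan is to iterate Theorem~\ref{v-large-k} repeatedly, at each stage dropping one set so that the collection of relevant index sets shrinks, until no further application is possible (i.e. until only $n$ sets remain in every term). First I would record the one-line consequence of Theorem~\ref{v-large-k} that is already noted above: whenever $m\ge n+1$ compact sets $C_1,\dots,C_m$ are given,
\[
v\Big(\sum_{i\in[m]}C_i\Big)\le \max_{i\in[m]} v\Big(\sum_{j\in[m]\setminus\{i\}}C_j\Big),
\]
which follows by taking the trivial bound $\min_{i\in[k]\setminus I}\le \max_{i\in[k]}$ over the possibly-empty set $[k]\setminus I$ inside the statement of Theorem~\ref{v-large-k} (here one should check the edge case: since $|I|\le n<m$, the set $[k]\setminus I$ is nonempty, so the $\min$ is over a nonempty set and the inequality is meaningful).

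Next I would set up the iteration carefully. Define, for each $m$ with $n\le m\le k$, the quantity
\[
M_m \;=\; \max_{J\subset[k]:\,|J|=m}\; v\Big(\sum_{j\in J}A_j\Big).
\]
The goal is $M_k\le M_n$. I claim $M_m\le M_{m-1}$ for every $m$ with $n+1\le m\le k$; chaining these inequalities from $m=k$ down to $m=n+1$ gives $M_k\le M_{k-1}\le\cdots\le M_n$, which is exactly Corollary~\ref{cor:v-large-k}. To prove $M_m\le M_{m-1}$, fix any $J\subset[k]$ with $|J|=m\ge n+1$ and apply the displayed one-line consequence above to the family $\{A_j:j\in J\}$ (which has $m\ge n+1$ members, so the hypothesis is met): this yields
\[
v\Big(\sum_{j\in J}A_j\Big)\le \max_{i\in J} v\Big(\sum_{j\in J\setminus\{i\}}A_j\Big)\le M_{m-1},
\]
since each $J\setminus\{i\}$ is a subset of $[k]$ of size $m-1$. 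Taking the maximum over all such $J$ gives $M_m\le M_{m-1}$, as desired.

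There is essentially no serious obstacle here; the only point requiring a little care is the bookkeeping of the iteration and confirming the hypothesis $m\ge n+1$ is exactly what licenses each step (so that the recursion halts precisely at $m=n$, producing $\max_{|I|=n}$ and not something smaller). One should also note that the process is well-defined because $v$ is finite on sums of compact sets (each such sum is compact, hence bounded). I would present the argument as the two displays above plus one sentence on the termination of the induction, keeping it to a few lines since Theorem~\ref{v-large-k} does all the real work.
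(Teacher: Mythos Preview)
Your proof is correct and follows essentially the same approach as the paper: the paper derives the one-line consequence $v(\sum_{i\in[k]}A_i)\le \max_{i\in[k]} v(\sum_{j\neq i}A_j)$ from Theorem~\ref{v-large-k} and then iterates it as long as at least $n+1$ sets remain, which is precisely your induction on the quantities $M_m$. Your version simply makes the bookkeeping of the iteration more explicit.
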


In the case where $A_1 = \cdots = A_k = A$, we can repeat the above argument with $k\ge c(A)+1$ to prove that in this case,
$$ v(A(k)) \leq \frac{k-1}{k} v(A(k-1)), $$
where $c(A)$ is the Schneider non-convexity index of $A$. Since $c(A) \leq n$, and $c(A) \leq n-1$ when $A$ is connected, we deduce the following monotonicity property for the effective standard deviation.

\begin{cor}
\begin{enumerate}
\item In dimension 1 and 2, the sequence $v(A(k))$ is non-increasing for every compact set $A$.
\item In dimension 3, the sequence $v(A(k))$ is non-increasing for every compact and connected set $A$.
\end{enumerate}
\end{cor}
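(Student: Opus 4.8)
The plan is to establish, for any compact $A\subset\R^n$ and any integer $k\ge 2$ with $k\ge c(A)+1$, the one-step contraction
\[
v(A(k))\le \frac{k-1}{k}\,v(A(k-1)),
\]
and then to feed in Schneider's bounds $c(A)\le n$, and $c(A)\le n-1$ when $A$ is connected (Theorem~\ref{thm:sch75}), to read off all three assertions. The contraction is obtained by running the argument of the proof of Theorem~\ref{v-large-k} in the special case $A_1=\cdots=A_k=A$; for identical sets one may replace the Shapley--Folkman lemma (which would force $k\ge n+1$) by the sharper identity $A+(k-1)\conv(A)=k\,\conv(A)$, valid as soon as $k-1\ge c(A)$. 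Indeed the infimum defining $c(A)$ is attained --- for $\lambda>c(A)$ the set $A+\lambda\conv(A)$ is a Minkowski sum of a convex set $A+\lambda'\conv(A)$ (with $c(A)\le\lambda'<\lambda$) and $(\lambda-\lambda')\conv(A)$, hence convex, and letting $\lambda\downarrow c(A)$ in the Hausdorff distance preserves convexity --- so by Remark~\ref{rk:referee} one has $A+\lambda\conv(A)=(1+\lambda)\conv(A)$ for every $\lambda\ge c(A)$, and in particular $A+(k-1)\conv(A)=k\,\conv(A)$ whenever $k\ge c(A)+1$.

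Granting this identity, fix $x\in\conv(A)=\conv(A(k))$. Since $kx\in k\,\conv(A)=A+(k-1)\conv(A)$, we may write $x=\frac1k a+\frac{k-1}{k}\tilde z$ with $a\in A$ and $\tilde z\in\conv(A)=\conv(A(k-1))$. Given $\eps>0$, pick a finite convex combination $\tilde z=\sum_j p_j b_j$ with $b_j\in A(k-1)$ and $\sum_j p_j|b_j-\tilde z|^2\le v(A(k-1))^2+\eps$ (possible since $v_{A(k-1)}(\tilde z)\le v(A(k-1))$). Writing each $b_j$ as the average of $k-1$ elements of $A$, the points $c_j:=\frac{k-1}{k}b_j+\frac1k a$ lie in $A(k)$, satisfy $\sum_j p_j c_j=x$, and obey $c_j-x=\frac{k-1}{k}(b_j-\tilde z)$; hence $\sum_j p_j|c_j-x|^2=\big(\tfrac{k-1}{k}\big)^2\sum_j p_j|b_j-\tilde z|^2\le\big(\tfrac{k-1}{k}\big)^2\big(v(A(k-1))^2+\eps\big)$. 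Thus $v_{A(k)}(x)\le\frac{k-1}{k}\sqrt{v(A(k-1))^2+\eps}$; letting $\eps\downarrow0$ and taking the supremum over $x\in\conv(A(k))$ yields the one-step contraction.

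To conclude: in $\R^1$ one has $c(A)\le1$, so the contraction holds for every $k\ge2$ and $\{v(A(k))\}_{k\ge1}$ is non-increasing. In $\R^2$ one has $c(A)\le2$, so the contraction holds for every $k\ge3$; the one remaining link $v(A(1))\ge v(A(2))$ follows from $A=A(1)\subset A(2)$ and $\conv(A)=\conv(A(2))$ (Lemma~\ref{lem:conv-sum}) via Lemma~\ref{lem:dincl}, which gives $r(A)\ge r(A(2))$, combined with $r=v$ on compact sets (Theorem~\ref{thm:weg}). For connected $A\subset\R^3$ the argument is identical, now using $c(A)\le n-1=2$ for the steps $k\ge3$ and the same inclusion step for $k=2$. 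I expect the delicate points to be: (i) the boundary index $k=2$ in dimensions $2$ and $3$, which is not covered by the contraction and requires the separate monotonicity-under-inclusion step; and (ii) checking that the infimum defining $c(A)$ is attained, so that $A+(k-1)\conv(A)=k\,\conv(A)$ holds exactly --- including the edge case when $c(A)$ equals the integer $k-1$.
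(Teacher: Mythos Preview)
Your proof is correct and follows essentially the same approach as the paper: derive the one-step contraction $v(A(k))\le\frac{k-1}{k}v(A(k-1))$ for $k\ge c(A)+1$ by specializing the argument of Theorem~\ref{v-large-k} to identical summands (replacing the Shapley--Folkman decomposition by $A+(k-1)\conv(A)=k\,\conv(A)$), and then invoke Schneider's bounds $c(A)\le n$ and $c(A)\le n-1$ for connected $A$. Your treatment is in fact more careful than the paper's in one respect: you explicitly handle the boundary step $v(A(1))\ge v(A(2))$ in dimensions $2$ and $3$ via the inclusion $A\subset A(2)$, Lemma~\ref{lem:dincl}, and $r=v$ from Theorem~\ref{thm:weg}, whereas the paper leaves this step implicit.
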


\begin{rem}
It follows from the above study that if a compact set $A \subset \R^n$ satisfies $c(A) \leq 2$, then the sequence $v(A(k))$ is non-increasing. One can see that if a compact set $A \subset \R^n$ contains the boundary of its convex hull, then $c(A) \leq 1$; for such set $A \subset \R^n$, the sequence $v(A(k))$ is non-increasing.
\end{rem}

\subsection{Convergence rates for $v$}

It is classical that one has convergence in $v$ at good rates.

\begin{thm}[\cite{Cas75}]\label{thm:cassels-v-gen}
Let $A_1, \ldots, A_k$ be compact sets in $\R^n$. Then
\ben
v(A_1 +\cdots + A_k) \leq \sqrt{\min\{k,n\}} \, \max_{i\in [k]} v(A_i) .
\een
\end{thm}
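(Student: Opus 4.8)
The plan is to dispose of the statement by a clean case split on the size of $k$ relative to $n$, using exactly the two results already proved in this section. In the regime $k\le n$ one has $\min\{k,n\}=k$, and here I would simply iterate the subadditivity of $v^2$ from Theorem~\ref{thm:v-subadd}: writing $A_1+\cdots+A_k=(A_1+\cdots+A_{k-1})+A_k$ and applying the two-set inequality repeatedly gives $v^2(A_1+\cdots+A_k)\le\sum_{i\in[k]}v^2(A_i)\le k\,\max_{i\in[k]}v^2(A_i)$, and taking square roots yields the bound with constant $\sqrt{k}=\sqrt{\min\{k,n\}}$ in this range.

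In the complementary regime $k\ge n+1$ one has $\min\{k,n\}=n$, and the point is to trade the number of summands for the dimension. First apply Corollary~\ref{cor:v-large-k}: there is a subset $I\subset[k]$ with $|I|=n$ such that $v\big(\sum_{i\in[k]}A_i\big)\le v\big(\sum_{j\in I}A_j\big)$. Now $\sum_{j\in I}A_j$ is a Minkowski sum of only $n$ compact sets, so it falls under the first case (with $n$ in the role of $k$, and $n\le n$); hence $v\big(\sum_{j\in I}A_j\big)\le\sqrt{n}\,\max_{j\in I}v(A_j)\le\sqrt{n}\,\max_{i\in[k]}v(A_i)$. Chaining these two inequalities gives $v\big(\sum_{i\in[k]}A_i\big)\le\sqrt{n}\,\max_{i\in[k]}v(A_i)=\sqrt{\min\{k,n\}}\,\max_{i\in[k]}v(A_i)$, as desired. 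Since the two cases $k\le n$ and $k\ge n+1$ are disjoint and exhaust all $k\ge1$, this completes the argument.

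There is essentially no obstacle here: all the substance has already been extracted. Theorem~\ref{thm:v-subadd} encodes the "independence $\Rightarrow$ additivity of variance" heuristic that produces the $\sqrt{k}$ rate, while the Shapley--Folkman reduction underlying Corollary~\ref{cor:v-large-k} is precisely the mechanism that caps the number of summands at $n$, producing the $\sqrt{n}$ rate; the theorem is just the combination of these two. The only mild point to watch is bookkeeping in the case split so that the boundary value $k=n$ is handled in the first case (as above) rather than being left ambiguous.
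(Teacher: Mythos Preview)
Your proof is correct and is essentially identical to the paper's: the paper too uses subadditivity of $v^2$ (Theorem~\ref{thm:v-subadd}) for the $\sqrt{k}$ bound, and for $k\ge n+1$ invokes Corollary~\ref{cor:v-large-k} followed by subadditivity on the resulting $n$-term sum to get the $\sqrt{n}$ bound. The only cosmetic difference is that the paper keeps the $\max_{I}$ explicit rather than selecting a maximizing $I$, but this is the same argument.
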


\begin{proof}
Firstly, by using subadditivity of $v^2$ (Theorem \ref{thm:v-subadd}), one has
$$ v^2(A_1 + \cdots + A_k) \leq k \max_{i \in [k]} v^2(A_i). $$
Hence, $v(A_1 + \cdots + A_k) \leq \sqrt{k} \max_{i \in [k]} v(A_i)$.

If $k\geq n+1$, we can improve this bound using Corollary~\ref{cor:v-large-k}, which gives us
\ben\begin{split}
v^2\left(\sum_{i \in [k]} A_i\right) 
&\leq  \max_{I \subset [k]: |I| = n} v^2\left(\sum_{j \in I} A_j \right)\\
&\leq \max_{I \subset [k]: |I| = n} \sum_{j \in I} v^2(A_j)\\
&\leq n \max_{i \in I} v^2(A_i) \leq n \max_{i \in [k]} v^2(A_i) ,
\end{split}\een
again using subadditivity of $v^2$ for the second inequality.
\end{proof}

By considering $A_1=\cdots=A_k=A$, one obtains the following convergence rate.

\begin{cor}\label{cor:vsfs}
Let $A$ be a compact set in $\R^n$. Then,
\ben
v(A(k))\leq \min\bigg\{ \frac{1}{\sqrt{k}}, \frac{\sqrt{n}}{k} \bigg\}  v(A).
\een 
\end{cor}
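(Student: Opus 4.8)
The plan is to obtain Corollary~\ref{cor:vsfs} as a direct specialization of Theorem~\ref{thm:cassels-v-gen} to the case of $k$ identical summands, combined with the elementary fact that $v$ is positively $1$-homogeneous under dilations. So the first step is to record this homogeneity: for any compact $A\subset\R^n$ and any $\lambda>0$, one has $v(\lambda A)=\lambda\, v(A)$ and $v(A+x)=v(A)$ for $x\in\R^n$. The homogeneity follows immediately from the geometric definition of $v^2$ by the substitution $a_i\mapsto\lambda a_i$, $x\mapsto\lambda x$ in the infimum and supremum (alternatively, it is a consequence of the identity $v=r$ from Theorem~\ref{thm:weg} together with the $1$-homogeneity of $r$ recorded in Lemma~\ref{lem:scaling}). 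Consequently, since $A(k)=\tfrac1k(\underbrace{A+\cdots+A}_{k\ \mathrm{times}})$, we get $v(A(k))=\tfrac1k\, v(A+\cdots+A)$.

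Next I would apply Theorem~\ref{thm:cassels-v-gen} with $A_1=\cdots=A_k=A$, which gives
\[
v(\underbrace{A+\cdots+A}_{k\ \mathrm{times}})\ \leq\ \sqrt{\min\{k,n\}}\; \max_{i\in[k]}v(A_i)\ =\ \sqrt{\min\{k,n\}}\; v(A).
\]
Dividing by $k$ and invoking the previous step yields $v(A(k))\leq \dfrac{\sqrt{\min\{k,n\}}}{k}\, v(A)$. It then remains only to identify the constant: I claim $\dfrac{\sqrt{\min\{k,n\}}}{k}=\min\Bigl\{\tfrac{1}{\sqrt{k}},\tfrac{\sqrt{n}}{k}\Bigr\}$. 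If $k\leq n$, the left-hand side equals $\tfrac{1}{\sqrt{k}}$, and $\tfrac{1}{\sqrt{k}}\leq\tfrac{\sqrt{n}}{k}$ is equivalent to $\sqrt{k}\leq n$, which holds since $k\leq n\leq n^2$. If $k> n$, the left-hand side equals $\tfrac{\sqrt{n}}{k}$, and $\tfrac{\sqrt{n}}{k}\leq\tfrac{1}{\sqrt{k}}$ is equivalent to $n\leq k$. In both cases the claimed equality holds, and the corollary follows.

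There is essentially no obstacle here: the entire quantitative content is already contained in Theorem~\ref{thm:cassels-v-gen}, and the proof is just a substitution plus a rescaling. The only points requiring the slightest care are verifying the $1$-homogeneity of $v$ (which is why I would state it explicitly rather than appeal vaguely to Lemma~\ref{lem:scaling}, whose statement is phrased for $d$, $r$ and $\Delta$) and the short two-case check that $\sqrt{\min\{k,n\}}/k$ coincides with $\min\{1/\sqrt{k},\sqrt{n}/k\}$.
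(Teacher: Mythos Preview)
Your proof is correct and follows exactly the same route as the paper: specialize Theorem~\ref{thm:cassels-v-gen} to $A_1=\cdots=A_k=A$ and rescale using the $1$-homogeneity of $v$. One tiny algebraic slip: the inequality $\tfrac{1}{\sqrt{k}}\le\tfrac{\sqrt{n}}{k}$ is equivalent to $\sqrt{k}\le\sqrt{n}$ (i.e.\ $k\le n$), not to $\sqrt{k}\le n$ as you wrote --- though your conclusion remains valid since you are already in the case $k\le n$.
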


\section{The behavior of the Hausdorff distance from the convex hull}
\label{sec:d}

In this section we study the Hausdorff distance from the convex hull. Recall its definition: for $K$ being a compact convex set containing 0 in its interior and $A$ compact in $\R^n$,
\ben
d^{(K)}(A)=  \inf\{r>0: \conv(A)\subset A+rK\} .
\een

\subsection{Some basic properties of the Hausdorff distance}
\label{ss:d-mono-gen}

The Hausdorff distance is subadditive.

\begin{thm}\label{d-additive}
Let $A, B$ be compact sets in $\R^n$, and $K$ be an arbitrary convex body containing 0 in its interior. Then 
\ben
d^{(K)}(A+B) \leq d^{(K)}(A)+d^{(K)}(B) .
\een
\end{thm}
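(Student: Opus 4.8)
The statement is an immediate consequence of the compatibility of the convex hull operation with Minkowski summation (Lemma~\ref{lem:conv-sum}) together with the convexity of $K$. The plan is: first argue that the infimum in the definition of $d^{(K)}$ is attained for compact sets, so that $\conv(A)\subset A+d^{(K)}(A)K$ and $\conv(B)\subset B+d^{(K)}(B)K$ literally hold; then add these two inclusions, use $\conv(A)+\conv(B)=\conv(A+B)$ and $rK+sK=(r+s)K$ to obtain $\conv(A+B)\subset (A+B)+(d^{(K)}(A)+d^{(K)}(B))K$; finally read off the desired bound from the definition of $d^{(K)}(A+B)$.

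\textbf{Step 1: the infimum is attained.} Write $r=d^{(K)}(A)$ and pick $r_j\downarrow r$ with $\conv(A)\subset A+r_jK$. Fix $x\in\conv(A)$ and write $x=a_j+r_jk_j$ with $a_j\in A$, $k_j\in K$. Since $A$ and $K$ are compact, pass to a subsequence along which $a_j\to a\in A$ and $k_j\to k\in K$; then $x=a+rk\in A+rK$. Hence $\conv(A)\subset A+d^{(K)}(A)K$, and similarly for $B$. (Alternatively, one can avoid this step by working with arbitrary $r>d^{(K)}(A)$, $s>d^{(K)}(B)$ and letting $r\downarrow d^{(K)}(A)$, $s\downarrow d^{(K)}(B)$ at the very end; this is essentially the same argument.)

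\textbf{Step 2: combine the inclusions.} With $r=d^{(K)}(A)$, $s=d^{(K)}(B)$, Step 1 gives
\[
\conv(A+B)=\conv(A)+\conv(B)\subset (A+rK)+(B+sK)=(A+B)+(rK+sK)=(A+B)+(r+s)K,
\]
where the first equality is Lemma~\ref{lem:conv-sum} and the last one uses that $K$ is convex (so $\lambda K+\mu K=(\lambda+\mu)K$ for $\lambda,\mu\ge 0$). By the definition of $d^{(K)}$ applied to the compact set $A+B$, this yields $d^{(K)}(A+B)\le r+s=d^{(K)}(A)+d^{(K)}(B)$, which is the claim. Taking $K=B_2^n$ recovers $d(A+B)\le d(A)+d(B)$.

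\textbf{Main obstacle.} There is essentially no obstacle here: the only point requiring a word of care is that $d^{(K)}$ is defined via an infimum, so one must justify that the defining inclusion holds at the infimum (Step 1) before adding inclusions; this is a routine compactness argument, and it is exactly the place where the standing assumption that $A$ and $B$ are compact is used.
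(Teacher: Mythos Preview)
Your proof is correct and follows essentially the same approach as the paper: add the inclusions $\conv(A)\subset A+d^{(K)}(A)K$ and $\conv(B)\subset B+d^{(K)}(B)K$, use $rK+sK=(r+s)K$ by convexity of $K$, and invoke Lemma~\ref{lem:conv-sum}. In fact your Step~1 is a bit more careful than the paper, which simply asserts the inclusion at the infimum ``by definition'' without spelling out the compactness argument.
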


\begin{proof}
The convexity of $K$ implies that
\ben
A+B + (d^{(K)}(A)+d^{(K)}(B)) K = A + d^{(K)}(A)K + B + d^{(K)}(B)K,
\een
but since $A + d^{(K)}(A)K \supset \conv(A)$ and $B + d^{(K)}(B)K \supset \conv(B)$ by definition, we have
\ben
A+B + (d^{(K)}(A)+d^{(K)}(B)) K \supset \conv(A) +\conv(B) = \conv(A+B). 
\een
\end{proof}

We can provide a slight further strengthening of Theorem \ref{d-additive} when dealing with Minkowski sums of more than 2 sets, by following an argument similar to that used for Schneider's non-convexity index.

\begin{thm}\label{thm:d-3sum}
Let $A, B, C$ be compact sets in $\R^n$, and $K$ be an arbitrary convex body containing 0 in its interior. Then 
\ben
d^{(K)}(A+B+C) \leq d^{(K)}(A+B)+d^{(K)}(B+C).
\een
\end{thm}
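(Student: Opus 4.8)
The plan is to mimic the proof of Theorem~\ref{thm:fracsubofc} (the analogous statement for Schneider's index $c$), replacing the equality $A+B+\lambda\conv(A+B)=(1+\lambda)\conv(A+B)$ by the inclusion $A+B+d^{(K)}(A+B)\,K\supset\conv(A+B)=\conv(A)+\conv(B)$ coming directly from the definition of $d^{(K)}$. Set $r=d^{(K)}(A+B)$ and $s=d^{(K)}(B+C)$; the goal is to show that $(A+B+C)+(r+s)K\supset\conv(A+B+C)$, which by definition of $d^{(K)}$ gives $d^{(K)}(A+B+C)\le r+s$.

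The key chain of inclusions I would write is as follows. Using $A+B+rK\supset\conv(A)+\conv(B)$ and $B+C+sK\supset\conv(B)+\conv(C)$, together with the convexity of $K$ (so that $rK+sK=(r+s)K$), I compute
\begin{eqnarray*}
A+B+C+(r+s)K &=& \big(A+B+rK\big)+C+sK \\
&\supset& \conv(A)+\conv(B)+C+sK \\
&=& \conv(A)+\big(B+C+sK\big) \\
&\supset& \conv(A)+\conv(B)+\conv(C) \\
&=& \conv(A+B+C),
\end{eqnarray*}
where the last equality is Lemma~\ref{lem:conv-sum}. Since we always have the reverse inclusion $\conv(A+B+C)\supset A+B+C+tK$ is false in general, but here we only need the displayed inclusion: it says precisely that $r+s$ is an admissible value of the parameter in the definition of $d^{(K)}(A+B+C)=\inf\{t>0:\conv(A+B+C)\subset A+B+C+tK\}$, hence $d^{(K)}(A+B+C)\le r+s=d^{(K)}(A+B)+d^{(K)}(B+C)$.

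One technical point to handle carefully is that $d^{(K)}$ is an infimum, so $A+B+d^{(K)}(A+B)\,K\supset\conv(A+B)$ need not hold with the infimal value itself unless the infimum is attained; to be safe I would either argue that the infimum is attained by compactness (the set $\{t>0:\conv(A)\subset A+tK\}$ is closed since $\conv(A)$, $A$ and $K$ are compact and $K$ has nonempty interior), or run the argument with $r+\epsilon$, $s+\epsilon$ in place of $r$, $s$ for arbitrary $\epsilon>0$ and then let $\epsilon\downarrow 0$. I expect no real obstacle here: the only subtlety is this attainment/$\epsilon$ bookkeeping, and the essential content is just the distributivity of Minkowski addition over the inclusions combined with $rK+sK=(r+s)K$, exactly as in the proof of Theorem~\ref{thm:fracsubofc}.
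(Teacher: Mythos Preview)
Your approach is essentially identical to the paper's. There is, however, one genuine slip in your chain of inclusions: the line
\[
\conv(A)+\conv(B)+C+sK \;=\; \conv(A)+(B+C+sK)
\]
is false in general, since $\conv(B)\neq B$. What you need there is the \emph{inclusion} $\conv(A)+\conv(B)+C+sK \supset \conv(A)+B+C+sK$ (using $\conv(B)\supset B$), after which your argument proceeds exactly as you wrote. The paper's proof inserts precisely this extra ``$\supset$'' step (going from $\conv(A+B)+C+sK$ to $\conv(A)+B+C+sK$) before applying $B+C+sK\supset\conv(B+C)$. With that one correction your proof matches the paper's line for line; your added care about attainment of the infimum is a nicety the paper omits.
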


\begin{proof}
Notice that
\ben\begin{split}
&A+B+C+\big(d^{(K)}(A+B)+d^{(K)}(B+C)\big)K \\
&= A+B+d^{(K)}(A+B) K + C + d^{(K)}(B+C) K \\
&\supset \conv(A+B)+C+d^{(K)}(B+C) K\\
&\supset \conv(A)+B+C+d^{(K)}(B+C) K\\
&\supset \conv(A)+ \conv(B+C)\\
&= \conv(A+B+C).
\end{split}\een
\end{proof}

In particular, Theorem~\ref{thm:d-3sum} implies that 
\ben
d^{(K)}\left(\sum_{l\in[k]}A_l\right) \leq 2 \max_{i\in[k]}d^{(K)}\left(\sum_{l\in[k]\setminus\{i\}}A_l\right) ,
\een
and, when the sets are the same,
\be\label{eq:d-part-mono}
d^{(K)}(A(k))\leq 2\frac{k-1}{k} d^{(K)}(A(k-1)).
\ee
While not proving monotonicity of $d^{(K)}(A(k))$, the inequality \eqref{eq:d-part-mono} does provide a bound on 
extent of non-monotonicity in the sequence in general dimension.

\subsection{The Dyn--Farkhi conjecture} 

Dyn and Farkhi \cite{DF04} conjectured that
\be\label{eq:sub-d}
d^2(A+B) \leq d^2(A)+d^2(B).
\ee
The next theorem shows that the above conjecture is false in $\R^n$ for $n\ge 3$.

\begin{thm}\label{thm:DF}
Let $q\geq 0$. The inequality
$$
d^q(A+B) \le d^q(A)+d^q(B),
$$
holds for all compact sets $A, B \subset {\mathbb R}^3$ if and only if $q \le 1$.
\end{thm}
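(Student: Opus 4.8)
I would establish the two implications separately: the direction $q\le 1$ is short and dimension‑free, while $q>1$ requires an explicit family of counterexamples and is where the real work lies.

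\textbf{Sufficiency ($q\le 1$).} Recall that $t\mapsto t^q$ is non‑decreasing on $[0,\infty)$ for $q\ge 0$, and that for $q\in[0,1]$ it is concave and vanishes at $0$, hence subadditive: $(a+b)^q\le a^q+b^q$ for all $a,b\ge 0$. Given compact $A,B\subset\R^3$, Theorem~\ref{d-additive} (with $K=B_2^3$) gives $d(A+B)\le d(A)+d(B)$, so monotonicity yields $d^q(A+B)\le(d(A)+d(B))^q$, and subadditivity of $t^q$ then yields $(d(A)+d(B))^q\le d^q(A)+d^q(B)$. This proves the inequality for every $q\in[0,1]$ (and, incidentally, in every dimension).

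\textbf{Reduction for $q>1$.} Now fix $q>1$; we must find compact $A,B\subset\R^3$ with $d^q(A+B)>d^q(A)+d^q(B)$. By the scaling relation $d(\lambda A+x)=\lambda d(A)$ (Lemma~\ref{lem:scaling}(2)) we may rescale so that $d(A)+d(B)=1$; with $s=d(A)$ and $1-s=d(B)$, the right‑hand side $s^q+(1-s)^q$ is minimized at $s=\tfrac12$, where it equals $2^{\,1-q}<1$. Hence it suffices to produce, for each $q>1$, a pair with $d(A)=d(B)$ and
$$
\frac{d(A+B)}{d(A)+d(B)}\;>\;2^{\,1/q-1}.
$$
Since $2^{\,1/q-1}\to 1$ as $q\downarrow 1$, what is needed is a family of pairs in $\R^3$ for which the ``convexification ratio'' $d(A+B)/(d(A)+d(B))$ — always strictly below $1$ by strict subadditivity — nevertheless tends to $1$; equivalently, taking $B$ a congruent copy of $A$, a family with $d(A(2))/d(A)\to 1$.

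\textbf{The construction and the main obstacle.} It therefore remains to exhibit, for every $\eps\in(0,1)$, compact $A_\eps,B_\eps\subset\R^3$ with $d(A_\eps+B_\eps)\ge(1-\eps)\bigl(d(A_\eps)+d(B_\eps)\bigr)$; substituting such a family into the reduction above completes the proof for every $q>1$. This is the delicate part, because the convexifying effect of Minkowski summation fights against it: generically $A+B$ looks far more convex than $A$ or $B$ (in dimensions $1$ and $2$ the whole sequence $d(A(k))$ is monotone, and a short computation shows that summands placed in orthogonal subspaces obey the Pythagorean estimate $d(A+B)\le\sqrt{d(A)^2+d(B)^2}$, which by itself only produces counterexamples for $q>2$). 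One must instead arrange, using all three dimensions, that the non‑convexities of $A$ and of $B$ \emph{reinforce} under addition rather than partially cancel; the paper achieves this via a construction due to F.~Nazarov, built from unions of convex bodies in carefully chosen relative position. Whatever the explicit family, the verification has three ingredients, of which only the second is substantial: (i) locate the point of $\conv(A_\eps)$ (resp.\ $\conv(B_\eps)$) farthest from $A_\eps$ (resp.\ $B_\eps$) to evaluate $d(A_\eps)$ and $d(B_\eps)$; (ii) produce a single point of $\conv(A_\eps)+\conv(B_\eps)$ lying at distance at least $(1-\eps)(d(A_\eps)+d(B_\eps))$ from $A_\eps+B_\eps$, which lower‑bounds $d(A_\eps+B_\eps)$; (iii) let $\eps\to 0$. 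I expect step (ii) — exhibiting and controlling a ``hole'' in the sumset almost as deep as the sum of the two individual holes, against the convexifying tendency — to be the heart of the argument, with (i) and (iii) essentially bookkeeping; it is precisely this step that pins the threshold at $q=1$ rather than at the value $q=2$ suggested by the orthogonal examples. (The same sets, regarded inside $\R^n$, show the statement fails for every $n\ge 3$.)
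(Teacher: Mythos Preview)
Your treatment of the case $q\le 1$ is correct and matches the paper exactly.

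For $q>1$ your reduction is sound: it does suffice to exhibit compact $A,B\subset\R^3$ with $d(A)=d(B)$ and $d(A+B)/(d(A)+d(B))$ arbitrarily close to $1$. But you stop there. You describe what the construction must accomplish, acknowledge that step (ii) is ``the heart of the argument,'' and then do not carry it out. That is precisely the gap: without an explicit family and the computation of (i) and (ii), you have a proof outline, not a proof. Your correct remark that orthogonal placements satisfy $d(A+B)\le\sqrt{d(A)^2+d(B)^2}$ shows you appreciate why this is nontrivial, but appreciating the difficulty is not the same as resolving it.

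For comparison, the paper's construction is short and concrete: take $A$ to be the union of the two segments $[(0,0,0),(1,0,\pm f)]$ and $B$ the union of $[(0,0,0),(1,\pm f,0)]$, with $f$ a large parameter. A planar calculation gives $d(A)=d(B)=f/\sqrt{1+f^2}\le 1$ (step~(i)). The witness for step~(ii) is the single point $(2,0,0)\in\conv(A+B)$: since $A+B$ consists of points $t(1,\pm f,0)+s(1,0,\pm f)$ with $t,s\in[0,1]$, one minimizes $(2-t-s)^2+f^2(s^2+t^2)$ directly to find $d_{A+B}(2,0,0)=2f/\sqrt{f^2+2}$. As $f\to\infty$ the ratio $d(A+B)/(d(A)+d(B))$ tends to $1$, which via your reduction (or directly, since $2^q\le 2$ forces $q\le 1$) finishes the argument. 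Note that $A$ and $B$ here are \emph{not} in orthogonal subspaces --- they share the $x$-axis direction --- which is how the Pythagorean obstruction you identified is evaded.
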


\begin{proof} 
We have already seen that the inequality holds for $q=1$ and thus the inequality holds when $0\le q\le1$. 
Let $q\geq 0$ be such that the inequality holds for all compact sets $A$ and $B$. Let $A=A_1 \cup A_2$, 
where $A_1$ and $A_2$ are intervals such that  $A_1=[(0,0,0), (1,0,-f)]$ and $A_2=[(0,0,0), (1,0, f)]$, and $f>0$ is a large  number to be selected.
Let $B=B_1 \cup B_2$, where $B_1$ and $B_2$ are intervals such that  $B_1=[(0,0,0), (1,-f,0)]$ and $A_2=[(0,0,0), (1,f, 0)]$.
Note that $(1,0,0)$ belongs to both ${\rm conv}(A)$ and ${\rm conv}(B)$. It is easy to see, using two dimensional considerations that
$$d(B)=d(A)=d_A(1,0,0)=\frac{f}{\sqrt{1+f^2}}\le 1.$$ 
Next we notice that 
$$
A+B=\bigcup \limits_{i,j \in \{1,2\}} (A_i + B_j).
$$
 Thus, the points in $A+B$ can be parametrized by
$$
t  (1, \pm f, 0) + s(1,0 \pm f),
$$
where $t,s \in [0,1]$. We note that $(2,0,0) \in {\rm conv}(A+B)$ and 
$$
d(A+B)\ge d_{A+B}(2,0,0)=\min\limits_{t,s \in [0,1]}\sqrt{ (2-(t+s))^2 +f^2(s^2+t^2)}=\frac{2f}{\sqrt{f^2+2}}
$$
Note that if $f \to \infty$, this tends to $2$. So assuming that the inequality 
$$
d^q(A+B) \le d^q(A)+d^q(B),
$$
holds implies that $2^q\le 2$ thus $q \le 1$.
\end{proof}

\begin{rem}
\begin{enumerate}
\item 
Note that the above example is also valid if we consider $\ell_p$, $p\ge 1$ metric instead of the $\ell_2$ metric. 
Indeed $d^{(B_p^3)}(A)=d^{(B_p^3)}(B)\le 1$ and we may compute $\ell_p$ distance from $(2,0,0)$ to $A+B$ as
$$
\min\limits_{t,s \in [0,1]}\left((2-(t+s))^p +f^p(s^p+t^p)\right)^\frac{1}{p} .
$$
If $f \to \infty$, then to minimize the above, we must, again, select $s, t$ to be close to zero,
and thus the distance is at least $2$.  This shows that if the inequality
$$
(d^{(B_p^3)})^q(A+B) \le (d^{(B_p^3)})^q(A)+(d^{(B_p^3)})^q(B)
$$
holds for all $A, B \subset \R^3$, then $q \le 1$.

\item As shown by Wegmann \cite{Weg80}, if the set $A$ is such that the supremum in the definition of $v(A)$ 
is achieved at a point in the relative interior of $\conv(A)$, then $d(A)=v(A)$. Thus Theorem~\ref{thm:v-subadd} implies the following statement:
If $A, B$ are compact sets in $\R^n$ such that the supremum in the definition of $v(A)$ 
is achieved at a point in the relative interior of $\conv(A)$, and likewise for $B$, then 
\ben
d^2(A+B) \leq d^2(A)+d^2(B) .
\een


\item We emphasize that the conjecture is still open in the case $A=B$. 
In this case, the Dyn-Farkhi conjecture is equivalent to 
\ben
d\left(\frac{A+A}{2}\right) \leq \frac{d(A)}{\sqrt{2}}.
\een
If $c_n$ is the best constant such that $d(\frac{A+A}{2}) \leq c_n d(A)$  for all compact sets $A$ in dimension $n$, 
then one has 
\ben 
c_n\ge \sqrt{\frac{n-1}{2n}}
\een
for $n\ge2$. This can be seen from the example where $A=\{a_1,\cdots, a_{n+1}\}$ is a set of $n+1$ vertices of a regular simplex in $\R^n$, $n\ge2$. 
For this example, it is not difficult to see that $d(A)=|g-a_1|$, where $g=(a_1+\cdots +a_{n+1})/(n+1)$ is the center of mass of $A$ and $d(\frac{A+A}{2})=|g-\frac{a_1+a_2}{2}|$. Then, one easily concludes that 
\ben
\frac{d(\frac{A+A}{2})}{d(A)}=\frac{|g-\frac{a_1+a_2}{2}|}{|g-a_1|}=\sqrt{\frac{n-1}{2n}}.
\een
Thus we get $\sup_n c_n \ge\frac{1}{\sqrt{2}}$, while the Dyn-Farkhi conjecture amounts to $\sup_n c_n \le\frac{1}{\sqrt{2}}$. 

\item Notice that there is another interpretation of $d(A)$ as the largest empty circle of $A$,
i.e., the radius of the circle of largest radius, centered at a point in $\conv(A)$ and containing no point of $A$ in its interior (see \cite{Sch08},
where the relevance of this notion for planning new store locations and toxic waste dump locations is explained). Indeed this radius is equal to
\ben
\sup\{R; \exists x\in\conv(A);  |x-a|\ge R, \forall a\in A\}=\sup\{R; \sup_{x\in\conv(A)} \inf_{a\in A}|x-a|\ge R\}=d(A).
\een


\end{enumerate}

\end{rem}

\subsection{Strong fractional subadditivity for large $k$}

In this section, similarly as for the effective standard deviation $v$, we prove that the Hausdorff distance from the convex hull $d^{(K)}$ 
satisfies a strong fractional subadditivity when considering sufficient large numbers of sets.

\begin{thm}\label{thm:d-large-k}
Let $K$ be an arbitrary convex body containing 0 in its interior. Let $A_1, \dots, A_k$ be compact sets in $\R^n$, with $k \geq n + 1$. Then,
$$ d^{(K)}\left(\sum_{i \in [k]} A_i \right) \leq \max_{I \subset [k]: |I| \leq n} \min_{i \in [k] \setminus I} d^{(K)}\left(\sum_{j \in [k] \setminus \{i\}} A_j \right). $$
\end{thm}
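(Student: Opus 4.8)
The plan is to mimic verbatim the proof of Theorem~\ref{v-large-k} for the effective standard deviation, replacing the point-wise functional $v_A(x)$ by the distance from $x$ to the set measured through the gauge of $K$. Write $S=\sum_{i\in[k]}A_i$ and, for $i\in[k]$, $S_i=\sum_{j\in[k]\setminus\{i\}}A_j$, and set $R=\max_{I\subset[k]:|I|\le n}\min_{i\in[k]\setminus I}d^{(K)}(S_i)$. Since $d^{(K)}(S)=\inf\{r>0:\conv(S)\subset S+rK\}$, it suffices to show $\conv(S)\subset S+RK$.

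First I would fix $x\in\conv(S)$ and apply the Shapley--Folkman lemma (Lemma~\ref{lem:SF}), which is available because $k\ge n+1$: there is $I\subset[k]$ with $|I|\le n$ such that $x\in\sum_{i\in I}\conv(A_i)+\sum_{i\in[k]\setminus I}A_i$. Since $|I|\le n<k$, the set $[k]\setminus I$ is nonempty, so I can pick any $i_0\in[k]\setminus I$ and, using $A_i\subset\conv(A_i)$ together with $\conv\big(\sum_{j\ne i_0}A_j\big)=\sum_{j\ne i_0}\conv(A_j)$ (Lemma~\ref{lem:conv-sum}), conclude
$$
x\in\Big(\sum_{i\in I}\conv(A_i)+\!\!\sum_{i\in[k]\setminus(I\cup\{i_0\})}\!\!A_i\Big)+A_{i_0}\subset\conv(S_{i_0})+A_{i_0}.
$$
Hence $x=z+a$ with $z\in\conv(S_{i_0})$ and $a\in A_{i_0}$. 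Then for any $\eps>0$ the definition of $d^{(K)}(S_{i_0})$ gives $\conv(S_{i_0})\subset S_{i_0}+(d^{(K)}(S_{i_0})+\eps)K$, so $z\in S_{i_0}+(d^{(K)}(S_{i_0})+\eps)K$ and therefore $x=z+a\in S_{i_0}+A_{i_0}+(d^{(K)}(S_{i_0})+\eps)K=S+(d^{(K)}(S_{i_0})+\eps)K$.

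Finally, since $i_0\in[k]\setminus I$ was arbitrary we get $x\in S+(\min_{i\in[k]\setminus I}d^{(K)}(S_i)+\eps)K$, and using that $0\in K$ and $K$ is convex (so $rK\subset r'K$ whenever $0\le r\le r'$) this yields $x\in S+(R+\eps)K$. As $x\in\conv(S)$ was arbitrary, $\conv(S)\subset S+(R+\eps)K$ for every $\eps>0$, hence $d^{(K)}(S)\le R+\eps$ for all $\eps>0$, i.e. $d^{(K)}(S)\le R$, which is the claimed inequality. There is no genuine obstacle here beyond careful bookkeeping of the Shapley--Folkman decomposition; the only points needing a word of care are the monotonicity $rK\subset r'K$ for $0\le r\le r'$ (convexity of $K$ and $0\in K$) and the $\eps$-slack, which lets one avoid discussing whether the infimum defining $d^{(K)}$ is attained.
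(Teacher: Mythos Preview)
Your proof is correct and follows essentially the same approach as the paper's own proof: both arguments fix $x\in\conv(S)$, apply the Shapley--Folkman lemma to produce the index set $I$, pick $i_0\in[k]\setminus I$, and decompose $x=z+a$ with $z\in\conv(S_{i_0})$ and $a\in A_{i_0}$. The only cosmetic difference is that the paper phrases the remaining step via the pointwise functionals $d^{(K)}_A(x)=\inf_{a\in A}\|x-a\|_K$ (showing $d^{(K)}_S(x)\le d^{(K)}_{S_{i_0}}(z)$ by a translation argument and then taking suprema), whereas you work directly with the inclusion characterization $\conv(S)\subset S+rK$ and carry an $\eps$-slack; these are equivalent presentations of the same idea.
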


\begin{proof}
Let $x \in \conv(\sum_{i \in [k]} A_i)$. By using the Shapley-Folkman lemma (Lemma~\ref{lem:SF}), there exists a set $I \subset [k]$ of cardinality at most $n$ such that
$$ x \in \sum_{i \in I} \conv(A_i) + \sum_{i \in [k] \setminus I} A_i. $$
Let $i_0 \in [k] \setminus I$. In particular, we have
$$ x \in \sum_{i \in [k] \setminus \{i_0\}} \conv(A_i) + A_{i_0}. $$
Thus,
$$ x = \sum_{i \in [k] \setminus \{i_0\}} x_i + x_{i_0} = z + x_{i_0}, $$
for some $x_i \in \conv(A_i)$, $i \in [k] \setminus \{i_0\}$, and some $x_{i_0} \in A_{i_0}$, where $z = \sum_{i \in [k] \setminus \{i_0\}} x_i$. Hence,
\begin{eqnarray*}
d^{(K)}_{\sum_{i \in [k] \setminus \{i_0\}} A_i}(z) & = & \inf_{a \in \sum_{i \in [k] \setminus \{i_0\}} A_i} \|z-a\|_K \\ & = & \inf_{a \in \sum_{i \in [k] \setminus \{i_0\}} A_i} \|z + x_{i_0} - (a + x_{i_0}) \|_K \\ & \geq & \inf_{a^* \in \sum_{i \in [k]} A_i} \|z + x_{i_0} - a^* \|_K \\ & = & d^{(K)}_{\sum_{i \in [k]} A_i}(x).
\end{eqnarray*}
Taking supremum over all $z \in \conv(\sum_{i \in [k] \setminus \{i_0\}} A_i)$, we deduce that
$$ d^{(K)}_{\sum_{i \in [k]} A_i}(x) \leq d^{(K)}(\sum_{i \in [k] \setminus \{i_0\}} A_i). $$
Since this is true for every $i_0 \in [k] \setminus I$, we deduce that
$$ d^{(K)}_{\sum_{i \in [k]} A_i}(x) \leq \min_{i \in [k] \setminus I} d^{(K)}(\sum_{j \in [k] \setminus \{i\}} A_j). $$
Taking the supremum over all set $I \subset [k]$ of cardinality at most $n$ yields
$$ d^{(K)}_{\sum_{i \in [k]} A_i}(x) \leq \max_{I \subset [k]: |I| \leq n} \min_{i \in [k] \setminus I} d^{(K)}(\sum_{j \in [k] \setminus \{i\}} A_j). $$
We conclude by taking the supremum over all $x \in \conv(\sum_{i\in [k]} A_i)$.
\end{proof}

In the case where $A_1 = \cdots = A_k = A$, we can use the above argument to prove that for $k \geq c(A) + 1$,
$$ d^{(K)}(A(k)) \leq \frac{k-1}{k} d^{(K)}(A(k-1)), $$
where $c(A)$ is the Schneider non-convexity index of $A$. Since $c(A) \leq n$, and $c(A) \leq n-1$ when $A$ is connected, 
we deduce the following monotonicity property for the Hausdorff distance to the convex hull.

\begin{cor}
Let $K$ be an arbitrary convex body containing 0 in its interior. Then,
\begin{enumerate}
\item In dimension 1 and 2, the sequence $d^{(K)}(A(k))$ is non-increasing for every compact set  $A$.
\item In dimension 3, the sequence $d^{(K)}(A(k))$ is non-increasing for every compact and connected set $A$.
\end{enumerate}
\end{cor}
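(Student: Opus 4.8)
The plan is to combine two ingredients. The first is the ``eventual'' contraction estimate
$$d^{(K)}(A(k))\le \tfrac{k-1}{k}\,d^{(K)}(A(k-1))\qquad\text{valid whenever }k\ge c(A)+1,$$
which is exactly the inequality established in the paragraph preceding the corollary by running the argument of Theorem~\ref{thm:d-large-k} on the identical summands in $A(k)=\tfrac1k A+\tfrac{k-1}{k}A(k-1)$: here one needs only $c(A)$ ``convex directions'' rather than $n$, because $A+\lambda\,\conv(A)$ is already convex for $\lambda\ge c(A)$. The second is the a priori bound on Schneider's index from Theorem~\ref{thm:sch75}: $c(A)\le n$ in general, and $c(A)\le n-1$ when $A$ is connected. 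For $n\le 2$ this gives $c(A)\le 2$, so the contraction holds for every $k\ge 3$; for $n=3$ with $A$ connected it likewise gives $c(A)\le 2$, so again the contraction holds for every $k\ge 3$. In dimension $1$ one even has $c(A)\le 1$, so the contraction is valid already from $k=2$, and part~(1) for $n=1$ is then immediate.

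It remains to treat the single step $k=1\to k=2$ in dimensions $2$ and $3$, which is not covered by the condition $k\ge c(A)+1$. For this one invokes \eqref{eq:d-part-mono} with $k=2$, which reads $d^{(K)}(A(2))\le 2\cdot\tfrac12\,d^{(K)}(A(1))=d^{(K)}(A(1))$; alternatively one notes $A=A(1)\subset A(2)$ with $\conv(A(2))=\conv(A)$, so that $A+d^{(K)}(A)K\supset\conv(A)=\conv(A(2))$ forces $d^{(K)}(A(2))\le d^{(K)}(A)$. Chaining this base case with $d^{(K)}(A(k))\le\tfrac{k-1}{k}d^{(K)}(A(k-1))\le d^{(K)}(A(k-1))$ for $k\ge 3$ shows that $\{d^{(K)}(A(k))\}_{k\ge 1}$ is non-increasing, which is precisely the two assertions of the corollary (in dimension $1$ no separate base case is needed).

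The only genuinely non-routine point, already supplied in the excerpt, is the contraction estimate for $k\ge c(A)+1$; since $c(A)$ need not be an integer, the clean way to phrase it is via the fact that $\{\lambda\ge 0: A+\lambda\,\conv(A)\text{ is convex}\}$ is a closed up-set, hence equals $[c(A),\infty)$ (an up-set because $A+\lambda\,\conv(A)=(A+\lambda_0\,\conv(A))+(\lambda-\lambda_0)\,\conv(A)$ is a Minkowski sum of convex sets for $\lambda\ge\lambda_0$, and closed by continuity of $\lambda\mapsto A+\lambda\,\conv(A)$ in the Hausdorff metric together with Hausdorff-closedness of convexity). Thus $k-1\ge c(A)$ already yields $A+(k-1)\conv(A)=k\,\conv(A)=\conv(A(k))$ by Remark~\ref{rk:referee}, so every $x\in\conv(A(k))$ decomposes as $x=\tfrac1k a+\tfrac{k-1}{k}y$ with $a\in A$ and $y\in\conv(A(k-1))$; replacing $y$ by a point $z\in A(k-1)$ with $\|y-z\|_K\le d^{(K)}(A(k-1))$ gives $\tfrac1k a+\tfrac{k-1}{k}z\in A(k)$ at $K$-distance at most $\tfrac{k-1}{k}d^{(K)}(A(k-1))$ from $x$, and taking the supremum over $x$ finishes it. Everything else is bookkeeping with the elementary properties of $d^{(K)}$ and $c$ already recorded in the excerpt.
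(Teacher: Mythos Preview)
Your proof is correct and follows essentially the same approach as the paper: the contraction $d^{(K)}(A(k))\le\frac{k-1}{k}d^{(K)}(A(k-1))$ for $k\ge c(A)+1$, combined with Schneider's bounds $c(A)\le n$ (and $c(A)\le n-1$ for connected $A$), is exactly the paper's argument. You are in fact more careful than the paper here: the paper's sketch does not explicitly address the step $k=1\to k=2$ in dimension~$2$ (or in dimension~$3$ for connected sets), where one only knows $c(A)\le 2$ and hence the contraction is guaranteed only from $k\ge 3$. Your use of \eqref{eq:d-part-mono} at $k=2$, or equivalently the inclusion $A(1)\subset A(2)$ with $\conv(A(1))=\conv(A(2))$ together with Lemma~\ref{lem:dincl}, cleanly closes that gap.
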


\begin{rem}
It follows from the above study that if a compact set $A \subset \R^n$ satisfies $c(A) \leq 2$, then the sequence $d^{(K)}(A(k))$ is non-increasing. 
One can see that if a compact set $A \subset \R^n$ contains the boundary of its convex hull, then $c(A) \leq 1$; for such set $A \subset \R^n$, 
the sequence $d^{(K)}(A(k))$ is non-increasing.
\end{rem}

It is useful to also record a simplified version of Theorem~\ref{thm:d-large-k}.

\begin{cor}\label{cor:d-large-k}
Let $K$ be an arbitrary convex body containing 0 in its interior. Let $A_1, \dots, A_k$ be compact sets in $\R^n$, with $k \geq n + 1$. Then,
$$ d^{(K)}\left(\sum_{i \in [k]} A_i\right) \leq \max_{I \subset [k]: |I| = n} d^{(K)}\left(\sum_{i \in I} A_i \right) \leq n \max_{i\in [k]} d^{(K)}(A_i) .$$
\end{cor}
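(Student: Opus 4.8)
The plan is to derive the first inequality by iterating the simplified form of Theorem~\ref{thm:d-large-k}, and the second inequality by invoking the subadditivity of $d^{(K)}$ from Theorem~\ref{d-additive}. First I would record the consequence of Theorem~\ref{thm:d-large-k} noted immediately after its proof: whenever $m \geq n+1$ compact sets $C_1, \dots, C_m$ in $\R^n$ are given, discarding the inner minimum (i.e.\ bounding $\min_{i\in[m]\setminus I}$ by $\max_{i\in[m]}$) yields
\[
d^{(K)}\Big(\sum_{i\in[m]} C_i\Big) \leq \max_{i\in[m]} d^{(K)}\Big(\sum_{j\in[m]\setminus\{i\}} C_j\Big),
\]
where each term on the right is the value of $d^{(K)}$ on a Minkowski sum of $m-1$ of the $C_i$'s.

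Next, starting from $A_1, \dots, A_k$ with $k \geq n+1$, I would apply this bound repeatedly. Each application replaces a sum of $m$ of the sets by a sum of $m-1$ of them for the worst choice of deleted index, and the hypothesis $m\ge n+1$ needed to run the step remains valid as long as $m-1\ge n$; hence the recursion can be carried out exactly $k-n$ times, stopping when $n$ sets remain. Since each successive maximization over which single set to drop composes with the previous ones into a single maximum over all $n$-element subsets $I\subset[k]$ of the surviving summands, one arrives at
\[
d^{(K)}\Big(\sum_{i\in[k]} A_i\Big) \leq \max_{I\subset[k]:\,|I|=n} d^{(K)}\Big(\sum_{i\in I} A_i\Big),
\]
which is the first inequality.

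Finally, for any fixed $I\subset[k]$ with $|I|=n$, applying the subadditivity of $d^{(K)}$ (Theorem~\ref{d-additive}) $n-1$ times gives
\[
d^{(K)}\Big(\sum_{i\in I} A_i\Big) \leq \sum_{i\in I} d^{(K)}(A_i) \leq n\,\max_{i\in I} d^{(K)}(A_i) \leq n\,\max_{i\in[k]} d^{(K)}(A_i),
\]
and taking the maximum over all such $I$ on the left, then combining with the first inequality, finishes the proof. I do not expect a genuine obstacle here: the only point requiring a little care is the bookkeeping in the iteration — checking that the nested single-index maxima telescope cleanly into a maximum over all $n$-element subsets, and that the step hypothesis $m\ge n+1$ is precisely what permits the recursion to descend to $|I|=n$ but not below.
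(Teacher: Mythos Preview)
Your proof is correct and follows essentially the same approach as the paper: iterate the simplified consequence of Theorem~\ref{thm:d-large-k} to drop one set at a time until only $n$ remain, then apply the subadditivity of $d^{(K)}$ (Theorem~\ref{d-additive}) to bound each $n$-term sum. Your account is in fact slightly more explicit than the paper's about the iteration bookkeeping, but the argument is the same.
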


\begin{proof}
By Theorem~\ref{thm:d-large-k}, provided $k>n$, we have in particular
\ben 
d^{(K)}\left(\sum_{i \in [k]} A_i\right) \leq \max_{i\in [k]} d^{(K)}\left(\sum_{j \neq i} A_j \right) .
\een
Iterating the same argument as long as possible, we have that
\ben 
d^{(K)}\left(\sum_{i \in [k]} A_i\right) \leq \max_{I \subset [k]: |I| = n} d^{(K)}\left(\sum_{j \in I} A_j \right) ,
\een
which is the first desired inequality. Applying the subadditivity property of $d^{(K)}$ (namely, Theorem~\ref{d-additive}),
we immediately have the second desired inequality.
\end{proof}

While Corollary~\ref{cor:d-large-k} does not seem to have been explicitly written down before, it seems to have
been first discovered by V. Grinberg (personal communication).

\subsection{Convergence rates for $d$}
\label{sec:d-rate}

Let us first note that having proved convergence rates for $v(A(k))$, we automatically
inherit convergence rates for $d^{(K)}(A(k))$ as a consequence of Lemma \ref{lem:d-d^K}, Theorem \ref{thm:weg} and Corollary \ref{cor:vsfs}.

\begin{cor}\label{d-rate-v-rate} Let $K$ be an arbitrary convex body containing 0 in its interior. For any compact set $A\subset \R^n$,
\ben
d^{(K)}(A(k))\leq \frac{1}{r}\min\bigg\{ \frac{1}{\sqrt{k}}, \frac{\sqrt{n}}{k} \bigg\}  v(A),
\een 
where $r>0$ is such that $rB_2^n \subset K$.
\end{cor}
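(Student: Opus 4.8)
The plan is to chain together three results already proved in the excerpt. First I would recall Lemma~\ref{lem:d-d^K}, which states that for any convex body $K$ containing $0$ in its interior, $d^{(K)}(A) \le \frac{1}{r} d(A)$ whenever $rB_2^n \subset K$; indeed the lemma gives $r\, d^{(K)}(A) \le d(A)$, which rearranges to exactly this. This reduces the problem to bounding the Euclidean Hausdorff distance $d(A(k))$.

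Next I would invoke the relationship between $d$ and $v$ coming from Wegmann's theorem (Theorem~\ref{thm:weg}), which asserts $d(A) \le v(A)$ for every compact set $A$ in $\R^n$. Applying this to the set $A(k)$ gives $d(A(k)) \le v(A(k))$. Finally, Corollary~\ref{cor:vsfs} provides the convergence rate for the effective standard deviation along the averaging sequence: $v(A(k)) \le \min\{\frac{1}{\sqrt{k}}, \frac{\sqrt{n}}{k}\}\, v(A)$.

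Putting these together, for any compact $A \subset \R^n$ and any $r>0$ with $rB_2^n \subset K$,
\ben
d^{(K)}(A(k)) \le \frac{1}{r}\, d(A(k)) \le \frac{1}{r}\, v(A(k)) \le \frac{1}{r}\min\bigg\{ \frac{1}{\sqrt{k}}, \frac{\sqrt{n}}{k} \bigg\}  v(A),
\een
which is the claimed inequality. Since all three ingredients are stated and proved earlier in the paper, there is no real obstacle here — the corollary is a routine concatenation. The only point requiring a moment's care is checking that Wegmann's theorem applies unconditionally in the form $d \le v$ (the equality $d(A)=v(A)$ needs the extra hypothesis about the maximizer being in the relative interior, but the inequality $d(A) \le v(A)$ holds for all compact sets, as stated in Theorem~\ref{thm:weg}), so that we may legitimately apply it to $A(k)$ with no assumptions beyond compactness of $A$.
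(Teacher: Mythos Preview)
Your proof is correct and matches the paper's approach exactly: the corollary is stated precisely as a consequence of Lemma~\ref{lem:d-d^K}, Theorem~\ref{thm:weg}, and Corollary~\ref{cor:vsfs}, chained together in the same order you describe.
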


For Euclidean norm (i.e., $K=B_2^n$), this goes back to \cite{Sta69, Cas75}.

Although we have a strong convergence result for $d^{(K)}(A(k))$ as a consequence of that for $v(A(k))$, we give below another estimate of $d^{(K)}(A(k))$ in terms of $d^{(K)}(A)$, instead of $v(A)$.

\begin{thm}\label{d-easy-rate} For any compact set $A\subset \R^n$,
\ben
d^{(K)}(A(k)) \leq \min\left\{1,\frac{\lceil c(A) \rceil}{k}\right\}d^{(K)}(A).
\een 
\end{thm}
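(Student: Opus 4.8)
The plan is to prove the two bounds in $d^{(K)}(A(k)) \leq \min\{1, \lceil c(A)\rceil/k\}\, d^{(K)}(A)$ separately and then combine them.

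\textbf{The bound $d^{(K)}(A(k)) \leq d^{(K)}(A)$.} This should follow from the subadditivity of $d^{(K)}$ (Theorem~\ref{d-additive}) together with the scaling property $d^{(K)}(\lambda A) = \lambda\, d^{(K)}(A)$ (a special case of the behavior recorded in Lemma~\ref{lem:scaling} for $d$, which extends verbatim to $d^{(K)}$ since $d^{(K)}(\lambda A + x) = \lambda\, d^{(K)}(A)$ by the same argument). Indeed, applying subadditivity to the $k$-fold sum $A + \cdots + A$ gives $d^{(K)}(A+\cdots+A) \leq k\, d^{(K)}(A)$, and then dividing by $k$ (using the scaling property with $\lambda = 1/k$) yields $d^{(K)}(A(k)) = \frac{1}{k} d^{(K)}(A + \cdots + A) \leq d^{(K)}(A)$.

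\textbf{The bound $d^{(K)}(A(k)) \leq \frac{\lceil c(A)\rceil}{k} d^{(K)}(A)$.} Write $c = c(A)$ and let $m = \lceil c \rceil$. Using the representation from the Shapley--Folkman development (the identity following Lemma~\ref{lem:SF}, or more directly Remark~\ref{rk:referee} applied with $\lambda = c(A(k))$): since $\conv(A(k)) = \conv(A)$, the set $A(k) + c(A(k))\conv(A)$ is convex and hence equals $(1+c(A(k)))\conv(A)$. By Theorem~\ref{thm:c-rate}, $c(A(k)) \leq c(A)/k = c/k$. Now I want to convert the statement ``$\conv(A) \subset A(k) + \frac{c}{k}\conv(A)$'' into a statement about $K$. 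First, by translation invariance we may assume $0 \in \conv(A)$, so that $\conv(A) \subset R(A)\, \frac{\conv(A)}{R(A)}$ — but this is circular; instead, the cleaner route is: translate so that $0$ is an interior point, and note that any convex body $L$ containing $0$ satisfies $\conv(A) \subset A(k) + \lambda L$ for $\lambda = c(A(k))\cdot(\text{scaling factor between }\conv(A)\text{ and }L)$. To avoid introducing $R(A)$, observe directly that from $\conv(A) = A(k) + \frac{c}{k}\conv(A)$ (after possibly enlarging $\frac{c}{k}$ — wait, equality may fail). Let me instead use: $c(A(k)) \leq c/k \leq m/k$, and since $A(k) + \frac{m}{k}\conv(A) \supset A(k) + c(A(k))\conv(A) \supset \conv(A(k)) = \conv(A)$, we get $\conv(A) \subset A(k) + \frac{m}{k}\conv(A)$.

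\textbf{Passing from $\conv(A)$ to $K$.} The key observation is that $d^{(\conv(A))}(A(k)) \leq \frac{m}{k}$ once $0 \in \conv(A)$ — this is essentially Lemma~\ref{lem:d^conv} applied to $A(k)$ (whose convex hull is $\conv(A)$ and whose Schneider index is $\leq c/k \leq m/k$), giving $d^{(\conv(A))}(A(k)) \leq c(A(k)) \leq m/k$. Then I convert between the norm $\|\cdot\|_{\conv(A)}$ and the norm $\|\cdot\|_K$. Here the main obstacle is that $K$ and $\conv(A)$ need not be comparable with constants related to $d^{(K)}(A)$ in an obvious way. The resolution: by definition of $d^{(K)}(A)$, we have $\conv(A) \subset A + d^{(K)}(A)\, K$, which does not directly give $\conv(A) \subset (\text{const})\,K$. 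However, there is a direct argument avoiding this: start from $\conv(A) \subset A(k) + \frac{m}{k}\conv(A)$ and use $\conv(A) = \conv(A(k)) \subset A(k) + d^{(K)}(A(k)) K$ — also circular. The correct manipulation: iterate. Since $\conv(A) \subset A(k) + \frac{m}{k}\conv(A)$ and $\conv(A) \subset A + d^{(K)}(A) K \subset \conv(A)$ is false; rather replace the inner $\conv(A)$ on the right: using $\conv(A) = \frac{1}{k}\conv(A) + \frac{k-1}{k}\conv(A)$ and Lemma~\ref{lem:conv-sum}, and the fact that $A(k) = \frac{1}{k}A + \frac{k-1}{k}A(k-1)$... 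Actually the cleanest path is via Theorem~\ref{thm:c-quant-mono}'s proof style: replace $\conv(A)$ inside by $A + d^{(K)}(A)K$ is wrong, but we CAN write $\conv(A) \subset A(k) + \frac{m}{k}\conv(A)$ and then, because $m \leq k$ (as $c(A) \leq n$ is not needed, but $m/k < 1$ requires... no: $m$ can exceed $k$), when $m \geq k$ the first bound already gives the result, so assume $m < k$. Then bootstrap: apply the inclusion to the $\conv(A)$ on the right repeatedly. Since $\left(\frac{m}{k}\right)^j \to 0$ and $A(k) + \varepsilon\conv(A)$ stabilizes, one extracts $\conv(A) \subset A(k) + \frac{m}{k}(A(k) + \frac{m}{k}\conv(A)) + \cdots$; summing the geometric-type series and using $A(k) + A(k) = 2A(k)$-type absorptions, one concludes $\conv(A) \subset A(k) + \frac{m}{k}\cdot\frac{1}{1 - m/k}(\cdots)$ — but this needs care. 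I expect the intended short proof simply combines Theorem~\ref{thm:c-rate}, Lemma~\ref{lem:d^conv}, and the triangle-type inequality relating $d^{(K)}$ through $\conv(A)$ as an intermediate body, and I would structure it as: $d^{(K)}(A(k)) \leq d^{(K)}(\text{something involving } c(A(k))\text{ and } d^{(K)}(A))$ following the exact template of Theorem~\ref{thm:fracsubofc}'s proof — writing $A(k) + \frac{\lceil c(A)\rceil}{k} d^{(K)}(A)\, K \supset A(k) + d^{(K)}(A(k))\conv(A) \cdot (\text{rescale}) \supset \conv(A(k))$, where the rescaling uses $\conv(A) \subset A + d^{(K)}(A)K$ iterated. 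The main obstacle is precisely making this substitution of $\conv(A)$ by a dilate of $K$ rigorous; I would handle it by the geometric series / stabilization argument sketched above, noting $A(k) + tA(k) = (1+t)A(k)$ fails but $A(k)$ is absorbed since all relevant sets contain a common translate, so the iteration telescopes cleanly.
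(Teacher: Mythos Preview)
Your first bound $d^{(K)}(A(k)) \leq d^{(K)}(A)$ is correct and matches the paper's argument exactly (subadditivity from Theorem~\ref{d-additive} plus homogeneity).

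The second bound, however, has a genuine gap. You correctly obtain $\conv(A) \subset A(k) + \frac{m}{k}\conv(A)$ from $c(A(k)) \leq m/k$, but then you are unable to pass from $\conv(A)$ to $K$ on the right-hand side. None of the routes you sketch actually close: comparing $\|\cdot\|_{\conv(A)}$ with $\|\cdot\|_K$ would require a relation of the form $\conv(A) \subset (\text{const})\,K$, which $d^{(K)}(A)$ alone does not provide; and the bootstrapping iteration does not telescope because $A(k) + \tfrac{m}{k}A(k) \neq (1+\tfrac{m}{k})A(k)$ for non-convex $A(k)$. Even if the geometric-series heuristic could be made rigorous, it would yield $\frac{m}{k-m}$ rather than $\frac{m}{k}$, which is strictly weaker.

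The missing idea is much more direct and avoids $c(A(k))$ altogether. Work at the level of the unscaled sum $kA(k) = \underbrace{A+\cdots+A}_{k}$ and split off exactly $m=\lceil c(A)\rceil$ summands:
\[
kA(k) + m\,d^{(K)}(A)\,K
= (k-m)A(k-m) + \underbrace{\big(A + d^{(K)}(A)K\big)+\cdots+\big(A + d^{(K)}(A)K\big)}_{m}
\supset (k-m)A(k-m) + m\,\conv(A),
\]
using $\conv(A) \subset A + d^{(K)}(A)K$ for each of the $m$ copies. The right-hand side equals $k\,\conv(A)$: since $m \geq c(A)$, one has $A + m\,\conv(A) = (1+m)\conv(A)$, and then inductively $jA + m\,\conv(A) = (j+m)\conv(A)$ for all $j \geq 0$. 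Hence $d^{(K)}(kA(k)) \leq m\,d^{(K)}(A)$, i.e., $d^{(K)}(A(k)) \leq \frac{m}{k}d^{(K)}(A)$. The point is that $d^{(K)}(A)$ enters linearly, once per replaced copy of $A$, and the choice $m = \lceil c(A)\rceil$ is exactly what makes the remaining $(k-m)$ copies absorb into $k\,\conv(A)$.
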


\begin{proof}
As a consequence of Theorem \ref{d-additive}, we always have $d^{(K)}(A(k)) \leq d^{(K)}(A)$. Now consider $k \geq c(A) + 1$, and notice that
\ben
kA(k) + \lceil c(A) \rceil d^{(K)}(A)K \supset (k- \lceil c(A) \rceil )A(k - \lceil c(A) \rceil) + \lceil c(A) \rceil \conv(A) = \conv(kA(k)).
\een
Hence $d^{(K)}(kA(k)) \leq \lceil c(A) \rceil d^{(K)}(A)$, or equivalently, $d^{(K)}(A(k)) \leq \frac{\lceil c(A) \rceil d^{(K)}(A)}{k}$.
\end{proof}

Using the fact that $c(A) \leq n$ for every compact set $A \subset \R^n$, we deduce that
$$ d^{(K)}(A(k)) \leq \min\left\{1,\frac{n}{k}\right\}d^{(K)}(A). $$


\section{Connections to discrepancy theory}
\label{sec:discrep}

The ideas in this section have close connections to the area known sometimes as ``discrepancy theory'',   
which has arisen independently in the theory of Banach spaces, combinatorics, and computer science.
It should be emphasized that there are two distinct but related areas that go by the name of discrepancy theory. 
The first, discussed in this section and sometimes called
``combinatorial discrepancy theory'' for clarity, was likely originally motivated by questions related to absolute versus unconditional 
versus conditional convergence for series in Banach spaces. The second, sometimes called
``geometric discrepancy theory'' for clarity, is related to how well a finite set of points can approximate a uniform distribution on
(say) a cube in $\R^n$. Our discussion here concerns the former; the interested reader may consult \cite{Tra14:book} for more on the latter.
When looked at deeper, however, combinatorial discrepancy theory is also related to the ability to discretely approximate ``continuous''
objects. For example, a famous result of Spencer \cite{Spe85} says that given any collection $\{S_1,\ldots,S_n\}$ of subsets of  $[n]$, it is possible to color the 
elements of $[n]$ with two colors (say, red and blue) such that
\ben
\bigg| |S_i\cap R| - \frac{|S_i|}{2} \bigg|\leq 3\sqrt{n},
\een
for each $i\in [n]$, where $R\subset [n]$ is the set of red elements. As explained for example by Srivastava \cite{Sri13:blog}
\begin{quote}
In other words, it is possible to partition $[n]$ into two subsets 
so that this partition is very close to balanced on {\it each one} of the test sets $S_i$. 
Note that a ``continuous'' partition which splits each element exactly in half will be exactly balanced on each $S_i$; 
the content of Spencer's theorem is that we can get very close to this ideal situation with an actual, discrete partition 
which respects the wholeness of each element.
\end{quote}
Indeed, Srivastava also explains how the recent celebrated results of Marcus, Spielman and Srivastava \cite{MSS15:1, MSS15:2}
that resulted in the solution of the Kadison-Singer conjecture may be seen from a discrepancy point of view.

For any $n$-dimensional Banach space $E$ with norm $\|\cdot\|_E$, define the functional 
\ben
V(k, E)= \max_{x_1, \ldots, x_k: \|x_i\|=1 \,\forall i\in [k]} \,\, \min_{(\eps_1, \ldots, \eps_k)\in \{-1,1\}^k} \, \bigg\| \sum_{i\in [k]} \eps_i x_i\bigg\|_E.
\een
In other words, $V(k,E)$ answers the question: for any choice of $k$ unit vectors in $E$, how small are we guaranteed to be able to make the 
signed sum of the unit vectors by appropriately choosing signs? The question of what can be said about the numbers $V(k,E)$ 
was first asked\footnote{See \cite[p. 496]{Kle63:book} where this question is stated as one in a collection of then-unsolved problems.} by A. Dvoretzky in 1963. 
Let us note that the same definition also makes sense when $\|\cdot\|$ is a nonsymmetric norm (i.e., satisfies $\|ax\|=a\|x\|$ for $a>0$,
positive-definiteness and the triangle inequality), and we will discuss it in this more general setting.



It is a central result of discrepancy theory \cite{GS80, BG81} that when $E$ has dimension $n$, it always holds\footnote{The fact 
that $V(k,E)\leq n$ appears to be folklore and the first explicit mention of it we could find is in \cite{GS80}.} that $V(k,E) \leq n$.
To make the connection to our results, we observe that this fact actually  follows from Corollary~\ref{cor:d-large-k}.

\begin{thm}\label{thm:discrep-main}
Suppose $A_1, \ldots, A_k \subset K$, where $K$ is a convex body in $\R^n$ containing 0 in its interior (i.e., the unit ball of a non-symmetric norm $\|\cdot \|_K$),
and suppose $0\in \conv(A_i)$ and $\dim(A_i)=1$ for each $i\in [k]$. Then there exist vectors $a_i\in A_i$ ($i\in [k]$) such that
\ben
\bigg\| \sum_{i\in [k]} a_i\bigg\|_K  \leq n .
\een
In particular, if $K$ is symmetric,  then by choosing $A_i=\{x_i, -x_i\}$, with $\|x_i\|_K=1$, one immediately has $V(k, E_K)\leq n$
for  $E_K=(\R^n, \|\cdot\|_K)$.
\end{thm}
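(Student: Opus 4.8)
The plan is to derive Theorem~\ref{thm:discrep-main} directly from Corollary~\ref{cor:d-large-k}. Observe that each $A_i$ is a one-dimensional compact set containing $0$ in its convex hull, so $A_i$ is a segment of the form $[u_i, -v_i]$ (up to reparametrization) with $0$ in its relative interior; more to the point, $A_i \subset K$ implies $\conv(A_i) \subset K$. The key first step is to recall the Shapley-Folkman-type bound for $d^{(K)}$: by Corollary~\ref{cor:d-large-k}, since $k \geq n+1$ (the case $k \leq n$ being handled directly, see below),
\[
d^{(K)}\Big(\sum_{i\in[k]} A_i\Big) \leq n \max_{i\in[k]} d^{(K)}(A_i).
\]
Now for each $i$, since $\dim(A_i)=1$ and $A_i \subset K$, one has $\conv(A_i) \subset K$, and moreover $\conv(A_i) \subset A_i + A_i$ when $0 \in \conv(A_i)$ (indeed a segment through the origin satisfies $[u,-v]\supset$... more carefully: if $0 = \theta u + (1-\theta)(-v)$ on the segment $\conv(A_i)=[-v,u]$, then any point is a convex combination of the endpoints, and one checks $\conv(A_i) = A_i + 0 \cdot (\cdots)$; in fact the cleanest route is that a segment containing $0$ satisfies $d^{(K)}(A_i) \leq$ something small, but actually we do not even need a quantitative bound here). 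The essential point is simply that $d^{(K)}(A_i) \le 1$: since $0 \in \conv(A_i)$ and $\conv(A_i)\subset K$, for any $x \in \conv(A_i)$ we have $x \in K$, hence writing $x = a + (x-a)$ for a suitable endpoint $a \in A_i$ with $x-a \in K$ (the difference of two points of the segment $\conv(A_i) \subset K$ lies in $K$ when one of them is an endpoint, using $0 \in \conv(A_i)$ and convexity/symmetry of $K$ along that line), we get $\conv(A_i) \subset A_i + K$, i.e. $d^{(K)}(A_i) \leq 1$.

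Combining, $d^{(K)}\big(\sum_i A_i\big) \leq n$. By the definition of $d^{(K)}$, this means $\conv\big(\sum_i A_i\big) \subset \sum_i A_i + nK$. In particular the point $0 \in \conv(\sum_i A_i)$ (since $0 \in \conv(A_i)$ for each $i$, and $\conv(\sum A_i) = \sum \conv(A_i)$ by Lemma~\ref{lem:conv-sum}) lies in $\sum_i A_i + nK$. Hence there exist $a_i \in A_i$ with $-\sum_i a_i \in nK$, i.e. $\sum_i a_i \in -nK = nK$ if $K$ is symmetric; for the nonsymmetric case one gets $\|{-\sum_i a_i}\|_K \le n$, and replacing each $A_i$ by $-A_i$ (which is also a valid choice of sets, still contained in... wait, not necessarily in $K$) — so more carefully, for the nonsymmetric statement we should instead just track that $0 \in \conv(\sum A_i) \subset \sum A_i + nK$ gives $\sum a_i \in nK$ directly for suitable $a_i$, after noting $0 = \sum a_i + n k_0$ with $k_0 \in K$ is the wrong sign; the fix is that $0 \in \conv(A_i)$ also means $0 \in \conv(\sum A_i) \subset \sum A_i + nK$ AND, applying the same to the reflected configuration is unnecessary if we instead observe that what we need is a point of $\sum A_i$ inside $nK$, which is exactly $\sum A_i \cap nK \neq \emptyset$; and $0 \in \conv(\sum A_i) \subset \sum A_i + nK$ says $\sum A_i$ meets $-nK$, so to land in $nK$ we run the argument with $\{-A_i\}$ — but $-A_i \not\subset K$ in general. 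The clean resolution: the conclusion $\big\|\sum a_i\big\|_K \le n$ is symmetric enough to extract from $\sum A_i \cap (\text{a translate of } K \text{ of "radius" } n)$; I will phrase it as: there is $a_i\in A_i$ with $\sum a_i \in nK$ by applying Corollary~\ref{cor:d-large-k} to the sets $A_i$ and using $0\in\conv(\sum A_i)$, noting that for the purposes of $\|\cdot\|_K$ one may equally use $-K$ since $\dim(A_i)=1$ and we are free to pick either endpoint.

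For the remaining case $k \le n$, one uses the trivial bound: pick any $a_i \in A_i$; then $\big\|\sum_{i\in[k]} a_i\big\|_K \le \sum_{i\in[k]} \|a_i\|_K \le k \le n$ by the triangle inequality for $\|\cdot\|_K$ and $a_i \in K$. Finally, the last sentence: if $K$ is symmetric and $A_i = \{x_i, -x_i\}$ with $\|x_i\|_K = 1$, then $0 = \frac12 x_i + \frac12(-x_i) \in \conv(A_i)$, $\dim(A_i) = 1$, and $A_i \subset K$; the $a_i \in A_i$ produced above are exactly $\pm x_i = \eps_i x_i$ for signs $\eps_i \in \{-1,1\}$, so $\min_{\eps} \big\|\sum \eps_i x_i\big\|_K \le \big\|\sum a_i\big\|_K \le n$, and taking the max over unit vectors $x_i$ gives $V(k, E_K) \le n$.

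The main obstacle I anticipate is purely bookkeeping around the sign/asymmetry issue in extracting ``$\sum a_i \in nK$'' rather than ``$\sum a_i \in -nK$'' from the containment $0 \in \conv(\sum A_i) \subset \sum A_i + nK$; for symmetric $K$ this is a non-issue, and for nonsymmetric $K$ it is resolved by the one-dimensionality of each $A_i$ (freedom to choose either endpoint) together with the fact that the statement only asserts an upper bound on $\|\cdot\|_K$. No deep step is needed beyond Corollary~\ref{cor:d-large-k} and the elementary observation $d^{(K)}(A_i) \le 1$ for a segment through the origin contained in $K$.
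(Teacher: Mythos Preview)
Your approach is the same as the paper's: both derive the result from Corollary~\ref{cor:d-large-k} by bounding $d^{(K)}\big(\sum_i A_i\big)\le n\max_i d^{(K)}(A_i)\le n$ and then using $0\in\conv\big(\sum_i A_i\big)$ to extract the desired point. Two comments on execution.

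First, your direct argument for $d^{(K)}(A_i)\le 1$ is correct but unnecessarily tangled. The paper's route is cleaner: since $\conv(A_i)\subset K$, Lemma~\ref{lem:d^K-d^L} gives $d^{(K)}(A_i)\le d^{(\conv(A_i))}(A_i)$; then Lemma~\ref{lem:d^conv} (using $0\in\conv(A_i)$) gives $d^{(\conv(A_i))}(A_i)\le c(A_i)$; and Theorem~\ref{thm:sch75} with $\dim(A_i)=1$ gives $c(A_i)\le 1$.

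Second, the sign issue you flag is genuine and you do not resolve it. From $d^{(K)}\big(\sum_i A_i\big)\le n$ and $0\in\conv\big(\sum_i A_i\big)$ one obtains $0\in\sum_i A_i+nK$, i.e.\ a point $a_0\in\sum_i A_i$ with $-a_0\in nK$; this gives $\|{-a_0}\|_K\le n$, not $\|a_0\|_K\le n$. Your proposed fixes (reflecting the $A_i$, or ``picking the other endpoint'') do not work as stated. The clean resolution is to run the entire argument with $-K$ in place of $K$: Corollary~\ref{cor:d-large-k} applies verbatim to $-K$, and one checks directly that $d^{(-K)}(A_i)\le 1$. Indeed, writing $\conv(A_i)=[-v,u]$ with endpoints $u,-v\in A_i\subset K$, for $x=te$ on this segment one has $x-u\in[-\|u\|,0]\cdot e\subset -K$ when $t\ge 0$ and $x-(-v)\in[0,\|v\|]\cdot e\subset -K$ when $t\le 0$ (using $-u,v\in -K$ and convexity). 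Then $d^{(-K)}\big(\sum_i A_i\big)\le n$ gives $0\in\sum_i A_i-nK$, i.e.\ $\big(\sum_i A_i\big)\cap nK\neq\emptyset$, which is exactly $\|\sum_i a_i\|_K\le n$ for suitable $a_i\in A_i$. (The paper's own proof writes $\|a-x\|_K$ where the definition has $\|x-a\|_K$, so it effectively uses $-K$ without comment; your explicit worry is well-founded.)

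Your separate treatment of the case $k\le n$ by the triangle inequality is fine and slightly more explicit than the paper, which relies on the subadditivity of $d^{(K)}$ (Theorem~\ref{d-additive}) to cover that range.
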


\begin{proof}
We simply observe that since $0\in \conv(\sum_{i\in [k]} A_i)$, there exists a point $a_0\in\sum_{i\in [k]} A_i$ such that
$$
\|a_0\|_K\leq \sup\limits_{x \in \conv(\sum\limits_{i\in [k]} A_i)} \inf\limits_{a \in \sum_{i\in [k]} A_i}  \|a-x \|_K=d^{(K)}(\sum\limits_{i\in [k]} A_i) \leq n \max_{i\in [k]} d^{(K)}(A_i),$$
where the last inequality follows from Corollary \ref{cor:d-large-k}. Moreover, using that for each $i \in [k]$,
$A_i \subseteq K$ and $K$ is convex, we get  $\conv(A_i) \subseteq K$. Thus by Lemmata~\ref{lem:d^K-d^L} and \ref{lem:d^conv}, $ d^{(K)}(A_i) \le  d^{(\conv(A_i))}(A_i) \leq c(A_i)\leq 1$,
where the last inequality uses Theorem~\ref{thm:sch75} and the assumption that $\dim(A_i)=1$.
\end{proof}

\begin{rem}
B\'ar\'any and Grinberg \cite{BG81} proved  Theorem~\ref{thm:discrep-main} without the condition $\dim(A_i)=1$.
They also proved it for symmetric bodies $K$ under the weaker condition that 
$0\in \conv(\sum_{i\in [k]} A_i)$; we will recover this fact for  symmetric bodies, without restriction on the dimension,
as a consequence of Theorem \ref{thm:discrep-super} below. 
\end{rem}


\begin{rem}\label{rem:sharp}
As pointed out in \cite{BG81},  Theorem~\ref{thm:discrep-main} is sharp. By taking $E=\ell_1^n$ and $x_i$ to be the $i$-th standard basis vector $e_i$ of $\R^n$,
we see that for any choice of signs, $\big\| \sum_{i\in [n]} \eps_i x_i\big\|= n$, which implies that $V(n, \ell_1^n)=n$.
\end{rem}

\begin{rem}
It is natural to think that the sequence $V(k,E)$ may be monotone with respect to $k$. Unfortunately, this is not true. 
Swanepoel \cite{Swa00} showed that $V(k,E)\leq 1$ for every {\it odd} $k$ and every $2$-dimensional Banach space $E$.
Consequently, we have $V(1, \ell_1^2)=1$ and $V(3, \ell_1^2)\leq 1$,
whereas we know from Remark~\ref{rem:sharp} that $V(2, \ell_1^2)=2$. 
\end{rem}


Not surprisingly, for special norms, better bounds can be obtained. In particular  (see, e.g., \cite[Theorem 2.4.1]{AS00:book} or \cite[Lemma 2.2]{Bec83:2}),   $V(k, \ell_2^n)\leq \sqrt{n}$. 
We will present a proof of this and more general facts in Theorem \ref{thm:discrep-super}.
But first let us discuss a quite  useful observation about the quantity $V(k,E)$: it is an isometric invariant,
i.e., invariant under nonsingular linear transformations of the unit ball. A way to measure the extent of isometry  is
using the Banach-Mazur distance $d_{BM}$: Let $E$, $E'$ be two $n$-dimensional normed spaces. The Banach-Mazur distance between them is defined as
$$
d_{BM} (E, E')=\inf\{\|T\| \cdot \|T^{-1}\|; T:E \to E'  \mbox{ isomorphism}\}.
$$
Thus $d_{BM}(E,E') \ge 1$ and $d_{BM}(E,E') = 1$ if and only if $E$ and $E'$ are isometric. We also remind that the above notion have a geometrical interpretation. Indeed if we denote by $B(X)$ a unit ball of 
Banach space $X$, then $d_{BM}(E,E')$ is a minimal positive number such that there exists a linear transformation $T$ with:
$$
B(E) \subseteq T(B(E')) \subseteq  d_{BM}(E,E') B(E).
$$
\begin{lem}\label{lem: V-BM}
If $d_{BM}(E,E')=1$, then
\ben
V(k,E)=V(k,E').
\een
\end{lem}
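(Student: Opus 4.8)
The plan is to show that the functional $V(k,\cdot)$ is preserved under isometry, and since $d_{BM}(E,E')=1$ means $E$ and $E'$ are isometric, the conclusion follows. The key observation is that $V(k,E)$ is defined purely in terms of the norm of $E$ (unit vectors, signed sums, and their norms), so any norm-preserving linear bijection will carry one optimization problem to the other.

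First I would recall that $d_{BM}(E,E')=1$ does \emph{not} immediately give an isometry in infinite dimensions, but here $E,E'$ are both $n$-dimensional, and the infimum in the definition of $d_{BM}$ is attained (the relevant set of linear isomorphisms with $\|T\|\cdot\|T^{-1}\|$ bounded is, after normalization, a compact family). So there exists a linear isomorphism $T:E\to E'$ with $\|T\|\cdot\|T^{-1}\|=1$; rescaling $T$ we may assume $\|T\|=\|T^{-1}\|=1$, which forces $\|Tx\|_{E'}=\|x\|_E$ for all $x\in E$, i.e. $T$ is a linear isometry. (Alternatively, phrase this via the geometric interpretation already recorded just before the lemma: $d_{BM}(E,E')=1$ gives $T$ with $B(E)\subseteq T(B(E'))\subseteq B(E)$, hence $T(B(E'))=B(E)$, so $T^{-1}$ maps the unit ball of $E$ onto that of $E'$ and is therefore an isometry.)

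Next I would use this isometry to match up the two extremal problems. Given unit vectors $x_1,\dots,x_k\in E$, the vectors $Tx_1,\dots,Tx_k$ are unit vectors in $E'$, and for any signs $(\eps_i)\in\{-1,1\}^k$,
\[
\Bigl\|\sum_{i\in[k]}\eps_i x_i\Bigr\|_E = \Bigl\|T\Bigl(\sum_{i\in[k]}\eps_i x_i\Bigr)\Bigr\|_{E'} = \Bigl\|\sum_{i\in[k]}\eps_i Tx_i\Bigr\|_{E'}.
\]
Taking the minimum over signs and then the maximum over unit vectors, and using that $x_i\mapsto Tx_i$ is a bijection between $k$-tuples of unit vectors of $E$ and of $E'$, gives $V(k,E)\le V(k,E')$. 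Applying the same argument to $T^{-1}$ (also an isometry) gives the reverse inequality, hence $V(k,E)=V(k,E')$. The only mild subtlety — and the one point worth stating carefully rather than the routine algebra — is the passage from $d_{BM}=1$ to the existence of an actual isometry; everything after that is a formal transport of the optimization problem along $T$.
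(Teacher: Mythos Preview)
Your proof is correct and follows essentially the same approach as the paper: both use an isometry $T$ with $T(B(E'))=B(E)$ (equivalently $\|y\|_E=\|Ty\|_{E'}$) to transport the max--min problem from $E$ to $E'$. You are a bit more explicit about why $d_{BM}=1$ actually yields an isometry in finite dimensions, which is a point the paper glosses over, but otherwise the arguments coincide.
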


\begin{proof} Consider an invertible  linear transformation $T$ such that $T(B(E))=B(E')$ and thus $\| y\|_E = \|Ty\|_{E'}$, then
\begin{eqnarray*}
V(k, E)&= & \max_{x_1, \ldots, x_k: \|x_i\|_E=1 \,\forall i\in [k]} \,\, \min_{(\eps_1, \ldots, \eps_k)\in \{-1,1\}^k} \, \bigg\| \sum_{i\in [k]} \eps_i x_i\bigg\|_E\\
&= & \max_{x_1, \ldots, x_k: \|T x_i\|_{E'}=1 \,\forall i\in [k]} \,\, \min_{(\eps_1, \ldots, \eps_k)\in \{-1,1\}^k} \, \bigg\| T \left(\sum_{i\in [k]} \eps_i x_i \right)\bigg\|_{E'}\\
&= & \max_{y_1, \ldots, y_k: \|y_i\|_{E'}=1 \,\forall i\in [k]} \,\, \min_{(\eps_1, \ldots, \eps_k)\in \{-1,1\}^k} \, \bigg\| \sum_{i\in [k]} \eps_i y_i \bigg\|_{E'}
\end{eqnarray*}
\end{proof}




 Now  we would like to use the ideas of the proof  of Theorem \ref{thm:discrep-main} together with  Lemma \ref{lem: V-BM} to prove the following statement that will help us to provide sharper bounds for $V(k,E)$ for intermediate norms.
 
 \begin{thm}\label{thm:discrep-super}
Suppose $A_1, \ldots, A_k \subset K$, where $K$ is a symmetric convex body in $\R^n$ (i.e., the unit ball of a norm $\|\cdot \|_K$), 
and suppose $0\in \conv(\sum_{i\in [k]} A_i)$. Then there exist vectors $a_i\in A_i$ ($i\in [k]$) such that
\ben
\bigg\| \sum_{i\in [k]} a_i\bigg\|_K  \leq \sqrt{n}\, d_{BM}(E,\ell_2^n),
\een
where $E=(\R^n, \|\cdot\|_K)$. In particular, by choosing $A_i=\{x_i, -x_i\}$, with $\|x_i\|_K=1$, one immediately has 
\ben
V(k,E)\leq \sqrt{n} \, d_{BM}(E,\ell_2^n).
\een
\end{thm}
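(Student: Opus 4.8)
The plan is to reduce the general symmetric case to the Euclidean case $K=B_2^n$ by means of the Banach--Mazur distance, exactly as one transfers estimates between normed spaces, and to settle the Euclidean case using the $v$- and $d$-machinery already assembled above (Theorems~\ref{thm:cassels-v-gen}, \ref{thm:weg} and the argument of Theorem~\ref{thm:discrep-main}).

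\textbf{Step 1: the Euclidean case.} I would first prove the following: if $A_1,\dots,A_k\subset B_2^n$ and $0\in\conv(\sum_{i\in[k]}A_i)$, then there exist $a_i\in A_i$ with $\big|\sum_{i\in[k]}a_i\big|\le\sqrt{n}$. The key observation is that $A_i\subset B_2^n$ forces $v(A_i)\le 1$: for any $x=\sum_j p_j a_j\in\conv(A_i)$ with $a_j\in A_i$, the particular decomposition gives $v_{A_i}^2(x)\le\sum_j p_j|a_j-x|^2=\sum_j p_j|a_j|^2-|x|^2\le 1-|x|^2\le 1$, so $v(A_i)\le 1$. Hence Theorem~\ref{thm:cassels-v-gen} yields $v\big(\sum_{i\in[k]}A_i\big)\le\sqrt{\min\{k,n\}}\,\max_i v(A_i)\le\sqrt n$, and Theorem~\ref{thm:weg} gives $d\big(\sum_{i\in[k]}A_i\big)\le v\big(\sum_{i\in[k]}A_i\big)\le\sqrt n$. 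Finally, since $0\in\conv(\sum_{i\in[k]}A_i)$ and $\sum_{i\in[k]}A_i$ is compact, there is $a_0=\sum_{i\in[k]}a_i\in\sum_{i\in[k]}A_i$ (with $a_i\in A_i$) realizing $|a_0|=\inf_{a\in\sum A_i}|a-0|\le d\big(\sum_{i\in[k]}A_i\big)\le\sqrt n$. This is precisely the argument of Theorem~\ref{thm:discrep-main} with the crude bound $d^{(K)}\le c\le n$ replaced by the sharper $v$-based estimate.

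\textbf{Step 2: transfer to general $K$.} Write $E=(\R^n,\|\cdot\|_K)$ and $d=d_{BM}(E,\ell_2^n)$. By the geometric description of the Banach--Mazur distance recalled just before Lemma~\ref{lem: V-BM} (applied with $E'=\ell_2^n$), there is a linear isomorphism $T$ of $\R^n$ with $K\subset T(B_2^n)\subset d\,K$; one may assume the minimum is attained, or else run the argument with $d+\eps$ and let $\eps\downarrow 0$. Since $A_i\subset K\subset T(B_2^n)$, the sets $T^{-1}(A_i)$ lie in $B_2^n$, and $0\in\conv(\sum_{i\in[k]}A_i)$ gives $0=T^{-1}(0)\in T^{-1}\conv(\sum_{i\in[k]}A_i)=\conv\big(\sum_{i\in[k]}T^{-1}(A_i)\big)$. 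Applying Step 1 to the family $\{T^{-1}(A_i)\}$ produces $b_i\in T^{-1}(A_i)$ with $\big|\sum_{i\in[k]}b_i\big|\le\sqrt n$. Put $a_i=T(b_i)\in A_i$; then, since $T(B_2^n)\subset d\,K$ entails $\|Tz\|_K\le d\,|z|$ for every $z\in\R^n$, we obtain $\big\|\sum_{i\in[k]}a_i\big\|_K=\big\|T\big(\sum_{i\in[k]}b_i\big)\big\|_K\le d\,\big|\sum_{i\in[k]}b_i\big|\le\sqrt n\,d_{BM}(E,\ell_2^n)$. The $V(k,E)$ bound then follows by specializing $A_i=\{x_i,-x_i\}$ with $\|x_i\|_K=1$: each $A_i\subset K$, and $0=\tfrac12 x_i+\tfrac12(-x_i)\in\conv(A_i)\subset\conv(\sum_{i\in[k]}A_i)$, so the hypotheses hold and the produced $a_i=\eps_i x_i$ satisfy $\big\|\sum_{i\in[k]}\eps_i x_i\big\|_K\le\sqrt n\,d_{BM}(E,\ell_2^n)$; taking the maximum over unit vectors and the guaranteed minimum over sign patterns gives $V(k,E)\le\sqrt n\,d_{BM}(E,\ell_2^n)$.

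\textbf{Main obstacle.} None of the individual steps is deep, but the delicate bookkeeping is the Banach--Mazur transfer: one must set up the inclusions in the direction $K\subset T(B_2^n)\subset d\,K$ so that "$A_i\subset K$" converts into "$T^{-1}(A_i)\subset B_2^n$" and the closing estimate $\|T(\cdot)\|_K\le d\,|\cdot|$ comes out with the constant $d$ rather than $d^{-1}$ or $d^2$; getting an inclusion backwards changes the exponent. A secondary, purely technical point is the attainment of the infimum defining $d_{BM}$ (or the $\eps$-approximation workaround noted above).
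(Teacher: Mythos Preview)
Your proof is correct and follows essentially the same route as the paper: both use the Banach--Mazur distance to reduce to a Euclidean situation, bound $v(A_i)$ by the radius of the enclosing ball, invoke the $\sqrt{\min\{k,n\}}$ estimate for $v$ of a sum (Theorem~\ref{thm:cassels-v-gen} or, equivalently, Corollary~\ref{cor:d-large-k} plus subadditivity of $v^2$), and finish via $d\le v$. The only cosmetic difference is that the paper normalizes $K$ so that $B_2^n\subset K\subset d\,B_2^n$ and then compares $d^{(K)}$ with $d$ via Lemma~\ref{lem:d^K-d^L}, whereas you carry the linear map $T$ explicitly and transform the sets; these are the same argument in two notations.
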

\begin{proof}
Let $d=d_{BM}(E,\ell_2^n)$, then we may assume, using Lemma \ref{lem: V-BM}, that $B_2^n \subset K \subset d B_2^n$. Next,  as in the proof of  Theorem \ref{thm:discrep-main} we observe that since $0\in \conv(\sum_{i\in [k]} A_i)$, there exists a point $a\in\sum_{i\in [k]} A_i$ such that
$$
\|a\|_K \leq d^{(K)} \left(\sum\limits_{i\in [k]} A_i\right)  \le \max_{I \subset [k]: |I| = n} d^{(K)}\left(\sum_{j \in I} A_j \right),
$$
where the last inequality follows from Corollary \ref{cor:d-large-k}. Next, we apply Lemma \ref{lem:d^K-d^L} together with $B_2^n \subset K$ to get
$$
\max_{I \subset [k]: |I| = n} d^{(K)}\left(\sum_{j \in I} A_j \right) \le \max_{I \subset [k]: |I| = n} d\left(\sum_{j \in I} A_j \right).
$$
Now we can apply Theorems \ref{thm:weg} and \ref{thm:v-subadd} to get
$$
 \max_{I \subset [k]: |I| = n} d\left(\sum_{j \in I} A_j \right)  \le  \max_{I \subset [k]: |I| = n} v\left(\sum_{j \in I} A_j \right)  \le \max_{I \subset [k]: |I| = n} \sqrt{ \sum_{j \in I} v^2( A_j) } \le d\sqrt{n},
$$
where the last inequality follows from the fact that $v(A_i)=r(A_i)$ is bounded by $d$ since $A_i\subset K \subset d B_2^n$.
\end{proof}
We note that it follows from F. John Theorem  (see, e.g., \cite[page 10]{MS86:book}) that 
$d_{BM}(E, \ell_2^n) \le \sqrt{n}$ for any $n$-dimensional Banach space $E$.
Thus we have the following corollary, which recovers a result of \cite{BG81}.

\begin{cor}\label{cor:discrep-super}
Suppose $A_1, \ldots, A_k \subset K$, where $K$ is a convex symmetric body in $\R^n$,
and suppose $0\in \conv(\sum_{i\in [k]} A_i)$. Then there exist vectors $a_i\in A_i$ ($i\in [k]$) such that
\ben
\bigg\| \sum_{i\in [k]} a_i\bigg\|_K  \leq n.
\een
 In particular, by choosing $A_i=\{x_i, -x_i\}$, with $\|x_i\|_K=1$, one immediately has 
\ben
V(k,E)\leq n,
\een
where $E=(\R^n, \|\cdot\|_K)$.
\end{cor}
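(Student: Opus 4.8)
The plan is to obtain Corollary~\ref{cor:discrep-super} as an essentially immediate consequence of Theorem~\ref{thm:discrep-super} together with a classical fact from the local theory of Banach spaces, namely F.~John's theorem on the maximal volume ellipsoid inscribed in a symmetric convex body. First I would apply Theorem~\ref{thm:discrep-super} verbatim: under the hypotheses that $A_1,\dots,A_k\subset K$ with $K$ a symmetric convex body and $0\in\conv(\sum_{i\in[k]}A_i)$, it produces vectors $a_i\in A_i$ with
\[
\bigg\|\sum_{i\in[k]}a_i\bigg\|_K\le \sqrt{n}\,d_{BM}(E,\ell_2^n),
\]
where $E=(\R^n,\|\cdot\|_K)$. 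The only thing left is to bound the Banach--Mazur distance $d_{BM}(E,\ell_2^n)$.

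The second step is to recall John's theorem: for any $n$-dimensional normed space $E$ there is a linear image of the Euclidean ball (the John ellipsoid of the unit ball $K$) satisfying $\mathcal E\subset K\subset \sqrt{n}\,\mathcal E$, which by the geometric reformulation of $d_{BM}$ recalled just before Lemma~\ref{lem: V-BM} gives $d_{BM}(E,\ell_2^n)\le\sqrt{n}$. Plugging this into the displayed inequality yields $\big\|\sum_{i\in[k]}a_i\big\|_K\le\sqrt{n}\cdot\sqrt{n}=n$, which is exactly the claimed bound. For the final assertion about $V(k,E)$, I would simply specialize to $A_i=\{x_i,-x_i\}$ with $\|x_i\|_K=1$: then $0=\tfrac12 x_i+\tfrac12(-x_i)\in\conv(A_i)\subset\conv(\sum_{i}A_i)$, the hypothesis of the corollary is met, and the vectors $a_i$ produced have the form $a_i=\eps_i x_i$ for signs $\eps_i\in\{-1,1\}$, so $\min_{\eps}\|\sum\eps_i x_i\|_K\le n$, and taking the supremum over unit vectors gives $V(k,E)\le n$.

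There is essentially no serious obstacle here — the work has all been done in Theorem~\ref{thm:discrep-super}, and John's theorem is invoked as a black box. The one point worth flagging is that this route, unlike the direct Shapley--Folkman argument of Theorem~\ref{thm:discrep-main} (via Corollary~\ref{cor:d-large-k}), does \emph{not} require the restriction $\dim(A_i)=1$, because in Theorem~\ref{thm:discrep-super} the one-dimensionality is replaced by the uniform bound $v(A_i)=r(A_i)\le d$ coming from $A_i\subset K\subset d\,B_2^n$; so the present corollary genuinely extends Theorem~\ref{thm:discrep-main} in the symmetric case, recovering the result of B\'ar\'any and Grinberg~\cite{BG81}. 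I would state this remark briefly after the proof rather than belabor it.
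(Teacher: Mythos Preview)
Your proposal is correct and takes essentially the same approach as the paper: the corollary is deduced immediately from Theorem~\ref{thm:discrep-super} by invoking John's theorem to bound $d_{BM}(E,\ell_2^n)\le\sqrt n$, giving $\sqrt n\cdot\sqrt n=n$. Your closing observation that this route dispenses with the $\dim(A_i)=1$ assumption of Theorem~\ref{thm:discrep-main} is also exactly the point the paper makes.
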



It is well known that $d_{BM}(\ell_p^n, \ell_2^n)=n^{|\frac{1}{p}-\frac{1}{2}|}$ for $p\ge 1$ (see, e.g., \cite[page 20]{MS86:book}). 
Thus  Theorem \ref{thm:discrep-super} gives:

\begin{cor}
For any $p\geq 1$ and any $n\in\Nat$,
$$
V(k, \ell_p^n) \le n^{\frac{1}{2}+|\frac{1}{p}-\frac{1}{2}|}.
$$
\end{cor}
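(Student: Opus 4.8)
The plan is to simply combine Theorem~\ref{thm:discrep-super} with the classical computation of the Banach--Mazur distance from $\ell_p^n$ to $\ell_2^n$. First I would recall that Theorem~\ref{thm:discrep-super}, applied to the symmetric convex body $K=B_p^n$ (the unit ball of $\ell_p^n$) and to the choice $A_i=\{x_i,-x_i\}$ with $\|x_i\|_{\ell_p^n}=1$, yields
\[
V(k,\ell_p^n)\leq \sqrt{n}\, d_{BM}(\ell_p^n,\ell_2^n).
\]
So the entire content of the corollary is reduced to inserting the value of $d_{BM}(\ell_p^n,\ell_2^n)$.

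Next I would invoke the well-known identity $d_{BM}(\ell_p^n,\ell_2^n)=n^{|\frac1p-\frac12|}$ for $p\geq 1$, which is standard (see, e.g., \cite[page 20]{MS86:book}); no proof is needed since it is quoted in the excerpt. Substituting this into the previous display gives
\[
V(k,\ell_p^n)\leq \sqrt{n}\cdot n^{|\frac1p-\frac12|}=n^{\frac12+|\frac1p-\frac12|},
\]
which is exactly the claimed bound, uniformly in $k$.

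There is essentially no obstacle here: the corollary is a direct specialization of Theorem~\ref{thm:discrep-super}. The only point requiring a moment's care is making sure the hypothesis $0\in\conv\big(\sum_{i\in[k]}A_i\big)$ of Theorem~\ref{thm:discrep-super} is met, but this is automatic for the symmetric choice $A_i=\{x_i,-x_i\}$ since then $0=\frac12 x_i+\frac12(-x_i)\in\conv(A_i)$ for each $i$, hence $0\in\sum_{i\in[k]}\conv(A_i)=\conv\big(\sum_{i\in[k]}A_i\big)$ by Lemma~\ref{lem:conv-sum}. (One may also note that for $p=2$ this recovers $V(k,\ell_2^n)\leq\sqrt n$, and for $p=1$ or $p=\infty$ it recovers the sharp bound $V(k,\ell_1^n)\leq n$ consistent with Remark~\ref{rem:sharp}.)
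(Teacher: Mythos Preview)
Your proof is correct and matches the paper's own argument exactly: the paper simply quotes the identity $d_{BM}(\ell_p^n,\ell_2^n)=n^{|1/p-1/2|}$ and applies Theorem~\ref{thm:discrep-super}. Your additional verification of the hypothesis $0\in\conv\big(\sum_{i\in[k]}A_i\big)$ is a nice touch, though it is already absorbed into the ``in particular'' clause of Theorem~\ref{thm:discrep-super}.
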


In particular, we recover the classical fact that $V(k, \ell_2^n)\leq \sqrt{n}$, which can be found, e.g., in \cite[Theorem 2.4.1]{AS00:book}.
V. Grinberg (personal communication) informed us of the following elegant and sharp bound generalizing this fact that he obtained in unpublished work:
if $A_i$ are subsets of $\R^n$ and $D=\max_i \diam(A_i)$, then
\be\label{eq:grinberg}
d\bigg(\sum_{i\in [k]} A_i\bigg) \leq \frac{D}{2} \sqrt{n}.
\ee
The special case of this when each $A_i$ has cardinality 2 is due to Beck \cite{Bec83:2}. Let us note that the inequality \eqref{eq:grinberg} improves upon
the bound of $\sqrt{n}\max_i v(A_i)$ that is obtained in the Shapley-Folkman theorem by combining Theorems~\ref{thm:weg} and \ref{thm:cassels-v-gen}.

Finally let us note that the fact that the quantities $V(k,E)$ are $O(n)$ for general norms and $O(\sqrt{n})$ for Euclidean norm
is consistent with the observations in Section~\ref{sec:d-rate} that the rate of convergence of $d^{(K)}(A(k))$ for a compact set $A\subset \R^n$
is $O(n/k)$ for general norms and $O(\sqrt{n}/k)$ for Euclidean norm (i.e., $K=B_2^n$). 

We do not comment further on the relationship of our study with discrepancy theory, which contains many interesting results and questions
when one uses {\it different} norms to pick the original unit vectors, and to measure the length of the signed sum (see, e.g., \cite{BF81:1, Gia97, Nik13}). The interested reader
may consult the books \cite{Cha00:book, Mat10:book, CST14:book} for more in this direction, including discussion of algorithmic issues and applications
to theoretical computer science. There are also connections to the Steinitz lemma \cite{Bar08}, which was originally discovered
in the course of extending the Riemann series theorem (on the real line being the set of possible limits by rearrangements of 
a conditionally convergent sequence of real numbers) to sequences of vectors (where it is called the L\'evy-Steinitz theorem,
and now understood in quite general settings, see, e.g., \cite{Sof08}).

\section{Discussion}
\label{sec:disc}

%

Finally we mention some notions of non-convexity that we do not take up in this paper:
\begin{enumerate}
\item Inverse reach: The notion of reach was defined by Federer \cite{Fed59}, 
and plays a role in geometric measure theory. For a set $A$ in $\R^n$, the reach of $A$ is defined as
\ben
\text{reach}(A)=\sup \{ r>0: \forall y\in A+rB_2^n,\, \text{there exists a unique}\, x\in A\, \text{nearest to}\, y \}.
\een 
A key property of reach is that $\text{reach}(A)=\infty$ if and only if $A$ is convex; consequently one may think
of 
\ben
\iota(A)=\text{reach}(A)^{-1}
\een 
as a measure of non-convexity.
Th\"ale \cite{Tha08} presents a comprehensive survey of the study of 
sets with positive reach (however, one should take into account the
cautionary note in the review of this article on MathSciNet).
\item Beer's index of convexity: First defined and studied by Beer \cite{Bee73:1}, this quantity
is defined for a compact set $A$ in $\R^n$ as the probability that 2 points drawn uniformly from $A$ at random
``see'' each other (i.e., the probability that the line segment connecting them is in $A$). Clearly this
probability is 1 for convex sets, and 0 for finite sets consisting of more than 1 point. Since our study has
been framed in terms of measures of non-convexity, it is more natural to consider
\ben
b(A)= 1-\P \{ [X,Y]\subset A\} ,
\een
where $X, Y$ are i.i.d. from the uniform measure on $A$, and $[x,y]$ denotes the line segment connecting
$x$ and $y$.
\item Convexity ratio: The convexity ratio of a set $A$ in $\R^n$ is defined as the ratio of the volume
of a largest convex subset of $A$ to the volume of $A$; it is clearly 1 for convex sets and can be arbitrarily
close to 0 otherwise. For dimension 2, this has been studied, for example,
by Goodman \cite{Goo81}. Balko et al. \cite{BJVW14} discuss this notion in general dimension, and also
give some inequalities relating the convexity ratio and Beer's index of convexity. Once again, to get 
a measure of non-convexity, it is more natural to consider
\ben
\kappa(A)=1-\frac{\vol_n(L(A))}{\vol_n(A)} ,
\een
where $L(A)$ denotes a largest convex subset of $A$.
\end{enumerate}

These notions of non-convexity are certainly very interesting, but they behave quite 
differently from the notions we have explored thus far.
For example, if $b(A)=0$ or $\kappa(A)=0$, the compact set $A$ may not be convex, but differ from a convex set 
by a set of measure zero. For example, if $A$ is the union of a unit Euclidean ball and a point separated from it,
then 
\be\label{eq:bk0}
b(A)=\kappa(A)=0 ,
\ee 
even though $A$ is compact but non-convex. 
Even restricting to compact {\it connected} sets does not help-- just connect the disc with a point by a segment,
and we retain \eqref{eq:bk0} though $A$ remains non-convex.

It is possible that further restricting to connected open sets
is the right thing to do here-- this may yield a characterization of convex sets using $b$ and $\kappa$,
but it still is not enough to ensure stability of such a characterization. For example, $b(A)$ small 
would not imply that $A$ is close to its convex hull even for this restricted class of sets, because we can take 
the previous example of a point connected to a disc by a segment and just slightly fatten the segment. 


Generalizing this example leads to a curious phenomenon. Consider $A=B_2^n \cup \{x_1,...,x_N\}$,
where $x_1, \ldots, x_N$ are points in $\R^n$ well separated 
from each other and the origin.
Then $b(A)=\kappa(A)=0$, but we can send $b(\frac{A+A}{2})$ and $\kappa(\frac{A+A}{2})$ 
arbitrarily close to 1 by making $N$ go to infinity
(since isolated points are never seen for $A$ but become very important for the sumset).
This is remarkably bad behavior indeed, since it indicates an extreme violation of the monotone decreasing
property of $b(A(k))$ or $\kappa(A(k))$ that one might wish to explore, already in dimension 2.

Based on the above discussion, it is clear that the measures $\iota, b, \kappa$ of non-convexity are 
more sensitive to the topology of the set than the functionals we considered in most of this paper. 
Thus it is natural that the behavior of these additional measures for Minkowski sums should be 
studied with a different global assumption than in this paper (which has focused on what can be said for compact sets). We hope to investigate this question in future work.

\noindent Matthieu Fradelizi \\
Universit\'e Paris-Est, LAMA (UMR 8050), UPEMLV, UPEC, CNRS,
F-77454, France\\
E-mail address: matthieu.fradelizi@u-pem.fr 

\vspace{0.8cm}

\noindent Mokshay~Madiman\\
University of Delaware\\
Department of Mathematical Sciences\\
501 Ewing Hall \\
Newark, DE 19716, USA\\
E-mail address: madiman@udel.edu

\vspace{0.8cm}

\noindent Arnaud Marsiglietti \\
Center for the Mathematics of Information\\
California Institute of Technology\\
1200 E California Blvd, MC 305-16\\
Pasadena, CA 91125, USA \\
E-mail address: amarsigl@caltech.edu

\vspace{0.8cm}

\noindent Artem Zvavitch \\
Department of Mathematical Sciences \\
Kent State University \\
Kent, OH 44242, USA \\
E-mail address: zvavitch@math.kent.edu

\end{document}